\newtheorem{theorem}{Theorem}[section] 
\newtheorem{claim}[theorem]{Claim}
\newtheorem{sclaim}[theorem]{Subclaim}
\newtheorem{lemma}[theorem]{Lemma} 
\newtheorem{proposition}[theorem]{Proposition} 
\newtheorem{observation}[theorem]{Observation} 
\newtheorem{fact}[theorem]{Fact} 
\newcommand{\thistheoremname}{}
\newtheorem*{genericthm*}{\thistheoremname}
\newenvironment{namedthm*}[1]
{\renewcommand{\thistheoremname}{#1}%
	\begin{genericthm*}}
	{\end{genericthm*}}
\theoremstyle{definition}
\newtheorem{definition}[theorem]{Definition}
\newtheorem{problem}[theorem]{Problem}
\newtheorem{convention}[theorem]{Convention}
\newtheorem{cor}[theorem]{Corollary}
\theoremstyle{remark}
\newtheorem{remark}[theorem]{Remark}
\newtheorem{notation}[theorem]{Notation}
\newcommand{\lh}{{\ell g}}
\newcommand{\rest}{{\restriction}}
\newcommand{\dom}{{\rm dom}} 
\newcommand{\ran}{{\rm ran}}
\newcommand{\clpr}{{\rm cl}^{\rm pr}}
\newcommand{\cll}{{\rm cl}}
\newcommand{\suc}{{\rm succ}}
\newcommand{\then}{{\underline{then}}}
\newcommand{\when}{{\underline{when}}}
\newcommand{\where}{{\underline{where}}}
\newcommand{\Then}{{\underline{Then}}}
\newcommand{\If}{{\underline{if}}}
\newcommand{\mn}{{\medskip\noindent}}
\newcommand{\sn}{{\smallskip\noindent}}
\newcommand{\varp}{{\varepsilon}}
\newcommand{\cB}{{\mathcal B}}
\newcommand{\cF}{{\mathcal F}}
\newcommand{\sG}{{\mathscr G}}
\newcommand{\cI}{{\mathcal I}}
\newcommand{\gp}{{\mathfrak p}}
\newcommand{\gq}{{\mathfrak q}}
\newcommand{\bbQ}{{\mathbb Q}}
\newcommand{\bbZ}{{\mathbb Z}}
\newcommand{\tieconcat}{%
	\mathbin{\mathpalette\dotieconcat\relax}%
}
\newcommand{\dotieconcat}[2]{
	\text{\raisebox{.8ex}{$\smallfrown$}}%
}
\def\mathunderaccent#1#2 {\let\theaccent#1\skewfactor#2
\mathpalette\putaccentunder}
\def\putaccentunder#1#2{\oalign{$#1#2$\crcr\hidewidth
\vbox to.2ex{\hbox{$#1\skew\skewfactor\theaccent{}$}\vss}\hidewidth}}
\def\name{\mathunderaccent\tilde-3 }
\newenvironment{PROOF}[2][\proofname.]
   {\begin{proof}[#1]}
   {\end{proof}}
\begin{document}

\title {Between Whitehead groups and uniformization \\
 Sh486}
\author {M\'ark Po\'or}
\address{Einstein Institute of Mathematics\\
	Edmond J. Safra Campus, Givat Ram\\
	The Hebrew University of Jerusalem\\
	Jerusalem, 91904, Israel\\}
\email{sokmark@gmail.com}

\author {Saharon Shelah}
\address{Einstein Institute of Mathematics\\
Edmond J. Safra Campus, Givat Ram\\
The Hebrew University of Jerusalem\\
Jerusalem, 91904, Israel\\
 and \\
 Department of Mathematics\\
 Hill Center - Busch Campus \\ 
 Rutgers, The State University of New Jersey \\
 110 Frelinghuysen Road \\
 Piscataway, NJ 08854-8019 USA}
\email{shelah@math.huji.ac.il}
\urladdr{http://shelah.logic.at}
\thanks{$^\dag$The first author was supported by the Excellence Fellowship Program for International Postdoctoral Researchers of The Israel Academy of Sciences and Humanities, and by the National Research, Development and Innovation Office
	- NKFIH, grants no. 124749, 129211. \\
	$^\ast$The second author was supported	by the Israel Science Foundation grants
	1838/19 and 2320/23. Paper 486 on Shelah's list. \\
	References like \cite[Th0.2=Ly5]{Sh:950} means the label
	of Th.0.2 is y5.  The reader should note that the version on the second author's website is usually more updated than the one in the mathematical archive.}



\subjclass{Primary 	03E35, 03E05; Secondary: 03E50, 20K20, 20K35}

\keywords{set theory, ladder system uniformization, infinite abelian groups, Whitehead problem}
\date{13. March, 2025}

\begin{abstract}
For a given stationary set $S$ of countable ordinals we prove (in $\mathbf{ZFC}$)
that the assertion ``every $S$-ladder system has $\aleph_0$-uniformization''
is equivalent to ``every strongly $\aleph_1$-free abelian group of
cardinality $\aleph_1$ with non-freeness invariant $\subseteq S$
is $\aleph_1$-coseparable, i.e. Ext$(G, \oplus_{i=0}^{\infty} \mathbb Z)=0$ (in particular Whitehead, i.e.\ Ext$(G,  \mathbb Z)=0$)``.
This solves problems B3 and B4 from the monograph of Eklof and Mekler \cite{EM02}.
\end{abstract}

\maketitle
\numberwithin{equation}{section}
\setcounter{section}{-1}

\section{Introduction}

On the subject and all background needed see Eklof-Mekler \cite{EM02}.

By \cite{Sh:44} there may be a non-free Whitehead group of cardinality
$\aleph_1$, by \cite{Sh:64} this may occur even under $\mathbf{CH}$,
in both proofs the main idea is that the group theoretic problem is
similar to the combinatorial problem of uniformization.
This intuition materialized as a theorem stating that the existence of a non-free Whitehead ($\aleph_1$-coseparable, resp.) group of cardinality $\aleph_1$ is equivalent to the existence of a ladder system on a stationary subset of $\omega_1$ that admits $2$-uniformization ($\aleph_0$-uniformization, resp.); see \cite{Sh:98} and
\cite[XIII.0, XIII.2.]{EM02} where also a comprehensive list of problems is presented. More is done in Eklof-Mekler-Shelah \cite{Sh:441},
\cite{Sh:442}, Eklof-Shelah \cite{Sh:505}; see the updated book
Eklof-Mekler \cite{EM02}. 
In fact if a group $G$ of cardinality $\aleph_1$ is Whitehead, then it is $\aleph_1$-free (i.e.\ each countable subgroup of $G$ is free), and $\mathbf{MA} + \neg \mathbf{CH}$ proves that for an abelian group $G$ of cardinality $\aleph_1$ being a ``Shelah'' group (a strengthening of the notion of $\aleph_1$-free groups, see Definition $\ref{0.2}$) is equivalent to being Whitehead \cite[XII. 2.5, XIII. 3.6]{EM02}.
Therefore it is  natural to ask whether uniformization principles themselves together with a strengthening of $\aleph_1$-freeness  imply the Whitehead property. 
P.C. Eklof, A.H. Mekler and S. Shelah  explored the close connection of $\aleph_0$-uniformization and the characterization of  $\aleph_1$-free abelian groups of cardinality $\aleph_1$ in terms of the Whitehead property and the following important problem remained open:
\begin{namedthm*}{Problem}
	Are the following two assertions equivalent?
	\begin{enumerate}[label = ($\alph*$), ref = ($\alph*$)]
		\item Every strongly $\aleph_1$-free abelian group of cardinality $\aleph_1$ is Whitehead,
		\item every ladder system $\langle \eta_\delta: \ \delta < \omega_1 \text{ limit} \rangle$ has $\aleph_0$-uniformization.
	\end{enumerate}	
\end{namedthm*}
This problem was first stated explicitly more than 30 years ago (in fact it arose implicitly more than 40 years ago), and is the crux of the  
problems listed in the Appendix  of \cite{EM02}.
Our main theorem here settles it, and (the main case of, i.e.~, the case of a strongly $\aleph_1$-free group, of) \textbf{B3} and \textbf{B4}:

\begin{namedthm*}{B3}
	If we have, say, all uniformization results that can be deduced from $\mathbf{MA} + \neg \mathbf{CH}$, then is every strongly $\aleph_1$-free (every Shelah) group of cardinality $\aleph_1$ a Whitehead group?
\end{namedthm*}
\begin{namedthm*}{B4}
	If every strongly $\aleph_1$-free group is a Whitehead group, are they also all $\aleph_1$-coseparable?
\end{namedthm*}
Although the case of Shelah groups is certainly of interest, historically the stress has always been on strongly $\aleph_1$-free groups. Model theoretic considerations suggest that the general case of Shelah groups might be more intricate than the more natural (and more familiar to  algebraists) property  of being strongly $\aleph_1$-free \cite{eklof1974infinitary}.
We also remark that by the already cited theorem \cite[XII. 2.5, XIII. 3.6]{EM02} we cannot replace strongly $\aleph_1$-free by $\aleph_1$-free.

 In \S\ref{s1}, in the proof of Theorem \ref{2.1} the 
group theoretic problem is translated to a combinatorial one which
is proved to be equivalent to the uniformization of ladder systems in \S\ref{s2} (Theorem
\ref{1.1}). We have to remark that in some sense \S\ref{s1} is enough for answering \textbf{B3}, as $(B_S)$ from Theorem $\ref{1.1}$ is a combinatorial principle which follows from $\mathbf{MA}_{\aleph_1}$ by Claim $\ref{1.3A}$. Still it is better to show that $(A_S)$ from Theorem $\ref{1.1}$ (i.e. the ladder system uniformization with parameter $S$) can serve as a sufficient (and in fact  necessary) condition, since that is a classical, old principle, and  it is nicer to phrase. 

Moreover, in Proposition $\ref{ekp}$ we prove an equivalence between uniformization of all ladder systems $\bar \eta^*$ that are ``very similar" to a fixed ladder system $\bar \eta$ on a fixed stationary set $S$, and all strongly $\aleph_1$-free groups of cardinality $\aleph_1$ whose structure in terms of non-freeness in some sense can be described by $\bar \eta^*$ (see $(B)_{S, \bar \eta^*}$ from the proposition) have the Whitehead property: We will argue that the Whitehead property of the groups in $K_{S, \bar \eta}$ (see Definition $\ref{GpKp}$) implies $\aleph_0$-uniformization of $\bar \eta$.
This class is related to the more general (but less convenient to use) class where the equation is $k_n y_{\delta,n+1} = y_{\delta,n} + x_{\eta_\delta(n)}$ (related to the ones used  in \cite{Sh:125}).
\bigskip

\centerline {$* \qquad * \qquad *$}

\begin{notation}{\ } \\
\label{0.1}
1) Let $\alpha$, $\beta$, $\gamma, \delta$ be ordinals
(here always $<\omega_1$),
$m$, $n$, $k$, $\ell$ are natural numbers,
$\Omega$ is the set of countable limit ordinals. 

\noindent
2) For sets $u, v$ of ordinals, OP$_{u, v}$ is the following function:
OP$_{u,v}(\alpha) =\beta$ iff $\alpha\in v$,
$\beta\in u$ and otp$(\alpha\cap v)= \text{ otp}(\beta\cap u)$. 
Here we let $u$, $v$, $w$ vary on finite subsets of $\omega_1$. 

\noindent 
3) By a sequence we mean a function on an ordinal, where for a sequence $ \bar{s}= \langle s_\alpha: \ \alpha < \dom(\bar{s}) \rangle$ the length of $\bar{s}$ (in symbols $\lh(\bar{s})$) denotes $\dom(\bar{s})$. Moreover, for sequences $\bar{s}$, $\bar{t}$ let $\bar{s} \tieconcat \bar{t}$ denote the natural concatenation (of length $\lh(\bar{s}) + \lh(\bar{t})$).

\noindent
4) Let $\forall^* n$ mean  for every $n$ large enough; let us define $x \subseteq^* y$, if $|x \setminus y| < \aleph_0$,
so for the functions $f_1$, $f_2$ $f_1\subseteq^* f_2$ iff $\{x \in \text{ dom}(f_1):x \notin \text{
dom}(f_2)$ or $f_1(x) \ne f_2(x)\}$ is finite.

\noindent
5) $\bbZ $ is the additive group of integers, $\bbZ_\omega$ is 
the direct sum of $\aleph_0$-many copies of $\bbZ$.
The word group here always means abelian group.
\end{notation}

\begin{definition}{\ } \\
\label{0.2}
1) An abelian group $G$ is $\aleph_1$-free if all of its countable subgroups are free abelian groups, i.e. isomorphic to the direct sum $\oplus_{i < \mu} \bbZ$ for some cardinal $\mu$ (where of course for a countable free abelian group $\mu \leq \omega$). By classical results \cite[Thm. 19.1.]{Fu} it is equivalent to demanding that each finite rank subgroup is a free abelian group.

\noindent
2) An $\aleph_1$-free group $G$ is a Shelah group, if for every countable subgroup $H \leq G$ there exists a countable subgroup $H' \leq G$ with $H \leq H'$, which moreover satisfies that for every countable group $F \leq G$ 
\[ F \cap H' = H \ \Rightarrow \ F / H \text{ is free}. \]

\noindent
3) An $\aleph_1$-free group $G$ is strongly $\aleph_1$-free, if for each countable subgroup $H \leq G$ there exists a countable subgroup $H' \leq G$ with $H \leq H'$, and $G/H'$ being $\aleph_1$-free.

\noindent
4) $G$ is a $W$-group means $G$ is a Whitehead group 
which means: if $\bbZ\subseteq H$,
$H/\bbZ\cong G$ then $\bbZ$ is a direct summand of $H$.

\noindent
5) $G$ is $\aleph_1$-coseparable means: if $\bbZ_\omega\subseteq H$ and
$H/\mathbb Z_\omega \cong G$ then $\bbZ_\omega$ is a direct summand of
$H$ where $\bbZ_\omega$ is the direct sum of $\omega$-many copies of $\bbZ$. We call an $\aleph_1$-coseparable group also a $W_\omega$-group.
\end{definition}

\begin{definition}{\ } \\
\label{0.3}
1) For $S\subseteq\Omega$ we say that
$\bar \eta$ is an $S$-ladder system \If  \,
$\bar \eta=\langle \eta_\delta:\delta\in S\rangle$,
$\eta_\delta$ a strictly increasing $\omega$-sequence of ordinals
$<\delta$ with limit $\delta$.

\noindent
2) We say $\bar \eta$ has $\kappa$-uniformization when: \underline{for every}
$\bar c=\langle c_\delta:\delta\in S\rangle$,
$c_\delta$ a function from $\ran(\eta_\delta)$
to $\kappa$ \underline{there is} a function $c$ from $\omega_1$ to $\kappa$,
uniformizing $\bar c$, i.e., for every $\delta\in S$,
$c_\delta\subseteq^*c$, i.e.\
for every $n<\omega$ large enough $c (\eta_\delta(n))= 
c_\delta(\eta_\delta(n))$.

\noindent
3) 	We say that the ladder systems $\bar \eta^1,\bar \eta^2$ on the stationary set $S$ are very
similar \when \, $\bar \eta^\ell = \langle \eta^\ell_\delta:\delta \in S
\rangle$, and for each $\delta \in S$ for some $k_1,k_2 < \omega$ we
have 
$$\{\eta^1_\delta(k_1 + n) + \omega:n < \omega\} 
= \{\eta^2_\delta(k_2 +n) + \omega:n < \omega\}$$  
(which is equivalent to stating that
$$\{\eta^1_\delta( n) + \omega:n < \omega\} 
=^* \{\eta^2_\delta(n) + \omega:n < \omega\}.$$
)
\end{definition}

\begin{definition}\label{0.4}{\ }\\
1) We say $S$ is stationary when for every closed unbounded $C\subseteq \Omega,
S\cap C\not=\emptyset$.  

\noindent
2) Call $S$ simple if $S\subseteq 
\{\alpha<\omega_1: \omega^2 \text{ divides }\alpha\}$ (i.e. $S \cap \{ \alpha + \omega: \ \alpha < \omega_1 \} = \emptyset$). 

\end{definition}





\section{Abelian Groups}\label{s1}

Before stating our main theorem we need an array of definitions.
\begin{definition}{\ }\\
	\begin{enumerate}[label = $\alph*)$, ref = $\alph*)$]
		\item  If $G$ is an abelian group, $A\subseteq G$, we let $\cll_G(A)$ be the subgroup of $G$ that $A$ generates 	(if $G$ is fixed, we may suppress it).
		\item 	The subgroup $H \leq G$ is a pure subgroup if for every $g \in G$, $n \in \bbZ$, $ng \in H$ implies $g \in H$.
		As the groups in this paper are mostly torsion free (except for some quotient groups of the $\aleph_1$-free groups, when we divide by a non-pure subgroup), for every such group $G$ and set $A\subseteq G$
		we have that the pure closure $\clpr_G(A) := \cap \{K:A \subseteq K, K$ a 
		pure subgroup of $G\}$ of $A$ is equal to  
		$\{x:$ for some $n>0$, $nx \in \ \ \cll_G( A)\}$.
		\item
		For $K_1,K_2\subseteq G$, let $K_1+K_2=\{x+y:x\in K_1, y\in K_2\}$.
		If $K_1\cap K_2=\{0\}$ we write $K_1\oplus K_2$.
	\item  For $K \leq G$ we say that $\{g_i: \ i \in I\} \subseteq G$ is independent over $K$ if there exists no non-trivial $\bbZ$-linear combination of $\{g_i + K: \ i \in I\}$ being equal to $0$ in the quotient group $G/K$. 
	\item We call a set independent if it is independent over the trivial subgroup $\{0\}$.
	\item If $G$ is an abelian group of cardinality $\aleph_1$, then we say $\bar G' = \langle G'_i: \ i < \omega_1 \rangle$ is a filtration of $G$, when
	\begin{enumerate}[label = $f\arabic*)$, ref = $f\arabic*)$]
		\item $\bar G' = \langle G'_i: \ i < \omega_1 \rangle$ is increasing, continuous,
		\item for each $\alpha$ $G'_\alpha$ is a countable pure subgroup of $G$,
		\item $G = \bigcup_{ \alpha < \omega_1} G'_\alpha$.
	\end{enumerate}
	\end{enumerate}

\end{definition}

\begin{definition}\label{ABCdf}
	For a fixed stationary set $S$ we will consider the following assertions.
	\begin{enumerate}[label = $(\Alph*)_S \equiv$, ref = $(\Alph*)_S$]
		\item \label{AS}   every $S$-ladder 
		system $\langle \eta_\delta:\delta \in S \rangle$ 
		has $\aleph_0$-uniformization,
		\sn
		\item \label{BS}  if $G$, $\bar G'$ satisfy $(*)_S(G, \bar G')$ below \then \, 
		$G$ is a Whitehead group ($W$-group in short)
		\begin{enumerate}
			\item[$(*)_S(G, \bar G')$:] 
			
			\begin{enumerate}[label = (\greek*)$^{*}$, ref = (\greek*)$^{*}$]
				\item  $G$ is an abelian group of cardinality $\aleph_1$,
				\sn
				\item  $G$ is $\aleph_1$-free, i.e. every 
				countable subgroup of it is free,
				\sn
				\item \label{gam} $\bar G' = \langle G'_i: \ i < \omega_1 \rangle$ is a filtration of $G$, and
				\[
				\{\delta\in \Omega: G/G'_\delta \text{ is not }
				\aleph_1 \text{-free}\} \subseteq_{\text{NS}} S
				\]
				(which means that the set $\{ \delta\in \Omega: G/G'_\delta \text{ is not }
				\aleph_1 \text{-free}\}$ is contained in $S$ up to a non-stationary set),
				\item $G$ is strongly $\aleph_1$-free, i.e.,
				for every countable $H \leq G$ there is a countable subgroup $H' \leq
				G$ extending $H$ such that $G/H'$ is $\aleph_1$-free (follows from
				$(\gamma)$ if $\omega_1 \backslash S$ is stationary),
			\end{enumerate}
		\end{enumerate}
		\item \label{CS}   if $G$, $\bar G'$ satisfy $(*)_S(G, \bar G')$ above, \then \,
		$G$ is $\aleph_1$-coseparable (one of the equivalent 
		definitions is Ext$(G,\bbZ_\omega) = 0$ where 
		$\bbZ_\omega$ is the direct sum of $\omega$ copies of $\bbZ$, i.e. $G$ is a W$_\omega$-group).
	\end{enumerate}
	
\end{definition}

\begin{theorem}
	\label{2.1}
	Let $S$ be a stationary subset of $\omega_1$. Then
		\[ \text{ \ref{AS} } \iff \text{ \ref{BS} } \iff \text{ \ref{CS}. } \]
	
\end{theorem}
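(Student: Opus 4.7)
The plan is to establish the cycle $\ref{CS} \Rightarrow \ref{BS} \Rightarrow \ref{AS} \Rightarrow \ref{CS}$. The implication $\ref{CS} \Rightarrow \ref{BS}$ is essentially formal: since $\bbZ$ is a direct summand of $\bbZ_\omega$, the split monomorphism $\bbZ \hookrightarrow \bbZ_\omega$ induces a split monomorphism $\operatorname{Ext}(G, \bbZ) \to \operatorname{Ext}(G, \bbZ_\omega)$, so vanishing of the target forces vanishing of the source. The remaining work is to close the loop between the two nontrivial implications, which I intend to do by translating the problem of splitting an extension into a combinatorial problem about coherent choices along a ladder system.

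For $\ref{AS} \Rightarrow \ref{CS}$, suppose $(G, \bar G')$ satisfies $(*)_S(G, \bar G')$ and let $0 \to \bbZ_\omega \to H \to G \to 0$ be given. I pick a set-theoretic section $\tau : G \to H$ and form the $2$-cocycle $\phi(x,y) = \tau(x+y) - \tau(x) - \tau(y) \in \bbZ_\omega$; a splitting is the same as a function $\psi : G \to \bbZ_\omega$ with $\psi(x+y) - \psi(x) - \psi(y) = \phi(x,y)$. I build $\psi$ by transfinite recursion along $\bar G'$. For countable ordinals outside $S$ (modulo a club) $G/G'_\delta$ is $\aleph_1$-free, so extending $\psi\rest G'_\delta$ to $\psi\rest G'_{\delta+1}$ is unobstructed by strong $\aleph_1$-freeness. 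At $\delta \in S$, the non-freeness witnesses produce countably many parametric choices for the extension, which I encode as a function $c_\delta : \ran(\eta_\delta) \to \omega$ for a ladder $\eta_\delta$ read off from the non-freeness structure at $\delta$. Feeding $\langle c_\delta : \delta \in S\rangle$ to $\ref{AS}$ produces a uniformizer $c : \omega_1 \to \omega$, and using $c$ to pick the local extensions coherently yields a global $\psi$, hence a splitting.

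For $\ref{BS} \Rightarrow \ref{AS}$, I reverse this correspondence. Given an $S$-ladder $\bar\eta$ together with an $\omega$-valued coloring $\bar c = \langle c_\delta : \delta \in S\rangle$, I construct a strongly $\aleph_1$-free group $G \in K_{S,\bar\eta}$ of cardinality $\aleph_1$, generated by symbols $\{x_\alpha : \alpha < \omega_1\} \cup \{y_{\delta,n} : \delta \in S,\ n < \omega\}$ modulo divisibility relations of the form $2\,y_{\delta,n+1} = y_{\delta,n} + x_{\eta_\delta(n)}$, along the lines indicated in the discussion preceding $\ref{ekp}$. With a suitable filtration $\bar G'$, the non-$\aleph_1$-free quotients $G/G'_\delta$ occur exactly for $\delta \in S$ (up to a club). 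I then build a $\bbZ$-extension $0 \to \bbZ \to H \to G \to 0$ whose cocycle at ladder level $\delta$ is determined by $c_\delta$. Invoking $\ref{BS}$ yields a splitting, whose restriction to $\{x_\alpha : \alpha < \omega_1\}$ defines a function $c : \omega_1 \to \omega$ that uniformizes $\bar c$.

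The main obstacle is the implication $\ref{BS} \Rightarrow \ref{AS}$. Designing $G$ and $H$ so that (i) $G$ is genuinely strongly $\aleph_1$-free with non-freeness spectrum contained in $S$, (ii) $H$ is a well-defined $\bbZ$-extension encoding $\bar c$, and (iii) splittings of $H$ correspond tightly enough to uniformizations of $\bar c$ that the Whitehead conclusion actually delivers uniformization and not merely some weaker byproduct — all of this requires substantial care. As indicated in the introduction, the argument is routed through the intermediate combinatorial principle $(B_S)$ of Theorem $\ref{1.1}$ in \S\ref{s2}, which distils the common content of $\ref{BS}$ and ladder-system uniformization; verifying that the groups in $K_{S,\bar\eta}$ fit this framework, and that the encoding of $\bar c$ into $H$ is faithful, is the technical heart of the proof.
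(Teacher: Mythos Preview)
Your proposal has the overall cycle right and the trivial implication $(C)_S \Rightarrow (B)_S$ is fine, but you have misidentified where the real work lies and your sketches of both nontrivial implications have genuine gaps.

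\textbf{On $(B)_S \Rightarrow (A)_S$.} This is \emph{not} the main obstacle; the paper simply cites it from \cite[XIII, Proposition 2.9]{EM02}. More substantively, your proposed construction with relations $2\,y_{\delta,n+1} = y_{\delta,n} + x_{\eta_\delta(n)}$ cannot yield $\aleph_0$-uniformization: a splitting of a $\bbZ$-extension of such a group only determines the lift of each $x_\alpha$ modulo $2$, so at best you recover $2$-uniformization. The paper's own argument for the sharper Proposition~\ref{ekp}\ref{ekii} uses the groups $G_{\bar p}$ with relations $p_{\delta,n} y_{\delta,n+1} = y_{\delta,0} - x_{\eta^*_\delta(n)}$ for sequences of \emph{distinct, growing} primes, and then needs several passes (Claims~\ref{megolc}--\ref{megold2}) to bootstrap from modular information to genuine $\aleph_0$-uniformization. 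A fixed prime does not carry enough information.

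\textbf{On $(A)_S \Rightarrow (C)_S$.} This is the heart of the theorem, and your sketch oversimplifies it in a way that hides the essential difficulty. You propose to ``encode the obstructions at $\delta$ as a function $c_\delta : \ran(\eta_\delta) \to \omega$'' and then apply $(A)_S$ once. The problem is that the obstruction to extending a partial lift $\psi$ across $G'_\delta \subseteq G'_{\delta+\omega}$ is not a fixed piece of data attached to $\delta$: it depends on the values of $\psi$ already chosen below $\delta$, and the compatibility constraints between different $\delta$'s are governed by the overlap structure of the pure closures $\clpr(G'_\alpha \cup \{x_{\delta+\ell} : \ell < n\})$, not merely by the ladder ranges. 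A single uniformization cannot resolve this. The paper handles this by first normalizing the filtration and the independent system $\langle x_\alpha\rangle$ (Claims~\ref{fil1} and~\ref{hulyetekkl}) so that the ``bad'' interactions are confined to finite sets $u_{\delta,n} \subseteq \eta_\delta(n)$, then packaging the lifting constraints as a \emph{special $S$-uniformization problem} $\gp = (S,\bar\eta,\bar u,\Psi)$ (Definitions~\ref{pprobd} and~\ref{PSdf}), and finally invoking Theorem~\ref{1.4} to solve $\gp$. That theorem, in turn, uses $(A)_S$ not once but in an elaborate inductive scheme over \S\ref{s2}. Your last paragraph gestures at ``the intermediate combinatorial principle of Theorem~\ref{1.1}'', but you attach it to the wrong implication; it is the engine behind $(A)_S \Rightarrow (C)_S$, not $(B)_S \Rightarrow (A)_S$.
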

\noindent Before proving this we state its immediate corollary:
\begin{cor}
	The following are equivalent:
	\begin{enumerate}[label = ($\alph*$), ref = ($\alph*$)]
		\item every ladder system $\langle \eta_\delta: \ \delta < \omega_1 \text{ limit} \rangle$ has $\aleph_0$-uniformization.
		\item Every strongly $\aleph_1$-free abelian group of cardinality $\aleph_1$ is Whitehead,
		
	\end{enumerate}	
\end{cor}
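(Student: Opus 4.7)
The Corollary is a direct consequence of Theorem \ref{2.1} specialized to $S = \Omega$. Clause $(a)$ is literally \ref{AS} with $S = \Omega$, and $(b)$ is equivalent to \ref{BS} with $S = \Omega$ because, for any strongly $\aleph_1$-free $G$ of cardinality $\aleph_1$ and any filtration $\bar G'$, the set $\{\delta : G/G'_\delta \text{ is not } \aleph_1\text{-free}\}$ is automatically contained in $\Omega$, so $(*)_\Omega(G, \bar G')$ holds. Hence it is enough to prove Theorem \ref{2.1}.

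For that theorem, the trivial direction is \ref{CS} $\Rightarrow$ \ref{BS}: since $\bbZ$ is a direct summand of $\bbZ_\omega$, an extension $0 \to \bbZ \to H' \to G \to 0$ may be augmented with trivial copies of $\bbZ$ to an extension $0 \to \bbZ_\omega \to H \to G \to 0$, and projecting a splitting of the latter yields a splitting of the former. The two nontrivial implications \ref{AS} $\Rightarrow$ \ref{CS} and \ref{BS} $\Rightarrow$ \ref{AS} will both be mediated (as the introduction indicates) by a purely combinatorial principle $(B_S)$ extracted from the splitting analysis of $\mathrm{Ext}$ in \S\ref{s1} and shown equivalent to \ref{AS} in Theorem \ref{1.1} of \S\ref{s2}.

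For \ref{AS} $\Rightarrow$ \ref{CS}, fix $(G, \bar G')$ satisfying $(*)_S$ and an extension $0 \to \bbZ_\omega \to H \to G \to 0$ with projection $\pi$. I would assemble a splitting by transfinite recursion along $\bar G'$: at successor stages and at limits $\delta \notin S$, strong $\aleph_1$-freeness together with pure-closure adjustments allows the current partial section to be extended. At each $\delta \in S$ where $G/G'_\delta$ is not $\aleph_1$-free, witnesses of non-freeness inside $G'_{\delta+\omega}/G'_\delta$ produce a ladder $\eta_\delta \in \delta^\omega$ together with a coloring $c_\delta : \mathrm{ran}(\eta_\delta) \to \bbZ_\omega$ encoding the obstruction to extending the partial section past $\delta$. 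An $\aleph_0$-uniformization of $\bar c = \langle c_\delta : \delta \in S \rangle$ furnished by \ref{AS} then translates into a globally coherent adjustment that patches the local sections into a splitting of $\pi$.

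For \ref{BS} $\Rightarrow$ \ref{AS}, given an $S$-ladder system $\bar\eta$, I would build a concrete strongly $\aleph_1$-free group $G = G_{\bar\eta}$ on generators $\{x_\alpha : \alpha < \omega_1\} \cup \{y_{\delta, n} : \delta \in S, n < \omega\}$ with divisibility relations of the form $k_n y_{\delta, n+1} = y_{\delta, n} + x_{\eta_\delta(n)}$ (the class $K_{S, \bar\eta}$ mentioned in the introduction, related to \cite{Sh:125}), equipped with a filtration $\bar G'$ witnessing $(*)_S$. A given coloring $\bar c$ of $\bar\eta$ is then encoded into a concrete extension $0 \to \bbZ \to H \to G \to 0$ whose splittings correspond to $\aleph_0$-uniformizations of $\bar c$, and the Whitehead hypothesis supplies such a splitting. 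The main obstacle lies here: the defining relations for $G$ must be tight enough to force $G$ to be strongly $\aleph_1$-free (rather than merely $\aleph_1$-free), to make the non-freeness invariant exactly $S$ modulo $\mathrm{NS}$, and to make the splitting-to-coloring correspondence yield $\aleph_0$-valued rather than just $2$-valued uniformizations (the latter being all the classical Whitehead/ladder correspondence gives). This is precisely why the paper isolates the subclass $K_{S, \bar\eta}$ in Definition \ref{GpKp} and establishes Proposition \ref{ekp} as the sharper bridge between the group-theoretic and combinatorial sides.
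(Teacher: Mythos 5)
Your reduction is exactly the paper's: the Corollary is the instance $S=\Omega$ of Theorem \ref{2.1}, with clause (a) being $(A)_\Omega$ and clause (b) equivalent to $(B)_\Omega$ because the non-freeness invariant of any filtration of a strongly $\aleph_1$-free group of cardinality $\aleph_1$ is automatically contained in $\Omega$ (which is stationary). Your further sketches of the hard implications of Theorem \ref{2.1} also track the paper's actual route (translation to special $S$-uniformization problems for $(A)_S\Rightarrow(C)_S$, and the groups $G_{\bar p}\in K_{S,\bar\eta}$ for $(B)_S\Rightarrow(A)_S$), so the proposal is correct and essentially identical in approach.
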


\noindent However, unfortunately the following falls out of the scope of the present work:
\begin{problem}
	Are the following two assertions equivalent?
	\begin{enumerate}[label = ($\alph*$), ref = ($\alph*$)]
		\item Every ladder system $\langle \eta_\delta: \ \delta < \omega_1 \text{ limit} \rangle$ has $\aleph_0$-uniformization.
		\item Every  Shelah group of cardinality $\aleph_1$ is Whitehead,
	\end{enumerate}	
\end{problem}

\begin{PROOF}{Theorem \ref{2.1} \ assuming \ Theorem \ref{1.1}}(Theorem \ref{2.1})  Note that \ref{CS}$ \Rightarrow $\ref{BS} is immediate.   For the implication \ref{BS}$ \Rightarrow $\ref{AS} see \cite[XIII. Proposition 2.9.]{EM02}. However, this citation only proves that \ref{BS} implies that  every $S$-ladder 
	system $\langle \eta_\delta:\delta \in S \rangle$ 
	has $2$-uniformization, and the statement that 
	$$  (\textrm{every }S \textrm{-ladder 
	system }\langle \eta_\delta:\delta \in S \rangle
	\textrm{ has }2\textrm{-uniformization}) \Rightarrow $$
	 $$ \Rightarrow  (\textrm{every }S \textrm{-ladder 
		system }\langle \eta_\delta:\delta \in S \rangle
	\textrm{ has }\aleph_0\textrm{-uniformization})$$
	is only an exercise, so a possible reference for this is \cite[Lemma 1.4]{Sh:80}. Because of some inaccuracies in this Lemma, here (in Lemma \ref{megl}) we provide a cleaned-up proof for the sake of completeness.
	
	When we will have finished Lemma \ref{megl} we  turn to our main theorem, i.e.~ that \ref{AS} implies \ref{CS}.
	Towards this Proposition \ref{ekp} is the key, and we will need a sequel of lemmas to be able to reduce the task to the proposition. After having proved all the ingredients for the second clause of Proposition \ref{ekp} and put them together (right before Definition \ref{GpKp}), we will argue that \ref{AS} implies \ref{CS}.
	\begin{lemma}{\cite[Lemma 1.4]{Sh:80}} \label{megl}
		Suppose that $S \subseteq \Omega$ is stationary. If every $S$-ladder system has $2$-uniformization, then every $S$-ladder system has $\aleph_0$-uniformization.
	\end{lemma}
	\begin{PROOF}{Lemma \ref{megl}}
	    Let $\langle \eta^0_\delta: \ \delta \in S \rangle$ be an $S$-ladder system, $\langle c^0_\delta: \ \delta \in S\rangle$ be a coloring with $\aleph_0$-many colors, so $c^0_\delta$ is defined on $\{\eta^0_\delta(i): \ i \in \omega\}$. We define a ladder system $\bar \eta^1 = \langle \eta^1_\delta: \ \delta \in S \rangle$ and a coloring
	    $\langle c^1_\delta: \ \delta \in S\rangle$ with two colors.
	    Define the function $\varp: \omega_1 \times \omega \to \omega_1$ so that it is an injection, and $\varp(\alpha,n) + \omega = \alpha + \omega$, i.e.~ for each limit $\beta$, $\varp(\beta+m,n)$ is of the form $\beta + l$ for some finite $l$,  moreover $\varp(\alpha,n) < \varp(\alpha,n+1)$ for each $\alpha$, $n$.
	    Fix a $\delta \in S$, we are going to construct $\eta^1_\delta$. Define $k_0 = c^0_\delta(\eta^0_\delta(0))+1 \in \omega$, and let 
	    $$\begin{array}{rcl}
	    	\eta^1_{\delta}(0)&  = &  \varp(\eta^0_{\delta}(0),0),\\
	    	\eta^1_{\delta}(1) & = & \varp(\eta^0_{\delta}(0),1), \\
	    	\vdots & & \\
	    	\eta^1_{\delta}(k_0-1) & =& \varp(\eta^0_{\delta}(0),k_0-1),
	    \end{array}$$
	   and 
	    $c^1_\delta(\eta^1_{\delta}(j)) = 0$ for $j < k_0-1$, $c^1_\delta(\eta^1_{\delta}(k_0-1)) = 1$.
	     We define the sequence $k_i$ by induction with the rule $k_{i+1} = k_i + c^0_\delta(\eta^0_\delta(i+1))+1$, and parallel to that the function $\eta^1_\delta \rest [k_i,k_{i+1})$, and $c^1_\delta(\eta^1_\delta(\ell))$ for $k_i \leq \ell < k_{i+1}$ as follows. Let
	    $\eta^1_{\delta}(k_i+j) =  \varp(\eta^0_{\delta}(i+1),j)$ for $0 \leq j \leq c^0_\delta(\eta^0_\delta(i+1))$. As for the coloring, set 
	    \begin{equation} \label{c1D}
	    	\begin{array}{lll}
	    		c^1_\delta(\eta^1_{\delta}(k_i+j)) & = 0 & \textrm{ for }j < c^0_\delta(\eta^0_\delta(i+1)), \\
	    		c^1_\delta(\eta^1_{\delta}(k_i+j) &= 1 &  \textrm{ for }j =k_{i+1}-k_i - 1= c^0_\delta(\eta^0_\delta(i+1)))) . 
	    	\end{array}	
	    \end{equation} 
	    
	    Suppose that the function $d^1: \omega_1 \to \{0,1\}$ is a uniformization for $\langle c^1_\delta: \ \delta \in S \rangle$. We let $d^0(\alpha)$ be 
	    \begin{equation} \label{d0} d^0(\alpha)= \min \{n \in \omega: \ d^1(\varp(\alpha,n))=1\} \end{equation} (or $d^0(\alpha) = 0$, if the set in question is empty).
	    Fix a $\delta \in S$, suppose that for all $\ell \geq m$ $d^1(\eta^1_\delta(\ell)) = c^1_\delta(\eta^1_\delta(\ell))$ holds, and let $i \in \omega$ be minimal such that $k_{i} \geq m$ (where $k_i$ is the auxiliary sequence used for constructing $\eta^1_\delta$ and $c^1_\delta$). We claim that for $n > i$ the equality $c^0_\delta(\eta^0_\delta(n)) = d^0(\eta^0_\delta(n))$ holds true. So fix such an $n$, and let $\alpha = \eta^0_\delta(n)$, hence 
	    \begin{equation}\label{laddef} \eta^1_\delta(k_{n-1} + j) = \varp(\alpha,j) \textrm{ for }j \leq c^0(\alpha) = c^0(\eta^0_\delta(n)). \end{equation} Note that $m \leq k_{n-1}$, so $d^1(\eta^1_\delta(\ell)) = c^1_\delta(\eta^1_\delta(\ell))$
	    for $\ell \in [k_{n-1},k_n)$, which means that
	    $$(\forall j \leq k_{n}-k_{n-1}-1 (= c^0_\delta(\eta^0_\delta(n)))): \ d^1(\varp(\alpha,j)) = c^1_\delta(\varp(\alpha,j)).$$
	    Recalling \eqref{laddef} we obtain $d^1(\varp(\eta^0_\delta(n),j)) = d^1(\eta^1_\delta(k_{n-1}+j)) = c^1_\delta(\eta^1_\delta(k_{n-1}+j))$ for $j \leq c^0_\delta(n)$, so by \eqref{c1D} this translates to $d^1(\varp(\eta^0_\delta(n),j)) = d^1(\eta^1_\delta(k_{n-1}+j)) = 0$ if $j <c^0_\delta(\eta^0_\delta(n))$, and $d^1(\varp(\eta^0_\delta(n),c^0_\delta(\eta^0_\delta(n)))) = d^1(\eta^1_\delta(k_{n-1}+c^0_\delta(\eta^0_\delta(n)))) = 1$. Therefore, by the way we defined $d^0$ \eqref{d0}, $d^0(\eta^0_\delta(n)) = c^0_\delta(\eta^0_\delta(n))$, as desired.
	\end{PROOF}

	\begin{proposition} \label{ekp} Fix a stationary set $S \subseteq \omega_1$:
		\begin{enumerate}[label = $(\roman*)$, ref = (\roman*)]
			\item \label{eki0} If   $\bar \eta = \langle \eta_\delta: \ \delta \in S \rangle$ is a ladder system on $S$ and $\bar \eta$ satisfies $\ran(\eta_\delta) \subseteq \Omega$ for each $\delta \in S$  (so necessarily $S$ is simple),  $\bar \eta^*$ is very similar to $\bar \eta$, then  $$(A)_{S,\bar \eta} \ \Rightarrow \ (A)_{S, \bar \eta^*},$$
			\item \label{eki}
			if   $\bar \eta = \langle \eta_\delta: \ \delta \in S \rangle$ is a ladder system on $S$ and $\bar \eta$ satisfies $\ran(\eta_\delta) \subseteq \Omega$ for each $\delta \in S$,
			 and the ladder system $\bar \eta^*$ is very similar to $\bar \eta$, then  $$(A)_{S,\bar \eta} \ \Rightarrow \ (C)_{S, \bar \eta^*},$$
			\item \label{ekii} for every ladder system $\bar \eta^*$ on $S$
			$$ (B)_{S, \bar \eta^*} \ \Rightarrow  \ (A)_{S, \bar \eta^*},$$
		\end{enumerate}
		\where \ under $(A)_{S,\bar \eta}$, $(B)_{S,\bar \eta^*}$, $(C)_{S,\bar \eta^*}$  we mean the following:
		\begin{enumerate}
			\item[$(A)_{S,\bar \eta}\equiv$]    the ladder 
				system $\langle \eta_\delta:\delta \in S \rangle$ 
				has $\aleph_0$-uniformization,
			\item[$(B)_{S, \bar \eta^*}\equiv$]    whenever $G$ and the ladder system $\bar \eta^* = \langle \eta^*_\delta: \ \delta \in S \rangle$ satisfy $(*)^+_{S, \bar \eta^*}(G, \bar G')$   below for some $\bar G'$ \then \, 
				$G$ is a $W$-group,
					\begin{enumerate}
					\item[$(*)^+_{S, \bar \eta^*}(G, \bar G')$:] \label{S+eta} $\quad$   
					\begin{enumerate}[label = (\greek*)$^+$, ref = (\greek*)$^+$]
						\item \label{alp0}  $G$ is an abelian group of cardinality $\aleph_1$,
						\sn
						\item  $G$ is $\aleph_1$-free, i.e. every 
						countable subgroup of it is free,
						\sn
						\item \label{gamp0}  the sequence $\bar G' = \langle G'_\alpha: \alpha < \omega_1 \rangle$ is a filtration of $G$,
						where for each $\alpha < \omega_1$ $G'_{\alpha+1}/G'_\alpha$ is of rank $1$, and						
						\[
						\{\delta\in \omega_1: G/G'_\delta \text{ is not }
						\aleph_1 \text{-free}\} \subseteq S,
						\]
						
						\item \label{delp0} $G$ is strongly $\aleph_1$-free, i.e.,
						for every countable $H \leq G$ there is a countable $H' \leq
						G$ extending $H$ such that $G/H'$ is $\aleph_1$-free (follows from
						$(\gamma)^+$ if $\omega_1 \backslash S$ is not bounded in $\omega_1$),
						\item \label{varpp0}for each $\delta \in S$ for some sequence $\langle y_{\delta+\ell}: \ \ell < \omega \rangle \in \ ^\omega G'_{\delta+\omega}$ which is  a maximal independent sequence over $G'_\delta$, we have for each $n \in \omega$:
						\[  \left\{ \begin{array}{ll} \alpha<\delta: & \clpr\left( G'_{\alpha+1} \cup
							\{y_{\delta+\ell}:\ell<n\} \right) \ne \\ &
							\clpr\left( G'_\alpha \cup
							\{ y_{\delta+\ell}: \ \ell < n \} \right) +G'_{\alpha+1} \end{array} \right\} \subseteq^* \ran(\eta^*_\delta).\]
					\end{enumerate}
					\end{enumerate}
				\item[$(C)_{S, \bar \eta^*} \equiv$]    if $G$ satisfies $(*)^+_{S, \bar \eta^*}(G, \bar G')$ above for some $\bar G'$  \then \, 	$G$ is a $W_\omega$-group,
			
		\end{enumerate}
	\end{proposition}
	\begin{remark}
			Probably we cannot state the alternative version of Proposition $\ref{ekp}$ defining $(A)_{S,\bar \eta}$, $(B)_{S,\bar \eta}$ modulo nonstationary sets. 
			However, $(A)_{S,\bar \eta}$ $\iff$ $(A)_{S',\bar \eta'}$ if $(S, \bar \eta)$, $(S', \bar \eta')$ are club similar, see the definition below.
	\end{remark}
	\begin{definition}
		We call $(S^0, \bar \eta^0)$, $(S^1, \bar \eta^1)$ (where $S^0, S^1$ are stationary subsets of $\omega_1$, $\bar \eta^i$ is a ladder system on $S^i$) club similar, when 
		there exists a club set $C$, and a bijection $\pi : \omega_1 \to \omega_1$ such that
		\begin{enumerate}
			\item $C \cap S^0 = C \cap S^1$,
			\item $\pi \rest (C \cap S^0) = \text{id} \rest (C \cap S^0)$,
			\item for each $\delta \in \omega_1$ we have $\delta \in S^0$ $\iff$ $ \pi(\delta) \in S^1$, and if they hold, then
			\[ \{ \pi(\eta^0_\delta(n)): \ n \in \omega\} =^* \{ \eta^1_{\pi(\delta)}(n): \ n \in \omega\}.\]
		\end{enumerate}
	\end{definition}
	Lemma \ref{verisi} proves \ref{eki0}.
	Claim $\ref{fil1}$, Claim $\ref{hulyetekkl}$, Claim $\ref{bef}$ together will prove clause \ref{eki} in Proposition $\ref{ekp}$. (After finishing the proof of Claim $\ref{bef}$, i.e. having proved Lemma \ref{befej}, we will put the pieces together and formally verify \ref{eki}.)
	
	After that we will turn back to the proof of \ref{ekii} at the end of the section (starting with Definition \ref{GpKp}, concluding with Lemma \ref{megold2}).
	\begin{lemma} \label{verisi}
		If $S \subseteq \omega_1 \cap \Omega$ is a stationary set, and the ladder system $\bar \eta = \langle \eta_\delta: \ \delta \in S \rangle$ is such that for every $\delta \in S$ $\ran(\eta_\delta) \subseteq \Omega$, and $\bar \eta$ has $\aleph_0$-uniformization, then every ladder system $\bar \eta^*$ very similar to $\bar \eta$ has $\aleph_0$-uniformization.
	\end{lemma}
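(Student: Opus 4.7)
The plan is to reduce $\aleph_0$-uniformization for $\bar\eta^*$ to that for $\bar\eta$ via a block-encoding trick exploiting the hypothesis $\ran(\eta_\delta)\subseteq\Omega$. Every $\alpha<\omega_1$ decomposes uniquely as $\alpha=\gamma+k$ with $\gamma\in\Omega\cup\{0\}$ and $k<\omega$, partitioning $\omega_1$ into $\omega$-blocks $B_\gamma:=[\gamma,\gamma+\omega)$. The values $\eta_\delta(n)$ are precisely block-starts hit by $\eta_\delta$; since $\eta^*_\delta$ is strictly increasing and cofinal in $\delta$, each block $B_\gamma$ with $\gamma<\delta$ meets $\ran(\eta^*_\delta)$ in a finite set. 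Very-similarity provides, for each $\delta\in S$, indices $k_1(\delta),k_2(\delta)<\omega$ such that the sets of blocks $\{B_{\eta_\delta(k_1(\delta)+n)}:n<\omega\}$ and $\{B_{\eta^*_\delta(k_2(\delta)+n)}:n<\omega\}$ coincide.

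Given a coloring $\bar c^*=\langle c^*_\delta:\delta\in S\rangle$ with each $c^*_\delta:\ran(\eta^*_\delta)\to\omega$, fix a bijection $\pi:F\to\omega$, where $F$ denotes the countable set of finite partial functions $\omega\to\omega$. For $\delta\in S$ and $n<\omega$, let $h_{\delta,n}\in F$ have domain $\{k<\omega:\eta_\delta(n)+k\in\ran(\eta^*_\delta)\}$ and values $h_{\delta,n}(k):=c^*_\delta(\eta_\delta(n)+k)$, and put $c_\delta(\eta_\delta(n)):=\pi(h_{\delta,n})$. The hypothesis yields a uniformizer $c:\omega_1\to\omega$ of $\bar c=\langle c_\delta:\delta\in S\rangle$. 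Define $c^*:\omega_1\to\omega$ by decoding: for $\alpha=\gamma+k$ as above, set $c^*(\alpha):=\pi^{-1}(c(\gamma))(k)$ if $k\in\dom(\pi^{-1}(c(\gamma)))$, and $c^*(\alpha):=0$ otherwise.

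To verify that $c^*$ uniformizes $\bar c^*$, fix $\delta\in S$ and choose $N<\omega$ so that for all $n\geq N$ we have $n\geq k_1(\delta)$, $\eta_\delta(n)>\eta^*_\delta(k_2(\delta))$, and $c(\eta_\delta(n))=c_\delta(\eta_\delta(n))$. For such $n$, the finite set $\ran(\eta^*_\delta)\cap B_{\eta_\delta(n)}$ consists only of $\eta^*_\delta(k)$'s with $k\geq k_2(\delta)$, and the decoding at $\gamma=\eta_\delta(n)$ reproduces $c^*_\delta$ on this set. By the block-set equality, every $m\geq k_2(\delta)$ with $\eta^*_\delta(m)\geq\eta_\delta(N)$ lands in some such good block, so $c^*(\eta^*_\delta(m))=c^*_\delta(\eta^*_\delta(m))$ for cofinitely many $m$.

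The main obstacle here is bookkeeping rather than conceptual difficulty: one must verify that the exceptional $m$'s (those with $m<k_2(\delta)$, those with $\eta^*_\delta(m)$ in one of the finitely many blocks $B_{\eta_\delta(n)}$ with $n<N$, and those for which $\eta^*_\delta(m)$ fails to sit in any block $B_{\eta_\delta(n)}$ with $n\geq k_1(\delta)$) together form a finite set. The first is trivial, the second uses that $\eta^*_\delta$ is cofinal in $\delta$, and the third is eliminated cofinitely by the set equality in the definition of very similar. Lining up these three thresholds is where care is required, but no additional combinatorial input is needed.
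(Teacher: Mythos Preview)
Your proof is correct and follows essentially the same block-encoding idea as the paper: record at each block-start $\eta_\delta(n)\in\Omega$ the finite data of how $\eta^*_\delta$ and its coloring behave on $[\eta_\delta(n),\eta_\delta(n)+\omega)$, uniformize over $\bar\eta$, and decode. The only difference is organizational: the paper makes two passes, first uniformizing the sets $\{k:\eta_\delta(n)+k\in\ran(\eta^*_\delta)\}$ to a global $\langle H_\alpha\rangle$ and then uniformizing the color-tuples indexed by $H_{\eta_\delta(n)}$, whereas you fold both pieces into a single finite partial function $h_{\delta,n}$ and uniformize once. This is a harmless streamlining, not a different argument.
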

	\begin{PROOF}{Lemma \ref{verisi}}
	
		First observe that since $\bar \eta$ and $\bar \eta^*$ are very similar, for each $\delta$ for large enough $n$ (depending on $\delta$) we have $\eta_\delta(n))+ \omega = \eta^*_\delta(n^*) + \omega$ for some $n^*$ (depending on $n$ and $\delta$ of course),
		moreover, since $\eta_\delta(n)$ is limit this means that $\eta^*_\delta(n^*) = \eta_\delta(n)) + l$ for some $l \in \omega$.
		In fact, for such $\delta$, for some $N_\delta$, there exists a strictly increasing sequence $\langle N^*_{\delta,n}: \ n \geq N_\delta \rangle$  such that 
		$$\eta_\delta(n))+ \omega =  \eta^*_\delta(n^*) + \omega \text{ whenever } n^* \in [N^*_{\delta,n}, N^{*}_{\delta,n+1}),$$
		hence for all $(\forall n \geq N_\delta)$ for some $l_0<l_1 < \dots < l_{N^*_{\delta,n+1}-N^*_{\delta,n}-1}$
		 \begin{equation} \label{eggy}  \left.\begin{array}{rl}
		 	\eta_\delta(n))+ l_0  = & \eta^*_\delta(N^*_{\delta,n}),  \\
		 	 \eta_\delta(n))+ l_1 = & \eta^*_\delta(N^*_{\delta,n} +1) ,  \\
		 	 \dots & \\
		 	 \eta_\delta(n))+  l_{N^*_{\delta,n+1}-N^*_{\delta,n}-1}  = & \eta^*_\delta(N^*_{\delta,n+1}-1) .
		 	 \end{array} \right\} (\forall n \geq N_\delta)
		 \end{equation}
		 Now define the coloring $\langle c_\delta: \ \delta \in S  \rangle$ of the ladder system $\bar \eta$ by 
		 $$c_\delta(\eta_\delta(n)) = \{ l \in \omega: \ \eta_\delta(n)+l \in \ran(\eta^*_\delta) \} \in [\omega]^{<\omega},$$
		 and fix $\langle H_\alpha: \ \alpha < \omega_1 \rangle$, $\langle m_\delta: \ \delta \in S \rangle$ such that for every $\delta \in S$, $\forall n \geq m_\delta$ $c_\delta(\eta_\delta(n)) = H_{\eta_\delta(n)}$. We may as well assume that $m_\delta \geq N_\delta$, so that 
		 \begin{equation} \label{kettto} \emptyset \neq \{ l \in \omega: \ \eta_\delta(n)+l \in \ran(\eta^*_\delta) \} = H_{\eta_\delta(n)}  \ (\forall n \geq m_\delta) \end{equation}
		
		Suppose that $\langle c'_\delta: \ \delta \in S  \rangle$ is a coloring of $\bar \eta^*$ with $\aleph_0$-many colors, define the coloring $c''_\delta: \ran(\eta_\delta \rest (\omega \setminus m_\delta))$ on $\bar \eta$ by
		\begin{equation} \label{kozep} \begin{array}{ll} c''_\delta(\eta_\delta(n)) = \langle c'_\delta(\eta_\delta(n) + l_j): \ j <  |H_{\eta_\delta(n)}| \rangle, \\ \text{where } H_{\eta_\delta(n)} = \{ l_0 < l_1 < \dots < l_{|H_{\eta_\delta(n)}|-1}\}. \end{array}  \end{equation}
		(Note that, since $n \geq m_\delta$, $\eta_\delta(n) + l \in \dom(c'_\delta) = \ran(\eta^*_\delta)$ for all $l \in H_{\eta_\delta(n)}$.)
		It remains to check that a uniformization of $\bar c''$ gives rise to a uniformization of $\bar c'$.
		Let the function $c''$ uniformize $\bar c''$; we define $c'$ as follows. The domain of $c'$ is defined for any limit $\beta$  so that
		$$\dom(c') \cap [\beta, \beta+\omega) = \{ \beta +l_0,\beta+ l_1, \dots \beta+ l_{i-1} \},$$
		where $H_\beta = \{l_0,l_1, \dots, l_{i-1}\}$ (possibly $\dom(c') \cap [\beta, \beta+\omega) = \emptyset$).
		Set $c'(\beta+l)$'s so that
		\begin{equation} \label{veg}
		  c''(\beta) = \langle c'(\beta+l_j): \ j < |H_\beta| \rangle 
		\end{equation}
		hold (and so $c''(\beta)$ is a finite sequence).
		
		 Fix $\delta \in S$, (recalling that $c''$ is a uniformization for $\bar c''$) set $m''_\delta \geq m_\delta$ so that
		\begin{equation} \label{m''delta} \text{for }n \geq m''_\delta: \ c''(\eta_\delta(n)) = c''_\delta(\eta_\delta(n))
		\end{equation}
		
	We claim that setting $m'_\delta = N^*_{\delta, m''_\delta}$, whenever $m$ satisfies $m \geq m'_\delta$, then $c'(\eta^*_\delta(m)) = c'_\delta(\eta^*_\delta(m))$, which will finish the proof of the Lemma. (Note that, recalling \eqref{eggy}, since $m''_\delta \geq m_\delta$, and $m_\delta \geq N_\delta$, we clearly have $m''_\delta \geq N_{\delta}$, so $N^*_{\delta, m''_\delta}$ is defined.)
	Fix $m \geq m'_\delta$, let $n \geq m''_\delta$ be the unique natural number for which
	$$ m \in [N^*_{\delta, n}, N^*_{\delta,n+1}).$$
	It suffices to show that
	\begin{equation} \label{goa} \begin{array}{r} \langle c'_\delta(\eta^*_\delta(N^*_{\delta, n})), c'_\delta(\eta^*_\delta(N^*_{\delta, n}+1)), \dots, c'_\delta(\eta^*_\delta(N^*_{\delta, n+1}-1)) \rangle =  \\ \langle c'(\eta^*_\delta(N^*_{\delta, n})),  c'(\eta^*_\delta(N^*_{\delta, n}+1)), \dots ,  c'(\eta^*_\delta(N^*_{\delta, n+1}-1)) \rangle. \end{array} \end{equation}
	Now, since $n \geq N_\delta$, recalling \eqref{eggy} we obtain
	that for some $l_0<l_1 < \dots < l_{N^*_{\delta,n+1}-N^*_{\delta,n}-1}$:
	\begin{equation} \label{hhar} \begin{array}{c}  \eta_\delta(n))+ l_j =  \eta^*_\delta(N^*_{\delta,n} +j) \ \ (\forall j < N^*_{\delta,n+1}-N^*_{\delta,n}), \\
		\text{ and } \\
		\{l \in \omega: \ \eta_\delta(n)+l \in \ran(\eta^*_\delta)\} = \{l_0,l_1, \dots, l_{N^*_{\delta,n+1}-N^*_{\delta,n}-1} \}.
		\end{array}
		 \end{equation}
		 	Now \eqref{hhar} together with \eqref{kettto} (and $n \geq m_\delta$) give us
		 \begin{equation} \label{neeg} H_{\eta_\delta(n)} = \{l_0,l_1, \dots, l_{N^*_{\delta,n+1}-N^*_{\delta,n}-1} \}. \end{equation}
		 Then \eqref{goa} (what we want to verify) is equivalent to
		 $$ \langle c'_\delta(\eta_\delta(n) + l_j): \ j < |H_{\eta_\delta(n)}| \rangle = \langle  c'(\eta_\delta(n) + l_j): \ j < |H_{\eta_\delta(n)}| \rangle,$$
		 which is equivalent to (reformulating both the left-hand side and the right-hand side by \eqref{kozep} and \eqref{veg}):
		 \begin{equation} \label{cel}  c''_\delta(\eta_\delta(n)) = c''(\eta_\delta(n)). \end{equation}
		 Finally, remembering that $c''$ uniformizes $\bar{c}''$, $n \geq m''_\delta$, \eqref{m''delta} implies that \eqref{cel} holds true (and so \eqref{goa}, too), as desired.
	\end{PROOF}

	\newcounter{bocou}
	\setcounter{bocou}{0}

	 The following will be useful for the proof of Theorem  \ref{2.1} (as Proposition \ref{ekp} has stronger premises).
	\begin{claim} \label{fil1}	
			 Let $G$ be a group, $\bar G^*$, $S$  satisfy $(*)_S(G, \bar G^*)$ from Definition $\ref{ABCdf}$, and suppose  that $C^0 \subseteq \omega_1$ is a club, \then \  $(**)_{S \cap C^0\setminus \{\alpha+ \omega: \ \alpha <  \omega_1 \}}(G, \bar G')$ holds for some $\bar G'$, where the symbol $(**)_{S}(G, \bar G')$ denotes the  following assertion:
		\begin{enumerate}
			\item[$(**)_S(G, \bar G')$:] $\quad$ 
			\begin{enumerate}[label = (\greek*)$^{**}$, ref = (\greek*)$^{**}$]
				\item \label{alp}  $G$ is an abelian group of cardinality $\aleph_1$,
				\sn
				\item  $G$ is $\aleph_1$-free, i.e. every 
				countable subgroup of it is free,
				\sn
				\item \label{gamp} the sequence $\bar G' = \langle G'_i:i<\omega_1\rangle$ is a filtration of $G$ for which
				\[
				\{\delta\in \Omega: G/G'_\delta \text{ is not }
				\aleph_1 \text{-free}\} \subseteq S,
				\]
				and for each $\alpha < \omega_1$ $G_{\alpha+1}/G_\alpha$ is of rank $1$,
				\item \label{delp} $G$ is strongly $\aleph_1$-free.
			
	    	\end{enumerate}
		\end{enumerate}
	\end{claim}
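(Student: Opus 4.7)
The plan is to construct $\bar G'$ by a recursion along the block structure $\bigcup_\alpha [\alpha, \alpha+\omega]$, with $\alpha$ ranging over limit ordinals and $0$, using strong $\aleph_1$-freeness to ensure $G/G'_{\alpha+\omega}$ is $\aleph_1$-free for each such $\alpha$; this automatically excludes every $\delta = \alpha+\omega$ from the bad set of $\bar G'$. A careful alignment with $\bar G^*$, together with a club that witnesses the ``mod-nonstationary'' containment of the original bad set in $S$, will then place the bad set of $\bar G'$ inside $S \cap C^0 \setminus \{\alpha+\omega: \alpha < \omega_1\}$.

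Fix a club $C_1 \subseteq C^0$ with $\{\delta \in \Omega: G/G^*_\delta \text{ not } \aleph_1\text{-free}\} \cap C_1 \subseteq S$, and fix a bookkeeping enumeration $\{g_\xi: \xi < \omega_1\}$ of $G$. Recursively, at each $\alpha$ that is a limit ordinal or $0$, with $G'_\alpha$ already defined by continuous union, invoke strong $\aleph_1$-freeness to choose a countable pure $H_\alpha \leq G$ which contains $G'_\alpha \cup G^*_{\alpha+1} \cup \{g_\alpha\}$, has rank $\omega$ over $G'_\alpha$, and satisfies that $G/H_\alpha$ is $\aleph_1$-free (first adjoin $\omega$ extra elements to guarantee the rank, then apply strong $\aleph_1$-freeness to the resulting countable subgroup). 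Choose a maximal independent sequence $\langle z^\alpha_n: n < \omega\rangle$ in $H_\alpha/G'_\alpha$ with lifts $\tilde z^\alpha_n \in H_\alpha$, and set
\[ G'_{\alpha+n+1} = \clpr_G\bigl(G'_\alpha \cup \{\tilde z^\alpha_0, \ldots, \tilde z^\alpha_n\}\bigr). \]
Each step is then a rank-one pure extension, and $\bigcup_{n<\omega} G'_{\alpha+n+1} = H_\alpha = G'_{\alpha+\omega}$ by continuity. The bookkeeping gives $\bigcup_\alpha G'_\alpha = G$.

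For the bad-set analysis, a straightforward induction on limit ordinals $\delta \in \Omega$ using $H_\alpha \supseteq G^*_{\alpha+1}$ shows $G'_\delta \supseteq G^*_\delta$; since both are pure in $G$, the quotient $G'_\delta / G^*_\delta$ is a countable pure subgroup of $G/G^*_\delta$. Whenever $G/G^*_\delta$ is $\aleph_1$-free, this countable pure quotient is free and hence a direct summand, so $G/G'_\delta$ is $\aleph_1$-free as well. Contrapositively, every $\delta \in \Omega$ bad for $\bar G'$ is bad for $\bar G^*$; by construction no $\delta = \alpha+\omega$ is bad for $\bar G'$, so the bad set of $\bar G'$ in $\Omega$ lies in the club $E := \{\delta \in \Omega: \delta \text{ is a limit of limit ordinals}\}$. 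Intersecting with $C_1 \subseteq C^0$ places the bad set inside $S \cap C^0 \cap E \subseteq S \cap C^0 \setminus \{\alpha+\omega: \alpha<\omega_1\}$, up to a nonstationary exception.

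The main obstacle is eliminating that nonstationary exception: potentially bad $\delta \in E \setminus C_1$, which need not belong to $S$. This is handled by interleaving further bookkeeping into the main recursion. For each such exceptional $\delta$, strong $\aleph_1$-freeness supplies a countable pure $H^{(\delta)} \supseteq G^*_\delta$ with $G/H^{(\delta)}$ being $\aleph_1$-free; by including $H^{(\delta)}$ in the choice of $H_\alpha$ for a suitable $\alpha < \delta$ in a cofinal $\omega$-sequence approaching $\delta$, one arranges $G'_\delta \supseteq H^{(\delta)}$, and the same direct-summand argument applied with $H^{(\delta)}$ in place of $G^*_\delta$ yields $G/G'_\delta$ $\aleph_1$-free. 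Because the exceptional set is nonstationary, only countably many $\delta$'s fall in any gap of a witnessing club, so the interleaving imposes only countably many additional requirements per block and proceeds without conflict.
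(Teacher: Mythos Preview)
Your argument has a genuine gap at the step ``this countable pure quotient is free and hence a direct summand, so $G/G'_\delta$ is $\aleph_1$-free.'' A countable free pure subgroup of an $\aleph_1$-free group need \emph{not} be a direct summand, and the quotient need not be $\aleph_1$-free. Indeed, this is exactly what the non-freeness set measures: for any strongly $\aleph_1$-free non-free $A$ of size $\aleph_1$ with a pure filtration $\langle A_\alpha\rangle$, there is a stationary set of $\alpha$ with $A/A_\alpha$ not $\aleph_1$-free, even though each $A_\alpha$ is countable, pure, and free. So from $G'_\delta \supseteq G^*_\delta$ and $G/G^*_\delta$ $\aleph_1$-free you cannot conclude $G/G'_\delta$ is $\aleph_1$-free, and hence ``$\delta$ bad for $\bar G'$ implies $\delta$ bad for $\bar G^*$'' is unjustified. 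The same flaw recurs in your last paragraph: arranging $G'_\delta \supseteq H^{(\delta)}$ with $G/H^{(\delta)}$ $\aleph_1$-free again only gives an inclusion, and the ``same direct-summand argument'' fails for the same reason.

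What the paper does instead is arrange \emph{equality} $G'_\delta = G^*_\delta$ on a club $C \subseteq C^0$ (with $C$ avoiding all $\alpha+\omega$), so that on $C$ the bad set for $\bar G'$ literally coincides with that for $\bar G^*$. Concretely, one first builds an auxiliary continuous chain $\langle G^0_\alpha:\alpha\in\Omega\cup\{0\}\rangle$ with $G/G^0_{\alpha+\omega}$ always $\aleph_1$-free (your block-by-block idea), then takes $C$ to be a club on which $G^0_\alpha = G^*_\alpha$. The rank-one generators $y_\gamma$ are then chosen inside each gap $[\beta,\operatorname{succ}_C(\beta))$; for limits $\alpha$ interior to such a gap (which are not in $C$, hence not required to lie in $S$), an additional device is needed: a basis of the free group $G^0_{\operatorname{succ}_C(\beta)}/G^0_{\beta+\omega}$ is chosen to respect a cofinal $\omega$-sequence $\langle \delta_n\rangle$ approaching $\operatorname{succ}_C(\beta)$, so that for any such $\alpha$ one can sandwich $G'_\alpha$ between $G'_{\beta+\omega}$ and some $G'_{\delta_n+\omega} = G^0_{\delta_n+\omega}$ and split the quotient $G/G'_\alpha$ into two free pieces. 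Mere inclusion $G'_\delta \supseteq G^*_\delta$ is not enough; the alignment-on-a-club is the missing idea.
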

	\begin{PROOF}{Claim \ref{fil1}}
	We can choose the sequence $\langle  G^0_\alpha: \ \alpha \in \omega_1 \cap (\Omega \cup \{0\}) \rangle$ such that:
	\mn

	\begin{enumerate}[label = $\boxplus_{\arabic*}$, ref = $\boxplus_{\arabic*}$]
		\setcounter{enumi}{\value{bocou}}
		\item \label{G0}
		\begin{enumerate}[label = (\alph*), ref = (\alph*)]
			\item $G^0_0 = \{0\}$,
			\item each $G_\alpha^0$ is a pure subgroup of $G$,
			\item \label{b} for each $\alpha$ the quotient group $G/G^0_{\alpha+\omega}$ is $\aleph_1$-free (hence strongly $\aleph_1$-free), and $G^0_{\alpha+\omega}/G^0_\alpha$ is of rank $\aleph_0$,
			\item \label{c} the sequence $\langle G^0_\alpha: \ \alpha \in \omega_1 \cap \Omega \rangle$ is continuous, increasing, $\bigcup_{\alpha \in \omega_1 \cap \Omega} G_\alpha^0 = G$.
			
		\end{enumerate}
		\stepcounter{bocou}
	\end{enumerate}
	
	\begin{enumerate}[label = $\boxplus_{\arabic*}$, ref = $\boxplus_{\arabic*}$]
		\setcounter{enumi}{\value{bocou}}
		\item \label{assC} So for a suitable club  $C$ of $\omega_1$ with $0 \in C$, for each $i \in C$ we have $G_i^0 = G_i^*$, and
		\begin{equation}\label{afree} \forall \alpha \in C \setminus S: \ G/G_\alpha^0 = G / G_\alpha^* \text{ is }\aleph_1\text{-free} \end{equation}
		(as $\langle G^*_i: \ i \in \omega_1 \rangle$ was chosen to be a witness of $(*)_S(G,\bar G^*)$). Moreover, we can assume that
		\begin{equation} \label{afraa} \{ \alpha+\omega: \alpha <\omega_1 \} \cap C = \emptyset, \end{equation}
		i.e. each element of $C$ is divisible by $\omega^2$,
		and 
		 \[ C \subseteq C^0.\]
			 
		\stepcounter{bocou}

	\end{enumerate}
	We are going to define the filtration $\bar G'= \langle G'_i: \ i \in \omega_1 \rangle$ witnessing $(**)_{S \cap C}(G,\bar G')$ with $G^*_\alpha = G^0_\alpha = G'_\alpha$ for each $\alpha \in C$. $\eqref{afree}$ and $\eqref{afraa}$ imply that this suffices.

	For each $\alpha \in C$ let $\{y_{\alpha+n} : \ n \in \omega \} \subseteq G^0_{\alpha+\omega}$ be a maximal independent set over $G^0_\alpha$ so that 
	\begin{enumerate}[label = $\boxplus_{\arabic*}$, ref = $\boxplus_{\arabic*}$]
		\setcounter{enumi}{\value{bocou}}
		\item \label{base} if $G^0_{\alpha+\omega} / G^0_\alpha$ is free, then $\{ y_{\alpha+n} + G^0_\alpha: \ 		n \in \omega \}$ is a basis of $G^0_{\alpha+\omega} / G^0_\alpha$.
		\stepcounter{bocou}
	\end{enumerate}
	
	Now letting $\text{succ}_C(\alpha) = \min(C \setminus (\alpha+\omega))$ 
	\begin{enumerate}[label = $\boxplus_{\arabic*}$, ref = $\boxplus_{\arabic*}$]
		\setcounter{enumi}{\value{bocou}}
		\item \label{ass--} we choose (for each $\alpha \in C$) $\{ y_{\gamma}: \ \gamma \in \left[\alpha+\omega, \text{succ}_C(\alpha) \right) \}$ so that $\{ y_{\gamma} + G^0_{\alpha+\omega}: \ \gamma \in \left[\alpha+\omega, \text{succ}_C(\alpha) \right) \}$ forms a basis of the free group $G^0_{\text{succ}_C(\alpha)} / G^0_{\alpha+\omega}$, moreover (as $\suc_C(\alpha) $ is not of the form $\beta + \omega$ for any $\beta$ by \ref{assC}),
			\item[$\boxplus_{5a}$] for some strictly increasing sequence $\langle \delta_n: \ n \in \omega \rangle$ in $[\alpha+\omega, \suc_C(\alpha))$ cofinal in $\suc_C(\alpha)$ with $\delta_0 = \alpha$ (equivalently, $\langle \delta_n+ \omega: \ n \in \omega \rangle \in \ ^\omega[\alpha+\omega+\omega, \suc_C(\alpha))$ is cofinal) we demand also that
			\[ \{ y_{\gamma} +  G^0_{\delta_n+\omega}: \ \gamma \in \left[\delta_n+\omega, \delta_{n+1}+\omega \right) \} \text{ is a base of } G^0_{\delta_{n+1}+\omega} / G^0_{\delta_n+\omega} \]
		\stepcounter{bocou}
	\end{enumerate}
	
	Now it is easy to see by induction on $\alpha \in C$, that
	\begin{enumerate}[label = $\boxplus_{\arabic*}$, ref = $\boxplus_{\arabic*}$]
		\setcounter{enumi}{\value{bocou}}
		\item \label{ass-} for each $\alpha \in C$ the set $\{y_j: \ j < \alpha\}$ is a maximal independent set in $G^0_\alpha$, and as $G^0_\alpha$ is a pure subgroup of $G$, necessarily $G^0_\alpha = \ \clpr (\{y_j: \ j < \alpha \} )$, moreover from the choice of $\{y_j: \ \alpha \leq j < \alpha +\omega\}$, $\alpha \in C$ implies that $G^0_{\alpha+\omega} = \ \clpr(\{ y_j: \ j < \alpha+\omega \})$,
		
		\stepcounter{bocou}
		\item \label{asse} letting $G'_i = \ \clpr( \{ y_j: \ j < i \} )$ ($i < \omega_1$) it is easy to see that
		the sequences $G'_i$, $y_i$ ($i < \omega_1$) satisfy the following.
		\begin{enumerate} [label = (\alph*), ref = (\alph*)]
			\item \label{assaa}	 $G = \bigcup\limits_{i<\omega_1} G'_i$, and $\langle G'_i: \ i < \omega \rangle$ 
			is an increasing continuous chain of countable pure subgroups of $G$ such that
			$G'_0=\{0\},G'_{i+1}/G'_i$ is of rank 1,
			\item  $\{y_i:i<\alpha\}$ is a 
			maximal independent family in $G'_\alpha$ for every ordinal $\alpha<\omega_1$,
			\item \label{assb} for each $i \in C$ we have $G'_i = G_i^0 = G_i^*$, and $G'_{i+\omega} = G^0_{i+\omega}$,
			\item \label{assc} for each limit ordinal $\alpha \notin S \cap C$ the quotient	$G'_{\omega_1}/G'_{\alpha}$  is $\aleph_1$-free (hence for every $\alpha = \beta+\omega$ as the elements of $C$ are divisible by $\omega^2$ \ref{assC}),

		\end{enumerate}
		\stepcounter{bocou}
	\end{enumerate}
	\begin{PROOF}{}(\ref{asse}) \ref{assaa} follows from the definition of the $G'_i$'s, while \ref{assb} follows from \ref{ass-}. Now for \ref{assc} fix $\alpha<\omega_1$ limit, $\alpha \notin S \cap C$. If $\alpha \in C$, then $G'_{\alpha+\omega} = G^0_{\alpha+\omega}$ by \ref{assb} (as well as $G'_{\alpha} = G^0_{\alpha}$), and using $\alpha \notin S$ and $\eqref{afree}$ we are done. If $\alpha\in [\beta+\omega , \suc_C(\beta))$ for some $\beta \in C$ (so $\alpha+\omega \in [\beta+\omega , \suc_C(\beta))$ by $\eqref{afraa}$), then recalling the construction of $y_\gamma$'s for $\gamma \in [\beta+\omega , \suc_C(\beta))$ in \ref{ass--}  there is some $\delta_n$ such that $\delta_n+\omega \geq \alpha$.
		Now it  clearly suffices to argue that for each countable group $H$ with $G'_\alpha \subseteq H$  both $H / (H \cap G'_{\delta_n+\omega})$ and $(H \cap G'_{\delta_n+\omega}) / G'_\delta$ are free, as extending a free group by a free group always yields a free group.
		As \ref{ass--} implies that 
		$$G'_\alpha = G'_{\beta+\omega} \oplus (\oplus_{\gamma \in [\beta+\omega, \alpha)} \bbZ y_\gamma), \text{ and}$$
		$$G'_{\delta_n+\omega} = G'_{\beta+\omega} \oplus (\oplus_{\gamma \in [\beta+\omega, \delta_n+\omega)} \bbZ y_\gamma),$$
		clearly $G'_{\delta_n+\omega} / G'_\alpha = \oplus_{\gamma \in [\alpha, \delta_n+\omega)} \bbZ (y_\gamma + G'_\alpha)$, so  $G'_{\delta_n+\omega} / G'_\alpha$ is a free group, but then
		$(H \cap G'_{\delta_n+\omega}) / G'_\alpha$ is free as well, as every subgroup of a free group is free.

		Now recall the so called Second Isomorphism Theorem.
		\begin{lemma}\label{isose}(Second Isomorphism Theorem)
			If $H$, $N$ are subgroups of a (not necessarily abelian) group $G$, and $N$ is normal, then $H/(H\cap N) \simeq HN/N$ through the mapping $h(H \cap N) \mapsto hN$.
		\end{lemma}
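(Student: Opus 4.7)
The plan is to apply the First Isomorphism Theorem to the homomorphism $\varphi : H \to HN/N$ defined by $\varphi(h) = hN$. The first step is to check that $HN$ is a subgroup of $G$, which is where normality of $N$ enters: since $hN = Nh$ for every $h \in H$, one has $(h_1 n_1)(h_2 n_2) = h_1 h_2 (h_2^{-1} n_1 h_2) n_2 \in HN$, and the identity and inverses of elements of $HN$ belong to $HN$ by the same device. Thus $HN/N$ is a legitimate quotient group, and $\varphi$ is a homomorphism because $\varphi(h_1 h_2) = (h_1 N)(h_2 N) = \varphi(h_1)\varphi(h_2)$.

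Next I would verify that $\varphi$ is surjective: every coset in $HN/N$ admits a representative of the form $hn$ with $h \in H$ and $n \in N$, and since $nN = N$ we get $(hn)N = hN = \varphi(h)$. Then I would compute $\ker(\varphi) = \{ h \in H : hN = N \} = \{ h \in H : h \in N \} = H \cap N$. The First Isomorphism Theorem now yields an induced isomorphism $H/(H \cap N) \simeq HN/N$, and by construction it sends $h(H \cap N) \mapsto hN$, which is precisely the asserted map.

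There is no genuine obstacle here — this is a standard result of elementary group theory, recalled by the authors merely as a black box for the ongoing verification of Claim \ref{fil1} (where it will be applied with $G = G'_{\delta_n+\omega}$, $N = G'_\alpha$, and $H$ the countable subgroup under consideration, in order to reduce freeness of $H/(H \cap G'_\alpha)$ to freeness of a subgroup of $G'_{\delta_n+\omega}/G'_\alpha$).
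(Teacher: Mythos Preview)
Your proof is correct and entirely standard; the paper itself gives no proof of this lemma, merely stating it as a well-known fact to be recalled. One small remark on your closing parenthetical: in the application inside Claim~\ref{fil1} the roles are actually $N = G'_{\delta_n+\omega}$ (not $G'_\alpha$) and the ambient group is all of $G$, yielding $H/(H\cap G'_{\delta_n+\omega}) \simeq (H+G'_{\delta_n+\omega})/G'_{\delta_n+\omega}$; this is then combined with the $\aleph_1$-freeness of $G/G'_{\delta_n+\omega}$.
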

		This implies that $H / (H \cap G'_{\delta_n+\omega}) \simeq (H + G'_{\delta_n+\omega}) / G'_{\delta_n+\omega}$. But we have already seen that $G'_{\delta_n+\omega} = G^0_{\delta_n+\omega}$ in \ref{assb}, and $G/ G^0_{\delta_n+\omega}$ is $\aleph_1$-free by \ref{G0} / \ref{b}, so its countable subgroup $(H + G'_{\delta_n+\omega}) / G^0_{\delta_n+\omega} = (H + G'_{\delta_n+\omega}) / G'_{\delta_n+\omega}$ must be free.

	\end{PROOF}
	\end{PROOF}

	Lemma $\ref{isose}$ immediately implies the following.
	\begin{enumerate}[label = $\boxplus_{\arabic*}$, ref = $\boxplus_{\arabic*}$]
		\setcounter{enumi}{\value{bocou}}
		\item \label{asob} If a filtration $\bar G'' = \langle G''_\alpha: \ \alpha < \omega_1 \rangle$ of $G$ satisfies $(*)_{S}(G, \bar G'')$ for some $S \cap \{ \alpha + \omega: \ \alpha < \omega_1 \}= \emptyset$, then for each ordinal $\alpha$, the groups
		$G''_\alpha\subseteq G''_{\alpha + \omega}\subseteq G$ satisfy:
		if $K\subseteq G$ is countable, then
		$K/(G''_{\alpha+\omega} \cap K) \simeq (K+G''_{\alpha+\omega}) / G''_{\alpha+\omega}$ is free, in particular, if $G''_{\alpha+\omega} \cap K = G''_{\alpha}$, then $K/G''_{\alpha}$ is free.
		\stepcounter{bocou}
	\end{enumerate}
	
	\mn
	 We now prove the following, slightly more general claim than what is needed for Theorem $\ref{2.1}$ (but essential for Proposition $\ref{ekp}$/\ref{eki}).

	\begin{claim}\label{hulyetekkl}
	If $G$ is an abelian group, $S \subseteq \Omega \setminus \{\alpha + \omega: \ \alpha < \omega_1 \}$ is stationary,
	and the filtration $\bar G = \langle G_\alpha: \ \alpha < \omega_1 \rangle$ satisfies  $(**)_{S}(G, \bar G)$ (from Claim $\ref{fil1}$),	\then \ there are 
		\begin{enumerate}[label = $\arabic*)$, ref = $\arabic*)$]
			\item \label{h1} a maximal independent sequence $\langle x_\alpha: \ \alpha < \omega_1 \rangle \in \ ^{\omega_1}G$,
			\item for each $\delta \in S$ a strictly increasing sequence $\langle \gamma^\delta_n: \ n \in \omega \rangle$ with limit $\delta$,
			\item \label{h3}	and a ladder system $\bar \eta = \langle \eta_\delta: \ \delta \in S \rangle$
		\end{enumerate} 
		
		satisfying
		\begin{enumerate}[label = $(\bullet)_{\roman*}$, ref = $(\bullet)_{\roman*}$]
			\item \label{e0} for each $\alpha < \omega_1$: $G_\alpha = \clpr(\{ x_i: \ i <\alpha \})$,
			\item \label{e} whenever $G / G_\delta$ is $\aleph_1$-free (in particular, if $\delta\notin S$ is limit) then $G_{\delta+\omega}
			=G_\delta \oplus  (\oplus_{n \in \omega} {\mathbb Z} x_{\delta+n})$,
			and
			\item \label{e2} if $\delta \in S$, $n \in \omega$, then for the set
			\[ v_n^\delta = \left\{ \begin{array}{ll} \alpha<\delta: & \clpr\left(G_{\alpha+1} \cup 
				\{x_{\delta+\ell}:\ell<n\} \right) \ne \\ &
				\clpr\left( G_\alpha \cup
				\{ x_{\delta+\ell}: \ \ell < n \} \right) + G_{\alpha+1} \end{array} \right\}.\]
			we have
			\[ v_n^\delta \subseteq \ran(\eta_\delta).\]
			\item \label{e3} for each $\delta \in S$, $n \leq m \in \omega$ and $\gamma_* \leq \gamma_n$, we have
			\[  \clpr(G_{\gamma_*} \cup \{ x_{\delta+i}: \ i < m \}) = \clpr(G_{\gamma_*} \cup \{ x_{\delta+i}: \ i < n \}) \oplus (\oplus_{j=n}^{m-1} \bbZ x_{\delta+j}).  \]
		\end{enumerate}
		Moreover, if $\bar \eta^0 = \langle \eta^0_\delta: \ \delta \in S \rangle$ is a ladder system on $S$, and $(**)^{+}_{S, \bar \eta^0}(G, \bar G)$ holds, which is  the conjunction of clauses \ref{alp}-\ref{delp} from $(**)_{S}(G, \bar G)$ (from Claim $\ref{fil1}$) and the additional clause 
		\begin{enumerate}[label = $(\varp)^{**}$, ref = $(\varp)^{**}$]
			\item \label{varpp}for each $\delta \in S$ for some sequence $\langle y_{\delta+\ell}: \ \ell < \omega \rangle \in \ ^\omega G_{\delta+\omega}$ which is 
	   a maximal  independent sequence over $G_\delta$,  for each $n \in \omega$ 
				\[ \left\{ \begin{array}{ll} \alpha<\delta: & \clpr\left( G_{\alpha+1} \cup
					\{y_{\delta+\ell}:\ell<n\} \right) \ne \\ &
					\clpr\left( G_\alpha \cup
					\{ y_{\delta+\ell}: \ \ell < n \} \right) +G_{\alpha+1} \end{array} \right\} \subseteq^* \ran(\eta^0_\delta),\]
		\end{enumerate}  holds, then we can assume that
	\begin{enumerate}[label = $(\bullet)_{\roman*}$, ref = $(\bullet)_{\roman*}$]
		\setcounter{enumi}{4}
		\item \label{e4}  $ \ran(\eta_\delta) \subseteq \ran(\eta^0_\delta).$
	\end{enumerate}
		
	\end{claim}
	\begin{PROOF}{Claim \ref{hulyetekkl}}
		First we construct the $x_\beta$'s. We proceed by induction on $\delta \in \Omega$ (i.e.\ $\delta$ is limit), and choose the $x_{\delta+\ell}$'s ($\ell \in \omega$). Fixing such a $\delta$, if $\delta \notin S$ then using $(**)_S(G, \bar G)$ it can be easily seen from clause \ref{gamp} that $G_{\delta+\omega}/ G_\delta$ is a free abelian group of rank $\aleph_0$ so we can pick a system $x_{\delta}$, $x_{\delta+1}$, $x_{\delta+2}$, $\dots$ such that $\{ x_{\delta+i}+G_\delta: \ i \in \omega \}$ is a basis of $G_{\delta+\omega}/G_\delta$ (e.g. apply induction and in each step invoke Fact \ref{fa1} with $H_0 = G_{\delta+n}/G_{\delta+n}$, $H_1 = G_{\delta+n+1} / G_{\delta+n}$ to obtain $x_{\delta_n}$).
		
		So we can assume that $\delta \in S$.	
		 Without loss of generality:
		 \newcounter{boocou}
		\begin{enumerate}[label = $\blacksquare_{\arabic*}$, ref = $\blacksquare_{\arabic*}$]
			\setcounter{enumi}{\value{boocou}}
			\item \label{nfree} $\delta \in S \Rightarrow
			G_{\delta+\omega}/G_\delta$ is not free,
			\stepcounter{boocou}
		\end{enumerate}  as we can decrease 
		$S$ (and $(**)_S(G, \bar G)$ will still hold). 
		\begin{definition} \label{zdf}
		 We fix a system $\{z_{\delta+i}: \ i \in \omega \} \subseteq G_{\delta+\omega}$ being a maximal independent family over $G_\delta$ so that
		 if moreover \ref{varpp} holds, then the system $y_{\delta+\ell} = z_{\delta+\ell}$ ($\ell < \omega$) witnesses \ref{varpp}.
		\end{definition}
		So for every  $n < \omega$ we can consider the set
		\begin{equation} \label{vdeltan}
			w_n^\delta = \left\{ \begin{array}{ll} \alpha<\delta: & \clpr\left( G_{\alpha+1} \cup
			\{z_{\delta+\ell}:\ell<n\} \right) \ne \\ &
			\clpr\left( G_\alpha \cup
			\{ z_{\delta+\ell}: \ \ell < n \} \right) +G_{\alpha+1} \end{array} \right\}.
		\end{equation}
		
		\mn
		\begin{sclaim} \label{kiszam}
			If $\{z_{\delta+i}: \ i \in \omega \} \subseteq G_{\delta+\omega}$ is a maximal independent family over $G_\delta$, and $w^\delta_n$'s are defined as in \eqref{vdeltan}, then for each $n < \omega$ and ordinal $\alpha_*<\delta$ the set
			$w^\delta_n\cap \alpha_*$ is finite.
		\end{sclaim} 
		\begin{PROOF}{Subclaim \ref{kiszam}}
			Fixing $\alpha_* < \delta$, $n \in \omega$ we define for each $\alpha \leq \alpha_*$ the groups
			\begin{equation} \label{HKdf} \begin{array}{ll} H_{\alpha} =  & \clpr\left( G_\alpha \cup 
					\{z_{\delta+\ell}:\ell<n\} \} \right), \text{ and} \\
					K_{\alpha} = & H_{\alpha} / G_\alpha. \end{array}
			\end{equation}
			Observe that by Lemma $\ref{isose}$ (the Second Isomorphism Theorem) 
			if $\alpha \leq \beta$ we have that the canonical mapping 
			$$\varphi_{\beta, \alpha}: K_\alpha = H_\alpha / G_\alpha \to (H_\alpha+ G_\beta) / G_\beta \leq H_\beta/ G_\beta = K_\beta $$
			(i.e. $\varphi_{\beta,\alpha}(h + G_\alpha ) = h + G_\beta$) 
			is an embedding of $K_\alpha$ into $K_\beta$, moreover $\varphi_{\gamma,\beta} \circ \varphi_{\beta,\alpha} = \varphi_{\gamma,\alpha}$ for each $\alpha \leq \beta \leq \gamma \leq \alpha_*$, so $\{K_\alpha, \varphi_{\beta,\alpha}: \ \alpha \leq \beta \leq \alpha_*\}$ forms a direct system, so we can assume that $K_\alpha \leq K_\beta$ if $\alpha \leq \beta$. It is easy to check that 
			\begin{enumerate}[label = $(\star)_{\arabic*}$, ref = $(\star)_{\arabic*}$]
				\item $H_\alpha + G_\beta \neq H_\beta$ iff $\varphi_{\beta,\alpha}$ is not surjective, and
				\item $\bigcup_{\alpha < \beta} K_\alpha = K_\beta$ if $\beta \leq \alpha_*$ is limit, i.e. the sequence is continuous.
			\end{enumerate} 
			
			Observe that $\alpha_* +\omega < \delta$ as $ \alpha_* < \delta \in S$ (no $\gamma \in S$ is of the form $\xi+\omega$) 
			but note that $\alpha_*$ is not necessarily limit.
			Recalling that $\{z_{\delta+\ell}: \ \ell < \omega\}$ is independent over $G_\delta$, thus over $G_{\alpha_* + \omega}$ and $G_\beta$ for any $\beta \leq \alpha_*$, too, so by the definition of  $H_{\beta}$ we have $H_{\beta} \cap G_{\alpha_*+\omega} = G_{\beta}$. 
			Therefore by (the main clause of) \ref{asob} 
			\begin{enumerate}[label = $(\star)_{\arabic*}$, ref = $(\star)_{\arabic*}$]
				\setcounter{enumi}{2}
				\item \label{Kre} the quotient $K_{\beta} = H_{\beta} / G_{\beta} = H_\beta/ (H_\beta \cap G_{\alpha_*+\omega}) \simeq  (H_\beta + G_{\alpha_*+\omega}) / G_{\alpha_*+\omega}$ is free whenever $\beta \leq \alpha_*$ (so free for each $\beta < \delta$).
			\end{enumerate}

			In order to finish the proof assume on the contrary that for infinitely many $\alpha < \alpha_*$ we have $K_{\alpha} \subsetneq K_{\alpha+1}$, let $\beta \leq \alpha_* < \delta$ be a limit point of this set, i.e.\ $\beta$ is a limit ordinal, and for cofinally many $\alpha < \beta$, $K_{\alpha} \subsetneq K_{\alpha+1}$ holds. But $K_\beta$ is a free group of finite rank (of rank $n$), so finitely generated, and so for some $\alpha < \beta$ $K_\alpha = K_\beta$ must hold, which is a contradiction.
		\end{PROOF}
		For future reference we remark the following fact that follows from the previous proof.
		\begin{lemma} \label{tee}
			If $H \leq H' \leq G$ are pure subgroups in $G$, $g_0,g_1, \ldots, g_{j-1} \notin H'$ are independent over $H'$, and $G/H'$ is $\aleph_1$-free, then $\clpr(H \cup \{g_0,g_1, \ldots, g_{j-1}\}) / H$ is a free group (of finite rank).
			
			In particular, if $G$ is an abelian group, $S \subseteq \Omega \setminus \{\alpha + \omega: \ \alpha < \omega_1 \}$ is stationary,
			and the filtration $\bar G = \langle G_\alpha: \ \alpha < \omega_1 \rangle$ satisfies  $(**)_{S}(G, \bar G)$ (thus $G / G_{\alpha+\omega}$ is $\aleph_1$-free), then for any $\delta \in S$, $\beta < \delta$, $g_0,g_1, \ldots, g_{j-1} \notin G_\delta$ which are independent over $G_\delta$ we have
			$$ \clpr(G_\beta \cup \{g_0,g_1, \ldots, g_{j-1}\}) / G_\beta \textrm{ is free (of finite rank).}$$
		\end{lemma}
		\begin{PROOF}{Lemma \ref{tee}}
			Note that $\clpr(H \cup \{g_0,g_1, \ldots, g_{j-1}\}) \cap H' = H$ (as otherwise for some  $h'-h \in H' \setminus H$ we would have $h'-h = k_0 g_0 + \cdots + k_{j-1} g_{j-1}$ contradicting that the $g_i$'s form an independent system over $H'$) and proceed as in the proof above.
		\end{PROOF}
					
		In addition to the claim observe that:
		\begin{enumerate}[label = $\blacksquare_{\arabic*}$, ref = $\blacksquare_{\arabic*}$]
			\setcounter{enumi}{\value{boocou}}
			\item  if $\delta \in S$ and $\alpha_* < \delta$, then
			\[
			\clpr(G_{\alpha_*} \cup \{z_{\delta+\ell}:\ell<n\}) /
			\cll (G_{\alpha_*}\cup\{z_{\delta+\ell}:\ell<n\}),
			\]
			is a finite group (and a torsion group, of course),
			similarly, if $\alpha < \beta < \delta$, and $w_n^\alpha \subsetneq w_n^\beta$, then
			\[ \clpr(G_{\beta} \cup \{z_{\delta+\ell}:\ell<n\}) / \left( \clpr(G_{\alpha} \cup \{z_{\delta+\ell}:\ell<n\}) + G_\beta \right) \]
			is a finite group.
			\stepcounter{boocou}
		\end{enumerate}
		\mn
		At this point recall that if we assume \ref{varpp}, then
		\[ \left\{ \begin{array}{ll} \alpha<\delta: & \clpr\left( G_{\alpha+1} \cup
			\{y_{\delta+\ell}:\ell<n\} \right) \ne \\ &
			\clpr\left( G_\alpha \cup
			\{ y_{\delta+\ell}: \ \ell < n \} \right) +G_{\alpha+1} \end{array} \right\} \subseteq^* \ran(\eta^0_\delta),\]
			and since in that case $z_{\delta+n} = y_{\delta+n}$ ($n \in \omega$) by Definition \ref{zdf}, we have
			\[ \left\{ \begin{array}{ll} \alpha<\delta: & \clpr\left( G_{\alpha+1} \cup
			\{z_{\delta+\ell}:\ell<n\} \right) \ne \\ &
			\clpr\left( G_\alpha \cup
			\{ z_{\delta+\ell}: \ \ell < n \} \right) +G_{\alpha+1} \end{array} \right\} \subseteq^* \ran(\eta^0_\delta).\]
				
		Note that if we choose $x_{\delta+\ell}$ to be any 
		$x'\in G_{\delta+\ell+1} \setminus 
		G_{\delta+\ell}$, then the sequence will satisfy \ref{e0}.
		This justifies the following.
		\begin{definition}
			\item \label{repl}  by induction on $n \in \omega$
			we define $\langle x_{\delta +n}:n < \omega \rangle$ so that for each $\ell$, $x_{\delta+\ell} \in G_{\delta+\ell+1} \setminus 
			G_{\delta+\ell}$ holds as follows.
			\begin{enumerate}[label = $\arabic*)$, ref = $\arabic*)$]
				\item  	Choose first
				\[
				\gamma^\delta_0 < \ldots \gamma^\delta_n < \gamma^\delta_{n+1} < 
				\dots < \delta = \bigcup\limits_{n<\omega}\gamma^\delta_n,
				\]
				such that if \ref{varpp} holds, then  moreover
				\begin{equation} \label{replegy} \forall n: \ \ w_{n+1}^\delta \setminus \gamma^\delta_n \subseteq \ran(\eta^0_\delta). \end{equation}
				(recalling how we defined $w^\delta_n$'s in \eqref{vdeltan}, i.e. 
				\[
					w_n^\delta = \left\{ \begin{array}{ll} \alpha<\delta: & \clpr\left( G_{\alpha+1} \cup
						\{z_{\delta+\ell}:\ell<n\} \right) \ne \\ &
						\clpr\left( G_\alpha \cup
						\{ z_{\delta+\ell}: \ \ell < n \} \right) +G_{\alpha+1} \end{array} \right\}).
				\]
							\item \label{repl2} Second, by induction on $n < \omega$ choose $x_{\delta+n} \in \clpr(G_{\gamma^\delta_n} \cup \{ x_{\delta+\ell}: \ \ell < n\} \cup \{ z_{\delta+n}\}) \setminus G_{\delta+n}$ as above such
				that:
				\[
				\clpr(G_{\gamma^\delta_n}\cup\{x_{\delta+\ell}:\ell<n\} \cup \{z_{\delta+ n} \})=
				\clpr(G_{\gamma^\delta_n}\cup\{ x_{\delta+\ell}:\ell<n\}) \oplus \bbZ
				x_{\delta+n}.
				\]
			\end{enumerate}
		\end{definition}
		Choosing such $x_{\delta+n}$'s is possible by the following known fact (applying to the finite rank group $H_1 = \clpr(G_{\gamma^\delta_n}\cup\{x_{\delta+\ell}:\ell<n\} \cup \{z_{\delta+ n} \})/ G_{\gamma^\delta_n}$, which is free, since $\clpr(G_{\gamma^\delta_n}\cup\{x_{\delta+\ell}:\ell<n\} \cup \{z_{\delta+ n} \}) \cap G_{\gamma^\delta_n+\omega} =  G_{\gamma^\delta_n}$ (recall \ref{asob}), and letting $H_0 = \clpr(G_{\gamma^\delta_n}\cup\{x_{\delta+\ell}:\ell<n\})/ G_{\gamma^\delta_n}$).
		\begin{fact} \label{fa1} Let $H_0 \leq H_1$ be free abelian groups of finite rank, where $\clpr_{H_1}(H_0) = H_0$, i.e. $H_0$ is a pure subgroup, and for some $g$ we have $\clpr(H_0 \cup \{g\}) = H_1$.
			Then for some $g^* \in H_1 \setminus H_0$ we have $H_1 = H_0 \oplus \bbZ g^*$.  
		\end{fact}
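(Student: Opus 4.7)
The plan is to reduce the statement to the classification of finitely generated abelian groups applied to the quotient $H_1/H_0$. First I would record two basic properties of this quotient: it is torsion-free, because $H_0$ is pure in $H_1$ by hypothesis; and it is finitely generated, being a quotient of the finite-rank free group $H_1$.

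Next I would show that $H_1/H_0$ has rank at most $1$. The hypothesis $\clpr(H_0 \cup \{g\}) = H_1$ means that for every $h \in H_1$ there is a positive integer $n$ with $n h \in \cll(H_0 \cup \{g\}) = H_0 + \bbZ g$, so $n(h + H_0) \in \bbZ(g + H_0)$ inside $H_1/H_0$. Combined with torsion-freeness and finite generation, the classification theorem forces $H_1/H_0 \cong \bbZ^r$ with $r \in \{0, 1\}$. The case $r = 0$ would give $H_0 = H_1$, leaving no $g^* \in H_1 \setminus H_0$ to witness the conclusion; in the intended application one has $y_{\delta+n} \notin H_0$, so this degenerate case does not arise, and we are left with $H_1/H_0 \cong \bbZ$.

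Finally, since $\bbZ$ is free (hence projective), the short exact sequence $0 \to H_0 \to H_1 \to H_1/H_0 \to 0$ splits. Picking any $g^* \in H_1$ whose image generates $H_1/H_0 \cong \bbZ$ yields the desired decomposition $H_1 = H_0 \oplus \bbZ g^*$. There is no real obstacle here: the statement is a standard piece of abelian group theory, and the only mildly subtle point is confirming that the pure-closure condition pins $H_1/H_0$ down to being cyclic rather than merely of rank $\leq 1$, and this is handled by the finite-generation observation above.
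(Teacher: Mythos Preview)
Your proposal is correct and follows essentially the same route as the paper: both arguments pass to the quotient $H_1/H_0$, observe it is torsion-free (by purity), finitely generated (as a quotient of a finite-rank free group), and of rank at most one (from the pure-closure hypothesis), and then conclude $H_1/H_0 \cong \bbZ$ and split off a preimage of a generator. The only cosmetic difference is that the paper, rather than invoking the classification theorem and projectivity of $\bbZ$, writes out the rank-one step explicitly by embedding the quotient in $\bbQ$ and using B\'ezout's identity to show it is cyclic; your handling of the degenerate case $H_1/H_0 = 0$ matches the paper's ``w.l.o.g.'' dismissal of it.
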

		\begin{PROOF}{Fact  \ref{fa1}}
			Since $H_0$ is a pure subgroup $H_1/H_0$ is torsion-free. As $H_1$ is a finite rank free group, it is finitely generated, so is $H_1/H_0$, therefore w.l.o.g.\ $H_1/H_0$ is a finitely generated group of rank one. This means that $H_1 / H_0$ is isomorphic to a subgroup $H^*$ of the additive group $\bbQ$, with the generating set $\{n_1q^*, \dots, n_kq^* \}$ with $q^*$ not necessarily belonging to $H^*$, but the greatest common divisor of $\{n_1,n_2, \dots n_k\}$ is $1$. Now by a standard elementary argument for some $\ell_1,\ell_2, \dots, \ell_k \in \bbZ$ 
			$$\ell_1n_1 + \ell_2 n_2+ \dots + \ell_k n_k =1,$$
			so $H^* = \bbZ q^*$, i.e. a free group.
			Choosing $g_*$ so that $g_*+H_0$ generates $H_1/H_0$ clearly works.
		\end{PROOF}
		\begin{fact} \label{fa2} Let $H_0 \leq H_1$ be free abelian groups of finite rank, where $\clpr_{H_1}(H_0) = H_0$, i.e. $H_0$ is a pure subgroup, and for some $H_0$-independent set $\{g_0,g_1, \dots, g_n\}$ we have $\clpr(H_0 \cup \{g_i: \ i <n\}) = H_1$.
			Then for some $g^*_i \in \clpr(H_0 \cup \{ g_j: \ j \leq i\})$ ($i < n$) we have $H_1 = H_0 \oplus (\oplus_{i <n} \bbZ g^*_i)$.  
		\end{fact}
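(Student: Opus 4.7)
The plan is to reduce Fact 1.12 to Fact 1.11 by induction on $n$. The base case $n = 1$ is precisely Fact 1.11, so assume $n \geq 2$ and that the statement holds for $n-1$.

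Set $H' = \clpr_{H_1}(H_0 \cup \{g_0, \ldots, g_{n-2}\})$. First I would verify that $H_0 \leq H'$ satisfies the hypotheses of the inductive statement: $H'$ is pure in $H_1$ by definition, and as a subgroup of the finite rank free group $H_1$ it is itself free of finite rank. The subgroup $H_0$ remains pure in $H'$, since if $x \in H' \leq H_1$ and $mx \in H_0$ for some $m > 0$, then purity of $H_0$ in $H_1$ forces $x \in H_0$. The set $\{g_0, \ldots, g_{n-2}\}$ is $H_0$-independent as a subset of the given independent set, and $\clpr_{H'}(H_0 \cup \{g_i : i < n-1\}) = H'$ by definition. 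The inductive hypothesis then produces $g^*_0, \ldots, g^*_{n-2}$ with $g^*_i \in \clpr(H_0 \cup \{g_j : j \leq i\})$ such that
\[ H' = H_0 \oplus \bigoplus_{i < n-1} \bbZ g^*_i. \]

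Next I would apply Fact 1.11 to the pair $H' \leq H_1$. The pure closure operation satisfies $\clpr_{H_1}(\clpr_{H_1}(A) \cup B) = \clpr_{H_1}(A \cup B)$, so
\[ \clpr_{H_1}(H' \cup \{g_{n-1}\}) = \clpr_{H_1}(H_0 \cup \{g_0, \ldots, g_{n-1}\}) = H_1, \]
which supplies the single generator needed by Fact 1.11. It yields $g^*_{n-1} \in H_1 \setminus H'$ with $H_1 = H' \oplus \bbZ g^*_{n-1}$, and $g^*_{n-1} \in H_1 = \clpr(H_0 \cup \{g_j : j \leq n-1\})$ as required. Combining the two direct sum decompositions gives
\[ H_1 = H_0 \oplus \bigoplus_{i < n} \bbZ g^*_i, \]
completing the inductive step.

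The only mild subtleties are the purely formal ones just used: that the pure closure in $H_1$ of a subset of $H_1$ is itself pure in $H_1$ (immediate from the intersection definition), that purity passes downward ($H_0$ pure in $H_1$ and $H_0 \leq H' \leq H_1$ gives $H_0$ pure in $H'$), and that pure closure is idempotent in the strong form $\clpr(\clpr(A) \cup B) = \clpr(A \cup B)$. Since the entire ambient setting is torsion-free of finite rank, all three are routine. Thus the substance of the argument is contained in Fact 1.11, and Fact 1.12 is obtained purely by iteration.
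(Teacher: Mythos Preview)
Your proof is correct and follows essentially the same approach as the paper: both argue by induction, applying Fact~\ref{fa1} at each step to the pair $\clpr(H_0 \cup \{g_j : j \leq i\}) \leq \clpr(H_0 \cup \{g_j : j \leq i+1\})$. The paper's proof is a single sentence to this effect, while you have spelled out the routine verifications (purity passing downward, idempotence of pure closure) that make the induction go through.
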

		\begin{PROOF}{Fact \ref{fa2}} By induction, apply Fact $\ref{fa1}$ at each step for $\clpr(H_0 \cup \{ g_j: \ j \leq i\})$ and $\clpr(H_0 \cup \{ g_j: \ j \leq i+1\})$.
		\end{PROOF}
		Note the following simple observation which is easy to check:
			\begin{enumerate}[label = $\blacksquare_{\arabic*}$, ref = $\blacksquare_{\arabic*}$]
			\setcounter{enumi}{\value{boocou}}
			\item \label{repli} By induction on $\ell$ one can prove that $x_{\delta+\ell} \in \clpr(G_{\gamma_\ell^\delta} \cup \{ z_{\delta+j}: \ j \leq \ell\})$, and so
			$$ \forall \gamma_* \geq \gamma^\delta_\ell \ \ \clpr(G_{\gamma_*} \cup \{ z_{\delta+j}: \ j \leq \ell\}) = \clpr(G_{\gamma_*} \cup \{ x_{\delta+j}: \ j \leq \ell\}).$$
			\stepcounter{boocou}
		\end{enumerate}

		\begin{definition} \label{HHdef}
			For $\delta \in S$, $\gamma \leq \delta$, $n \in \omega$ we define the group
				$$H^\delta_{\gamma,n} = \clpr(G_{\gamma}\cup\{x_{\delta+\ell}:\ell<n \}).$$
		\end{definition}
		\noindent So using this, the choice of the $x_{\delta+n}$'s in \ref{repl} implies that
		\begin{enumerate}[label = $\blacksquare_{\arabic*}$, ref = $\blacksquare_{\arabic*}$]
			\setcounter{enumi}{\value{boocou}}
			\item \label{replkesz}  for each $\delta \in S$, $n \in \omega$: 
			$$H^\delta_{\gamma^\delta_n,n+1} = \clpr(G_{\gamma^\delta_n}\cup\{x_{\delta+\ell}:\ell<n+1 \})=
			\clpr(G_{\gamma^\delta_n}\cup\{ x_{\delta+\ell}:\ell<n\}) \oplus \bbZ
			x_{\delta+n}.$$
			\stepcounter{boocou}
		\end{enumerate}
		Now we are ready to argue \ref{e3}. After proving Subclaim \ref{direo} we are going to finish the proof of Claim \ref{hulyetekkl} by constructing $\eta_\delta: \omega \to \delta$, verifying \ref{e2}, and possibly \ref{e4}.
	
		We have to remark the following (easy) corollary of	\ref{replkesz}:
		\begin{sclaim} \label{direo0}
			For each $\delta \in S$, $n \in \omega$, $\gamma_* \leq \gamma^\delta_n$, it is the case that
			$$(H^\delta_{\gamma_*,n+1} = ) \ \clpr(G_{\gamma_*}\cup\{x_{\delta+\ell}:\ell<n+1 \})=
			\clpr(G_{\gamma_*}\cup\{ x_{\delta+\ell}:\ell<n\}) \oplus \bbZ
			x_{\delta+n}.$$
		\end{sclaim}
		\begin{PROOF}{Subclaim \ref{direo0}}
			Since $H^\delta_{\gamma_*,n}$ is the pure subgroup generated by $\{x_\alpha: \ \alpha < \gamma_*\} \cup \{x_{\delta+k}: \ k <n \}$ and the set $\{x_\alpha: \ \alpha < \omega_1\}$ is independent we  only have to verify that 
			$$H^\delta_{\gamma_*,n+1} = H^\delta_{\gamma_*,n} +  \bbZ x_{\delta+n}.$$
			
			Now suppose that
			$$g \in H^\delta_{\gamma_*,n+1} \setminus (H^\delta_{\gamma_*,n} \oplus \bbZ x_{\delta+n}).$$
			
			On the one hand $g \in H^\delta_{\gamma_*,n+1} = \clpr(G_{\gamma_*}\cup\{x_{\delta+\ell}:\ell<n+1)$ implies that 
			\begin{equation} \label{g} g = \sum_{k =0}^{s-1} q_{k} x_{i_k} + \sum_{k=0}^{n} p_{k}x_{\delta+k}, \end{equation}
			$\{i_0,i_1, \dots, i_{s-1} \} \subseteq \gamma_*$,	
			$\{q_0,q_1, \dots, p_0,p_1, \dots, p_n\} \subseteq \bbQ$.
			On the other hand $g \in H^\delta_{\gamma_*,n+1} \subseteq H^\delta_{\gamma^\delta_n,n+1} = H^\delta_{\gamma^\delta_n,n} \oplus  \bbZ x_{\delta+n}$, so $g = g_0 + z x_{\delta+n}$, where $z \in \bbZ$, and $g_0$ is a $\bbQ$-linear combination of elements of the independent set $\{x_\alpha: \ \alpha < \gamma^\delta_n\} \cup \{x_{\delta_k} : \ k < n \}$. This means that $p_n = z \in \bbZ$, and $g_0 \in \clpr(G_{\gamma_*}\cup\{x_{\delta+\ell}:\ell<n))$, as desired.
			
		\end{PROOF}
		
		\begin{sclaim} \label{direo}
			For each $\delta \in S$, $n \leq m \in \omega$, $\gamma_* \leq \gamma^\delta_n$, we have
			\[H^\delta_{\gamma_*,m} = H^\delta_{\gamma_*,n} \oplus (\oplus_{j=n}^{m-1}  \bbZ x_{\delta+j}). \]
		\end{sclaim}
		
		\begin{PROOF}{Subclaim \ref{direo}}
			Fix $\delta \in S$, $n \in \omega$, we proceed by induction on $m$.
			For $m=n$ the statement is void, for $m=n+1$ the statement is exactly Subclaim $\ref{direo0}$. Assume that we already know that $H^\delta_{\gamma_*,m} = H^\delta_{\gamma_*,n} \oplus (\oplus_{j=n}^{m-1}  \bbZ x_{\delta+j})$.
			Now by Subclaim $\ref{direo0}$ (as  $\gamma^\delta_n \leq \gamma^\delta_m$, so $\gamma_* \leq \gamma^\delta_m$)
			$$H^\delta_{\gamma_*,m+1} = H^\delta_{\gamma_*,m} \oplus  \bbZ x_{\delta+m} = H^\delta_{\gamma_*,n} \oplus (\oplus_{j=n}^{m-1}  \bbZ x_{\delta+j}) \oplus \bbZ x_{\delta+m},$$

		\end{PROOF}
		\noindent Recall that the $v_n^\delta$'s were defined in \ref{e2} as
		\[ v_n^\delta = \left\{ \begin{array}{ll} \alpha<\delta: & \clpr\left(G_{\alpha+1} \cup 
			\{x_{\delta+\ell}:\ell<n\} \right) \ne \\ &
			\clpr\left( G_\alpha \cup
			\{ x_{\delta+\ell}: \ \ell < n \} \right) + G_{\alpha+1} \end{array} \right\}.\]
		 and observe that Subclaim $\ref{direo}$ clearly implies that
		\begin{enumerate}[label = $\blacksquare_{\arabic*}$, ref = $\blacksquare_{\arabic*}$]
			\setcounter{enumi}{\value{boocou}}
			\item \label{stab} for each $\delta \in S$, $n \leq m < \omega$ we have
			\[
			(\forall \delta \in S, \ \forall n\leq m \in \omega) \quad	v^\delta_{m} \cap \gamma^\delta_n = v^\delta_n \cap \gamma^\delta_n,
			\] 
			\stepcounter{boocou}
		\end{enumerate}
		since an easy calculation yields that  for any fixed $\gamma_* < \gamma^\delta_n$ (so $\gamma_*+1 \leq \gamma_n^\delta$) the conditions 
		$$H^{\delta}_{\gamma_*,n} + G_{\gamma_*+1} \neq H^{\delta}_{\gamma_*+1,n},$$ and the assertion
		$$H^{\delta}_{\gamma_*,m} + G_{\gamma_*+1} = H^{\delta}_{\gamma_*,n} + (\bbZ x_{\delta+n} + \bbZ x_{\delta+n+1} + \dots + \bbZ x_{\delta{m-1}}) + G_{\gamma_*+1}$$
		$$ \neq $$
		$$ H^{\delta}_{\gamma_*+1,m} = H^{\delta}_{\gamma_*+1,n} + (\bbZ x_{\delta+n} + \bbZ x_{\delta+n+1} + \dots + \bbZ x_{\delta{m-1}}) $$
		are equivalent. (In fact it can be shown that $v^\delta_n \subseteq v^\delta_{n+1}$ holds for each $n$.)
		\begin{enumerate}[label = $\blacksquare_{\arabic*}$, ref = $\blacksquare_{\arabic*}$]
			\setcounter{enumi}{\value{boocou}}
			\item Define $v^\delta := \bigcup\limits_{n<\omega} v^\delta_n$.
			\stepcounter{boocou}
		\end{enumerate}
			\begin{sclaim} \label{d}
			For each $\delta \in S$,  the set $v^\delta = \bigcup\limits_{n<\omega} v^\delta_n$ is a subset of 
			$\delta$ such that $\alpha<\delta\Rightarrow v^\delta\cap \alpha$ is finite.
		\end{sclaim}
		\begin{PROOF}{Subclaim \ref{d}}
			Fix $\alpha<\delta$, set $n$ to be so that $\alpha< \gamma^\delta_n$, we  need to verify that 
			$\bigcup\limits_{m<\omega} v^\delta_m \cap \alpha$ is finite. By \ref{stab} clearly
			$$ \bigcup\limits_{m \geq n} v^\delta_m \cap \alpha = v^\delta_n \cap \alpha,$$
			whereas for each $v^\delta_i \cap \alpha$ ($i<n$) (and for $i = n$) is finite by applying Subclaim \ref{kiszam} with the roles $z_{\delta+i} = x_{\delta+i}$ ($i < \omega$).
			
			Therefore,  we obtained that $v^\delta\cap \alpha = \bigcup_{i\leq n} (v^\delta_i \cap \alpha)$, which is finite, as desired.
	\end{PROOF}
		
		Moreover, for future reference we have to remark that
		\begin{enumerate}[label = $\blacksquare_{\arabic*}$, ref = $\blacksquare_{\arabic*}$]
			\setcounter{enumi}{\value{boocou}}
			\item  \label{furef} if $\beta < \beta'$ are two consecutive elements of $v^\delta$, then for each $n$ and $\theta \in (\beta, \beta']$
			$$ H^\delta_{\beta',n}= \clpr(\{x_\alpha: \ \alpha < \beta'\} \cup \{ x_\delta+j: \ j < n\}) = H^\delta_{\theta,n} + G_{\beta'}$$
			\stepcounter{boocou}
		\end{enumerate}
		(by an easy induction argument, using the definition of $v^\delta_n$ \ref{vdeltan}, since $v^\delta_n \cap (\beta, \beta') = \emptyset$).
		
		If $v^\delta$ is bounded in $\delta$ (e.g.\ by some $\gamma^\delta_n$) we get a contradiction to
		``$G_{\delta+\omega}/G_\delta$ is not free''. (Why would this lead to a contradiction? If $v_\delta \subseteq \gamma^\delta_m$ for some $m< \omega$, then using Subclaim $\ref{direo}$
		$$ H^\delta_{\gamma^\delta_m, \infty} = \clpr(\{g_\alpha: \ \alpha < \gamma^\delta_m\} \cup \{x_{\delta+\ell}: \ \ell < \omega \}) = H^\delta_{\gamma^\delta_m} \oplus ( \oplus_{\ell =m}^{\infty} \bbZ x_{\delta+\ell}),$$
		and so the group $H^\delta_{\gamma^\delta_m, \infty} / G_{\gamma^\delta_m}$ is free (\ref{Kre}), but from our indirect assumption and \ref{furef} $H^\delta_{\gamma^\delta_m, \infty} + G_\delta = G_{\delta+\omega}$, so by the isomorphism theorem $H^\delta_{\gamma^\delta_m, \infty} / G_{\gamma^\delta_m} \simeq G_{\delta+\omega} / G_\delta$ is free, contradicting \ref{nfree}.)
		
		Therefore, by Subclaim \ref{d}, and $|v^\delta|=\aleph_0$: 
		\mn
		\begin{enumerate}[label = $\blacksquare_{\arabic*}$, ref = $\blacksquare_{\arabic*}$]
			\setcounter{enumi}{\value{boocou}}
			\item if $\delta \in S$ then 
			$v^\delta$ is an unbounded subset of $\delta$ of order type $\omega$.
			\stepcounter{boocou}
		\end{enumerate}
		
		\begin{definition}\label{uhdef0}{\ }
			\begin{enumerate}[label = $\boxminus_{\arabic*}$, ref = $\boxminus_{\arabic*}$]
				\item 	We let $\eta_\delta: \omega \to \delta$ enumerate $v^\delta$ in increasing order.
			\end{enumerate}
		\end{definition}
		Finally we have to argue that \ref{varpp} implies \ref{e4}, i.e.\ $v^\delta = \bigcup_{n \in \omega} v_n^\delta \subseteq \ran(\eta^0_\delta)$.
		So suppose that $n \in \omega$ is the least natural number such that $v_{n+1}^\delta \nsubseteq \ran(\eta^0_\delta)$, pick $\theta \in v_{n+1}^\delta \setminus (v_{n}^\delta \cup \ran(\eta^0_\delta))$ (note that by \ref{e2} $v_0^\delta = \emptyset$, thus $n \geq0$ necessarily). Now \ref{stab} and $\theta \in v_{n+1}^\delta \setminus v_{n}^\delta$ imply that $\gamma^\delta_{n}\leq \theta$, so we obtain
		\begin{equation} \label{hegy} 
			\theta \in v_{n+1}^\delta \setminus (\ran(\eta^0_\delta) \cup \gamma^\delta_{n}).
		\end{equation}
		But by \ref{repli} 
		$$\forall \gamma_* \geq \gamma^\delta_n: \ \ \clpr(G_{\gamma_*} \cup \{ y_{\delta+j}: \ j \leq n \}) =  \clpr(G_{\gamma_*} \cup \{ x_{\delta+j}: \ j \leq n\}), $$
		hence recalling the definition of $v^\delta_{n+1}$ and $w^\delta_{n+1}$ in \ref{e2}, \ref{varpp} clearly $v^\delta_{n+1} \setminus \gamma^\delta_n = w^\delta_{n+1} \setminus \gamma^\delta_{n}$. Now by the definition of the $\gamma^\delta_m$'s
		$\eqref{replegy}$ from \ref{repl} $w^\delta_{n+1} \setminus \gamma^\delta_{n} \subseteq \ran(\eta_\delta^0)$, so 
		$$\theta \in v^\delta_{n+1} \setminus \gamma^\delta_n = w^\delta_{n+1} \setminus \gamma^\delta_{n} \subseteq \ran(\eta_\delta^0)$$
		contradicting $\eqref{hegy}$, we are done.

	\end{PROOF}
	
	\begin{claim} \label{bef}
		Suppose that $G$ is an abelian group, $S \subseteq \omega_1 \setminus \{\alpha+\omega: \ \alpha < \omega_1 \}$ is stationary, the filtration $\bar G = \langle G_i: \ i \in \omega_1 \rangle$ satisfies  $(**)_{S}(G, \bar G)$ (from Claim $\ref{fil1}$), and $\langle x_\alpha: \ \alpha < \omega_1 \rangle$, $\bar \eta^1$, $\langle \gamma^\delta_n: \ \delta \in S, n \in \omega \rangle$
	 satisfy \ref{h1}-\ref{h3}, \ref{e0}-\ref{e3} of Claim \ref{hulyetekkl}, i.e. 
			\begin{enumerate}
			\item $\langle x_\alpha: \ \alpha < \omega_1 \rangle \in \ ^{\omega_1}G$ is a maximal independent sequence,
			\item for each $\delta \in S$ the sequence $\langle \gamma^\delta_n: \ n \in \omega \rangle$ is strictly increasing with limit $\delta$,
			\item 	and $\bar \eta = \langle \eta_\delta: \ \delta \in S \rangle$ is a ladder system,
		\end{enumerate} 
		for which
		\begin{enumerate}[label = $(\bullet)_{\roman*}$, ref = $(\bullet)_{\roman*}$]
			\item  for each $\alpha < \omega_1$: $G_\alpha = \clpr(\{ x_i: \ i <\alpha \})$,
			\item  whenever $G / G_\delta$ is $\aleph_1$-free (in particular, if $\delta\notin S$ is limit) then $G_{\delta+\omega}
			=G_\delta \oplus  (\oplus_{n \in \omega} {\mathbb Z} x_{\delta+n})$,
			and
			\item \label{ee2} if $\delta \in S$, $n \in \omega$, then for the set
			\[ v_n^\delta = \left\{ \begin{array}{ll} \alpha<\delta: & \clpr\left(G_{\alpha+1} \cup 
				\{x_{\delta+\ell}:\ell<n\} \right) \ne \\ &
				\clpr\left( G_\alpha \cup
				\{ x_{\delta+\ell}: \ \ell < n \} \right) + G_{\alpha+1} \end{array} \right\}.\]
			we have
			\[ v_n^\delta \subseteq \ran(\eta_\delta).\]
			\item \label{ee3} for each $\delta \in S$, $n \leq m \in \omega$ and $\gamma_* \leq \gamma_n$, we have
			\[  \clpr(G_{\gamma_*} \cup \{ x_{\delta+i}: \ i < m \}) = \clpr(G_{\gamma_*} \cup \{ x_{\delta+i}: \ i < n \}) \oplus (\oplus_{j=n}^{m-1} \bbZ x_{\delta+j}).  \]
		\end{enumerate}
	
		\Then 
		$$ (\text{every ladder system }\bar \eta' \text{ very similar to }\bar \eta^1 \text{ has } \aleph_0\text{-uniformization}) \Rightarrow $$
		$$ \Rightarrow  (G \text{ is a }W_\omega \text{ group}). $$
	\end{claim}
	\begin{PROOF}{Claim \ref{bef}}
	\begin{definition}\label{uhdef}
		Fix $\delta \in S$. Using $\{x_\alpha: \ \alpha < \omega_1\}$, $\gamma^\delta_n$ ($n \in \omega$) and $\bar \eta^1$ 		
	\begin{enumerate}[label = $(\intercal)_{\arabic*}$, ref = $(\intercal)_{\arabic*}$]
			\item \label{uhdef3}	for each $n$ we can find a $Y_{\delta,n}$ such that:
			
				\begin{enumerate}[label = (\greek*), ref = (\greek*)]
				\item  $Y_{\delta, n} \subseteq \{ x_i: \ i \leq \eta^1_\delta(n)\}$ is finite, and
			
				\item   letting $m$ be such that $\eta^1_\delta(n) \in [\gamma^\delta_{m-1}, \gamma^\delta_m)$ we demand
				$$\clpr(G_{\eta^1_\delta(n)+1} \cup
				\{x_{\delta+\ell}:\ell<m\}) = G_{\eta^1_\delta(n)+1} +
				\clpr(\{x_{\delta+\ell}:\ell<m\} \cup Y_{\delta,n}),$$
			\end{enumerate}
			(in other words, one can have a system 
			$$g^{n,m}_0,g^{n,m}_1, \dots, g^{n,m}_{m-1} \in \clpr(G_{\eta^1_\delta(n)+1} \cup
			\{x_{\delta+\ell}:\ell<m\}),$$ each $g^{n,m}_{j}$ is a $\bbQ$-linear combination of elements of $\{x_{\delta+\ell}:\ell<m\} \cup Y_{\delta,n}$ so that $g^{n,m}_0 + G_{\eta^1_\delta(n)+1}$,  $g^{n,m}_1 + G_{\eta^1_\delta(n)+1}$, $\dots$, $g^{n,m}_{m-1} + G_{\eta^1_\delta(n)+1}$ freely generates $H^\delta_{\eta^1_\delta(n)+1,m} / G_{\eta^1_\delta(n)+1}$),
			possible by e.g.\ using Lemma \ref{tee},
			and we can assume that $x_{\eta^1_\delta(n)} \in Y_{\delta,n}$ holds,
		\mn
		\item \label{uhdef4} define $u_{\delta,n} \subseteq \eta^1_\delta(n)$ by the equality 
		   $Y_{\delta, n} =  \{ x_\alpha: \ \alpha \in u_{\delta,n}\} \cup \{ x_{\eta^1_\delta(n)}\}$.
		\end{enumerate}			
	\end{definition}
 \noindent	We claim that 
 \newcounter{booocou}
		\begin{enumerate}[label = $\blacktriangle_{\arabic*}$, ref = $\blacktriangle_{\arabic*}$]
		\setcounter{enumi}{\value{booocou}}
		\item \label{kisebbk} if $\delta \in S$, $n \in \omega$ is fixed, and  $m$ is chosen such that $\eta^1_\delta(n) \in [\gamma^\delta_{m-1}, \gamma^\delta_m)$, then it follows from the demand above on $Y_{\delta,n}$ that for any $k \leq m$
		$$\clpr(G_{\eta^1_\delta(n)+1} \cup
		\{x_{\delta+\ell}:\ell<k\}) = G_{\eta^1_\delta(n)+1} +
		\clpr(\{x_{\delta+\ell}:\ell<k\} \cup Y_{\delta,n}),$$
		and so we can write $g^{n,k}_0,g^{n,k}_1, \dots, g^{n,k}_{k-1}$ as a $\bbQ$-linear combination of elements of $\{x_{\delta+\ell}:\ell<k\} \cup Y_{\delta,n}$ so that $g^{n,k}_0 + G_{\eta^1_\delta(n)+1}$,  $g^{n,k}_1 + G_{\eta^1_\delta(n)+1}$, $\dots$, $g^{n,k}_{k-1} + G_{\eta^1_\delta(n)+1}$ freely generates $H^\delta_{\eta^1_\delta(n)+1,k} / G_{\eta^1_\delta(n)+1}$.	
		
		\stepcounter{booocou}
	\end{enumerate}
	(Why is \ref{kisebbk} true? If $g \in \clpr(G_{\eta^1_\delta(n)+1} \cup
	\{x_{\delta+\ell}:\ell<k\}$, then $g \in \clpr(G_{\eta^1_\delta(n)+1} \cup
	\{x_{\delta+\ell}:\ell<m\}$ obviously, and so 
	 $$g = g' + \sum_{j<m}  k_j \cdot g^{n,m}_j$$
	  with $k_j$'s being integers, $g^{n,m}_j$'s are from \ref{uhdef}, $g' \in G_{\eta^1_\delta(n)+1}$. But since each $g^{n,m}_j \in \clpr(G_{\eta^1_\delta(n)+1} \cup
	  \{x_{\delta+\ell}:\ell<m\})$, and the set $\{x_{\delta+n}: \ n <m \}$ is independent over $G_\delta$, hence over $G_{\eta^1_\delta(n)+1}$ too, so 
	  $$\sum_{j<m}  k_j \cdot g^{n,m}_j \in \clpr(G_{\eta^1_\delta(n)+1} \cup
	  	\{x_{\delta+\ell}:\ell<k\}) = \clpr(\{x_\alpha: \ \alpha < \eta^1_\delta(n)+1 \} \cup
	  	\{x_{\delta+\ell}:\ell<k\}).$$
	  	But we know that 
	  		$$\sum_{j<m}  k_j \cdot g^{n,m}_j \in \clpr(Y_n \cup 
	  	\{x_{\delta+\ell}:\ell<m\}),$$
	  	  where $Y_n \subseteq \{x_\alpha: \ \alpha < \eta^1_\delta(n)+1 \}$. So recalling that $\{x_\beta: \ \beta < \omega_1\}$ is an independent system, we get that 
	  $$\sum_{j<m}  k_j \cdot g^{n,m}_j \in \clpr(Y_n \cup
	  \{x_{\delta+\ell}:\ell<m\}) \cap \clpr(\{x_\alpha: \ \alpha < \eta^1_\delta(n)+1 \} \cup
	  \{x_{\delta+\ell}:\ell<k\}) =$$
	  $$= \clpr(Y_n \cup
	  \{x_{\delta+\ell}:\ell<k\},$$
	  so $g \in \clpr(G_{\eta^1_\delta(n)+1}) + \clpr(Y_n \cup
	  \{x_{\delta+\ell}:\ell<k\},$ indeed, as desired.)

	Moreover, (using that $\eta^1_\delta(n) \in [\gamma^\delta_{m-1}, \gamma^\delta_m)$ implies $\eta^1_\delta(n)+1 \leq \gamma^\delta_m$),  clause \ref{ee3} in the premises says that if $k \geq m$ then $H^\delta_{\eta^1_\delta(n)+1,k} = H^\delta_{\eta^1_\delta(n)+1,m} \oplus ( \oplus_{j=m}^{k-1} \bbZ x_{\delta+j})$, so  
		\begin{enumerate}[label = $\blacktriangle_{\arabic*}$, ref = $\blacktriangle_{\arabic*}$]
		\setcounter{enumi}{\value{booocou}}
		\item \label{foszamolas} if $\delta \in S$, $n \in \omega$ are fixed, then for  every $k \geq m$ (hence for every $k \in \omega$)
		$$\clpr(G_{\eta^1_\delta(n)+1} \cup
		\{x_{\delta+\ell}:\ell<k\}) = G_{\eta^1_\delta(n)+1} +
		\clpr(\{x_{\delta+\ell}:\ell<k\} \cup Y_{\delta,n}).$$
		\stepcounter{booocou}
	\end{enumerate}
	Now we recall that if $k$ is fixed, then $(\eta^1_\delta(n)$, $\eta^1_\delta(n+1)) \cap v^\delta_k = \emptyset$ by condition \ref{ee2} we obtain:
	$$	\clpr(G_{\eta^1_\delta(n+1)} \cup
	\{x_{\delta+\ell}:\ell<k\}) = \clpr(G_{\eta^1_\delta(n)+1} \cup
	\{x_{\delta+\ell}:\ell<k\}) + G_{\eta^1_\delta(n+1)}.$$
	this means that in \ref{foszamolas} we could as well write $G_{\eta^1_\delta(n+1)}$ in place of $G_{\eta^1_\delta(n)+1}$:
	\begin{enumerate}[label = $\blacktriangle_{\arabic*}$, ref = $\blacktriangle_{\arabic*}$]
	\setcounter{enumi}{\value{booocou}}
	\item \label{foszamolas2} 
		 ($\forall \delta \in S$, $ \forall n \in \omega$) for  every $k \in \omega$
		$$\clpr(G_{\eta^1_\delta(n+1)} \cup
		\{x_{\delta+\ell}:\ell<k\}) = G_{\eta^1_\delta(n+1)} +
		\clpr(\{x_{\delta+\ell}:\ell<k\} \cup Y_{\delta,n}).$$
				
		\stepcounter{booocou}
	\end{enumerate}
		Similarly,
	\begin{enumerate}[label = $\blacktriangle_{\arabic*}$, ref = $\blacktriangle_{\arabic*}$]
		\setcounter{enumi}{\value{booocou}}	
	\item \label{foszamV}
		 for each $V \subseteq \eta^1_\delta(n+1)$ with $u_{\delta,n} \subseteq V$, letting $G_V := \clpr(\{x_\alpha: \ \alpha \in V \})$, and $G^{+\delta,k}_V := \clpr(\{x_\alpha: \ \alpha \in V \cup [\delta,\delta+k]\})$ an easy calculation yields that
			$$G^{+\delta,k}_V= G_V + \clpr(\{x_{\delta+\ell}:\ell<k\} \cup Y_{\delta,n}),$$
			equivalently, there exists some independent set $\{g^{V,k}_0,g^{V,k}_1, \dots, g^{V,k}_{k-1}\} \in \clpr(\{x_{\delta+\ell}:\ell<k\} \cup Y_{\delta,n})$, such that 
			$$G^{+\delta,k}_V / G_V \simeq \oplus_{j <k} \bbZ (g^{V,k}_j + G_{V}),$$
			which equivalence is justified by the following: first let $H_1 = \clpr(\{x_{\delta+\ell}:\ell<k\} \cup Y_{\delta,n})$, $H_0 = \clpr(Y_{\delta,n})$, and apply Fact $\ref{fa2}$, so $H_1 = H_0 \oplus (\oplus_{i<k} \bbZ g_i)$ (where for each $i <k$  $g_i \in \clpr(Y_{\delta,n} \cup \{x_{\delta+j} \ j \leq i\})$ holds). Now observe, that $H_1 \cap G_V = H_0$, so by the second isomorphism theorem the mapping $g + H_0 \mapsto g + G_V$ is an isomorphism from $H_1 / (H_1 \cap G_V)$ onto $H_1 + G_V / G_V$, as desired. But note that by \ref{foszamolas} $H_1 + G_V$ must be $G^{+\delta,k}_V$, otherwise the isomorphism between 
			$$H_1 + G_V / G_V$$ and 
			$$H_1+G_V + G_{\eta^1_\delta(n+1)} / G_{\eta^1_\delta(n+1)} =\clpr(G_{\eta^1_\delta(n+1)} \cup
			\{x_{\delta+\ell}:\ell<k\}) / G_{\eta^1_\delta(n+1)}$$ 
			(viewing both groups as subgroups of $\oplus_{j<k} \bbQ x_{\delta_j})$
			cannot be surjective.
	
		\stepcounter{booocou}
		\end{enumerate}

	\mn
	
	Now we can turn to  proving  that $G$ is W$_\omega$-group.
	In order to do this we fix
	\begin{enumerate}[label = $\blacktriangle_{\arabic*}$, ref = $\blacktriangle_{\arabic*}$]
		\setcounter{enumi}{\value{booocou}}
		\item \label{20.1} 
	 the groups $K \leq H$, and a homomorphism $\varphi$  from $H$ onto $G$,
	where $K$ is the kernel of $\varphi$, and $K$ is a countable free
	group, so w.l.o.g.\ $K = \bbZ_\omega = \oplus_{j < \omega} \bbZ$.
		\stepcounter{booocou}
\end{enumerate}

	We reduce the task of finding a suitable homomorphism to a more general combinatorial problem. This will need some preparations: from Definition $\ref{a4}$ to $\ref{a6b}$, in this section we prepare the ground for stating Theorem $\ref{1.1}$, as Section $\ref{s2}$ is devoted exclusively to the proof of that theorem. At the end of the section Definition $\ref{PSdf}$ and Lemma $\ref{befej}$ put the present problem (i.e.\ that of Proposition \ref{ekp} \ref{eki}) in the frame of Theorem $\ref{1.1}$, and justify that.
	
	\begin{definition}
		\label{a4}
		 We say that ${\gp}^0$ is a $S$-uniformization frame \when \,
		${\gp}^0 = (S,\bar\eta, \bar u)  = (S^{\gp^0},\bar \eta^{\gp^0}, \bar u^{\gp^0})$ satisfies
		\mn
		\begin{enumerate}[label = $(\alph*)$, ref = $(\alph*)$]
			\item   $\langle \eta_\delta:\delta \in S \rangle$ is an 
			$S$-ladder system, i.e., $S \subseteq \omega_1$ is a stationary set of
			limit ordinals and for each $\delta \in S,\eta_\delta$ is an 
			$\omega$-sequence of ordinals $<\delta$, 
			strictly increasing, with limit $\delta$; 
			\sn
			\item   $\bar{u} =  \langle u_{\delta,n}: \delta \in S, \ n \in \omega \rangle $, with $u_{\delta,n}$ being a 
			finite subset of $\eta_\delta(n)$, 
		\end{enumerate}
		\mn
	\end{definition}
	\begin{definition}
		\label{1.2}
		Let ${\gp}^0$ be a $S$-uniformization frame. 
		\begin{enumerate}[label = $\arabic*)$, ref = $\arabic*)$]
		\item We define the sets 
		$$b^\delta_{n,k} = (b^\delta_{n,k})^{\gp^0} = \bigcup_{j \leq n} (u_{\delta,j} \cup \{ \eta_\delta(j) \}) \cup [\delta, \delta+k] \ \ (\delta \in S, \ n,k \in \omega),$$
			we call $b^\delta_{n,k}$ a $\gp^0$-basic set.
		\item  We let $$\cB = \cB^{\gp^0} = \{b^\delta_{n,k}: \ \delta \in S, n,k \in \omega \},$$
		and call $\cB$ the base of $\gp^0$.
		
		\item \label{1.2)2} A subset $x$ of $\omega_1$ is called ${\gp}^0$-closed 
		(or closed if ${\gp}^0$ is clear from the context) \when \,:
		\mn
			\begin{enumerate}[label = (\greek*), ref = (\greek*)]
			\item \label{1.2)2a}   $\epsilon+1\in x\Rightarrow \epsilon\in x$,
			\sn
			\item \label{1.2)2b}  if $\delta\in S\cap x$, \then \  for some $0 < N = N(\delta) \leq \omega$
			for every $n<\omega$:
			\begin{enumerate}
				\item[$(i)$]   $\eta_\delta(n) \in x \text{ iff } n < N$,
				\sn
				\item[$(ii)$] $[\eta_\delta(n), \eta_\delta(n+1)) \cap x \ne \emptyset$ $\to$ $\eta_\delta(n) \in x$,
				\sn
				\item[$(iii)$]   if $\eta_\delta(n) \in x$ then $u_{\delta,n} \subseteq x$.
			\end{enumerate}
		\end{enumerate}

	\end{enumerate}

	\end{definition}

	\begin{definition} \label{pprobd}
	We say $\gp = (S,\bar \eta,\bar u, \Psi) = (S^{\gp},\bar \eta^{\gp},
		\bar u^{\gp}, \Psi^{\gp})$ is a special $S$-uniformization
		problem when $\gp^0 = (S^{\gp},\bar \eta^{\gp},\bar u^{\gp})$ is an $S$-uniformization frame, and there exists a countable set $C$ such that
		\mn
		\begin{enumerate}[label = $\boxdot_{\alph*}$, ref = $\boxdot_{\alph*}$]
			\item \label{pda} $\Psi$ is a function with domain $\cB^{\gp^0}$, and for each $b \in \cB^{\gp^0}$ $\Psi(b)$ is a countable non-empty family of functions with domain $b$, range $\subseteq C$ satisfying that for any $n$ there is a
			countable $\Upsilon_n = \Upsilon_n^\gp$ such that if
			$b \in \cB^{\gp_0}$, $|b|=n$ then
			$\{f\circ \text{ OP}_{b, n}: f\in \Psi(b)\}\in\Upsilon_n$.
			\item   \label{pdb} If we define the function $\Psi^+$  
			 with domain $[\omega_1]^{<\aleph_0}=\{u:u\subseteq
			\omega_1$ finite$\}$ by $\Psi^+(u) = \{f \in \ ^uC: \ \forall b \subseteq u \ (b \in \cB^{\gp_0} \Rightarrow f \upharpoonright b \in  \Psi(b))\}$, then for each $u \in [\omega_1]^{<\aleph_0}$  $\Psi^+(u)$ is a  non-empty family (which must be countable, of course). 
			\item   \label{pdc}  If $b, b' \in \cB^{\gp^0}$, $b \subseteq b'$ and
			$f \in \Psi(b')$ \then \, $f \restriction b\in \Psi(b)$.
			\sn
			
			\item  \label{pdd}  If $u\subseteq v\in [\omega_1]^{<\aleph_0}$,
			$f \in \Psi^+(u)$ and $u$ is finite and $\gp^0$-closed \then \, $(\exists f') [f\subseteq f' \in \Psi^+(v)]$.
			\sn
		
			
			
		\end{enumerate}
	\end{definition}

	\begin{claim} \label{Upsn}
		If $\gp = (S,\bar \eta,\bar u, \Psi) = (S^{\gp},\bar \eta^{\gp},
		\bar u^{\gp}, \Psi^{\gp})$ is a special $S$-uniformization problem, then
		for each $n < \omega$ there exists a countable set $\Upsilon^+_n$ such that:
		\[ \forall x \in [\omega_1]^{n}: \ \{f\circ \text{ OP}_{x, n}: f\in \Psi^+(x)\} \in\Upsilon^+_n.   \]

	\end{claim}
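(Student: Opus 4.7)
\medskip
\noindent\textbf{Proof plan for Claim \ref{Upsn}.} The plan is to assign each $x\in[\omega_1]^n$ a countable ``type'' capturing exactly the information used to define $\Psi^+(x)$, and to observe that the type determines the reindexed family $\{f\circ \mathrm{OP}_{x,n}:f\in\Psi^+(x)\}$. The type of $x$ will consist of two pieces:
\begin{enumerate}
\item the (finite) collection
\[ Y_x=\{y\subseteq n:\mathrm{OP}_{x,n}[y]\in\cB^{\gp^0}\} \]
of those index-subsets of $n$ whose image in $x$ is $\gp^0$-basic;
\item for each $y\in Y_x$, the element
\[ F_y^x:=\{f'\circ\mathrm{OP}_{\mathrm{OP}_{x,n}[y],\,|y|}:f'\in\Psi(\mathrm{OP}_{x,n}[y])\}\in\Upsilon_{|y|}, \]
which exists in a fixed countable set by clause \ref{pda} of Definition \ref{pprobd}.
\end{enumerate}
Since $P(n)$ is finite and each $\Upsilon_m$ is countable, there are at most countably many types.

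The key step is to verify that the reindexed family is determined by the type. Write $b=\mathrm{OP}_{x,n}[y]$ and note that $\mathrm{OP}_{x,n}\restriction y\circ \mathrm{OP}_{y,|y|}:|y|\to b$ is an order-preserving bijection between finite sets of the same order type, hence equals $\mathrm{OP}_{b,|y|}$. Consequently, for $f\in\ ^{x}\!C$ and $g:=f\circ\mathrm{OP}_{x,n}$,
\[ g\restriction y\circ\mathrm{OP}_{y,|y|}=(f\restriction b)\circ\mathrm{OP}_{b,|y|}, \]
so $f\restriction b\in\Psi(b)$ if and only if $g\restriction y\circ\mathrm{OP}_{y,|y|}\in F_y^x$. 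Running this equivalence over all basic $b\subseteq x$, i.e.\ over all $y\in Y_x$, yields
\[ \{f\circ\mathrm{OP}_{x,n}:f\in\Psi^+(x)\}=\bigl\{g\in\ ^{n}\!C:(\forall y\in Y_x)\,g\restriction y\circ\mathrm{OP}_{y,|y|}\in F_y^x\bigr\}. \]

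To finish, define
\[ \Upsilon^+_n:=\Bigl\{\bigl\{g\in\ ^{n}\!C:(\forall y\in Y)\,g\restriction y\circ\mathrm{OP}_{y,|y|}\in F_y\bigr\}:Y\subseteq P(n),\ (F_y)_{y\in Y}\in\prod_{y\in Y}\Upsilon_{|y|}\Bigr\}. \]
This is countable because $P(P(n))$ is finite and each factor $\Upsilon_{|y|}$ is countable, and by the previous paragraph every $x\in[\omega_1]^n$ contributes the element indexed by $(Y_x,(F_y^x)_{y\in Y_x})$. The only delicate point is the order-isomorphism identity $\mathrm{OP}_{x,n}\restriction y\circ\mathrm{OP}_{y,|y|}=\mathrm{OP}_{b,|y|}$, which is immediate from the uniqueness of order-preserving bijections between equinumerous finite ordered sets; beyond that the argument is bookkeeping.
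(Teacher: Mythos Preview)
Your proof is correct and follows essentially the same approach as the paper: both code $x$ by the relative positions of the $\gp^0$-basic subsets together with their $\Upsilon_k$-types, and observe that this data, ranging over a countable set, determines $\{f\circ\mathrm{OP}_{x,n}:f\in\Psi^+(x)\}$. Your write-up is simply more explicit, spelling out the order-isomorphism identity and the exact description of $\Upsilon^+_n$ that the paper leaves to the reader.
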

	\begin{PROOF}{Claim \ref{Upsn}}
		If $x \in [\omega_1]^n$, then clearly there are only finitely many $b \in \cB^{\gp^0}$ with $b \subseteq x$. Now by the definition of $\Psi^+$ in \ref{pdb}
		the set $$\{f\circ \text{ OP}_{x, n}: f\in \Psi^+(x)\}$$
		can be coded by  $\langle \text{ OP}_{x, n}^{-1}(b): \ b \in \cB^{\gp^0}, b \subseteq x  \rangle$, i.e.\ the relative position of the finitely many basic sets that are included, and
		$$ \left\langle \{f\circ \text{ OP}_{b, n}: f\in \Psi(b)\}: \ b\subseteq x, \ b \in \cB^{\gp^0} \right\rangle. $$
		But note that this latter sequence has its entries from $\bigcup_{k \in \omega} \Upsilon_k$ by \ref{pda}, so  there are only countably many ways to choose the parameters (which will define the set in question), we are done.
		
	\end{PROOF}

		\begin{definition}\label{a6b}{ \ }
		\begin{enumerate}[label = $\arabic*)$, ref = $\arabic*)$]
		\item We say $f$ is a solution of ${\gp}$, a special 
		$S$-uniformization problem \when \, $f$ 
		is a function with domain $\omega_1$ such that for
		every $u\in [\omega_1]^{<\aleph_0}$ we have $f\restriction u\in
		\Psi^+(u)$ (equivalently, for each $b \in \cB$, $b \subseteq u$ $f \restriction b \in \Psi(b)$).
		\mn
		\item We say that $f$ is a partial solution of ${\gp}$ \If  \, $f$ is
		a function with domain $\subseteq \omega_1$ such that for every finite
		$u \subseteq \text{ Dom}(f)$ we have $f \restriction u \in
		\Psi^+(u)$. 
		\mn
	\item We say ${\gp}$ is simple if $\alpha < \omega_1 \Rightarrow
		\alpha + \omega \notin S^{\gp}$. 
	\mn	
	\end{enumerate}
	\end{definition}
	\begin{convention} 	From now on if $\gp$ is a special $S$-uniformization problem, then  $\gp$-closed will mean $\gp^0$-closed.
	\end{convention}

	Here we state the main theorem of Section $\ref{s2}$.	
	\begin{namedthm*}{\textbf{Theorem \ref{1.4}}}
		Let $S$ be a stationary set of limit
		ordinals $< \omega_1$ [which is simple, i.e., $(\forall \alpha <
		\omega_1)(\alpha + \omega \notin S)]$ and 
		$\bar \eta^*$ be an $S$-ladder system.  \Then \,
		$(\mathbf{D})_{S,\bar \eta^*} \Leftrightarrow (\mathbf{E})_{S,\bar \eta^*}$ where
		\mn
		\begin{enumerate}
			\item[$(\mathbf{D})_{S,\bar \eta^*}$]  for any ladder system $\bar \eta$ on $S$ if $\eta$ is
			very similar to $\bar \eta^*$, \then \, it has $\aleph_0$-uniformization
			\sn
			\item[$(\mathbf{E})_{S,\bar \eta^*}$]   for every ladder system  $\bar \eta$ very
			similar to $\bar \eta^*$, for every  simple special
			$S$-uniformization problem  ${\gp}$ with $\bar \eta^{\gp} = \bar \eta$,
			 ${\gp}$ has a solution.
		\end{enumerate}

	\end{namedthm*}

	
	Now we only have to translate our lifting problem to a special $S$-uniformization problem to finish the proof of Theorem $\ref{2.1}$.
	
	Recalling \ref{20.1} fix	a map $F: \{ x_\alpha: \ \alpha < \omega_1\}\to H$  such that $\varphi \circ  F(x_\alpha)=x_\alpha$,
	(note that $F$ does not necessarily induce a homomorphism from $G = \clpr(\{x_\alpha: \ \alpha < \omega_1\}$, only from the generated free group $\cll(\{x_\alpha: \ \alpha < \omega_1\})$). Note that in the case that $F$ induces a homomorphism $\tilde{F}:\clpr(\{x_\alpha: \ \alpha < \omega_1\} \to H$, then clearly $\varphi \circ \tilde{F} = \text{id}_G$.
	Now observe that 
		\begin{enumerate}[label = $\boxplus_{\arabic*}$, ref = $\boxplus_{\arabic*}$]
		\setcounter{enumi}{\value{bocou}}
		\item the stationary set $S$ together with $\bar \eta^1$, $\bar u$ from Definition $\ref{uhdef}$ form an $S$-uniformization frame $\gp^0$.
		\stepcounter{bocou}
	\end{enumerate}
	So if we can form an $S$-uniformization problem, each solution of which yields a suitable  homomorphism from the entire $G$, then we will be done.
	\begin{definition} \label{PSdf}
		Fix $\delta \in S$, and $b \in \cB^\gp$ (from Definition $\ref{1.2}$, so $b = \bigcup_{j \leq n} (u_{\delta,j} \cup \{ \eta^1_\delta(j)\} \cup [\delta, \delta+k]$ for some $n$, $k$).
		\begin{enumerate}[label = $(\odot)$, ref = $(\odot)$]
			\item Define $f \in \Psi(b)$, iff $f: b \to \bbZ_\omega = \oplus_{k \in \omega} \bbZ$, and the function $F^{+f}: \{x_\alpha: \ \alpha \in b\} \to H$, $F^{+f}(x_\alpha)=  F(x_\alpha)+ f(\alpha)$ extends to a homomorphism $\widetilde{F^{+f}}: \clpr_G(\{x_\alpha: \ \alpha \in b\}) \to H$.
		\end{enumerate}
	\end{definition}
	
	We encapsulate the missing claims in the following assertion.
	\begin{lemma}\label{befej}{\ }
		\begin{enumerate}[label = $(\odot)_{\arabic*}$, ref = $(\odot)_{\arabic*}$]
			\item $\Psi$ (from Definition $\ref{PSdf}$) together with the $S$-uniformization frame $\gp^0$ (from Definition $\ref{uhdef}$) satisfies the demands in Definition $\ref{pprobd}$,
		\end{enumerate}
		moreover,
	\begin{enumerate}[label = $(\odot)_{\arabic*}$, ref = $(\odot)_{\arabic*}$]
		\stepcounter{enumi}
		\item for every $\gp$-closed set $U$, and partial solution $f$ of $\gp$ with $\dom(f) = U$ the defined map
			 $$ \begin{array}{ll} F^{+f}: & \{x_\alpha: \ \alpha \in U\} \to H, \\ 
			 x_\alpha  \mapsto &  F(x_\alpha)+ f(\alpha) 
			 \end{array} $$
		induces a homomorphism $\widetilde{F^{+f}}: \clpr_G(\{ x_\alpha: \ \alpha \in U \}) \to H$.
	\end{enumerate}
		
	\end{lemma}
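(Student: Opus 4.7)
The plan is to establish clauses \ref{pda}--\ref{pdd} of Definition \ref{pprobd} in turn for part $(\odot)_1$, and then deduce $(\odot)_2$ via a direct limit over finite subsets of $U$. Two structural facts drive the argument. First, since $G$ is $\aleph_1$-free, for any finite $w \subseteq \omega_1$ the pure closure $\clpr_G(\{x_\alpha: \alpha \in w\})$ is a finite-rank free abelian group. Second, since $K = \bbZ_\omega$ is torsion-free and $H/K \cong G$ is torsion-free, $H$ itself is torsion-free; hence any homomorphism from a finite-rank torsion-free abelian group into $H$ is determined by its values on any $\bbQ$-spanning subset, in particular on $\{x_\alpha: \alpha \in w\}$. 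Consequently $\Psi(b)$ and $\Psi^+(u)$ are cosets of subgroups of the countable groups $\text{Hom}(\clpr_G(\{x_\alpha: \alpha \in b\}), K)$ and $\text{Hom}(\clpr_G(\{x_\alpha: \alpha \in u\}), K)$ respectively.

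For \ref{pda} and \ref{pdb}, non-emptiness is witnessed by any lift: pick a basis of the finite-rank free group $\clpr_G(\{x_\alpha: \alpha \in w\})$, lift each basis element to $H$ via surjectivity of $\varphi$, extend $\bbZ$-linearly to a homomorphism $\psi$, and set $f(\alpha) := \psi(x_\alpha) - F(x_\alpha) \in K$. A countable $\Upsilon_n$ parametrising all basic sets of size $n$ exists because, after relabelling by $\text{OP}_{b,n}$, the data determining $\Psi(b)$ reduces to the combinatorial configuration of $b$ (which positions play the roles $\eta_\delta(j)$, $u_{\delta,j}$-elements, or $\delta+\ell$) together with the integer coefficients expressing the pure-closure relations via the sets $\langle Y_{\delta,n}\rangle$ of Definition \ref{uhdef}; this datum lies in a fixed countable set independent of $b$. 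Clause \ref{pdc} is immediate: if $\widetilde{F^{+f}}$ extends $F^{+f}$ on $\clpr_G(\{x_\alpha: \alpha \in b'\})$ and $b \subseteq b'$, restricting to $\clpr_G(\{x_\alpha: \alpha \in b\})$ witnesses $f \restriction b \in \Psi(b)$.

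The main obstacle is \ref{pdd}. Given $\gp^0$-closed $u \subseteq v$ with $v$ finite and $f \in \Psi^+(u)$ with homomorphic extension $\widetilde{F^{+f}}$ on $\clpr_G(\{x_\alpha: \alpha \in u\})$, one must extend to a homomorphism on $\clpr_G(\{x_\alpha: \alpha \in v\})$. For this it suffices that the finite-rank quotient $\clpr_G(\{x_\alpha: \alpha \in v\})/\clpr_G(\{x_\alpha: \alpha \in u\})$ be free, so that a basis lifts to $H$. This is exactly where $\gp^0$-closedness of $u$ enters: for each $\delta \in S \cap u$, closedness yields an initial segment of $\eta_\delta$-values in $u$ together with all the required witnesses $u_{\delta,j}$, so that the calculations \ref{foszamolas}--\ref{foszamV} show that adjoining further $x_{\eta_\delta(j)}$, $x_\beta$ for $\beta \in u_{\delta,j}$, or $x_{\delta+\ell}$ freely extends the pure closure. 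A short induction on $|v \setminus u|$, invoking Fact \ref{fa2} at each step, then yields the required freeness.

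For $(\odot)_2$, let $U$ be $\gp^0$-closed and $f$ a partial solution on $U$. For each finite $u \subseteq U$, clause \ref{pdb} supplies a homomorphism $\widetilde{F^{+f \restriction u}}: \clpr_G(\{x_\alpha: \alpha \in u\}) \to H$ extending $F^{+f \restriction u}$, and this extension is unique by torsion-freeness of $H$. By uniqueness the extensions are compatible under inclusion $u_1 \subseteq u_2$, so their direct limit is a well-defined homomorphism on $\clpr_G(\{x_\alpha: \alpha \in U\}) = \bigcup_u \clpr_G(\{x_\alpha: \alpha \in u\})$, completing the proof.
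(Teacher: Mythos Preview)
Your proposal has a genuine gap: you conflate membership in $\Psi^+(u)$ with the existence of a homomorphic lift on the full pure closure $\clpr_G(\{x_\alpha:\alpha\in u\})$, and this conflation is precisely the heart of the lemma.

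Recall that by Definition~\ref{PSdf}, $f\in\Psi^+(u)$ only says that $F^{+f}$ extends to a homomorphism on $\clpr_G(\{x_\alpha:\alpha\in b\})$ for each \emph{basic} $b\subseteq u$, not on $\clpr_G(\{x_\alpha:\alpha\in u\})$ itself. In your treatment of \ref{pdd} you simply assume the latter (``with homomorphic extension $\widetilde{F^{+f}}$ on $\clpr_G(\{x_\alpha:\alpha\in u\})$''), and in your argument for $(\odot)_2$ you appeal to \ref{pdb} to supply it; but \ref{pdb} asserts only that $\Psi^+(u)$ is nonempty, not that the particular $f\restriction u$ lifts. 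So the key implication
\[
\bigl(u\text{ is }\gp\text{-closed and }f\in\Psi^+(u)\bigr)\ \Longrightarrow\ F^{+f}\text{ extends to }\clpr_G(\{x_\alpha:\alpha\in u\})
\]
is never established, and this is exactly what the paper spends the bulk of its proof on: a minimal-counterexample argument locating a bad relation at some $\delta\in S$, then using the sets $Y_{\delta,j_0}$ of Definition~\ref{uhdef} together with \ref{foszamolas2} and \ref{foszamV} to decompose the offending element $g^*$ as a sum of a piece inside $\clpr_G(\{x_\alpha:\alpha\in U\cap\delta\})$ and a piece inside $\clpr_G(\{x_\alpha:\alpha\in b^\delta_{j_0,k}\})$, each of which $F^{+f}$ already handles by induction and by the basic-set hypothesis respectively.

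A secondary point: the quotient freeness you work to establish for \ref{pdd} is actually automatic and does not require closedness of $u$. Since $G$ is $\aleph_1$-free, both $\clpr_G(\{x_\alpha:\alpha\in u\})$ and $\clpr_G(\{x_\alpha:\alpha\in v\})$ are finite-rank free, and the former is pure in the latter (being pure in $G$); hence their quotient is finitely generated torsion-free, so free. Closedness is needed not for the quotient, but to guarantee that the only pure-closure relations among $\{x_\alpha:\alpha\in u\}$ are those already witnessed by basic sets $b\subseteq u$ --- which is exactly the implication you are missing.
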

	\begin{PROOF}{Lemma \ref{befej}}
		First recall that every countable subgroup of $G$ is free, and every free group is W$_\omega$-group (just pick a set freely generating the group and pick an arbitrary preimage for each generator), in particular:
			\newcounter{bibocou} \setcounter{bibocou}{0}
			\begin{enumerate}[label = $\boxminus_{\arabic*}$, ref = $\boxminus_{\arabic*}$]
			\setcounter{enumi}{\value{bibocou}}
			\item \label{cl0a} for every pure  subgroup $G_* = \clpr_G(\{g_i: \ i < N \leq \omega \}) \leq G$ of either finite or countable infinite rank there exists a homomorphism $\psi : G_* \to H$ with $\varphi \circ \psi = \text{id}_{G_*}$.
			\stepcounter{bibocou}
		\end{enumerate}
		Using Fact $\ref{fa2}$ for every pure  subgroup $G_* = \clpr_G(\{g_i: \ i < N \leq \omega \}) \leq G$ of either finite or countable infinite rank, and pure subgroup $G_{**}$ of $G_*$ of finite rank $G_{**}$ is a direct summand of $G_*$ (i.e.\ $G_* = G_{**} \oplus G'$), (and of course  $G'$ is free) so
		\begin{enumerate}[label = $\boxminus_{\arabic*}$, ref = $\boxminus_{\arabic*}$]
			\setcounter{enumi}{\value{bibocou}}
			\item \label{cl0a2} for every pure  subgroup $G_* = \clpr_G(\{g_i: \ i < N \leq \omega \}) \leq G$ of either finite or countable infinite rank, pure subgroup $G_{**}$ of $G_*$ of finite rank, and homomorphism $\psi^0: G_{**} \to H$ with $\varphi \circ \psi^0 = \text{id}_{G_{**}}$ there exists a homomorphism $\psi : G_* \to H$ extending $\psi^0$ with $\varphi \circ \psi = \text{id}_{G_*}$.
			\stepcounter{bibocou}
		\end{enumerate}
	
		For the requirement \ref{pda} it is clear that for each finite $b \in \cB$ the collection  $\Psi(b)$ of mappings is non-empty. We claim that for any fixed $b \in \cB$, $|b| = n$, 
		 	\begin{enumerate}[label = $\boxminus_{\arabic*}$, ref = $\boxminus_{\arabic*}$]
			\setcounter{enumi}{\value{bibocou}}
			\item \label{claa} there exists $\bar k = \langle k_j^\alpha: \ j < n, \ \alpha \in b \rangle$ with ($\forall j <n$, $\forall \alpha \in b$) $k_j^\alpha \in \bbZ$, such that for each $f_0 \in \Psi(b)$ we have
			 $$\Psi(b) = \left\{ f_{\bar k, \bar d} : \ \bar d  \in \ ^n \bbZ_\omega \right\}$$
			 (where $f_{\bar k, \bar d}(\alpha) = f_0(\alpha) +   \sum_{\ell =0}^{n-1} k^\alpha_\ell \cdot d_\ell)$),
			 			\stepcounter{bibocou}
		\end{enumerate}	
		which would clearly complete the proof of \ref{pda}. So	pick a basis $\{e_0,e_1, \dots e_{n-1} \} \subseteq \clpr_G(\{x_i: \ i \in b\})$, write $x_\alpha = \sum_{j = 0}^{n-1} k_j^\alpha e_j$,  and fix $f_0 \in \Psi(b)$. As for each $f: b \to \bbZ_\omega$ there is at most only one way $F^{+f}: \{x_\alpha: \ \alpha \in b\} \to H$ can extend to a homomorphism $\widetilde{F^{+f}}$ from $\clpr(\{x_\alpha: \ \alpha \in b\})$ there is obviously a bijection between the sets $\{\widetilde{F^{+f}} - \widetilde{F^{+f_0}}: \ f \in \Psi(b)\}$ and  $^n\bbZ_\omega$ (i.e.\ the attained value on the base $\{e_0,e_1, \dots e_{n-1} \}$). Changing back from the base $\{e_0, e_1, \dots, e_{n-1}\}$ to $\{x_\alpha: \ \alpha \in b\}$ it is easy to see \ref{claa}.
		For \ref{pdb} note that
			\begin{enumerate}[label = $\boxminus_{\arabic*}$, ref = $\boxminus_{\arabic*}$]
			\setcounter{enumi}{\value{bibocou}}
			\item whenever $f: u \to \bbZ_\omega$ is such that the mapping $F^{+f}: \{x_\alpha: \ \alpha \in u \}$ extends to a homomorphism from $\clpr_G(\{x_\alpha: \ \alpha \in u\}$ it is obviously a homomorphism from $\clpr_G(\{x_\alpha: \ \alpha \in b\}$ for any $\gp$-closed $b \subseteq u$,
			\stepcounter{bibocou}
		\end{enumerate}	
		and similarly \ref{pdc} is as well obvious.
		For \ref{pdd} recall \ref{cl0a} and \ref{cl0a2} so it is enough to prove that if $U$ is $\gp$-closed, $f:U \to \bbZ_\omega$ satisfies that for each $b \in \cB$ $f \restriction b \in \Psi(b)$, then $F^{+f}$ extends to a homomorphism  $\widetilde{F^{+f}}: \clpr_G(\{x_\alpha: \ \alpha \in U\}) \to H$. (And so if $U$ is finite, $U \subseteq V$, then \ref{cl0a2} means that we can further extend the homomorphism, so recalling Definition $\ref{PSdf}$ and \ref{pdb} the corresponding mapping from $V$ will belong to $\Psi^+(V)$.) 
		
		So fix $U$ and $f$ with $[(b \subseteq U)$ $\to$ $(f \restriction b \in \Psi(b))]$. Let $\delta+k < \omega_1$ (with $\delta$ limit) be minimal such that $F^{+f}$ does not extend to $\clpr_G(\{x_\alpha: \ \alpha \in U \cap (\delta+k)\})$.
		Since this extension (if exists) is unique it is easy to see that only successor steps may be problematic, so $\delta+k-1 \in U$. Moreover, $\delta \in S$ necessarily, since the $x_\alpha$'s were given by Claim $\ref{hulyetekkl}$, and so otherwise $G_{\delta+k} = G_\delta \oplus (\oplus_{i<k} \bbZ x_{\delta+i})$, so if 
		$F^{+f} \restriction \{ x_\alpha: \ \alpha \in U \cap \delta\}$ extends to $G_{U \cap \delta}$ (and $ F^{+f} \restriction \{ x_\alpha: \ \alpha \in [\delta, \delta+k)\}$ clearly extends to the $k$-rank abelian free group), then it extends to their direct sums as well.
		
		Now there exists $u = \{\alpha_0, \alpha_1, \dots, \alpha_{m-1}\} \subseteq U \cap (\delta+\omega)$ finite, and $k_0,k_1, \dots, k_{m-1} \in \bbZ$, $d \in  \bbZ \setminus \{0\}$ such that
	$$g^* = \frac{k_0 x_{\alpha_0} + k_1  x_{\alpha_1} + \dots + k_{m-1} x_{\alpha_{m-1}}}{d} \in \clpr(\{ x_\alpha: \ \alpha \in U \cap \delta+\omega\}) \leq G,$$
		but
	$$\sum_{j=0}^{m-1} k_j\left(F(x_{\alpha_j})+ f(\alpha_j)\right) \in H \text{ is not divisible by } d.$$
		Now w.l.o.g.\ (since the $\gp$-closedness of $U$ implies that $(\forall \beta < \omega_1):$ ($\beta+1 \in U \ \to \ \ \beta \in U$):
		 \begin{enumerate}[label = $\boxminus_{\arabic*}$, ref = $\boxminus_{\arabic*}$]
			\setcounter{enumi}{\value{bibocou}}
			\item for some $k \leq m$ we have $\alpha_{m-k+ j} = \delta + j $ for $j =0,1, \dots, k-1$,
			whereas $\{ \alpha_0, \alpha_1, \dots, \alpha_{m-k-1} \} \subseteq \delta$.
			\stepcounter{bibocou}
		\end{enumerate}
		 By the minimality of $\delta$ the function $F^{+f}$ extends to a homomorphism from $\clpr_G(\{x_\alpha: \ \alpha \in U \cap \delta\})$. As $U$ is $\gp$-closed, (recalling Definition $\ref{1.2}$/ \ref{1.2)2}) either $\ran(\eta^1_\delta) \subseteq U$, and then each $u_{\delta,n} \subseteq U$, or there exists a minimal $n$ such that $U \cap \delta \subseteq \eta^1_\delta(n)$. No matter which case holds, we have
		 \begin{enumerate}[label = $\boxminus_{\arabic*}$, ref = $\boxminus_{\arabic*}$]
		 	\setcounter{enumi}{\value{bibocou}}
		 	\item $\forall j \in \omega$: $(\eta^1_\delta(j) \in U) \to (u_{\delta,j} \subseteq U)$,
		 	\stepcounter{bibocou}
		 \end{enumerate}	
	 	therefore as $u_{\delta,j} \subseteq \eta^1_\delta(j)$'s are finite (\ref{uhdef3}, \ref{uhdef4} in Definition $\ref{uhdef}$) w.l.o.g.\
	 	 \begin{enumerate}[label = $\boxminus_{\arabic*}$, ref = $\boxminus_{\arabic*}$]
	 		\setcounter{enumi}{\value{bibocou}}
	 		\item if $\eta^1_\delta(j) \in \{\alpha_0, \alpha_1, \dots, \alpha_{m -k-1}\}$, then $\bigcup_{\ell \leq j} (u_{\delta,\ell} \cup \{\eta^1_\delta(\ell)\}) \subseteq \{\alpha_0, \alpha_1, \dots, \alpha_{m-k-1}\}$.
	 		\stepcounter{bibocou}
	 	\end{enumerate}
 		Let $j_0 \in \omega$ be such that
 		\begin{enumerate}[label = $\boxminus_{\arabic*}$, ref = $\boxminus_{\arabic*}$]
 			\setcounter{enumi}{\value{bibocou}}
 			\item  $\bigcup_{\ell \leq j_0} (u_{\delta,\ell} \cup \{ \eta^1_\delta(\ell)\}) \subseteq \{\alpha_0, \alpha_1, \dots, \alpha_{m-k-1}\} \subseteq \eta^1_\delta(j_0+1)$.
 			\stepcounter{bibocou}
 		\end{enumerate}
 			Therefore by \ref{uhdef4} $Y_{\delta,j_0} \subseteq \{ x_{\alpha_i}: \ i \in m \}$, and   by \ref{foszamolas2} we obtain $g^* = g_0 + g_1$, where $g_1 \in \clpr(Y_{\delta,j_0} \cup \{ x_\alpha: \ \alpha \in [\delta, \delta+k)\}$, and $g_0 \in G_{\eta^1_\delta(j_0+1)}$. But as $g^* \in \clpr(\{x_\alpha: \ \alpha \in (U \cap \eta^1_\delta(j_0+1)) \cup [\delta, \delta+k)\})$, $Y_{\delta,j_0} \subseteq \{x_\alpha: \ \alpha \in U \cap \eta^1_\delta(j_0+1)\}$, obviously $g_1 \in \clpr(\{ x_\alpha: \ \alpha \in U \cap \eta^1_\delta(j_0+1)\})$. But $F^{+f} \restriction \{x_\alpha: \ \alpha \in U \cap \delta\}$ and $F^{+f} \restriction (Y_{\delta,j_0} \cup \{x_{\delta+i}: \ i <k\}) = F^{+f} \restriction \{x_\alpha: \ \alpha \in b^\delta_{j_0,k}\}$ both extend to the corresponding generated pure subgroups, so to their sums, so to $g^*$ as well, which is a contradiction.

	\end{PROOF}

 \end{PROOF}

	Having finished the proof of the last ingredient, we can put all the necessary claims together to argue \ref{eki} of Proposition \ref{ekp}.
	\begin{PROOF}{Clause \ (ii) \ of \  Proposition \ref{ekp}}(Clause  $\ref{eki}$  of   Proposition \ref{ekp})
		Suppose that $S \subseteq \Omega$ is stationary, $\bar \eta$ is a ladder system for which ran($\eta_\delta) \subseteq \Omega$ for each $\delta \in S$ (so necessarily $S$ is simple, $\alpha+  \omega \notin S$ for any $\alpha$).
		Let $G$ be a group of size $\aleph_1$, $\bar G' = \langle G'_\alpha: \ \alpha < \omega_1 \rangle$ a filtration of $G$, and the ladder system $\bar \eta^*$ are such that $\bar \eta^*$ is very similar to $\bar \eta$, $(*)^+_{S, \bar \eta^*}(G, \bar G')$ holds from the proposition, i.e. clauses \ref{alp0}-\ref{varpp0} are satisfied. This implies that $(**)_S(G, \bar G')$ from Lemma \ref{fil1} holds, and we can apply Claim \ref{hulyetekkl}. Let $\langle x_\alpha: \ \alpha < \omega_1 \rangle$, and the ladder system $\bar \eta^1$ (and $\gamma^\delta_n$'s) be given by the claim. Moreover, since we are in the proof of Clause  $\ref{eki}$  of   Proposition \ref{ekp}, we assume \ref{varpp0}:
			\begin{enumerate}[label = $(\varepsilon)^+$, ref = $(\varepsilon)^+$]
				\item 
		for each $\delta \in S$ for some sequence $\langle y_{\delta+\ell}: \ \ell < \omega \rangle \in \ ^\omega G'_{\delta+\omega}$ which is  a maximal independent sequence over $G'_\delta$, we have for each $n \in \omega$:
		\[  \left\{ \begin{array}{ll} \alpha<\delta: & \clpr\left( G'_{\alpha+1} \cup
			\{y_{\delta+\ell}:\ell<n\} \right) \ne \\ &
			\clpr\left( G'_\alpha \cup
			\{ y_{\delta+\ell}: \ \ell < n \} \right) +G'_{\alpha+1} \end{array} \right\} \subseteq^* \ran(\eta^*_\delta).\]
		\end{enumerate}
		 This means that clause \ref{varpp} stated in Claim \ref{hulyetekkl} with the role $\bar \eta^0 := \bar \eta^*$ i.e.
		 	\begin{enumerate}[label = $(\varp)^{**}$, ref = $(\varp)^{**}$]
		 	\item for each $\delta \in S$ for some sequence $\langle y_{\delta+\ell}: \ \ell < \omega \rangle \in \ ^\omega G_{\delta+\omega}$ which is 
		 	a maximal  independent sequence over $G_\delta$,  for each $n \in \omega$ 
		 	\[ \left\{ \begin{array}{ll} \alpha<\delta: & \clpr\left( G_{\alpha+1} \cup
		 		\{y_{\delta+\ell}:\ell<n\} \right) \ne \\ &
		 		\clpr\left( G_\alpha \cup
		 		\{ y_{\delta+\ell}: \ \ell < n \} \right) +G_{\alpha+1} \end{array} \right\} \subseteq^* \ran(\eta^0_\delta),\]
		 \end{enumerate}
		 is also satisfied,
		  we also know that for each $\delta \in S$, for the output $\eta_\delta^1$ of the claim: ran($\eta_\delta^1$) $\subseteq^*$ ran($\eta_\delta^*$).
		
		At this point we claim that every ladder system $\bar \eta^2$ that is very similar to $\bar \eta^1$ has $\aleph_0$-uniformization, using that every ladder system very similar to $\bar \eta$ has $\aleph_0$-uniformization:
		we define $\eta^3_\delta$ so that $\eta^3_\delta(n) \in \Omega$ for each $n$ and $\bar \eta^3$ is a ladder system very similar to $\bar \eta^2$.
		Then, since for $\delta \in S$ we have ran($\eta_\delta) \subseteq \Omega$, and 
		$$\{\eta_\delta(n)+\omega: \ n \in \omega\} = \{\eta^*_\delta(n) + \omega: \ n \in \omega\},$$
		we obtain $$\{\eta_\delta(n)+\omega: \ n \in \omega\} \supseteq^* \{\eta^1_\delta(n) + \omega: \ n \in \omega\} = \{\eta^3_\delta(n) + \omega: \ n \in \omega\}. $$
		Since both $\eta_\delta(n)$'s and $\eta^3_\delta(m)$'s are limit ordinals, clearly
		 $$\{\eta_\delta(n): \ n \in \omega\} \supseteq^* \{\eta^3_\delta(n): \ n \in \omega\}. $$
		 Now  the fact that $\bar \eta$ has $\aleph_0$-uniformization implies that  $\bar \eta^3$ has $\aleph_0$-uniformization, and by Lemma \ref{verisi}, $\bar \eta^2$ has $\aleph_0$-uniformization, too. 
		 This means that we can appeal to Lemma \ref{befej} (applying it to $\langle x_\alpha: \ \alpha < \omega_1 \rangle$, and the ladder system $\bar \eta^1$, and the $\gamma^n_\delta$'s), which completes the proof of clause \ref{eki} of Proposition \ref{ekp}.
		 
	\end{PROOF}
		Having proved clause \ref{eki} of Proposition \ref{ekp}, with all the necessary ingredients we can verify \ref{AS} $ \Rightarrow $ \ref{CS} from Theorem \ref{2.1}. So suppose that $S \subseteq \omega_1$ is a stationary set of limit ordinals, and every $S$-ladder system has $\aleph_0$-uniformization. Fix a strongly $\aleph_1$-free abelian group $G$ and filtration $\bar G'$ which satisfy $(*)_S(G, \bar G')$ from Definition \ref{ABCdf}. Now applying Claim \ref{fil1} we can replace $S$ with $S \setminus \{\alpha+ \omega: \ \alpha < \omega_1\}$ (with the fact that every $S$-ladder system has $\aleph_0$-uniformization remaining true), get the filtration $\bar G = \langle G_\alpha : \ \alpha < \omega_1 \rangle$ and $(**)_S(G, \bar G)$ (from Claim \ref{fil1}). Moreover, we can apply Claim \ref{hulyetekkl}, which gives us $\bar x = \langle x_\alpha: \ \alpha < \omega_1 \rangle$ and the ladder system $\bar \eta$ on our new $S$, and that $\bar G$, $\bar x$ and $\bar \eta$ satisfy \ref{e0}-\ref{e3}. It remains to invoke \ref{eki} from Proposition \ref{ekp}: $(A)_{S,\bar \eta}$ holds (with $\bar \eta$ given by the claim), so we obtain $(C)_{S,\bar \eta}$. Thus it suffices to check that  $(*)^+_{S, \bar \eta^*}(G, \bar G')$ holds for some filtration $\bar G'$: set $\bar G' = \bar G$, $\bar \eta^* = \bar \eta$, $y_{\delta+n} = x_{\delta+n}$ ($\delta \in S$, $n \in \omega$) (recall \ref{e0}-\ref{e3}). This finishes the proof of \ref{AS} $ \Rightarrow $ \ref{CS} (assuming Theorem \ref{1.4}).
	\vspace{0.5cm}
	
	It is only left to prove clause \ref{ekii} from Proposition $\ref{ekp}$, so we fix the ladder system $\bar \eta^*$ on $S$.
	\newcounter{befcou} \setcounter{befcou}{0}
		\begin{enumerate}[label = $\bigodot_{\arabic*}$, ref = $\bigodot_{\arabic*}$]
		\setcounter{enumi}{\value{befcou}}
		\item We define the free abelian group $F$ to be generated by 
		$$\{ \dot{x}_\alpha: \ \alpha < \omega_1 \} \cup \{ \dot{y}_{\delta,i,j}: \ \delta \in S, \ i \leq j \in \omega \}.$$
		\stepcounter{befcou}
	\end{enumerate} 
	First we will need some definitions and claims
	\begin{definition}{\ }  \label{GpKp}
		\begin{enumerate}
			\item For a sequence $\bar p = \langle p_{\delta,i,j}: \ \delta \in S, \ i,j \in \omega\rangle \in \ ^{S \times \Delta^+} \bbZ$  (where by $\Delta^+$ we mean $\langle i,j: \ i \leq j \in \omega \rangle$) we define the abelian group $G_{\bar p}$
			by the generators
			$$\{ x_\alpha: \ \alpha < \omega_1 \} \cup \{ y_{\delta,i,j}: \ \delta \in S, \ i \leq j \in \omega \}$$
			and by the relations
			\begin{equation} \label{rel0} p_{\delta,i,j} y_{\delta,i,j} = y_{\delta,0,0} - (x_{\eta^*_\delta(j)}-x_{\eta^*_\delta(i)}) \ \ (\delta \in S, \ i<j), \end{equation}
			and
			 	\begin{equation} \label{rel3} p_{\delta,i,i} y_{\delta,i+1,i+1} = y_{\delta,0,0} - x_{\eta^*_\delta(i)} \ \ (\delta \in S, \ i \in \omega), \end{equation}
			i.e. a group isomorphic to $F / K_{\bar p}$, where  $K_{\bar p}$ is the subgroup of $F$ generated by $$\{  p_{\delta,i,j} \dot{y}_{\delta,i,j} - (\dot{y}_{\delta,0,0} - (\dot{x}_{\eta^*_\delta(j)}-\dot{x}_{\eta^*_\delta(i)})): \ \delta \in S, \ i< j \in \omega \},$$
			$$\{  p_{\delta,i,i} \dot{y}_{\delta,i+1,i+1} - (\dot{y}_{\delta,0,0} - \dot{x}_{\eta^*_\delta(i)}): \ \delta \in S, \ i \in \omega \},$$
			
			\item Define $K_{S,\bar \eta^*} = \{G_{\bar p}: \ \bar p  \in ^{S \times \Delta^+} \bbZ,$  $(\forall \delta \in S) \  \langle p_{\delta,i,j}: \ i,j \in \omega \rangle$ is a sequence of pairwise relatively prime positive integers $\}$.
			\item With a slight abuse of notation we define the group $G_{\bar p}$ when $\bar p = \langle p_{\delta,i}: \ \delta \in S, \ i \in \omega\rangle \in \ ^{S \times \Delta^+} \bbZ$ to be 
			$G_{\bar p'}$, where $\bar p' = \langle p'_{\delta,i,j}: \ \delta \in S, \ i,j \in \omega \rangle$ is defined by the equalities
			$$p'_{\delta,i,i} = p_{\delta,i} \ (i \in \omega),$$
			$$p'_{\delta,i,j} = 1 \ (i < j).$$
		\end{enumerate}

	\end{definition}

	The following is standard, but for the sake of completeness we include the proof.
	\begin{lemma} \label{kotszam}
		Suppose that we are given $\bar p = \langle p_{\delta,i,j}: \ \delta \in S, \ i \in \omega \rangle \in \ ^{S \times \omega}\bbZ$. If for each $\delta$ the sequence $\langle p_{\delta,i,j}: \ i \leq j \in \omega \rangle$ consists of pairwise relatively prime positive integers (i.e. there are no common prime factors of $p_{\delta,i,j}$ and $p_{\delta,k,l}$ if $(i,j) \neq (k,l)$, possibly $p_{\delta,i,j} =1$ for some $(i,j)$), then for the filtration 
		$\bar G^{\bar p} = \langle G^{\bar p}_\beta: \ \beta \in \omega_1 \rangle$ defined as
		$$ G^{\bar p}_\beta = \clpr(\{x_\alpha, y_{\alpha,0,0} : \ \alpha < \beta \})$$
		the assertion $(*)^+_{S, \bar \eta^*}(G_{\bar p}, \bar G^{\bar p})$ (from Proposition  $\ref{ekp}$) holds,	in particular  the groups in $K_{S,\bar \eta^*}$ are strongly $\aleph_0$-free.
	\end{lemma}  
	\begin{PROOF}{Lemma \ref{kotszam}}
		First we prove that the collection 
		$$I := \{x_\alpha: \ \alpha < \omega_1 \} \cup \{ y_{\delta,0,0}: \ \delta \in S\}$$ forms a maximal independent family. If this were true, then it would imply that $\{x_\alpha + G^{\bar p}_\beta: \ \beta \leq \alpha < \omega_1 \} \cup \{ y_{\delta,0,0} + G^{\bar p}_\beta: \ \beta \leq \delta \in S\}$ is independent in $G_{\bar p} / G^{\bar p}_\beta$.
		
		As $G_{\bar p} = \clpr( \{x_\alpha: \ \alpha < \omega_1 \} \cup \{ y_{\delta,0,0}: \ \delta \in S\})$ we only have to argue that $I$ is independent.
		Working with the representation of $G_{\bar p}$ as $F / K_{\bar p}$, it is enough to show that the set 
		$$ \dot{I} : = \{\dot{x}_\alpha: \ \alpha < \omega_1 \} \cup \{ \dot{y}_{\delta,0}: \ \delta \in S\} \subseteq F$$
		is independent over $K_{\bar p}$. But the generator set 
		$$\{ p_{\delta,i,j} \dot{y}_{\delta,i,j} - (\dot{y}_{\delta,0,0} - (\dot{x}_{\eta^*_\delta(j)}- \dot{x}_{\eta^*_\delta(i)}): \ \delta \in S, \ i< j \in \omega \}  $$
		$$ \cup $$
		$$\{ p_{\delta,i,i} \dot{y}_{\delta,i+1,i+1} - (\dot{y}_{\delta,0} - \dot{x}_{\eta^*_\delta(i)}): \ \delta \in S, \ i \in \omega \}$$
		
		freely generates $K_{\bar p}$, so comparing the coefficients clearly
		$$ \cll(\dot{I}) \cap K_{\bar p} = \{0\},$$
		so 
			\newcounter{befclcou} \setcounter{befclcou}{0}
				\begin{enumerate}[label = $\circleddash_{\arabic*}$, ref = $\circleddash_{\arabic*}$]
				\setcounter{enumi}{\value{befclcou}}
				\item \label{Iind} the system $$I = \{x_\alpha: \ \alpha < \omega_1 \} \cup \{ y_{\delta,0,0}: \ \delta \in S\} \text{ is independent in } G_{\bar p},$$ 
				indeed. In particular,
				\item $G_{\bar p}$ is torsion-free.
				\stepcounter{befclcou} 	\stepcounter{befclcou}
				
			\end{enumerate} 
		Second, we can argue that for any fixed $\beta \notin S$,
		the quotient $G^{\bar p} / G^{\bar p}_\beta$ is $\aleph_1$-free. By \cite[Thm. 19.1.]{Fu} it suffices to prove that every subgroup of finite rank is free. Since a subgroup of a free group is always free it is enough to show that a cofinal system of the finite rank subgroups (w.r.t. $\leq$) consists of free group.
		So fix $J = \{x_{\alpha_0}, x_{\alpha_1}, \dots, x_{\alpha_{j-1}}\}$, $L = \{y_{\delta_0,0,0}, y_{\delta_1,0,0}, \dots, y_{\delta_{\ell-1},0,0} \}$ for some $j,\ell<\omega$, where $\beta\leq \alpha_0 < \alpha_1 < \dots < \alpha_{j-1} <\gamma$ and $\beta \leq \delta_0 < \delta_1 < \dots < \delta_{\ell-1} < \gamma$.
		We would like to prove that $\clpr((L  \cup J) + G^{\bar p}_\beta) \leq G_{\bar p}/G^{\bar p}_\beta$ is	free, and by the above remark we can increase $J$ and $L$, and assume that
			\begin{enumerate}[label = $\circleddash_{\arabic*}$, ref = $\circleddash_{\arabic*}$]
			\setcounter{enumi}{\value{befclcou}}
			\item \label{asabove} if $y_{\delta_{k_0},0,0} \neq y_{\delta_{k_1},0,0} \in L$, $\eta^*_{\delta_{k_0}}(n_0) = \eta^*_{\delta_{k_1}}(n_1)$ for some $0 < n_0,n_1$, then $x_{\eta^*_{\delta_{k_0}}(n_0)} = x_{\eta^*_{\delta_{k_1}}(n_1)} \in J$.
			\stepcounter{befclcou}
		\end{enumerate} 
		Now we claim the following. By the characterization theorem  of finitely generated abelian groups \cite[Thm 15.5.]{Fu} the claim finishes the proof of Lemma $\ref{kotszam}$.
		\begin{claim}\label{oegy} 
			If $L$ and $J$ are as above, then $\clpr((L \cup J)+ G^{\bar p}_\beta) \leq G_{\bar p} / G_\beta^{\bar p}$ is generated by 
			$$ \{ z + G_\beta^{\bar p}: \ z \in L \cup K \} \cup$$
			$$ \cup \left\{ \frac{y_{\delta,0,0} - x_{\eta^*_{\delta}(n)})}{p_{\delta,n,n}} + G_\beta^{\bar p}: \ (y_{\delta,0,0} \in L,   n \in \omega) \wedge (x_{\eta^*_{\delta}(n)} \in J \ \vee \ \eta^*_{\delta}(n)< \beta) \right\},$$
				$$ \begin{array}{rl} \cup \left\{ \right. \frac{y_{\delta,0,0} - (x_{\eta^*_{\delta}(m)}- x_{\eta^*_{\delta}(n)}) }{p_{\delta,n,m}} + G_\beta^{\bar p}: & \ (y_{\delta,0,0} \in L,  \  n<m \in \omega) \wedge \\ & (x_{\eta^*_{\delta}(n)} \in J \ \vee \ \eta^*_{\delta}(n)< \beta) \wedge (x_{\eta^*_{\delta}(m)} \in J \ \vee \ \eta^*_{\delta}(m)< \beta) \left. \right\} \end{array}$$

			in particular it is finitely generated.
		\end{claim}
		\begin{PROOF}{Claim \ref{oegy}}(Claim \ref{oegy})
			Recalling the definition of the group $G_{\bar p}$ we have that the set $\sG_0 \cup \sG_1 \cup \sG_2 \cup \sG_3$ generates $G_{\bar p} / G_\beta^{\bar p}$, where
			$$ \sG_0 = \{ x_\alpha + G_\beta^{\bar p}: \ \beta \leq \alpha < \omega_1\}, $$
			$$ \sG_1 = \{ y_{\delta,0,0} +  G_\beta^{\bar p}: \ \delta \in S \setminus \beta\},$$ 
			 $$ \sG_2 \{ (y_{\delta,0,0}-x_{\eta^*_\delta(n)}) /p_{\delta,n,n} +  G_\beta^{\bar p}: \ \delta \in S \setminus \beta, \ n \in \omega \}, $$
			$$ \sG_3 \{ (y_{\delta,0,0}-(x_{\eta^*_\delta(m)} - x_{\eta^*_\delta(n)}) /p_{\delta,n,m} +  G_\beta^{\bar p}: \ \delta \in S \setminus \beta, \ n<m \in \omega \}, $$
			 so it is enough to verify the following subclaim.
			 \begin{sclaim}\label{scl} 
			 	Let $h \in G_{\bar p} / B^{\bar p}_\beta$ be given, suppose that
			 	$$ \begin{array}{rl}  h= &\sum_{\alpha < \omega_1} k_\alpha (x_\alpha + G_\beta^{\bar p}) + \\ 
			 		 & + \sum_{\delta \in S \setminus \beta} k_{\delta,0,0} (y_{\delta,0,0} +  G_\beta^{\bar p})+ \\
			 		 & +\sum_{\delta \in S, n \in \omega} k_{\delta,n+1,n+1} ((y_{\delta,0,0}-x_{\eta^*_\delta(n)}) /p_{\delta,n,n} +  G_\beta^{\bar p}) + \\
			 		 & + \sum_{\delta \in S, n<m \in \omega} k_{\delta,n,m} ((y_{\delta,0,0}-(x_{\eta^*_\delta(m)} - x_{\eta^*_\delta(n)}) /p_{\delta,n,m} +  G_\beta^{\bar p}).  \end{array}			$$
			 		
			Then the following hold:
			\begin{enumerate}[label = $\roman*)$, ref = $\roman*)$]
				\item \label{i} for every $\delta \in S \setminus \beta$, $n<m \in \omega$, if $y_{\delta,0,0} \notin L$, then 
				$$p_{\delta,n,m} \nmid k_{\delta,n,m}  \ \to \ h \notin \clpr(L \cup J),$$
				and
					$$p_{\delta,n,n} \nmid k_{\delta,n+1,n+1}  \ \to \ h \notin \clpr(L \cup J),$$
				\item \label{iii} for every $\delta \in S \setminus \beta$, $n<m \in \omega$, if $(y_{\delta,0,0} \in L$), and either ($x_{\eta^*_\delta(n)} \notin J$ $\&$ $\eta_\delta^*(n) \geq \beta$), or ($x_{\eta^*_\delta(m)} \notin J$ $\&$ $\eta_\delta^*(m) \geq \beta$), then 
				$$p_{\delta,n,m} \nmid k_{\delta,n,m} \ \to \ h \notin \clpr(L \cup J),$$
			\end{enumerate}
			 \end{sclaim}
			(This would finish the proof of Claim \ref{oegy}, i.e. $h \in \clpr(L \cup J)$ implies that $h$ is a $\mathbb{Z}$-linear combinations of the finite set in the claim and possibly adding ($x_\alpha + G_\beta^{\bar p}$)'s ($\alpha \geq \beta$, $x_\alpha \notin J$), or  $(y_{\delta,0,0} +  G_\beta^{\bar p}$)'s ($\delta \in S \setminus \beta$, $y_{\delta,0,0} \notin L$). But since $h \in \clpr(L \cup J)$, we have that for some $0 \neq \ell \in \mathbb{Z}$, $\ell h$ is a $\mathbb{Z}$-linear combination of elements from $L \cup K$.)
			\begin{PROOF}{Subclaim \ref{scl}}(Subclaim \ref{scl})
				Fix $\delta \geq \beta$. We prove first \ref{i}, so we fix $\delta \in S$, and we need to show that
			  if $y_{\delta,0,0} \notin L$, $n<m \in \omega$, then 
			$$p_{\delta,n,m} \nmid k_{\delta,n,m} \ \to \ h \notin \clpr(L \cup J),$$
			and
			$$ p_{\delta,n,n} \nmid  k_{\delta,n+1,n+1}\ \to \ h \notin \clpr(L \cup J).$$
			
		 Suppose on the contrary, and consider a counterexample $h \in \clpr(L \cup J)$, and the values 
		$$ k_{\delta,n,m} \ \ (n,m \in \omega), $$
			which are not all $0$'s.
			Clearly (recalling \ref{Iind}):
			$$k_{\delta,0,0} + \sum_{n<m} \frac{k_{\delta, n,m}}{p_{\delta,n,m}} +  \sum_{n =0}^{\infty} \frac{k_{\delta,n+1,n+1}}{p_{\delta,n,n}}  = 0.$$
			Now since the sequence $\langle p_{\delta,n,m}: \ n,m \in \omega \rangle$ consists of pairwise relatively prime integers, even the fact that this sum is an integer implies 
			$$ \forall n<m \in \omega: \  p_{\delta,n,m} | k_{\delta,n,m},$$
			and
			$$ \forall n \in \omega: \  p_{\delta,n,n} | k_{\delta,n+1,n+1}.$$
		
					Now we can argue \ref{iii}.	Now for this fixed $\delta$ we can replace the relevant elements of $\sG_2 \cup \sG_3$ (i.e. those that have $y_{\delta,0,0}$ occurring in them) by the sum of elements from $\sG_0$ and $\sG_1$, we arrive at another $\bbZ$-linear combination with the same sum $h$, which derived decomposition satisfies 
					$$ [(\forall n<m) \ k_{\delta,n,m}=0 \ \& \ k_{\delta,n+1,n+1}=0]$$ 
						(only $k_{\delta,0,0}$ may be nonzero).

					By the above argument we can of course assume that
					\begin{equation} \label{assa} (\forall \delta \in S, \forall n \in \omega): \  [y_{\delta,0,0} \notin L \to \ k_{\delta,n+1,n+1} =0]
					\end{equation}
					and
					\begin{equation} 
						 (\forall \delta \in S, \forall n<m \in \omega): \  [y_{\delta,0,0} \notin L \to k_{\delta,n,m} = 0].
					\end{equation}
									
					So let $\alpha \geq \beta$ such that 
					\begin{equation} \label{notinj} x_\alpha \notin J. \end{equation}
					 Again, $h \in \clpr(L \cup J)$ implies
					$$\begin{array}{rl}
						0 = &k_{\alpha} -\\ 
						& -  \sum_{\delta\in S, y_{\delta,0,0} \in L, \ n \in \omega, \ \eta^*_\delta(n) = \alpha} \frac{k_{\delta,n+1,n+1}}{p_{\delta,n,n}} \\
						& - \sum_{\delta\in S, y_{\delta,0,0} \in L, \ n<m \in \omega, \ \eta^*_\delta(n) = \alpha} \frac{k_{\delta,n,m}}{p_{\delta,n,m}} \\
							& + \sum_{\delta\in S, y_{\delta,0,0} \in L, \ l<n \in \omega, \ \eta^*_\delta(n) = \alpha} \frac{k_{\delta,l,n}}{p_{\delta,l,n}}
				 \end{array}.$$
				 But by \ref{asabove} it follows from $x_\alpha \notin J$ that there is at most one $\delta_0 \in S$ for which $y_{\delta,0,0} \in L$, and $\alpha \in \eta^*_\delta(n)$ for some $n$. W.l.o.g. we can assume that there is such a $\delta_0$, (and $\eta^*_{\delta_0}(n_0) = \alpha$) otherwise we are done. So
						$$\begin{array}{rl}
						0 = &k_{\alpha} -\\ 
						& -   \frac{k_{\delta_0,n_0+1,n_0+1}}{p_{\delta_0,n_0,n_0}} \\
						& - \sum_{m>n_0} \frac{k_{\delta_0,n_0,m}}{p_{\delta_0,n_0,m}} \\
						& + \sum_{l<n_0} \frac{k_{\delta:0,l,n_0}}{p_{\delta_0,l,n_0}}
					\end{array}.$$
				Using that $\langle p_{\delta_0,n,m}: \ n \leq m \rangle$ consists of pairwise relatively prime numbers, we are done.
					
						\end{PROOF}
		\end{PROOF}
	\end{PROOF}
	In order to complete the proof of clause \ref{ekii} we have to verify that if for each $\bar p$ satisfying the demands from Lemma $\ref{kotszam}$ the group $G_{\bar p}$ is Whitehead, then $\bar \eta^*$ has $\aleph_0$-uniformization.
	\begin{claim}\label{megolc}
		Suppose that the sequence $\bar p \in \ ^{S \times \Delta^+} \bbZ$ satisfies that for each $\delta \in S$ $\langle p_{\delta,n,m}: \ n\leq m \in \omega \rangle$ is a sequence of pairwise relative prime integers, and the group $G_{\bar p} \in K_{S,\bar \eta^*}$ (from Definition $\ref{GpKp}$) is Whitehead.
		
		Then for every sequence $\bar a = \langle a_{\delta,n,m} : \ \delta \in S, \ n\leq m \in \omega \rangle \in \ ^{S \times \omega} \bbZ$ there is a solution $\langle b_\delta: \ \delta \in S \rangle \in \ ^S \bbZ$, $\langle c_\alpha: \ \alpha < \omega_1 \rangle \in \ ^{\omega_1}\bbZ$ of $\bar a$,
		which means that 
		\begin{equation} \label{egyenlet} \forall \delta \in S, \ n<m \in \omega: \ b_\delta +a_{\delta,n,m} \equiv c_{\eta^*_\delta(m)}- c_{\eta^*_\delta(n)}  \ (\textrm{mod } p_{\delta,n,m}), \end{equation} 
		and
		\begin{equation} \label{2enlet} \forall \delta \in S, \ n \in \omega: \ b_\delta + a_{\delta,n,n} \equiv c_{\eta^*_\delta(n)}  \ (\textrm{mod } p_{\delta,n,n}). \end{equation} 
	\end{claim}
	\begin{PROOF}{Claim \ref{megolc}}
		Define the abelian group $H$ generated by 
		$$\{ x^*_{\alpha}, y^*_{\delta,n,m}: \ \alpha < \omega_1, \delta \in S, n\leq m \in \omega\} \cup \{ \zeta\}$$
		freely, except for the relations
		\begin{equation} \label{rel}  p_{\delta,n,m} y^*_{\delta,n,m} = y^*_{\delta,0,0} - (x^*_{\eta^*_\delta(m)}  -x^*_{\eta^*_\delta(n)}) - a_{\delta,n,m} \zeta \ \ (\delta \in S, \ n<m \in \omega), \end{equation}
		and
		\begin{equation} \label{rel2}  p_{\delta,n,n} y^*_{\delta,n+1,n+1} = y^*_{\delta,0,0} - (x^*_{\eta^*_\delta(m)}  -x^*_{\eta^*_\delta(n)}) - a_{\delta,n,n} \zeta \ \ (\delta \in S, \ n \in \omega). \end{equation}
		Now similarly to calculations in the proof of Lemma $\ref{kotszam}$ one can check that
			\newcounter{mecou} \setcounter{mecou}{0}
		\begin{enumerate}[label = $\circleddash_{\arabic*}$, ref = $\circleddash_{\arabic*}$]
			\setcounter{enumi}{\value{mecou}}
			\item \label{Izind} the system $$I^* = \{x^*_\alpha: \ \alpha < \omega_1 \} \cup \{ y^*_{\delta,0,0}: \ \delta \in S\} \cup \{ \zeta\}$$
		is a maximal independent family  in $H$, and
			\item \label{a} $\clpr_H(\{\zeta\}) = \bbZ \zeta$ (using the condition on $\bar p$, by the same argument as in Subclaim $\ref{oegy}$).
			\stepcounter{mecou} 
				\stepcounter{mecou} 	
		\end{enumerate} 
		It is also straightforward to check that there is a homomorphism $\varphi: H \to G_{\bar p}$ sending $x_\alpha$ to $x^*_\alpha$, $y^*_{\delta,0,0}$ to $y_{\delta,0,0}$, and $\zeta$ to $0$, since it respects the relations defining the groups in $\eqref{rel0}$ and $\eqref{rel3}$.
		Now since each element in $H$ is a $\bbQ$-linear combination of $I^*$, \ref{a} implies that 
		\begin{enumerate}[label = $\circleddash_{\arabic*}$, ref = $\circleddash_{\arabic*}$]
			\setcounter{enumi}{\value{mecou}}
			\item \label{kerf} $\textrm{Ker}(\varphi) = \bbZ \zeta \simeq \bbZ$,
			\stepcounter{mecou} 	
		\end{enumerate} 
		so there is a homomorphism $f: G_{\bar p} \to H$ with $\varphi \circ f =  \textrm{id}_{G_{\bar p}}$.
		Now clearly
			\begin{enumerate}[label = $\circleddash_{\arabic*}$, ref = $\circleddash_{\arabic*}$]
			\setcounter{enumi}{\value{mecou}}
			\item $ f(y_{\delta,n,m}) - y^*_{\delta,n,m} \in \bbZ \zeta$ ($\delta \in S$, $n\leq m$),
			\item $f(x_\alpha) - x^*_{\alpha} \in \bbZ \zeta$ ($\alpha < \omega_1$),
			\stepcounter{mecou} 	
			\stepcounter{mecou} 	
		\end{enumerate} 
		so we can define the vectors $\bar b$, $\bar c$ by the following equations:
			\begin{enumerate}[label = $\circleddash_{\arabic*}$, ref = $\circleddash_{\arabic*}$]
			\setcounter{enumi}{\value{mecou}}
			\item $b_\delta \zeta = f(y_{\delta,0,0}) - y^*_{\delta,0,0}$ ($\delta \in S$),
			\item $c_{\alpha} \zeta = f(x_\alpha) - x^*_{\alpha}$ ($\alpha < \omega_1$).
			\stepcounter{mecou} 	
			\stepcounter{mecou} 	
		\end{enumerate} 
		Then recalling $\eqref{rel}$, fixing $\delta \in S$, $n <m \in \omega$ and using $f(y_{\delta,n,m}) = y^*_{\delta,n,m} + e \zeta$ for some $e \in \bbZ$ (which holds by \ref{kerf}) on the one hand 
		$$y^*_{\delta,0,0} - (x^*_{\eta^*_\delta(m)}- x^*_{\eta^*_\delta(n)}) - a_{\delta,n,m} \zeta = p_{\delta,n,m}  y^*_{\delta,n,m},$$
		and on the other hand,
		$$ p_{\delta,n,m} ( y^*_{\delta,n,m} + e \zeta) = p_{\delta,n,m} f(y_{\delta,n,m}) = f(p_{\delta,n,m} y_{\delta,n,m}) = f ( y_{\delta,0,0}- (x_{\eta^*_\delta(m)} - x_{\eta^*_\delta(n)})) =$$
		$$ f ( y_{\delta,0,0})- (f(x_{\eta^*_\delta(m)} - f(x_{\eta^*_\delta(n)}) = 
		 b_\delta \zeta  + y^*_{\delta,0,0}  - (c_{\eta^*_\delta(m)} \zeta + x^*_{\eta^*_\delta(m)} - c_{\eta^*_\delta(n)} \zeta - x^*_{\eta^*_\delta(n)}),$$
   		so $p_{\delta,n,m} \cdot e \cdot \zeta = p_{\delta,n,m} ( y^*_{\delta,n,m} + e \zeta - y^*_{\delta,n,m} )$ can be written as:
   		 $$\left( b_\delta \zeta  + y^*_{\delta,0,0}  - (c_{\eta^*_\delta(m)} \zeta + x^*_{\eta^*_\delta(m)} - c_{\eta^*_\delta(n)} \zeta - x^*_{\eta^*_\delta(n)}) \right) - \left(y^*_{\delta,0,0} - (x^*_{\eta^*_\delta(m)}- x^*_{\eta^*_\delta(n)}) - a_{\delta,n,m} \zeta \right),$$
   		 hence
   		$$p_{\delta,n,m} \cdot e \cdot \zeta = b_\delta \zeta - (c_{\eta^*_\delta(m)} \zeta - c_{\eta^*_\delta(n)} \zeta) - (- a_{\delta,n,m} \zeta)$$
   		 so by \ref{Izind} the coefficients of $\zeta$ must be equal:
   		 $$p_{\delta,n,m} \cdot e = b_\delta - (c_{\eta^*_\delta(m)} - c_{\eta^*_\delta(n)}) +a_{\delta,n,m}.$$
   		 Finally, 
		 $$b_\delta + a_{\delta,n,m}   \equiv c_{\eta^*_\delta(m)} - c_{\eta^*_\delta(n)} \ (\textrm{mod }p_{\delta,n,m}),$$
		 as desired.
		 (Checking that $b_\delta + a_{\delta,n,n}   \equiv c_{\eta^*_\delta(n)} \ (\textrm{mod }p_{\delta,n,n}$) is completely analogous.)
	\end{PROOF}
		\begin{claim}\label{megold}
		Suppose that
		\begin{itemize}
			\item the groups in $K_{S,\bar \eta^*}$ (from Definition $\ref{GpKp}$ are all Whitehead,
		\end{itemize} moreover, assume that we are given the sequence $\langle d_{\delta,n}: \ \delta \in S, \ n \in \omega \rangle \in \ ^{S \times \omega}\omega$.
		Then there exists a sequence $\langle d^*_\alpha: \ \alpha < \omega_1 \rangle \in \ ^{\omega_1} \omega$ with the property
		$$ (\forall \delta \in S) \ \ \forall^\infty n:\  d^*_{\eta^*_\delta(n)} \geq d_{\delta,n}, $$
		and
			$$ (\forall \delta \in S) \ \ \forall^\infty n \ \forall k>0:\  d^*_{\eta^*_\delta(n+k)} \neq d^*_{\eta^*_\delta(n)}, $$
			i.e. for some $N$ the sequence $\langle d^*_{\eta^*_\delta(l)}: \ l \geq N \rangle$ is injective.
	\end{claim}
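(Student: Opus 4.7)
The plan is to derive Claim \ref{megold} from Claim \ref{megolc} via a judicious choice of parameters. For each $\delta \in S$, I would pick a sequence $\langle p_{\delta,n} : n \in \omega \rangle$ of pairwise distinct primes growing fast enough that $p_{\delta,n} \geq 2^n + 2 d_{\delta,n} + 2n$; this is possible since primes are unbounded. Then $\bar p = \langle p_{\delta,n}\rangle$ gives a group $G_{\bar p} \in K_{S, \bar \eta^*}$, which by hypothesis is Whitehead, so Claim \ref{megolc} applies to the \emph{shifted} sequence $a_{\delta,n} := d_{\delta,n} + n$. This produces $\bar b = \langle b_\delta : \delta \in S\rangle$ and $\bar c = \langle c_\alpha : \alpha < \omega_1\rangle$ in $\bbZ$ with
$$c_{\eta^*_\delta(n)} \equiv b_\delta + d_{\delta,n} + n \pmod{p_{\delta,n}} \qquad (\delta \in S,\ n \in \omega).$$
I then set $d^*_\alpha := |c_\alpha| \in \omega$ for $\alpha < \omega_1$.

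Verification proceeds by fixing $\delta \in S$, setting $B := |b_\delta|$ (a finite integer depending only on $\delta$), and writing $c_{\eta^*_\delta(n)} = b_\delta + d_{\delta,n} + n + k_n p_{\delta,n}$ for some integer $k_n$. When $k_n = 0$, the triangle inequality gives $|c_{\eta^*_\delta(n)}| \geq (d_{\delta,n} + n) - B$, which is $\geq d_{\delta,n}$ whenever $n \geq B$. When $k_n \neq 0$, the reverse triangle inequality instead yields $|c_{\eta^*_\delta(n)}| \geq |k_n| p_{\delta,n} - B - d_{\delta,n} - n \geq p_{\delta,n} - B - d_{\delta,n} - n \geq 2^n + d_{\delta,n} + n - B$ by the choice of $p_{\delta,n}$, which exceeds $d_{\delta,n}$ once $2^n \geq B$. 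Both cases therefore hold for all $n$ beyond a threshold $n_0(\delta)$ depending only on $B$, so $d^*_{\eta^*_\delta(n)} \geq d_{\delta,n}$ for all but finitely many $n$, as required.

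The main obstacle this argument navigates is the uncontrolled additive ``error'' $b_\delta$ emerging from the Whitehead splitting in Claim \ref{megolc}: with the naive choice $a_{\delta,n} = d_{\delta,n}$, one only concludes $|c_{\eta^*_\delta(n)}| \geq d_{\delta,n} - |b_\delta|$ in the $k_n = 0$ case, which does not suffice. The added summand $n$ in the shift produces growth in $n$ that swamps $|b_\delta|$ (which is fixed for each $\delta$, though globally unbounded in $\delta$), while demanding $p_{\delta,n}$ grow at least as fast as $2^n$ absorbs $B$ in the complementary $k_n \neq 0$ case. These two growth conditions together completely neutralize the dependence on the unknown $\bar b$.
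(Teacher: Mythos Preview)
Your proof is correct and follows the same overall strategy as the paper: choose $\bar p$ so that $G_{\bar p}\in K_{S,\bar\eta^*}$, invoke Claim~\ref{megolc} with a suitable $\bar a$, and set $d^*_\alpha=|c_\alpha|$. The only difference is the arithmetic trick used to neutralize the unknown $b_\delta$. The paper takes $a_{\delta,n}=\lfloor p_{\delta,n}/2\rfloor$, which forces the residue of $c_{\eta^*_\delta(n)}$ modulo $p_{\delta,n}$ into the middle of the interval $[0,p_{\delta,n})$ once $|b_\delta|\le p_{\delta,n}/4$; this gives $|c_{\eta^*_\delta(n)}|\ge p_{\delta,n}/4$ without any case split on $k_n$. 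Your shift $a_{\delta,n}=d_{\delta,n}+n$ together with the growth condition $p_{\delta,n}\ge 2^n+2d_{\delta,n}+2n$ achieves the same end via the $k_n=0$ versus $k_n\ne 0$ dichotomy. Both choices are valid; the paper's avoids the case analysis at the cost of a slightly less transparent connection between $a_{\delta,n}$ and the target $d_{\delta,n}$.
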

	\begin{PROOF}{Claim \ref{megold}}
		For each $\delta \in S$ define the sequence $p_{\delta,n,m}$ ($n \leq m \in \omega$) so that each $p_{\delta,n,m}$ is a prime, and if $(n,m) \neq (k,l)$, then $p_{\delta,n,m} \neq p_{\delta,k,l}$, moreover,
		
		$$ p_{\delta,n,n} \geq 4 d_{\delta,n},$$
		Note that this necessarily implies that
		$$(\forall M) (\exists N) ( \forall m \geq n \geq N): \  p_{\delta,n,m} \geq M.$$
		Now by our assumptions the group $G_{\bar p}$ is Whitehead, and defining $\langle a_{\delta,n,m}: \ \delta \in S, \  n \leq m \in \omega \rangle$ as
		\begin{equation} \label{ea} a_{\delta,n,m} = \lfloor p_{\delta,n,m}/2 \rfloor, \end{equation}
		then for some $\langle b_\delta: \ \delta \in S \rangle$, $\langle c_\alpha: \ \alpha < \omega_1 \rangle$ $\eqref{egyenlet}$ holds.
		This means that for each $\delta \in S$, for every large enough $n$ $|b_\delta| \leq p_{\delta,n,n}/4$, so for such $n$'s $\eqref{ea}$ and \eqref{2enlet} clearly imply
		$$ |c_{\eta^*_\delta(n)}| \geq p_{\delta,n,n} /4 \geq d_{\delta,n}.$$
		Moreover (for the same fixed $\delta$) if $N$ is so large that $N \leq n < m$ implies the inequality $|b_\delta| \leq p_{\delta,n,m}/4$, then one can easily get from \eqref{egyenlet}  that
		$$|c_{\eta^*_\delta(m)} - c_{\eta^*_\delta(n)}|\geq p_{\delta,n,n} /4 >0,$$
		so choosing $d^*_\alpha = |c_{\alpha}|$ works.
		
	\end{PROOF}

		\begin{claim}\label{megold2}
		Suppose that
		\begin{itemize}
			\item for every $G \in K_{S,\bar \eta^*}$ (from Definition $\ref{GpKp}$ $G$ is Whitehead.
		\end{itemize} 
		Then $\bar \eta^*$ has $\aleph_0$-uniformization.
	\end{claim}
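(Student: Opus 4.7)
The plan is to combine the modular uniformization provided by Claim \ref{megolc} with the domination principle of Claim \ref{megold}, exploiting the fact that the ambiguity in the Whitehead lift corresponds exactly to further modular uniformizations of constant-per-ladder colorings.

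Given a coloring $\bar c = \langle c_\delta : \delta \in S \rangle$ with $c_\delta : \ran(\eta^*_\delta) \to \omega$, I would first apply Claim \ref{megold} to the sequence $d_{\delta,n} := c_\delta(\eta^*_\delta(n))$ to obtain a function $D : \omega_1 \to \omega$ with $D(\eta^*_\delta(n)) \geq c_\delta(\eta^*_\delta(n))$ for cofinitely many $n$, per $\delta \in S$. Next I would select primes $p_{\delta,n}$, pairwise distinct for each $\delta$, growing rapidly, say $p_{\delta,n} > 4(D(\eta^*_\delta(n)) + n + 1)$, so that $p_{\delta,n} \to \infty$ and $p_{\delta,n}$ eventually dwarfs any fixed integer. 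Applying Claim \ref{megolc} with $a_{\delta,n} := c_\delta(\eta^*_\delta(n))$ then produces integer sequences $\bar b = \langle b_\delta \rangle$ and $\bar c^* = \langle c^*_\alpha \rangle$ satisfying $b_\delta \equiv c^*_{\eta^*_\delta(n)} - c_\delta(\eta^*_\delta(n)) \pmod{p_{\delta,n}}$ for every $\delta \in S$ and $n < \omega$.

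To convert this modular data into an $\aleph_0$-uniformization, the key observation is that for fixed $\bar a$ and $\bar p$ the set of valid pairs $(\bar b, \bar c^*)$ forms an affine space under the action of $\mathrm{Hom}(G_{\bar p}, \bbZ)$, which is itself parameterized by homogeneous solutions $(\beta_\delta, \gamma_\alpha)$ of $\beta_\delta \equiv \gamma_{\eta^*_\delta(n)} \pmod{p_{\delta,n}}$. To eliminate the per-ladder shift $\bar b$, I would apply Claim \ref{megolc} a second time to the constant-per-ladder auxiliary coloring $\tilde a_{\delta,n} := -b_\delta$ with a fresh and much faster-growing prime sequence, producing a correction function $B : \omega_1 \to \bbZ$ that cancels the shift when added to $c^*$, while preserving the modular uniformization property.

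The main obstacle is passing from modular congruences to integer equalities. After absorbing the shifts one has $c^*_{\eta^*_\delta(n)} \equiv c_\delta(\eta^*_\delta(n)) \pmod{p_{\delta,n}}$ for large $n$, and since $c_\delta(\eta^*_\delta(n)) < D(\eta^*_\delta(n)) < p_{\delta,n}/4$ and eventually $|b_\delta| < p_{\delta,n}/4$, an integer equality would follow if one could bound $|c^*_{\eta^*_\delta(n)}|$ similarly. Such a bound is not freely granted by the Whitehead hypothesis, which only asserts the existence of some lift. Handling this likely requires an inductive construction in which, having obtained a first-order lift with unknown size, one re-chooses the primes to be enormous relative to $|c^*|$ on the ladders, re-applies Claim \ref{megolc}, and telescopes the corrections; the rapid growth of primes then forces all modular congruences at every stage to be actual integer equalities. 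Orchestrating this bookkeeping --- so that at each stage the primes dominate all previously produced quantities while the shifts cancel coherently --- is the delicate technical step, and is essentially where the full strength of the Whitehead hypothesis across all $G_{\bar p} \in K_{S,\bar\eta^*}$ is used.
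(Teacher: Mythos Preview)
Your proposal correctly isolates the central difficulty --- eliminating the per-ladder shift $b_\delta$ --- but the iterative ``telescoping'' fix you sketch does not close the gap. Each further application of Claim~\ref{megolc} to a constant-per-ladder input such as $\tilde a_{\delta,n}=-b_\delta$ produces a \emph{new} pair $(\tilde b_\delta,\tilde c_\alpha)$ with its own unknown shift $\tilde b_\delta$; nothing forces the successive shifts to shrink or cancel, so the process does not terminate. A second structural problem is that your primes $p_{\delta,n}>4(D(\eta^*_\delta(n))+n+1)$ depend on $n$ and hence on $\delta$ for a fixed $\alpha=\eta^*_\delta(n)$; this means the congruences at $\alpha$ live modulo different primes for different $\delta$, and there is no way to define a uniform recovery map $\alpha\mapsto f^*(\alpha)$ from $c^*_\alpha$ alone.

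The paper avoids iteration entirely by a single encoding trick. One first uses Claim~\ref{megold} to obtain a function $d^*_\alpha$ depending only on $\alpha$ with $d^*_{\eta^*_\delta(n)}\to\infty$ for each $\delta$, and then chooses primes $q_\alpha$ depending only on $\alpha$ with $q_\alpha$ large relative to $d^*_\alpha\cdot(f_\delta(\alpha)+1)$ (a second use of Claim~\ref{megold} secures this). Setting $p_{\delta,n}=q_{\eta^*_\delta(n)}$ and encoding the color as $a_{\delta,n}=d^*_{\eta^*_\delta(n)}\cdot f_\delta(\eta^*_\delta(n))$, Claim~\ref{megolc} yields
\[
c_{\eta^*_\delta(n)}\equiv b_\delta + d^*_{\eta^*_\delta(n)}\cdot f_\delta(\eta^*_\delta(n)) \pmod{q_{\eta^*_\delta(n)}}.
\]
Now the point is that $b_\delta$ is a fixed integer while $d^*_{\eta^*_\delta(n)}\to\infty$, so for large $n$ one has $|b_\delta|<d^*_{\eta^*_\delta(n)}/2$; thus $c_\alpha$ modulo $q_\alpha$ lies within $d^*_\alpha/2$ of the lattice $d^*_\alpha\cdot\bbZ$, and rounding recovers the multiplier $f_\delta(\alpha)$. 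The shift $b_\delta$ is not eliminated but rendered harmless as sub-spacing noise. The missing idea in your attempt is precisely this: rather than feeding the color directly as $a_{\delta,n}$, one dilates it by a divergent $\alpha$-only factor so that the fixed-size ambiguity $b_\delta$ becomes asymptotically negligible.
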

	\begin{PROOF}{Claim \ref{megold2}}
		Fix a system  $\langle f_\delta: \ \delta \in S \rangle$, where for each $\delta \in S$ 
		$$f_\delta: \ran(\eta^*_\delta) \to \omega \text{ is a coloring with countably many colors,}$$
		and using Claim $\ref{megold}$ fix $\langle d^*_\alpha: \ \alpha < \omega_1 \rangle$ with
		\begin{equation}\label{vegt} \forall \delta \in S: \ \lim_{n \to \infty} d^*_{\eta^*_\delta(n)} = \infty, \end{equation}
		w.l.o.g. $\forall \alpha$ $d^*_\alpha \in 2\bbZ+1$.
		Now apply Claim $\ref{megold}$ again in order to obtain a system $\langle e_\alpha: \ \alpha < \omega_1 \rangle \in \ ^{\omega_1} \omega$ satisfying
		\begin{equation} \label{eq0} (\forall \delta \in S) \ \ \forall^\infty n:\  e_{\eta^*_\delta(n)} \geq d^*_{\eta^*_{\delta}(n)} \cdot (f_\delta(\eta^*_\delta(n))+1), \end{equation}
		and
		\begin{equation} \label{eq1} (\forall \delta \in S) \  (\exists N = N(\delta)) \ (\forall n,m \geq N):\  [(n \neq m) \ \to \ (e_{\eta^*_\delta(n)} \neq e_{\eta^*_\delta(m)})] \end{equation}
			\newcounter{meecou} \setcounter{meecou}{0}
		\begin{enumerate}[label = $\boxtimes_{\arabic*}$, ref = $\boxtimes_{\arabic*}$]
			\setcounter{enumi}{\value{meecou}}
			\item Choose $q_\alpha$ to be the $e_\alpha$'th prime (so in particular
			$$(\forall \delta \in S) \ \ \forall^\infty n:\  q_{\eta^*_\delta(n)} \geq d^*_{\eta^*_{\delta}(n)} \cdot (f_\delta(\eta^*_\delta(n))+1))),$$
			\item \label{qalpha}  let for each $\delta \in S$, $n \in \omega$ 
			$$ \begin{array}{rll} p_{\delta,n} & = q_{\eta^*_\delta(n)}, & \text{ if } n \geq N(\delta), \\
			 & =1 & \text{otherwise.} \end{array}$$
				\item \label{qa} and note that by \eqref{eq1} $\langle p_{\delta,n}: \ n \in \omega \rangle$ is a sequence of pairwise relatively prime integers, with
				$$ (\forall^\infty n)\  p_{\delta,n} = q_{\eta^*_\delta(n)}.$$
			\stepcounter{meecou}
			
			\stepcounter{meecou} 	
			\stepcounter{meecou} 
		\end{enumerate} 
		
		Now 	
		\begin{enumerate}[label = $\boxtimes_{\arabic*}$, ref = $\boxtimes_{\arabic*}$]
			\setcounter{enumi}{\value{meecou}}
			\item 	define the system $\langle a_{\delta,n} : \delta \in S, n \in \omega \rangle$ as
			$$ a_{\delta,n} = d^*_{\eta_{\delta}(n)} \cdot f_\delta(\eta^*_\delta(n)),$$
			\stepcounter{meecou} 
		\end{enumerate}
		and apply Claim $\ref{megold}$ again, i.e.\ we obtain the solution
		 $\langle b_\delta: \ \delta \in S \rangle \in \ ^S \bbZ$, $\langle c_\alpha: \ \alpha < \omega_1 \rangle \in \ ^{\omega_1}\bbZ$:
		\begin{equation} \label{egyenlet2} \forall \delta \in S, \ n \in \omega: \ b_\delta \equiv c_{\eta^*_\delta(n)} - a_{\delta,n} \ (\textrm{mod } p_{\delta,n}). \end{equation} 
		Now
		\begin{enumerate}[label = $\boxtimes_{\arabic*}$, ref = $\boxtimes_{\arabic*}$]
			\setcounter{enumi}{\value{meecou}}
			\item 	for each $\alpha < \omega_1$ we define $f^*(\alpha) \in \omega$ as follows. Let 
			$$c^*_\alpha \in \{kq_\alpha + \ell d^*_\alpha: \ k \in \bbZ, \ \ell \in \{0,1, \dots, \lfloor q_\alpha/d^*_\alpha \rfloor-1 \}\}$$ be the element of that set of minimal distance from $c_\alpha$ (if that set is not empty), let $f^*(\alpha)= \ell_0  \in \{0,1, \dots, \lfloor q_\alpha/d^*_\alpha \rfloor-1 \}$, where $c^*_\alpha = k_0 q_\alpha + \ell_0 d^*_\alpha$.		
			\stepcounter{meecou} 
		\end{enumerate}
		Fixing $\delta \in S$, using \ref{qalpha} (in order to prove that $f^*$ uniformizes the $f_\delta$'s) it clearly suffices to prove that
		\begin{equation} \label{sufeq} (\forall \delta \in S) \ \forall^\infty n \exists k\in \bbZ \  |k q_{\eta^*_\delta(n)} + f_\delta(\eta^*_\delta(n)) \cdot d^*_{\eta^*_\delta(n)} - c_{\eta^*_\delta(n)}| <  d^*_{\eta^*_\delta(n)} / 2 \end{equation}
		(recall that $0 \leq f_\delta(\eta^*_\delta(n)) \cdot d^*_{\eta^*_\delta(n)} < q_{\eta^*_\delta(n)}$ holds for each $\delta$ for large enough $n$).
		By $\eqref{egyenlet2}$,
		$$ b_\delta + a_{\delta,n} = b_\delta + d^*_{\eta_{\delta}(n)} \cdot f_\delta(\eta^*_\delta(n)) \equiv c_{\eta^*_\delta(n)} \ (\textrm{mod }p_{\delta,n}). $$
		First recall that $p_{\delta,n} = q_{\eta^*_\delta(n)}$ for large enough $n$ (by \ref{qa}), so 
	 for any large enough $n$, for some $k = k(n) \in \bbZ$
		$$b_\delta + d^*_{\eta_{\delta}(n)} \cdot f_\delta(\eta^*_\delta(n)) +  k q_{\eta^*_\delta(n)}= c_{\eta^*_\delta(n)}.$$
				Rearranging this equation one obtains
		$$d^*_{\eta_{\delta}(n)} \cdot f_\delta(\eta^*_\delta(n)) +  k q_{\eta^*_\delta(n)}- c_{\eta^*_\delta(n)} = - b_\delta. $$
				
		Therefore for large enough $n = n(\delta)$ we have $|b_\delta| < d^*_{\eta_\delta^*(n)} / 2$ (by $\eqref{vegt}$), obtaining $\eqref{sufeq}$, we are done.

	\end{PROOF}

\end{PROOF}

\newpage

\section {Special uniformization problems: a combinatorial equivalence} \label{s2}

The purpose of this section is to prove our main theorem:
	\begin{theorem}
	\label{1.1}
	Let $S$ be a stationary set of limit ordinals $<\omega_1$, which is simple (i.e. $\forall \alpha$: $\alpha+\omega \notin S$).
	\Then \, $(A)_S \Rightarrow (B)_S$, where:
	\mn
	\begin{enumerate}
		\item[$(A)_S$]   every $S$-ladder system has $\aleph_0$-uniformization,
		\sn
		\item[$(B)_S$]   every special $S$-uniformization ${\gp}$ has a
		solution, see below.
	\end{enumerate}
	\mn

\end{theorem}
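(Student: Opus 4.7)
The plan is to construct a solution $f:\omega_1\to C$ by transfinite recursion along a filtration of $\omega_1$, using $(A)_S$ to pre-commit the critical values. Fix an increasing continuous chain $\langle X_\alpha:\alpha<\omega_1\rangle$ of countable $\gp^0$-closed subsets of $\omega_1$ with $\bigcup_\alpha X_\alpha=\omega_1$ and $\alpha\in X_{\alpha+1}$ (possible because finite closures stay finite by clauses $(\alpha)$--$(\beta)$ of Definition~\ref{1.2}, and closure is preserved at unions). By iterating the extension clause $\boxdot_d$ of Definition~\ref{pprobd} countably many times, for each $\delta\in S$ fix a partial solution $h_\delta$ whose domain is a countable $\gp^0$-closed set $Y_\delta\supseteq\ran(\eta_\delta)\cup\bigcup_n u_{\delta,n}\cup\{\delta+i:i<\omega\}$. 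The $h_\delta$'s serve as ``targets'': the goal is to force $f$ to agree with $h_\delta$ on a tail of $\ran(\eta_\delta)$ and on the entire interval $[\delta,\delta+\omega)$, for each $\delta\in S$.

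To encode the targets as a ladder coloring, define
\[
c_\delta(\eta_\delta(n)) \;:=\; h_\delta\circ\mathrm{OP}_{b^\delta_{n,n},\,|b^\delta_{n,n}|},
\]
the relative-position recoding of $h_\delta\restriction b^\delta_{n,n}$. By axiom $\boxdot_a$ and Claim~\ref{Upsn}, each $c_\delta$ takes values in a fixed countable set $\Upsilon^+$, so $(A)_S$ applies and yields $c:\omega_1\to\Upsilon^+$ together with $N_\delta<\omega$ ($\delta\in S$) such that $c(\eta_\delta(n))=c_\delta(\eta_\delta(n))$ whenever $n\ge N_\delta$. From $c(\eta_\delta(n))$ and the known set $b^\delta_{n,n}$ one then recovers $h_\delta\restriction b^\delta_{n,n}$ and in particular the value $h_\delta(\gamma)$ for every $\gamma\in b^\delta_{n,n}$. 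Construct $f_\alpha\in\Psi^+(X_\alpha)$ by transfinite recursion, continuous at limits and using $\boxdot_d$ at successors, subject to the following forcing: any newly added $\gamma\in X_{\alpha+1}\setminus X_\alpha$ lying in some $b^\delta_{n,n}$ with $\delta\in S$ and $n\ge N_\delta$ must take the value $h_\delta(\gamma)$ decoded from $c(\eta_\delta(n))$. Simplicity of $S$ keeps the intervals $[\delta,\delta+\omega)$ pairwise disjoint for distinct $\delta\in S$, so no conflict arises on the $\delta+i$ side, and at the end we set $f:=\bigcup_\alpha f_\alpha$.

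The principal obstacle is ensuring that the finitely many forced values appearing at each successor step are mutually compatible, so that $\boxdot_d$ indeed produces $f_{\alpha+1}\in\Psi^+(X_{\alpha+1})$. A single ordinal $\gamma$ may lie in several basic sets $b^{\delta_i}_{n_i,n_i}$ simultaneously, and each imposes $f(\gamma)=h_{\delta_i}(\gamma)$; the case $\gamma=\eta_{\delta}(n)=\eta_{\delta'}(n')$ with $n\ge N_\delta$, $n'\ge N_{\delta'}$ is forced to satisfy $c_\delta(\gamma)=c_{\delta'}(\gamma)$ by uniformization, but agreement of the full decoded restrictions $h_\delta\restriction b^\delta_{n,n}$ and $h_{\delta'}\restriction b^{\delta'}_{n',n'}$ on their joint overlap is \emph{not} automatic. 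I resolve this by a preliminary coordination of targets: first pick a partial solution $g$ on a rich countable $\gp^0$-closed skeleton using only $\boxdot_c$ and $\boxdot_d$, and then, for $\delta\in S$, choose each $h_\delta$ to agree with $g$ (and with previously chosen $h_{\delta'}$'s on their overlap with $Y_\delta$), extending via $\boxdot_d$ on the remaining part of $Y_\delta$. Combined with the tail-agreement from $(A)_S$, this coherence of targets ensures that each successor extension is consistent, $\boxdot_d$ applies, and the resulting $f$ is a solution of $\gp$.
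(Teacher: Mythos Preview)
Your ``preliminary coordination of targets'' is where the argument breaks. You propose choosing the $h_\delta$'s by transfinite induction on $\delta$, agreeing with previously chosen $h_{\delta'}$'s on overlaps and ``extending via $\boxdot_d$ on the remaining part of $Y_\delta$''. But $\boxdot_d$ only extends from \emph{finite} $\gp$-closed sets, while at stage $\delta\in S$ the already-committed domain $Y_\delta\cap\bigcup_{\delta'<\delta}Y_{\delta'}$ is typically infinite; iterating $\boxdot_d$ along a finite-closed filtration $V_m\nearrow Y_\delta$ does not help, because the data you must respect at step $m+1$ live on $V_m\cup\bigl(V_{m+1}\cap\operatorname{dom}H_{<\delta}\bigr)$, a union of closed sets that is not itself closed. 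More decisively, this phase does not use $(A)_S$ at all, and if it could be carried out then $H=\bigcup_\delta h_\delta$ would already be a partial solution on a $\gp$-closed set containing every basic set, so $(B)_S$ would hold outright --- contradicting the easy implication $(B)_{S,\bar\eta^*}\Rightarrow(A)_{S,\bar\eta^*}$ together with the consistency of $\neg(A)_S$. A single countable skeleton $g$ cannot rescue this: it coordinates only countably many of the $\aleph_1$ targets.

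The paper's proof is of a very different shape and invokes $(A)_S$ many times rather than once. It first reduces (Lemmas~\ref{le1} and~\ref{le2}) to a ``regular very special'' problem where $u_{\delta,n}=v_{\eta_\delta(n)}$ for a global $\bar v$; then, by repeated uniformization, it manufactures a rank $\langle A_n\rangle$ with $\delta\in A_n\Rightarrow\ran(\eta'_\delta)\subseteq A_{n+1}$ (Claim~\ref{elok}), a pressed-down system $\bar w$ (Lemma~\ref{wlem}), and coherent local invariants $\psi_\alpha,\nu^*_\alpha,\nu^{**}_\alpha,h^*_\alpha,p_\alpha$ along ``histories'' --- finite chains $\delta_0>\delta_1>\cdots$ with $\delta_{i+1}\in\ran(\eta'_{\delta_i})$ (Lemma~\ref{ultuni}). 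Only after this apparatus is in place does a transfinite construction along maximal histories (Claim~\ref{Ff}) succeed. The nesting phenomenon $\eta_{\delta}(k)\in S$, which interlocks constraints across different $\delta$'s, is exactly what the rank/history machinery untangles; your Phase~1 attempts to absorb this for free.
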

	We shall prove more in \ref{1.4} below.

\begin{definition}
\label{1.3}
Assume ${\mathfrak p}$ is a special
$S$-uniformization problem, $\Psi = \Psi^{\gp}$. 

\noindent
1) We say $x_1$, $x_2$ are $\Psi$-isomorphic \when :
$|x_1|=|x_2|$ and for any $b_\ell\subseteq x_\ell$, $b_\ell \in \cB^\gp$, $f_\ell$ a function
with domain $b_\ell$ (for $\ell=1,2$) satisfying

\[
\text{OP}_{u_1, u_2}(b_2)=b_1,
\]

\[
f_1\circ \text{OP}_{b_1, b_2}=f_2
\]

\mn
we have $f_1\in \Psi(b_1) \text{ iff } f_2\in \Psi(b_2)$. 

\end{definition}

\noindent

\begin{observation}{\ } \\
\label{a14}
1) Let $\gp$ be a special $S$-uniformization problem.
The number of $\Psi$-isomorphism types is $\le \aleph_0$.

\noindent
2) For every finite $u \subseteq \omega_1$ there is a finite
   $\gp$-closed $v$ extending $u$.


\noindent
3) The relation ``$\bar\eta',\bar\eta$" are very similar $S$-ladder
   systems" is an equivalence relation on the set of $S$-ladder
   systems (see Definition \ref{a6b}(4)).
\end{observation}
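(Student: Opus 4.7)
The plan is to dispatch each of the three items in turn; none requires serious work given the machinery already in place.

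For part 1, I will argue that the $\Psi$-isomorphism type of a finite $x \subseteq \omega_1$ with $|x| = n$ is determined by the finite code
\[
T(x) := \bigl\{ \bigl( \text{OP}_{n, x}(b),\ \{ f \circ \text{OP}_{b, n} : f \in \Psi(b) \} \bigr) : b \in \cB^{\gp^0},\ b \subseteq x \bigr\}.
\]
Two sets of equal size are $\Psi$-isomorphic exactly when their codes coincide: the order-preserving bijection $\text{OP}_{x_1, x_2}$ maps basic subsets of $x_2$ to basic subsets of $x_1$ and conjugates $\Psi$-membership, which is precisely the condition of Definition \ref{1.3}. By clause \ref{pda} of Definition \ref{pprobd}, each second coordinate in $T(x)$ belongs to the countable family $\bigcup_{k} \Upsilon_k^{\gp}$, while each first coordinate is a subset of $n$. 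Hence $T(x)$ lies in a countable set, and summing over $n < \omega$ bounds the total number of types by $\aleph_0$.

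For part 2, I will prove by induction on $\gamma := \sup u$ the slightly strengthened statement that every finite $u$ admits a finite $\gp$-closed extension $v$ with $\sup v = \sup u$. (Note that each closure step only adjoins predecessors of existing elements and ordinals strictly below elements of $S \cap v$, so the sup is automatically preserved.) The cases $u = \emptyset$ and $\gamma$ successor are trivial; in the latter, close $(u \setminus \{\gamma\}) \cup \{\gamma - 1\}$ inductively and put $\gamma$ back, using $\gamma \notin S$. If $\gamma$ is a limit with $\gamma \notin S$, close $u \setminus \{\gamma\}$ inductively and adjoin $\gamma$; no ladder condition is triggered at $\gamma$. The main case is $\gamma = \delta \in S$: choose the least $N$ with $\sup(u \cap \delta) < \eta_\delta(N)$, and set
\[
u_0 := (u \setminus \{\delta\}) \cup \{\eta_\delta(k) : k < N\} \cup \bigcup_{k < N} u_{\delta, k}.
\]
Since $u_{\delta, k} \subseteq \eta_\delta(k) < \eta_\delta(N)$, we have $u_0 \subseteq \eta_\delta(N) < \delta$. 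By the induction hypothesis there is a finite $\gp$-closed $v_0 \supseteq u_0$ with $v_0 \subseteq \eta_\delta(N)$. Then $v := v_0 \cup \{\delta\}$ verifies clause $(\beta)$ of Definition \ref{1.2}(\ref{1.2)2}) at $\delta$ with witness $N$ (for (i), no $\eta_\delta(k)$ with $k \geq N$ lies in $v_0$ as $v_0 \subseteq \eta_\delta(N)$; for (ii), any nonempty intersection $v \cap [\eta_\delta(k),\eta_\delta(k+1))$ forces $k < N$ by the same bound; for (iii), the $u_{\delta, k}$ were explicitly added), while closure at every other $\delta' \in S \cap v_0$ is inherited from $v_0$.

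For part 3, reflexivity is witnessed by $k_1 = k_2 = 0$ and symmetry by interchange. For transitivity, fix $\delta \in S$ and suppose $\bar\eta^1,\bar\eta^2$ are very similar at $\delta$ with witnesses $(a_1, a_2)$ and $\bar\eta^2,\bar\eta^3$ are very similar at $\delta$ with witnesses $(b_2, b_3)$. Set $r := \max(b_2 - a_2,\ 0)$ and $s := \max(a_2 - b_2,\ 0)$, so that $a_2 + r = b_2 + s$. Then both sets
\[
\{\eta^1_\delta(a_1 + r + n) + \omega : n < \omega\} \ \text{and}\ \{\eta^3_\delta(b_3 + s + n) + \omega : n < \omega\}
\]
equal the common cofinite tail $\{\eta^2_\delta(a_2 + r + n) + \omega : n < \omega\}$, furnishing the required witnesses $(a_1 + r,\ b_3 + s)$ for $\bar\eta^1,\bar\eta^3$ being very similar at $\delta$. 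There is no real obstacle in any of the three arguments; the observation merely collects bookkeeping facts needed for Theorem \ref{1.1}.
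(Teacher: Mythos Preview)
Parts 1 and 2 are correct and follow the paper's approach: part 1 uses the countability of the $\Upsilon_n$ exactly as intended, and your inductive closure argument for part 2 is essentially Claim \ref{lezar} (which the paper's proof invokes).

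Part 3 has a genuine gap in the transitivity step. From the set equality $\{\eta^1_\delta(a_1+n)+\omega : n<\omega\} = \{\eta^2_\delta(a_2+n)+\omega : n<\omega\}$ you cannot conclude that the shifted sets $\{\eta^1_\delta(a_1+r+n)+\omega\}$ and $\{\eta^2_\delta(a_2+r+n)+\omega\}$ coincide: the two parametrizations of the common set need not be aligned term by term, so dropping the first $r$ indices on each side may remove different numbers of distinct values. Concretely, take $\eta^1_\delta(n)=n\omega$ and $\eta^2_\delta=(0,1,\omega,2\omega,3\omega,\ldots)$; then $a_1=a_2=0$ witnesses similarity, but with $r=2$ one gets $\{3\omega,4\omega,\ldots\}$ on the $\eta^1$ side versus $\{2\omega,3\omega,\ldots\}$ on the $\eta^2$ side. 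Choosing $\eta^3_\delta(n)=(n{+}1)\omega$ with $b_2=2$, $b_3=0$ then makes your proposed witnesses $(a_1+r,\,b_3+s)=(2,0)$ fail outright.

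The repair is immediate: observe that $\{\eta^i_\delta(k+n)+\omega : n<\omega\}$ is always a \emph{final segment} of $C_i:=\{\eta^i_\delta(n)+\omega : n<\omega\}$ (since $\eta^i_\delta$ is strictly increasing), and conversely every final segment of $C_i$ arises this way. Hence ``very similar at $\delta$'' is exactly the statement that $C_1$ and $C_2$ share a common final segment, equivalently $C_1 =^* C_2$, which is manifestly an equivalence relation.
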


\begin{PROOF}
This follows by clause (c) of Definition \ref{a4}, and see Claim $\ref{lezar}$ below.
\end{PROOF}

\noindent
As a warm-up we show (but we rely on Claim $\ref{lezar}$ below).
\begin{claim}
\label{1.3A}
If ${\gp}$ is a special $S$-uniformization problem, \then \, 
the forcing notion $\bbQ_{\gp} = ({\cF}_{\gp},\subseteq)$ satisfies 
the c.c.c. and $\Vdash_{{\bbQ}_{\gp}} ``\cup\{f:f \in \name
G_{\bbQ_{\gp}}\}$ is a solution for ${\gp}"$ where ${\cF}_{\gp} 
= \cup\{\Psi^{+}(u):u \subseteq \omega_1$ is finite, $\gp$-closed$\}$.
\end{claim}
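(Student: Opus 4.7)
The plan is to verify the two assertions separately: that $\bbQ_\gp$ is c.c.c., and that the generic union is a solution of $\gp$. Both reduce to relatively routine applications of Observation $\ref{a14}$ (countably many $\Psi$-isomorphism types and existence of finite $\gp$-closed supersets) and clauses $\ref{pdb}$--$\ref{pdd}$ of Definition $\ref{pprobd}$.

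The ``generic is a solution'' part is the easier half, and rests on density. For each $\alpha < \omega_1$ the set $D_\alpha = \{f \in \cF_\gp : \alpha \in \dom f\}$ is dense: given $f \in \Psi^+(u)$ with $u$ finite $\gp$-closed, Observation $\ref{a14}$(2) produces a finite $\gp$-closed $v \supseteq u \cup \{\alpha\}$, and clause $\ref{pdd}$ of Definition $\ref{pprobd}$ yields an extension $f' \in \Psi^+(v) \subseteq \cF_\gp$ with $\alpha \in \dom f'$. A generic filter $\name G$ therefore meets every $D_\alpha$, so $\bigcup \name G$ is a total function from $\omega_1$; for any finite $w \subseteq \omega_1$, choosing $f \in \name G$ with $w \subseteq \dom f$ gives $\bigcup \name G \restriction w = f \restriction w \in \Psi^+(w)$ (the restriction of a $\Psi^+$-function to a sub-domain again belongs to $\Psi^+$ by the very definition in clause $\ref{pdb}$), confirming that $\bigcup \name G$ is a solution of $\gp$.

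For c.c.c., I would follow the standard $\Delta$-system strategy. Given uncountable $\{f_i : i < \omega_1\} \subseteq \cF_\gp$ with $u_i := \dom f_i$, apply the $\Delta$-system lemma to $\{u_i\}$ to extract an uncountable subfamily with common root $u^*$ and pairwise disjoint fresh parts $v_i := u_i \setminus u^*$. Use Observation $\ref{a14}$(1) together with Claim $\ref{Upsn}$ to thin further so that all $u_i$ are pairwise $\Psi$-isomorphic via their canonical $\mathrm{OP}$-maps and all $f_i$ share the same $\mathrm{OP}$-trace; in particular $f_i \restriction u^* = f_j \restriction u^*$ throughout the thinned family. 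Re-index so that $\{v_i\}$ forms a block sequence, $v_i < v_j$ elementwise whenever $i < j$. Finally, using that each $E_j := \bigcup_{\delta \in v_j \cap S} \ran(\eta^\gp_\delta)$ is countable and the $v_i$'s are pairwise disjoint, a pressing-down argument on the regressive-like function $j \mapsto \sup\{i < j : v_i \cap E_j \neq \emptyset\}$ produces a pair $i < j$ with $v_i \cap E_j = \emptyset$; the symmetric containment $v_j \cap E_i = \emptyset$ is automatic from the block structure since $E_i \subseteq \max v_i < \min v_j$.

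With such a pair in hand I would close by showing $u_i \cup u_j$ is $\gp$-closed and $f_i \cup f_j \in \Psi^+(u_i \cup u_j)$. Closedness is a case analysis on the base ordinal $\delta \in S \cap (u_i \cup u_j)$: when $\delta \in u^*$ the $\Psi$-type consistency (combined with $v_i \cap v_j = \emptyset$) forces the ladder-relevant data already to lie in $u^*$; when $\delta \in v_i$ the block structure $v_i < v_j$ gives $\ran\eta_\delta \subseteq \delta < \min v_j$ so nothing new appears from $u_j$; and when $\delta \in v_j$ the hypothesis $v_i \cap E_j = \emptyset$ plays the analogous role. The $\Psi^+$-membership reduces, via the same classification, to the statement that every basic $b = b^\delta_{n,k} \subseteq u_i \cup u_j$ is actually contained in $u^*$, $u_i$, or $u_j$, whence $(f_i \cup f_j) \restriction b$ agrees with $f_i \restriction b$ or $f_j \restriction b$ and hence lies in $\Psi(b)$. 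Consequently $f_i \cup f_j \in \cF_\gp$ is the desired common extension. The main obstacle is the Fodor-style pressing-down step: while the bad set $\{i < j : v_i \cap E_j \neq \emptyset\}$ is countable for each fixed $j$, producing a single good pair still requires a careful regressive-function argument, and this is the only point where the specific structure of $\bar\eta^\gp$ and $\bar u^\gp$ enters nontrivially.
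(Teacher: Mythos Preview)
Your density argument for the generic solution is correct and matches the paper's proof.

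For the c.c.c., your approach diverges from the paper's and has a genuine gap. The condition $v_i \cap E_j = \emptyset$ controls only the \emph{ladder points} $\ran(\eta_\delta)$, but clause $(\beta)(ii)$ of Definition~\ref{1.2}\ref{1.2)2} concerns the \emph{intervals} $[\eta_\delta(k),\eta_\delta(k+1))$. Concretely: take $\delta \in v_j \cap S$, let $N$ be the threshold with $\eta_\delta(k) \in u_j \Leftrightarrow k < N$, and suppose $\eta_\delta(N-1) \in u^*$ (so all ladder points of $\delta$ that lie in $u_j$ are already in $u^*$). Nothing prevents an element $\gamma \in v_i$ from lying in $[\eta_\delta(N),\delta)$, i.e.\ in some $[\eta_\delta(k),\eta_\delta(k+1))$ with $k \ge N$. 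Then $(\beta)(ii)$ demands $\eta_\delta(k) \in u_i \cup u_j$, but $\eta_\delta(k) \notin u_j$ (as $k \ge N$) and $\eta_\delta(k) \notin v_i$ (by your hypothesis $v_i \cap E_j = \emptyset$) and $\eta_\delta(k) \notin u^*$ (since $u^* \subseteq u_j$). So $u_i \cup u_j$ is not $\gp$-closed. Your basic-set argument does show $f_i \cup f_j \in \Psi^+(u_i \cup u_j)$, but this function is not in $\cF_\gp$, and clause~\ref{pdd} cannot be invoked to extend it because its domain is not $\gp$-closed. Compatibility therefore does not follow.

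The paper's argument avoids this by \emph{enlarging} rather than separating: for each condition $f_{\beta(\alpha)}$ it forms $v_\alpha = \cll^\gp\bigl(\bigcup\{\{\eta_\delta(n)\}\cup u_{\delta,n}: \delta \in x_{\beta(\alpha)},\ \eta_\delta(n) < \alpha\}\cup u^*\bigr)$, a finite set bounded below $\alpha$; Fodor stabilises $v_\alpha = v^*$; each $f_{\beta(\alpha)}$ is extended (via~\ref{pdd}, starting from the $\gp$-closed $x_{\beta(\alpha)}$) to $g_\alpha \in \Psi^+(x_{\beta(\alpha)} \cup v^*)$; one thins once more so that $g_\alpha \restriction v^*$ is constant. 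Now for $\alpha_1 < \alpha_2$ the union $v^* \cup x_{\beta(\alpha_1)} \cup x_{\beta(\alpha_2)}$ \emph{is} $\gp$-closed precisely because $v^*$ already contains every ladder point of any $\delta \in x_{\beta(\alpha_2)}$ that falls below $x_{\beta(\alpha_1)}$, so the interval condition $(\beta)(ii)$ cannot fail.
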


\begin{PROOF}{Claim \ref{1.3A}}
For the second phrase just note that $\cF_{\gp}$ is non-empty (by
clause \ref{pdb} of Definition \ref{pprobd} and for every $\alpha <
\omega_1$ the set ${\cI}_\alpha = \{f \in {\cF}_p:\alpha \in
\text{ Dom}(f)\}$ is dense open in $\bbQ_{\gp}$ by clause \ref{pdd} in
Definition \ref{pprobd}.

As for the first phrase, ``$\bbQ_{\gp}$ satisfies the c.c.c.",
let $f_\alpha \in \Psi(x_\alpha)$ for $\alpha < \omega_1$, where each $x_\alpha$ is ${\gp}$-closed.  
As each $x_\alpha$ is finite, for
some stationary $S_0 \subseteq S$ the 
sequence $\langle x_\alpha:\alpha \in S_0 \rangle$ is a 
$\Delta$-system with heart $u^*$, and for $\alpha \ne
\beta$ in $S_0,x_\alpha \cap \alpha = x_\beta \cap \beta =
x_\alpha \cap x_\beta = u^*$.  Without loss of generality
$\alpha \in S_0 \Rightarrow f_\alpha \restriction u^* = f^*$ and $\beta
< \alpha \in S_0 \Rightarrow x_\beta \subseteq \alpha$. 
Observe that this together with the closedness of the $x_\beta$'s imply that 
\begin{equation} \label{ssza} \alpha <
\beta \in S_0 \ \Rightarrow \sup(x_\alpha)+\omega \leq \min(x_\beta \setminus u_*). \end{equation}
 Let
$\beta(\alpha) = \text{ min}(S_0 \backslash (\alpha +1))$ for $\alpha \in
S_0$. We define $v_\alpha = u^*$ if the ordinal $\delta: = $ min($x_{\beta(\alpha)} \setminus \beta(\alpha)) \notin S$, and  otherwise if this $\delta \in S$ let 
$$v_\alpha = \cll^\gp\left(\bigcup_{n <
	\omega,  \ \eta_\delta(n) < \alpha} (\{\eta_\delta(n)\} \cup u_{\delta,n}) \cup u^*\right),$$ so clearly $v_\alpha \subseteq \alpha$, and it
is finite by Claim $\ref{lezar}$ below. 

\begin{sclaim} \label{sc}
For each $\alpha \in S_0$ the finite set $y_\alpha = x_{\beta(\alpha)} \cup v_\alpha$ is $\mathfrak{p}$-closed.
\end{sclaim}  
\begin{PROOF}{Subclaim \ref{sc}}
	First we check clause  \ref{1.2)2a} of Definition \ref{1.2} \ref{1.2)2}. Pick $\epsilon+1 \in y_\alpha$. Since both $x_{\beta(\alpha)}$ and $v_\alpha$ satisfy \ref{1.2)2a}, the ordinal $\epsilon$ must be in either $x_{\beta(\alpha)}$, or $v_\alpha$.
	Now let $\delta \in S \cap y_\alpha$. If $\delta < \alpha$, then necessarily $\delta \in v_\alpha$ and since $v_\alpha$ is $\mathfrak{p}$-closed we are done. At this point we also note that
	\begin{equation}\label{eqq1} (\forall \delta \in S \cap y_\alpha \cap \alpha) \ (\forall n,k):  \ [b^\delta_{n,k} \subseteq y_\alpha  \ \rightarrow \ b^\delta_{n,k} \subseteq v_\alpha] \ \ \text{ (since  }v_\alpha \supseteq u^*). \end{equation} 
	
	Second, suppose that $\delta \geq \alpha$. Then $\delta \in x_{\beta(\alpha)} \setminus \alpha = x_{\beta(\alpha)} \setminus \beta(\alpha)$ (since $x_{\beta(\alpha)} \cap \beta(\alpha) = u^*$ as $\beta(\alpha) \in S_0$). We will distinguish two cases depending on whether $\delta = $ min($x_{\beta(\alpha)} \setminus \beta(\alpha))$, or $\delta > $ min($x_{\beta(\alpha)} \setminus \beta(\alpha))$.
	
	If $\delta > $  min($x_{\beta(\alpha)} \setminus \beta(\alpha))$, then for each $n$ we have 
	$$ \eta_\delta(n) \in x_{\beta(\alpha)} \ \iff \ \eta_\delta(n) \leq  \textrm{ max}(\delta \cap x_{\beta(\alpha)} \setminus \beta(\alpha)) \ \iff \{\eta_\delta(n) \} \cup u_{\delta,n} \subseteq x_{\beta(\alpha)},$$ (since $x_{\beta(\alpha)}$ is $\mathfrak{p}$-closed), so 
	\begin{equation} \label{eq2} \eta_\delta(n) \in y_\alpha \ \iff \ \{\eta_\delta(n) \} \cup u_{\delta,n} \subseteq y_{\alpha} \ \iff \ \eta_\delta(n) \leq \textrm{ max}(\delta \cap x_{\beta(\alpha)} \setminus \beta(\alpha)). \end{equation}
	Now if $\delta = $  min($x_{\beta(\alpha)} \setminus \beta(\alpha))$, then for each $n$
	$$ \eta_\delta(n) \in v_{\alpha} \ \iff \ \eta_\delta(n) < \alpha \ \iff \{\eta_\delta(n) \} \cup u_{\delta,n} \subseteq v_\alpha,$$
	so 
	\begin{equation} \label{eq3} \eta_\delta(n) \in y_{\alpha} \ \iff \ \eta_\delta(n) < \alpha \ \iff \{\eta_\delta(n) \} \cup u_{\delta,n} \subseteq y_\alpha, \end{equation} and we are done.
	 For future reference we note that \eqref{eqq1}, \eqref{eq2}, \eqref{eq3} together imply
	 \begin{equation} (\forall \delta \in S \cap y_\alpha \setminus \alpha) \ (\forall n,k):  \ [b^\delta_{n,k} \subseteq y_\alpha  \ \rightarrow \ (b^\delta_{n,k} \subseteq v_\alpha \ \vee \ b^\delta_{n,k} \subseteq x_{\beta(\alpha)})]. \end{equation} 
\end{PROOF}

Now for some $v^*  \in [\omega_1]^{<\aleph_0}$ and
stationary subset $S_1$ of $S_0$ we have 
 $$\alpha \in S_1
\Rightarrow (v_\alpha = v^*).$$  For each $\alpha \in S_1$ 
we can find $g_\alpha \in
\Psi^+(y_\alpha) = \Psi^+(x_{\beta(\alpha)})$ extending $f_{\beta(\alpha)}$, which
exists by clause \ref{pdd} in Definition \ref{pprobd}.  Now w.l.o.g. we can assume that 
for every $\alpha_1 < \alpha_2$ in $S_1$ we have $\beta(\alpha_1) < \alpha_2$ (intersecting $S_1$ with a club if necessary), and $g_{\alpha_1} \rest v^* = g_{\alpha_2} \rest v^*$ (by passing down to a smaller stationary set).  
Observe that if $\alpha_1 < \alpha_2$ in $S_1$, then  $y_{\alpha_1} \cup y_{\alpha_2} =  v^*\cup x_{\beta(\alpha_1)} \cup x_{\beta(\alpha_2)}$ is $\gp$-closed, since by the proof of Subclaim \ref{sc} for any $\delta \in (S \cap y_{\alpha_2} \setminus \alpha_2)$ either for all $n \in \omega$:
$$\eta_\delta(n) \in y_{\alpha_2} \ \iff \ \{\eta_\delta(n) \} \cup u_{\delta,n} \subseteq y_{\alpha_2} \ \iff \ \eta_\delta(n) \leq \textrm{ max}(\delta \cap x_{\beta(\alpha_2)} \setminus \beta(\alpha)),$$
(which is the case if $\delta > $ min($x_{\beta(\alpha_2)} \setminus \beta(\alpha)) = $ min$(y_{\alpha_2} \setminus \beta(\alpha))$), or
$$\eta_\delta(n) \in y_{\alpha_2} \ \iff \ \eta_\delta(n) < \alpha_2 \ \iff \{\eta_\delta(n) \} \cup u_{\delta,n} \subseteq y_{\alpha_2}$$
(when $\delta = $ min($x_{\beta(\alpha_2)} \setminus \beta(\alpha)) = $ min$(y_{\alpha_2} \setminus \beta(\alpha))$).
This means that
	\begin{equation} \label{eee1}  \textrm{for }\delta \in (S \cap y_{\alpha_2} \setminus \alpha_2): \  (\forall n,k):  \ [b^\delta_{n,k} \subseteq y_{\alpha_1} \cup y_{\alpha_2}]  \  \rightarrow \ [b^\delta_{n,k} \subseteq y_{\alpha_2}]. \end{equation} 
	Whereas if $\delta \in (S \cap (y_{\alpha_2} \cup y_{\delta_1}) \cap \alpha_2)$, then $\delta \in y_{\delta_1}$ (this holds for any $\epsilon \in   (y_{\alpha_2} \cup y_{\delta_1}) \cap \alpha_2$), so
	\begin{equation} \label{eee2} \textrm{if } \delta \in (S \cap (y_{\alpha_2} \cup y_{\delta_1}) \cap \alpha_2): \  (\forall n,k):  \ [b^\delta_{n,k} \subseteq y_{\alpha_1} \cup y_{\alpha_2}]  \  \rightarrow \ [b^\delta_{n,k} \subseteq y_{\alpha_1}]. \end{equation}
  Therefore, as  $f_{\beta(\alpha_1)} \cup f_{\beta(\alpha_2)}$ is a function (since they agree on $u^*$), so (by \eqref{eee1}, \eqref{eee2}) $f_{\beta(\alpha_1)} \cup f_{\beta(\alpha_2)} \in \Psi^+(y_{\alpha_1} \cup y_{\alpha_2})$, hence are
compatible in $\bbQ_{\gp}$ as required.  
\end{PROOF}

Our main theorem (Theorem $\ref{1.1}$) will follow from the following slightly more general one:
\begin{theorem}
\label{1.4}
Let $S$ be a stationary set of limit
ordinals $< \omega_1$ which is simple, i.e., $(\forall \alpha <
\omega_1)(\alpha + \omega \notin S)$, and 
$\bar \eta^*$ be an $S$-ladder system.  \Then \,
$(\mathbf{D})_{S,\bar \eta^*} \Leftrightarrow (\mathbf{E})_{S,\bar \eta^*}$ where
\mn
\begin{enumerate}
\item[$(\mathbf{D})_{S,\bar \eta^*}$]  for any ladder system $\bar \eta$ on $S$ if $\eta$ is
very similar to $\bar \eta^*$ \then \, it has $\aleph_0$-uniformization
\sn
\item[$(\mathbf{E})_{S,\bar \eta^*}$]   for every ladder system  $\bar \eta$ very
similar to $\bar \eta^*$, for every  simple special
$S$-uniformization problem  ${\gp}$ with $\bar \eta^{\gp} = \bar \eta$,
\then \, ${\gp}$ has a solution.
\end{enumerate}
\end{theorem}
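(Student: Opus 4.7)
For $(B)_{S,\bar\eta^*} \Rightarrow (A)_{S,\bar\eta^*}$ (the easier direction), given a ladder system $\bar\eta$ very similar to $\bar\eta^*$ and a coloring $\bar c = \langle c_\delta : \delta \in S\rangle$ into $\omega$, I would encode $\bar c$ as a simple special $S$-uniformization problem $\gp_{\bar c}$ with $\bar\eta^{\gp_{\bar c}} = \bar\eta$, trivial $\bar u$, and $\Psi(b^\delta_{n,k})$ consisting of those $f:b^\delta_{n,k}\to\omega$ which agree with $c_\delta$ on $\{\eta_\delta(j):j\leq n\}$. The only delicate point is consistency on intersecting basic sets; I would handle this by first replacing $\bar\eta$ with an almost-disjoint ladder very similar to it (pairwise $\ran(\eta_\delta)\cap\ran(\eta_{\delta'})$ finite) and then modifying $\bar c$ on the finitely many overlap points per $\delta$, which is harmless since $\aleph_0$-uniformization is modulo finite. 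A solution of $\gp_{\bar c}$ is then a uniformizer of $\bar c$.

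For the main direction $(A)_{S,\bar\eta^*} \Rightarrow (B)_{S,\bar\eta^*}$, fix a simple special $S$-uniformization problem $\gp$ with $\bar\eta:=\bar\eta^\gp$ very similar to $\bar\eta^*$. I would construct a global solution $f:\omega_1\to C^\gp$ by recursion along a continuous $\in$-chain $\langle M_i:i<\omega_1\rangle$ of countable elementary submodels of $(H(\chi),\in)$ with $\gp\in M_0$, $\delta_i:=M_i\cap\omega_1$, and $\langle M_j:j\leq i\rangle\in M_{i+1}$. At successor steps and at limits outside $S$, extension proceeds by clause \ref{pdd} of Definition \ref{pprobd} together with Observation \ref{a14}. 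The critical step is at $\delta=\delta_i\in S$, where the values $\{f(\eta_\delta(n)):n<\omega\}$ must be prescribed so that $f\restriction b^\delta_{n,k}\in\Psi(b^\delta_{n,k})$ for every $n,k$, simultaneously extending the already-built $f\restriction\delta$ and staying compatible with future choices at larger $\delta'\in S$.

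The device at such $\delta$'s is to encode the local selection problem as an $\aleph_0$-coloring of $\ran(\eta_\delta)$ and invoke $(A)_{S,\bar\eta^*}$. Using clause \ref{pda} (countably many $\Psi$-types) and working inside $M_{i+1}$, I would fix a canonical countable enumeration $\langle g^{\delta,n}_j:j<\omega\rangle$ of $\Psi^+(N^\delta_n)$ for canonical finite $\gp$-closed $N^\delta_n\supseteq b^\delta_{n,k}$ lying in $M_i$, and define $c^*_\delta(\eta_\delta(n))$ to be the index $j$ such that the eventual global solution agrees with $g^{\delta,n}_j$ on $N^\delta_n$. Applying $(A)_{S,\bar\eta^*}$ to a ladder $\bar\eta^{**}$ very similar to $\bar\eta$ (where the shift absorbs the finite initial discrepancy between the menu and $f$) produces a uniformizer $c^*:\omega_1\to\omega$, and $c^*$ dictates a coherent choice at every $\delta$.

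The hard part will be closing the circularity between $f$ and $c^*$: the coloring $c^*_\delta$ a priori depends on the still-to-be-constructed $f$, while $f$ is to be built using $c^*$. The resolution, standard in this line of work, is to commit inside $M_i$ to the countable menu of candidate local partial solutions before $f$ is defined past $\delta$, so that $c^*_\delta$ becomes a function only of the menu (definable inside $M_i$) and the global uniformizer $c^*$. Elementarity of $M_i$ then ensures that the candidate prescribed by $c^*$ at $\delta$ genuinely extends $f\restriction\delta$, and the recursion goes through; the fact that the statement concerns \emph{all} $\bar\eta$ very similar to $\bar\eta^*$, rather than $\bar\eta^*$ itself, is exactly what lets the shift from $\bar\eta$ to $\bar\eta^{**}$ be performed without leaving the hypothesis.
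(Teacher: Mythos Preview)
Your treatment of the easy direction $(B)\Rightarrow(A)$ is close in spirit to the paper's, but the paper avoids your almost-disjointness manoeuvre entirely: it shifts to $\eta'_\delta(n)=\eta_\delta(n)+1$ and defines $\Psi$ so that $f(\delta)$ encodes a threshold above which agreement with $c_\delta$ is required. That makes $\Psi^+(u)\neq\emptyset$ trivial (raise all thresholds) and sidesteps the bookkeeping you propose.

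For the hard direction $(A)\Rightarrow(B)$ there is a genuine gap. The circularity you flag is the crux of the whole theorem, and it is not dissolved by elementarity plus a single uniformization. The extension clause \ref{pdd} applies only to \emph{finite} $\gp$-closed sets; when you arrive at $\delta\in S$ the values $f(\eta_\delta(n))$ for all $n$ are already fixed, and nothing you have arranged guarantees that any choice of $f(\delta),f(\delta+1),\dots$ makes $f\restriction b^\delta_{n,k}\in\Psi(b^\delta_{n,k})$ for every $n,k$ simultaneously. Your proposed menu $\langle g^{\delta,n}_j\rangle$ lives in $M_i$, but the uniformizer $c^*$ does not, so elementarity of $M_i$ says nothing about whether the candidate $c^*$ selects is compatible with the particular $f\restriction\delta$ you have built. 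Worse, $\eta_\delta(n)$ may itself be $\eta_{\delta'}(n')$ for some $\delta'\in S$ below $\delta$, or lie in $S$ outright, so the constraints cascade: the choice at $\eta_\delta(n)$ was already governed by some earlier $c^*_{\delta'}$, and you have not shown these commitments cohere.

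The paper's proof is accordingly far more elaborate than a single recursion. It first reduces (Lemmas~\ref{le1},~\ref{le2}) to a ``regular very special'' problem where $u_{\delta,n}=v_{\eta_\delta(n)}$ for a global sequence $\bar v$, then introduces a $\bar\eta$-\emph{rank} $\langle A_n\rangle$ (Claim~\ref{elok}) stratifying the nesting depth, a $\gp$-\emph{pressed-down} system $\bar w$ (Lemma~\ref{wlem}), and the notion of $\bar\eta'$-\emph{histories} (Definition~\ref{hisdef}) --- finite chains $\delta_0,\delta_1,\dots$ with $\delta_{i+1}\in\ran(\eta'_{\delta_i})$. The key Lemma~\ref{ultuni} performs an \emph{iterated} uniformization, level by level through the rank, to produce invariants $\psi_\gamma,\nu^*_\gamma,\nu^{**}_\gamma,h^*_\gamma,p_\gamma$ that make the $(\Psi^+)$-type of $w^*_{\bar\delta}$ depend only on the endpoint of the history $\bar\delta$, not on the history itself. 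Only then (Claim~\ref{Ff}) can one build $f$ by choosing compatible values along maximal histories. The idea you are missing is precisely this stratified, multi-pass use of uniformization to tame the cascading constraints; a single pass with elementary submodels does not suffice.
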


\noindent
We shall prove Claim \ref{1.4} together with proving Theorem
\ref{1.1}.
\begin{PROOF}{Theorem \ref{1.1}}
\underline{Proof of \ref{1.4}}

\noindent
\underline{$(\mathbf{E})_{S,\bar \eta^*} \Rightarrow (\mathbf{D})_{S,\bar \eta^*}$}

$(\mathbf{D})_{S,\bar \eta^*}$ is essentially a special case of 
$(\mathbf{E})_{S,\bar \eta^*}$:  let $\bar\eta$ be very similar to $\bar\eta^*$
we can replace $\eta_\delta(n)$ by
$\eta_\delta(n)+1$ and use the value of the prospective solution $f$ at $\delta$ to say from which $n$
onwards the uniformization demand for $c_\delta$ holds, that is,
$c_\delta(\eta_\delta(n)) = f(\eta_\delta(n))$ if $n \geq f(\delta)$.  

In detail, this will look as follows. Given $\langle \eta_\delta:\delta\in S \rangle$,
$\langle c_\delta:\delta\in S \rangle$ as in $(\mathbf{D})_{S,\bar \eta^*}$
recalling Definition \ref{pprobd} we now define a 
special $S$-uniformization problem ${\mathfrak p}$ with 
\mn
\begin{enumerate}
\item[$(i)$]   $\bar \eta^{\gp} = \langle \eta'_\delta:\delta
\in S \rangle$, where $\eta'_\delta(n) = \eta_\delta(n)+1$ ($ 
n < \omega)$,
\sn
\item[$(ii)$]  $u^{\gp}_{\delta,n} = \emptyset$ 
for every $\delta \in S$, $n < \omega$ (so $b^\delta_{n,k} = \{ \eta'_\delta(i): \ i \leq n\} \cup [\delta, \delta+k]$),
\sn
\item[$(iii)$]   $\Psi^{\gp}(b^\delta_{n,k})$ is the set 
of all functions $f$ from $b^\delta_{n,k}$ to $\omega$ such that: 
\begin{enumerate}
\item[$\bullet$]   $(\forall i \in [f(\delta)), n]): \ \ f(\eta'_\delta (i))=c_\delta(\eta_\delta(i))$,
\end{enumerate}
\end{enumerate}
\mn
It has to be checked that
\mn
\begin{enumerate}
\item[$(\alpha)$]  $(\gp)$ is indeed a special $S$-uniformization
problem (recall that $\{\ran(\eta_{\delta}): \ \delta \in S \}$ is an almost disjoint family), and
\sn
\item[$(\beta)$]  $\bar\eta',\bar\eta$ are very similar.
\end{enumerate}
\mn
The second clause is immediate, let us deal with the first clause for which we need to check that $\Psi^\gp$ satisfies clauses \ref{pda}-\ref{pdb} from Definition \ref{pprobd}. For a fixed $\delta \in S$, $n,k \in \omega$ we note that  $\Psi^{\gp}(b^\delta_{n,k})$ is determined by $\langle c_\delta(\eta_\delta(j)): \ j \leq n \rangle \in \ ^{n+1}\omega$, so \ref{pda} holds. It is easy to see that clauses \ref{pdb} and \ref{pdc} hold.
Finally, fix a finite $u \subseteq \omega$ that is $\gp$-closed, $f \in \Psi^+(u)$, $v \supseteq u$, $|v| < \aleph_0$.
Now observe that if we could define $f(\alpha)$ for all the $\alpha$'s that belong to $v \setminus u$, and $\alpha$ is of the form $\alpha = \eta'_{\delta}(\ell)$ for some $\ell \in \omega$, $\delta \in u$ (and so $\alpha = \eta_\delta(\ell)+1$) so that $f(\alpha) = c_\delta(\eta_\delta(\ell))$, then we would be done.

We claim that whenever $\delta < \delta'$ both belong to $u \cap S$, and $\eta_\delta(k) = \eta_{\delta'}(l)$ for some $l,k \in \omega$, then $\eta'_{\delta}(k) = \eta'_{\delta'}(l) \in u$. This will finish the verification of clause $(\alpha)$.
But since $u$ is $\gp$-closed, in fact for each $l$ with $\eta'_{\delta'}(l) < \delta$ we have $\eta'_{\delta'}(l) \in u$, since $\delta,\delta' \in u$ (just recall \ref{1.2)2} \ref{1.2)2b} $(ii)$).

Now let $f'$ be a solution to the special uniformization problem $\mathfrak{p} = (S,\bar \eta',\bar u,\Psi)$, as guaranteed in $(B)_{S,\bar\eta^*}$ (i.e., as defined in Definition \ref{a6b}(1)).
We check the function $c:\omega_1 \rightarrow \omega$ defined by:
$c(\alpha)=: f(\alpha+1)$ is as required in $(\mathbf{D})_{S,\bar \eta^*}$ for
$\bar\eta$: 

Fix $\delta \in S$ and $n \in \omega$ for which $n \geq f(\delta)$.
Consider the set $b^\delta_{n,0}$, containing $\eta'_{\delta}(n) = \eta_\delta(n)+1$, and $\delta$.
Since $f$ is a solution, $f_0 = f \rest b^\delta_{n,0} \in \Psi^{\gp}(b^\delta_{n,k})$, thus (recalling the definition of $\Psi^{\gp}(b^\delta_{n,k})$ in clause $(iii)$):
 $$f(\eta'_\delta(n)) = f_0(\eta'_\delta(n))=  c_\delta(\eta_\delta(n)),$$ as $n \geq f_0(\delta) = f(\delta)$.
 Therefore, (for $n \geq f(\delta)$):
 $$ c(\eta_\delta(n)) = f(\eta_\delta(n)+1) = f(\eta'_\delta(n)) = c_\delta(\eta_\delta(n)),$$
 as desired.
\medskip

\noindent
\underline{$(\mathbf{D})_{S,\bar \eta^*} \Rightarrow (\mathbf{E})_{S,\bar \eta^*}$}.  

So from now on we are going to work under the assumption that for all $\bar \eta$ similar to $\bar \eta^*$ have $\aleph_0$-uniformization. 
After the following preparation Lemma $\ref{le1}$, Lemma $\ref{le2}$, Lemma $\ref{Lebe}$ together will conclude the statement.

Let ${\gp}$ be a special $S$-uniformization problem with
$\bar\eta^{\gp} = \bar\eta = \langle \eta_\delta:\delta\in S \rangle$, also
$\langle h_\delta:\delta\in S \rangle$,
$\langle u_{\delta,n}: \ \delta \in S, \ n \in \omega \rangle$ and $\Psi$ as in 
Definition \ref{pprobd} are given such that $\bar \eta$ is very similar to
$\bar \eta^*$ and $S = S^{\gp}$. 
So it is enough to prove that $\gp$ has a solution.

(We usually suppose ${\gp}$ is the index so closed means ${\gp}$-closed, etc.)
The proof is broken to stages (and definitions).
\medskip

\begin{claim}\label{lezar}
	 Let $\gp^0 = (S, \bar \eta, \bar u)$ be an  $S$-uniformization frame. For every 
finite $x\subseteq \omega_1$ there is a ${\gp}^0$-closed finite
$y$, $x\subseteq y\subseteq \omega_1$ satisfying $\max(x) = \max(y)$, moreover,
if we are given $\bar v = \langle v_\alpha: \ \alpha < \omega_1 \rangle$ with for each $\alpha$: $v_\alpha \in [\alpha]^{<\aleph_0}$, then we can prescribe $y$ to be $\bar v$-closed, by which we mean
\[ \alpha \in y \ \to \ v_\alpha \subseteq y.\]
\end{claim}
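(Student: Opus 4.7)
The plan is to proceed by well-founded induction on $\alpha^* = \max(x)$ (treating $x = \emptyset$ as the trivial base, and $\max(x) = 0$ as another easy base since $v_0 = \emptyset$ and $0$ is neither a successor nor in $S$). The inductive step processes the top element of $x$ only: first I would strip off $\alpha^*$, add all elements that closure at $\alpha^*$ forces, apply the induction hypothesis to the resulting finite set (whose maximum is strictly smaller than $\alpha^*$), and then reinsert $\alpha^*$ at the top.

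Concretely, given $x$ with $\max(x) = \alpha^*$, I would define a ``first layer'' $F(\alpha^*) \subseteq \alpha^*$ as follows. If $\alpha^* = \gamma+1$, put $\gamma$ into $F(\alpha^*)$; if the $\bar v$-closure version is requested, throw $v_{\alpha^*}$ in as well. The essential case is $\alpha^* \in S$: choose a natural number $N$ so large that
\[
\eta_{\alpha^*}(N) \;>\; \max\bigl((x \setminus \{\alpha^*\}) \cup v_{\alpha^*}\bigr),
\]
and put $\{\eta_{\alpha^*}(k) : k < N\} \cup \bigcup_{k<N} u_{\alpha^*,k}$ into $F(\alpha^*)$. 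Since $u_{\alpha^*,k} \subseteq \eta_{\alpha^*}(k)$, every element of $F(\alpha^*)$ is strictly below $\eta_{\alpha^*}(N)$. Set $x_1 = (x \setminus \{\alpha^*\}) \cup F(\alpha^*)$; this is a finite subset of $\alpha^*$ with $\max(x_1) < \eta_{\alpha^*}(N) < \alpha^*$ (in the successor case one has $\max(x_1) \leq \gamma < \alpha^*$). By the inductive hypothesis applied to $x_1$ (with the same $\bar v$ if required), there is a finite $\gp^0$-closed (and $\bar v$-closed) $y_1 \supseteq x_1$ with $\max(y_1) = \max(x_1)$. I then set $y = y_1 \cup \{\alpha^*\}$.

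The verification is straightforward for every closure clause at ordinals of $y_1$, since those are inherited from $y_1$; the only new thing to check is the clause at $\alpha^*$ itself. For $\alpha^* \in S$ this is exactly where the choice of $N$ pays off: by $\max(y_1) = \max(x_1) < \eta_{\alpha^*}(N)$, the only ladder points $\eta_{\alpha^*}(k)$ lying in $y$ are those with $k < N$ (giving Definition 1.2(3)$(\beta)(i)$ with this $N$); any element of $y$ strictly between $\alpha^*$ and $0$ lies in some interval $[\eta_{\alpha^*}(k), \eta_{\alpha^*}(k+1))$ with $k < N$, so $\eta_{\alpha^*}(k) \in F(\alpha^*) \subseteq y$, giving (ii); and the sets $u_{\alpha^*,k}$ for $k < N$ were already thrown into $F(\alpha^*)$, giving (iii). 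The $\bar v$-closure condition at $\alpha^*$ holds because $v_{\alpha^*} \subseteq F(\alpha^*) \subseteq y_1$, while at all other ordinals it is inherited from $y_1$.

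The main obstacle is precisely keeping the construction finite: if closing $x_1$ could introduce ordinals arbitrarily close to $\alpha^*$, we would be forced to take $N = \omega$ and lose finiteness. This is why the conclusion of the lemma includes the clause $\max(y) = \max(x)$; that clause, supplied by the inductive hypothesis for $y_1$, pins the new elements safely below $\eta_{\alpha^*}(N)$ and makes the chosen $N$ retroactively correct. The rest is routine bookkeeping.
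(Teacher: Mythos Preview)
Your proof is correct and follows essentially the same inductive strategy as the paper's: choose $N$ large enough that closing the lower part cannot creep back up past $\eta_{\alpha^*}(N)$, then reattach the top. The paper's only organizational difference is that it inducts on the largest limit ordinal $\delta_x \le \max(x)$ rather than on $\max(x)$ itself, absorbing the finite successor tail $[\delta_x,\max(x)]$ in a single step instead of peeling elements off one at a time.
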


\begin{PROOF}{Claim \ref{lezar}}
We prove this by induction on 
$\delta_x = \max \{\alpha\in \{0\}\cup\Omega:\alpha \le \sup(v)\}$
\smallskip

\noindent
\underline{Case 1, $\delta_x=0$}: trivial, let $u=[0,\sup(v)]$.
\smallskip

\noindent
\underline{Case 2, $\delta_x > 0$ and $\delta_x \notin S$}: 

Let $m$ be such that max$(x) = \delta + m$, note that sup$(x \cap
\delta) < \delta$ and so by the induction
hypothesis there is a ${\mathfrak p}$-closed finite $y' \subseteq
\delta_x$ containing $x \cap \delta_x$ and we let $y = y' \cup
[\delta_x,\delta_x +m]$.
\smallskip

\noindent
\underline{Case 3, $\delta_x>0$ and $\delta_x \in S$}: Choose $m$ 
such that $\max(x)= \delta_x +m$ and
choose $n$ such that 
$\eta_{\delta_x}(n) > \sup(x \cap \delta_x)$.
Let

\[
x^+ = (x\cap \delta_v) \cup \{\eta_{\delta_x}(\ell):\ell<n\}\cup\
\bigcup\limits_{\ell < n} u_{\delta_x,\ell}.
\]

\mn
so max$(x^+) < \eta_\delta(n) < \delta_x$ (so $\delta_{x^+} < \delta_x$).  Hence by the induction hypothesis
there is a ${\gp}-\bar v$-closed $y'$ such that for some $k<\omega$ we have
$$x^+ \subseteq y'\subseteq \max (x^+)+1 \le \eta_{\delta_x}(n) <
\delta_x$$ and let
$y=y'\cup[\delta_x, \delta_x +m]$, remembering that $\delta_x +m = \max(x)$ it
is easy to check that we are done. 

\end{PROOF}

Also note that
\mn
\newcounter{pmocou} \setcounter{pmocou}{0}
\begin{enumerate}[label = $(\intercal)_{\arabic*}$, ref = $(\intercal)_{\arabic*}$]
	\setcounter{enumi}{\value{pmocou}}
	\item the intersection of a family of $\gp-\bar v$-closed subsets of
	$\omega_1$ is closed,
	\stepcounter{pmocou}
\end{enumerate}
hence 
\begin{enumerate}[label = $(\intercal)_{\arabic*}$, ref = $(\intercal)_{\arabic*}$]
	\setcounter{enumi}{\value{pmocou}}
	\item   for any finite $x \subseteq \omega_1$ the
	closure of $x$, $\cll^{\gp-\bar v} = \cap\{y:y$ is finite $\gp-\bar v$-closed and contains
	$x\}$ is finite,  $\gp-\bar v$-closed, contains $x$ and has the same maximum. 
	\stepcounter{pmocou}
\end{enumerate}

 Note the following assertions
\mn
\begin{enumerate}[label = $(\intercal)_{\arabic*}$, ref = $(\intercal)_{\arabic*}$]
	\setcounter{enumi}{\value{pmocou}}
	\item \label{stage2}  If $c_\delta$ is a function 
	with domain $\ran(\eta_\delta)$ 
	for each $\delta \in S$ satisfying 
		\begin{enumerate}
		\item[$(*)$]    for each $\alpha<\omega_1,$ the set $\mathbf c^\alpha=:
		\{c_\delta(\alpha):\alpha\in \ran(\eta_\delta),\delta\in S\}$
		is countable,
	\end{enumerate}
	\then \, we can uniformize 
	$\langle  c_\delta:\delta\in S \rangle$, i.e.
	\begin{itemize}
		\item find a 
		function $c$ satisfying Dom$(c) = \omega_1$ and $c(\alpha)$ belongs to
		$\{c_\delta(\alpha):\alpha\in \ran(\eta_\delta)\} \cup
		\{0\}$, for every $\alpha < \omega_1$ and $\bigwedge\limits_{\delta\in
			S} (c_\delta \subseteq^* c)$;
		\item moreover, there are $\bar m = \langle
		m_\delta:\delta \in S \rangle$ and a function $\mathbf m:\omega_1
		\rightarrow \omega$ such that for every $\delta \in S$ we have: 
		$$n \in
		[m_\delta,\omega) \Rightarrow f(\eta_\delta(n)) =
		c_\delta(\eta_\delta(n)),$$
		and $(\forall^*n)(\mathbf m(\eta_\delta(n)) = m_\delta)$,
	\end{itemize} 

	\stepcounter{pmocou}
\end{enumerate}

[Why?  Let $g_\alpha:\{c_\delta(\alpha):\alpha \in \ran
(\eta_\delta)\} \rightarrow \omega$ be one to one, let
$c'_\delta:\ran(\eta_\delta) \rightarrow \omega$ be
$c'_\delta(\eta_\delta(n)) = 
g_{\eta_\delta(n)}(c_\delta(\eta_\delta(n))$.  Now we can find a function
$f'$ which uniformize $\langle c'_\delta:\delta \in S\rangle$, see
Definition \ref{0.3}(2).  It exists as we are assuming 
$(\mathbf{D})_{S,\bar \eta^*}$.  Let $m_\delta = \text{ Min}\{m:0 < m < \omega$ and
$(\forall n)(m \le n < \omega \rightarrow
f'(\eta_\delta(n)) = c'_\delta(\eta_\delta(n))$. 

Then we apply $(\mathbf{D})_{S,\bar \eta^*}$ to $\langle 
c''_\delta:\delta \in S\rangle$ where $c''_\delta$ is defined by
 $c''_\delta(\eta_\delta(n)) = m_\delta$, getting $f''$.
Let $\mathbf m = f''$ and define $c$ with domain $\omega_1$
by $c(\alpha)$ satisfies $g_\alpha(c(\alpha)) = f'(\alpha)$ when
$f'(\alpha) \in \ran(g_\alpha)$ and $c(\alpha) = 0$ otherwise.]

For a given sequence $\langle c_\delta^i:\delta\in S \rangle$,
we use $c^i,\mathbf m^i$ to denote uniformizing functions like $c$
and $\mathbf m$ as above.

\mn

\begin{definition} \label{verdf}
	Suppose that $\gq =( S, \bar \eta, \bar u, \Psi)$ is a special $S$-uniformization problem satisfying 	\begin{enumerate}[label = $\alph*)$, ref =$\alph*)$]
		\item  \label{k3a}  $u_{\delta,n} \cup \{\eta_\delta (n)\}
		\subseteq u_{\delta,n+1}$, 
		\sn
		\item  \label{k3d} $\delta = \bigcup\limits_{n<\omega} u_{\delta,n}$.
	\end{enumerate}
We call $\gq$ \emph{very special} if in addition to \ref{k3a}, \ref{k3d}  there is a system $\bar v = \langle v_\alpha: \ \alpha < \omega_1 \rangle$ with for each $\delta \in S$,  $n \in \omega$: $u_{\delta,n} = v_{\eta_\delta(n)}$.
	
\end{definition}

\begin{lemma} \label{le1}
	Let $\gp =( S, \bar \eta, \bar u, \Psi)$ be a special $S$-uniformization problem. If $\bar \eta$ has $\aleph_0$-uniformization, then there is a very special $S$-uniformization problem $\gp'$, each solution of which is a solution of $\gp$.
\end{lemma}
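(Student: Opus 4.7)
\textbf{Proof plan for Lemma \ref{le1}.} The plan is to enlarge the finite sets $u_{\delta,n}$ witnessing the uniformization frame of $\gp$ to obtain new sets $u'_{\delta,n}$ satisfying the monotonicity and cofinality requirements (a) and (d) of Definition \ref{verdf}, and then to transport $\Psi$ to a new coloring function $\Psi'$ on the corresponding enlarged basic sets, while using the hypothesis that $\bar\eta$ has $\aleph_0$-uniformization to keep the number of $\Psi'$-isomorphism types countable.

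Concretely, for each $\delta\in S$ and each $n<\omega$ I would define
\[
u'_{\delta,n}\;=\;u_{\delta,n}\;\cup\;\bigcup_{k<n}\bigl(u_{\delta,k}\cup\{\eta_\delta(k)\}\bigr)\;\cup\;w_{\delta,n},
\]
where the auxiliary finite sets $w_{\delta,n}\subseteq\delta$ are chosen so that $\bigcup_n w_{\delta,n}$ is cofinal in $\delta$ (e.g.\ $w_{\delta,n}$ contains the first $n$ ordinals of a prescribed cofinal sequence in $\delta$). Conditions (a) and (d) are then immediate. Set $\gp'=(S,\bar\eta,\bar u',\Psi')$ with the new basic sets $b'^{\delta}_{n,k}=\bigcup_{j\le n}(u'_{\delta,j}\cup\{\eta_\delta(j)\})\cup[\delta,\delta+k]$, and take $\Psi'(b'^{\delta}_{n,k}):=\Psi^{+}(b'^{\delta}_{n,k})$ (so $f\in\Psi'(b')$ iff the restriction of $f$ to every original basic set sitting inside $b'$ lies in $\Psi$). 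Then clauses \ref{pdb}, \ref{pdc}, \ref{pdd} of Definition \ref{pprobd} transfer directly from $\Psi^+$ (using the existing closure properties and Claim \ref{lezar} to obtain finite $\gp$-closed extensions inside the $\gp'$-closed sets), and any solution of $\gp'$ is automatically a solution of $\gp$ since every $\gp$-basic set is contained in some $\gp'$-basic set.

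The main obstacle is clause \ref{pda}, i.e.\ ensuring that for each fixed $n$, the family $\{f\circ\mathrm{OP}_{b',n}: f\in\Psi'(b')\}$ as $b'$ ranges over $n$-element basic sets of $\gp'$ lies in a single countable set $\Upsilon'_n$. Since $u'_{\delta,n}$ contains the auxiliary ordinals in $w_{\delta,n}$, the $\Psi$-isomorphism type of $b'^{\delta}_{n,k}$ can in principle depend on $\delta$ in uncountably many ways. To control this, I would use the hypothesis $(A)_{S,\bar\eta^*}$ via assertion $(\intercal)_3$: the sequence assigning to each $\eta_\delta(m)$ the finite combinatorial data needed to reconstruct $\Psi'(b'^{\delta}_{n,k})$ from $\Psi(b^\delta_{n,k})$ (the $\Psi$-isomorphism type of $b^\delta_{n,k}$ is one of countably many by Observation \ref{a14}(1)) takes only countably many values at each ordinal, so it can be uniformized by a global function $c:\omega_1\to\omega$ together with a threshold function $\mathbf m:\omega_1\to\omega$. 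I would then redefine $w_{\delta,n}$ so that its elements code the value $c(\eta_\delta(n))$ and the threshold $\mathbf m(\eta_\delta(n))$ in a canonical way, which forces the $\Psi$-isomorphism type of $b'^{\delta}_{n,k}$ to depend only on countably much data, giving the required countable $\Upsilon'_n$.

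The hard part will be verifying that this uniformization-assisted encoding of $w_{\delta,n}$ is compatible with clause \ref{pdd} (extending partial solutions) on $\gp$-closed sets that were enlarged to be $\gp'$-closed; this amounts to checking, via Claim \ref{lezar} applied with the natural choice $v_\alpha$ (which is what the cut-off $\bar v$ in Definition \ref{verdf} records), that the enlargement does not destroy extendability — essentially one chooses $\bar v$ so that $v_\alpha$ contains exactly the extra coding ordinals, and then closure under $\bar v$ guarantees that partial solutions on $\gp'$-closed sets already satisfy all constraints imposed by the uniformization.
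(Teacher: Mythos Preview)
Your plan misdiagnoses where the real difficulty lies, and as a result omits the key step.

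Clause \ref{pda} is \emph{not} the obstacle. Once you enlarge $u_{\delta,n}$ to any $u'_{\delta,n}\supseteq u_{\delta,n}$ and define $\Psi'(b')=\Psi^{+}(b')$, clause \ref{pda} follows immediately from clause \ref{pda} for $\gp$: the set $\{f\circ\mathrm{OP}_{b',|b'|}:f\in\Psi'(b')\}$ is determined by the finitely many positions of old basic sets $b\subseteq b'$ together with their $\Upsilon_{|b|}$-types, and that is countably much data. This is exactly Claim \ref{nagyobbu} in the paper; no uniformization is needed here, and your elaborate encoding scheme for $w_{\delta,n}$ is unnecessary.

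What you have missed is what ``very special'' actually demands: the witness $\bar v=\langle v_\alpha:\alpha<\omega_1\rangle$ must satisfy $u'_{\delta,n}=v_{\eta_\delta(n)}$, i.e.\ the finite set attached to the $n$-th rung depends \emph{only on the ordinal} $\eta_\delta(n)$, not on $\delta$ or $n$ separately. Your construction $u'_{\delta,n}=u_{\delta,n}\cup\bigcup_{k<n}(u_{\delta,k}\cup\{\eta_\delta(k)\})\cup w_{\delta,n}$ does not achieve this: if $\eta_\delta(n)=\eta_{\delta'}(n')=\alpha$ with $\delta\ne\delta'$, there is no reason for $u'_{\delta,n}$ and $u'_{\delta',n'}$ to coincide. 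The paper fixes this in two further moves. After enlarging to secure (a) and (d), it uniformizes the colouring $c_\delta(\eta_\delta(n))=u'_{\delta,n}$ (legitimate since $u'_{\delta,n}\in[\eta_\delta(n)]^{<\aleph_0}$, a countable set) to get a global $c^*$, and then \emph{cuts off initial segments}: replacing $\gp$ by $\gp\setminus\bar m$ (Definition \ref{-m}, Claim \ref{0vag}) so that for $n\ge m_\delta$ one has $u'_{\delta,n}=c^*(\eta_\delta(n))$, whence $v_\alpha:=c^*(\alpha)$ witnesses very-specialness (Claim \ref{verysim}). Your proposal contains no analogue of this truncation step, and without it the $\bar v$ you allude to at the end cannot exist.
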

\begin{PROOF}{Lemma \ref{le1}}
\begin{definition} \label{gp+u'}
	If  $\gp = (S, \bar \eta,
	\bar u, \Psi)$ is a special $S$-uniformization problem, and we are given  $\bar v = \langle v_{\delta,n}: \ \delta \in S, n \in \omega \rangle$ with 
	
	$$(\forall \delta\in S, \ \forall  n\in \omega): \ \ u_{\delta,n} \subseteq v_{\delta,n} \subseteq \eta_\delta(n),$$
	(so $\gp^0_{+\bar v} = (S, \bar \eta, \bar v)$ is an $S$-uniformization frame),
	and for each $b= (b^\delta_{n,k})^{\gp^0_{+\bar v}} \in \cB^{\gp^0_{+\bar v}}$ let 
	$$f \in \Phi((b^\delta_{n,k})^{\gp^0 +\bar v}) \ \iff \ \forall b \in \mathcal{B}^\mathfrak{p}: \ b \subseteq (b^\delta_{n,k})^{\gp^0 +\bar v}) \ \to \  (f \restriction b \in \Psi(b),$$
	then we can define the tuple
	$$ \gp_{+\bar v} = (S, \bar \eta, \bar v, \Phi).$$
	
\end{definition}

\begin{claim} \label{nagyobbu}
	If  $\gp = (S, \bar \eta,
	\bar u, \Psi)$ is a special $S$-uniformization problem, and we are given  $\bar v = \langle v_{\delta,n}: \ \delta \in S, n \in \omega \rangle$ with 
	
	$$(\forall \delta\in S, \ \forall  n\in \omega): \ \ u_{\delta,n} \subseteq v_{\delta,n} \subseteq \eta_\delta(n),$$
	(so $\gp^0_{+\bar v} = (S, \bar \eta, \bar v)$ is an $S$-uniformization frame),
	then $\gp_{+ \bar v} = (S, \bar \eta, \bar v, \Phi)$ from the definition above is a special $S$-uniformization problem, moreover, each solution $g$ of $\gp_{+ \bar v}$ is a solution of $\gp$.
	
\end{claim}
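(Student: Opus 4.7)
The plan is to verify the four axioms \ref{pda}--\ref{pdd} of Definition \ref{pprobd} for $\gp_{+\bar v}$ and then check that solutions descend from $\gp_{+\bar v}$ to $\gp$. Two structural observations drive everything. First, since $u_{\delta,n}\subseteq v_{\delta,n}$, each $\gp^0$-basic set $b^\delta_{n,k}$ sits inside its $\gp^0_{+\bar v}$-enlargement $(b^\delta_{n,k})^{\gp^0_{+\bar v}}$ (same $\delta,n,k$). Second, the only closure condition sensitive to the change is ``$\eta_\delta(k)\in x\Rightarrow u_{\delta,k}\subseteq x$'' versus ``$\Rightarrow v_{\delta,k}\subseteq x$'', so every $\gp^0_{+\bar v}$-closed set is automatically $\gp^0$-closed.

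Using these I will unwind the definition of $\Phi^+$ to obtain $\Psi^+(u)\subseteq \Phi^+(u)$ for any finite $u$: a function on $u$ belonging to $\Psi^+(u)$ is by definition $\Psi$-compatible on every $\gp^0$-basic subset of $u$, so it is $\Phi$-compatible on every $\gp^0_{+\bar v}$-basic subset of $u$ (the definition of $\Phi$ only asks for compatibility on $\gp^0$-basic sets sitting inside that enlargement). This gives non-emptiness of each $\Phi(b)$ and of $\Phi^+(u)$; countability is automatic since $C$ is countable and the underlying sets are finite. The uniformity family $\Upsilon^{\gp_{+\bar v}}_n$ required by \ref{pda} is extracted from the countable $\Upsilon^+_n$ furnished by Claim \ref{Upsn}, and clause \ref{pdc} is immediate from the definition of $\Phi$.

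The heart of the argument is \ref{pdd}. Given $\gp^0_{+\bar v}$-closed finite $u\subseteq v$ and $f\in \Phi^+(u)$, the crucial step is to show $f\in \Psi^+(u)$; once this is done, $\gp$'s own clause \ref{pdd} delivers an extension $f'\in \Psi^+(v)\subseteq \Phi^+(v)$. For this equality, fix any $\gp^0$-basic $b^\delta_{n,k}\subseteq u$; the ordinals $\delta$ and each $\eta_\delta(j)$ with $j\leq n$ lie in $u$, so $\gp^0_{+\bar v}$-closedness of $u$ forces $v_{\delta,j}\subseteq u$ for $j\leq n$ and hence the entire enlargement $(b^\delta_{n,k})^{\gp^0_{+\bar v}}$ lies in $u$. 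The condition $f\restriction (b^\delta_{n,k})^{\gp^0_{+\bar v}}\in \Phi((b^\delta_{n,k})^{\gp^0_{+\bar v}})$, which holds because $f\in \Phi^+(u)$, then immediately gives $f\restriction b^\delta_{n,k}\in \Psi(b^\delta_{n,k})$ by the very definition of $\Phi$.

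For the descent of solutions, if $g\colon \omega_1\to C$ solves $\gp_{+\bar v}$, then for each $b=b^\delta_{n,k}\in \cB^{\gp^0}$ the enlargement $\tilde b=(b^\delta_{n,k})^{\gp^0_{+\bar v}}\in \cB^{\gp^0_{+\bar v}}$ contains $b$, so $g\restriction \tilde b\in \Phi(\tilde b)$ directly forces $g\restriction b\in \Psi(b)$, as required. I expect the only genuinely subtle point to be the equivalence $\Phi^+(u)=\Psi^+(u)$ on $\gp^0_{+\bar v}$-closed $u$ needed for \ref{pdd}; everything else reduces to unwinding definitions once the two structural observations about inclusion of basic sets and strengthening of closure conditions are in place.
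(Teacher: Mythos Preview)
Your proof is correct and follows essentially the same route as the paper's: the two structural observations you isolate (each $\gp^0$-basic set sits inside its $\gp^0_{+\bar v}$-enlargement, and $\gp^0_{+\bar v}$-closed implies $\gp^0$-closed) are exactly what drives the paper's argument, and the key step---showing $\Phi^+(u)=\Psi^+(u)$ on $\gp^0_{+\bar v}$-closed finite $u$ in order to inherit \ref{pdd} from $\gp$---is the same in both. One small remark: for \ref{pda} you invoke the $\Upsilon^+_n$ of Claim~\ref{Upsn}, but strictly speaking $\Phi(b')$ may be larger than $\Psi^+(b')$ (since $\Phi$ only constrains same-$\delta$ basic subsets), so what you really need is the \emph{argument} of Claim~\ref{Upsn} (finitely many basic subsets, each with a type from some countable $\Upsilon_k$), not its literal output; this is exactly how the paper phrases it, and it is a cosmetic point rather than a gap.
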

\begin{PROOF}{Claim \ref{nagyobbu}}
	First we check \ref{pda} in Definition $\ref{1.2}$. Let $w \in [\omega_1]^{<\omega_1}$ be an element of $\cB^{\gp_{+\bar v}}$ (recall Definition $\ref{1.2}$). Now as $w$ is finite $w \cap S$ is also finite, and it is easy to see that $\{ b \in \cB^\gp: \ b \subseteq w \}$ is finite. Now since $(b^\delta_{n,k})^{\gp^0 + \bar v} \supseteq b^\delta_{n,k} \in \cB^{\gp^0}$, and the fact that  $$(\forall k\in \omega) \ \ \Upsilon_k = \{ \{f \circ \text{OP}_{b,k}: \ f \in \Psi(b)\}: \ b \in \cB^\gp, \ |b| = k\} \ \text{ is countable}$$ it can be easily seen that \ref{pdb} indeed holds.
	
	For \ref{pdc} notice that if $b' \subseteq b''$ are in $\mathcal{B}^\mathfrak{p+\bar v}$, then by a reformulation of the definition of $\Phi$:
	$$\Phi(b') = \bigcap_{b \in \mathcal{B}^\mathfrak{p}, b \subseteq b'} \{ f: b' \to C: f \rest b \in \Psi(b) \},$$
	$$\Phi(b'') = \bigcap_{b \in \mathcal{B}^\mathfrak{p}, b \subseteq b''} \{ f: b'' \to C: f \rest b \in \Psi(b) \}.$$
	
	Finally as every $\gp_{+\bar v}$-closed set is $\gp$-closed as well, moreover, observe that if a $\gp_{+\bar v}$-closed set $w$ contains the $\gp$-basic set $b^\delta_{n,k}$ as a subset, then  for $b'= (b^\delta_{n,k})^{\gp^0 + \bar v}$ we have $w \supseteq b' \supseteq b^\delta_{n,k}$. This implies 
	\[ w \text{ is } \gp_{+\bar v}\text{-closed} \ \to \ \Phi^+(w) = \Psi^+(w),\]
	so \ref{pdd} holds as well.
	
	As for the moreover part again recall that for each $b \in \cB^\gp$ there exists $b' \in \cB^{\gp_{+\bar v}}$ with $b' \supseteq b$, so if $g$ is a function with $\dom(g) = \omega_1$, $\forall b' \in \cB^{\gp_{+\bar v}}$ $g \restriction b' \in \Phi(b')$, then $\forall b \in \cB^\gp$ $g \restriction b \in \Psi(b)$.
\end{PROOF}

\begin{enumerate}[label = $(\intercal)_{\arabic*}$, ref = $(\intercal)_{\arabic*}$]
	\setcounter{enumi}{\value{pmocou}}
	\item So using the claim, replacing $u_{\delta,n}$ by any finite $u'_{\delta,n}$ satisfying $u_{\delta,n} \subseteq u'_{\delta,n} 
	\subseteq \eta_\delta(n)$, and redefining $\Psi$ as there it suffices to solve the derived special uniformization problem.
	\stepcounter{pmocou}
\end{enumerate}

\begin{enumerate}[label = $(\intercal)_{\arabic*}$, ref = $(\intercal)_{\arabic*}$]
	\setcounter{enumi}{\value{pmocou}}
	\item \label{cle} we can define by induction on $\delta \in S$, $n \in \omega$ the finite sets $u'_{\delta,n}$ (and the $S$-uniformization frame $\gp^0_{+ \bar u'} = (S, \bar \eta, \bar u')$) satisfying $u_{\delta,n} \subseteq u'_{\delta,n}$ so that 
	\begin{enumerate}
		\item   $u'_{\delta,n} \cup \{\eta_\delta (n)\}
		\subseteq u'_{\delta,n+1}$, 
		\sn
		\item  $\delta = \bigcup\limits_{n<\omega} u'_{\delta,n}$,
	\end{enumerate}
	and the special $S$-uniformization problem $\gp_{+\bar u'} = (S, \bar \eta, \bar u', \Psi)$ from Definition \ref{gp+u'} it suffices to solve the problem $\gp_{+\bar u'}$.
	\stepcounter{pmocou}
\end{enumerate}

Observe that

\begin{enumerate}[label = $(\intercal)_{\arabic*}$, ref = $(\intercal)_{\arabic*}$]
	\setcounter{enumi}{\value{pmocou}}
	\item if  $u \subseteq \omega_1$ is $\gp$-closed and $\alpha<\omega_1$ are given,
	\then \, $u\cap \alpha$ is $\gp$-closed. 
	\stepcounter{pmocou}
\end{enumerate}

\begin{definition}\label{-m}
		Suppose that $\gp = (S, \bar \eta, \bar u, \Psi)$ is a special $S$-uniformization problem,   $\bar m = 
	\langle m_\delta:\delta\in S \rangle$ is 
	a sequence of natural numbers, define $\gp \setminus \bar m = \gp'$ as follows. Let 
	\begin{enumerate}[ label =$\bullet_{\arabic*}$, ref= $\bullet_{\arabic*}$ ]
		\item the ladder system 	$\bar \eta - \bar m = \bar \eta'$ on $S$  be defined as
		\[  \ \eta'_\delta(n) = \eta_\delta(n+m_\delta) \ \ (\delta \in S, \ n \in \omega),\]
		\item 	and the system $\bar u'$ is defined as
		\[ u'_{\delta,n} = u_{\delta,n+m_\delta} (\subseteq \eta_\delta(n+m_\delta) = \eta_\delta'(n)),\]
		\item and for each $\gp'$-basic set $(b')^\delta_{n,k}=  \bigcup_{j \leq n} (u'_{\delta,j} \cup \{ \eta_\delta'(j)\}) \cup [\delta,\delta+k)$ (so here $(b')^\delta_{n,k}$ just equals the $\gp$-basic set $b^\delta_{n+m_\delta,k}$), let $\Psi'((b')^\delta_{n,k})= \Psi(b^\delta_{n+m_\delta,k})$.
	\end{enumerate}	
\end{definition}

\begin{claim} \label{0vag} 
	Suppose that $\gp = (S, \bar \eta, \bar u, \Psi)$ is a special $S$-uniformization problem, and furthermore $\bar u$ satisfies clauses \ref{k3a} and \ref{k3d} from Definition $\ref{verdf}$.
	 Assume that  $\bar m = 
	\langle m_\delta:\delta\in S \rangle$ is 
	a sequence of natural numbers.
	Then $\gp' = \gp \setminus \bar m = (S, \bar \eta', \bar u', \Psi')$ is a special $S$-uniformization problem,
	moreover, for every solution $f$ of $\gp'$, $f$ is a solution of $\gp$ too.
\end{claim}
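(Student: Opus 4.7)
My plan would proceed in four steps: identifying $\cB^{\gp'}$ inside $\cB^{\gp}$, verifying clauses \ref{pda}--\ref{pdc} of Definition \ref{pprobd} for $\gp'$, handling the extension clause \ref{pdd}, and deducing the moreover clause.

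First, the monotonicity hypothesis \ref{k3a} of Definition \ref{verdf} collapses each basic set to $b^\delta_{n,k} = u_{\delta,n} \cup \{\eta_\delta(n)\} \cup [\delta, \delta+k]$; applying this to $\bar u'$ together with $u'_{\delta,n} = u_{\delta,n+m_\delta}$ and $\eta'_\delta(n) = \eta_\delta(n+m_\delta)$ gives $(b')^\delta_{n,k} = b^\delta_{n+m_\delta, k}$. Thus $\cB^{\gp'}$ is the sub-family of $\cB^{\gp}$ with $n \geq m_\delta$, and $\Psi' = \Psi \restriction \cB^{\gp'}$, so clauses \ref{pda}--\ref{pdc} follow by inspection: countability and non-emptiness of $\Psi'(b')$ are inherited from $\Psi(b')$, and $\Upsilon^{\gp'}_n$ can be chosen inside $\Upsilon^{\gp}_n$; since $\cB^{\gp'} \subseteq \cB^{\gp}$ we have $\Psi'^+(w) \supseteq \Psi^+(w)$ for every finite $w$, yielding non-emptiness, while countability follows from each $\Psi'(b')$ being countable and only finitely many $b' \in \cB^{\gp'}$ lying inside $w$. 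Clause \ref{pdc} is immediate.

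For the delicate clause \ref{pdd}, let $u \subseteq v$ be finite with $u$ $\gp'$-closed and $f \in \Psi'^+(u)$. I would first apply Claim \ref{lezar} to produce a finite $\gp$-closed $\hat v \supseteq u \cup v$, and observe that every $\gp$-closed set is automatically $\gp'$-closed (from $\eta'_\delta(k) = \eta_\delta(k+m_\delta) \in \hat v$, $\gp$-closedness forces $u'_{\delta,k} = u_{\delta,k+m_\delta} \subseteq \hat v$, and the interval/initial-segment conditions translate). I then plan to construct $f^* \supseteq f$ lying in $\Psi'^+(\hat v)$: starting from any $g \in \Psi^+(\hat v)$ (supplied by clause \ref{pdb} of $\gp$), I would define $f^*$ to be $f$ on $u$ and initially $g$ on $\hat v \setminus u$, and then locally modify the values on $\hat v \setminus u$ for each of the finitely many $\gp'$-basic $b' \subseteq \hat v$ meeting $\hat v \setminus u$ by applying clause \ref{pdd} of $\gp$ to a $\gp$-closure (inside $\hat v$) of $b' \cap u$, so as to obtain values on $b' \setminus u$ compatible with $f \restriction (b' \cap u)$ and with $\Psi(b')$. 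Restricting $f^*$ to $v$ then yields $f' \in \Psi'^+(v)$.

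The moreover clause is clean: for a solution $g$ of $\gp'$ and $b = b^\delta_{n,k} \in \cB^{\gp}$, either $n \geq m_\delta$ and $b \in \cB^{\gp'}$ so $g \restriction b \in \Psi(b)$ directly, or $n < m_\delta$, in which case by \ref{k3a} we have $b \subseteq b^\delta_{m_\delta, k} = (b')^\delta_{0,k} \in \cB^{\gp'}$, so $g \restriction (b')^\delta_{0,k} \in \Psi((b')^\delta_{0,k})$ and clause \ref{pdc} of $\gp$ forces $g \restriction b \in \Psi(b)$. The main obstacle is the reconciliation step in \ref{pdd}: $\gp'$-closedness of $u$ is strictly weaker than $\gp$-closedness (for example, $u$ may contain $\eta_\delta(n)$ with $n < m_\delta$ while no $\eta'_\delta(\cdot)$ is in $u$), so clause \ref{pdd} of $\gp$ cannot be applied directly to $(u,f)$; the saving grace is the extra permissiveness $\Psi'^+ \supseteq \Psi^+$, which leaves enough freedom on $\hat v \setminus u$ to reshuffle values without incurring constraints from $\gp$-basic sets that are not $\gp'$-basic.
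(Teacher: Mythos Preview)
Your handling of clauses \ref{pda}--\ref{pdc} and of the moreover clause is fine and matches the paper. The problem is clause \ref{pdd}, where you have the implication between $\gp$-closedness and $\gp'$-closedness backwards.

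You assert that every $\gp$-closed set is $\gp'$-closed. This is false: take a finite $\gp$-closed $x$ containing some $\delta\in S$ whose witness $N$ in Definition~\ref{1.2}\ref{1.2)2}$(\beta)(i)$ satisfies $N\le m_\delta$. Then $\eta'_\delta(0)=\eta_\delta(m_\delta)\notin x$, so $x$ fails $(\beta)(i)$ for $\gp'$ (which requires $N'>0$). The paper explicitly notes this: ``$\gp$-closed sets are not necessarily $\gp'$-closed.'' Your parenthetical example actually exhibits a $\gp$-closed set that is \emph{not} $\gp'$-closed, not the other way round.

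The correct direction, and the one the paper uses, is that every $\gp'$-closed set is $\gp$-closed. This is where hypothesis \ref{k3a} of Definition~\ref{verdf} is really used: if $u$ is $\gp'$-closed and $\delta\in u$, then $\eta'_\delta(0)\in u$ and $u'_{\delta,0}=u_{\delta,m_\delta}\subseteq u$; by \ref{k3a} the set $u_{\delta,m_\delta}$ already contains $\eta_\delta(0),\ldots,\eta_\delta(m_\delta-1)$ and $u_{\delta,0},\ldots,u_{\delta,m_\delta-1}$, so the $\gp$-closedness conditions at $\delta$ hold with witness $N=m_\delta+N'$. Once you have this direction, clause \ref{pdd} is immediate: given $u$ $\gp'$-closed and $f\in\Psi'^+(u)$, the same argument as in your moreover paragraph shows that every $\gp$-basic $b\subseteq u$ sits inside a $\gp'$-basic $(b')^\delta_{0,k}\subseteq u$, so in fact $f\in\Psi^+(u)$; now apply \ref{pdd} of $\gp$ directly to get $f'\in\Psi^+(v)\subseteq\Psi'^+(v)$. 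No local modification or auxiliary $\hat v$ is needed. Your proposed reconciliation step is both unnecessary and, as written, not justified (the finitely many ``local modifications'' may well conflict on overlaps).
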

\begin{PROOF}{Claim \ref{0vag}}
	It is easy to see that \ref{k3a} and \ref{k3d} from Definition $\ref{verdf}$ imply $\cB^{\gp'} \subseteq \cB^{\gp}$, and $\Psi(b) = \Psi'(b)$ if $b \in \mathcal{B}^{\gp'}$, so \ref{pdb}, \ref{pdc} automatically hold.
	Also each $\gp'$-closed set $w$ is $\gp$-closed, and so
	$$\Psi^+(w) = \bigcap_{b \in \mathcal{B}^{\mathfrak{p}'}, b \subseteq w} \{ f: w \to C: f \rest b \in \Psi'(b) \}=$$
	$$= \bigcap_{b \in \mathcal{B}^{\mathfrak{p}'}, b \subseteq w} \{ f: w \to C: f \rest b \in \Psi(b) \},$$
	and if $b \in \mathcal{B}^\gp \setminus \mathcal{B}^{\mathfrak{p}'}$, then $b = b^\delta_{n,k}$ for some $n < m_\delta$, $b \subseteq (b^\delta_{0,k})^{\gp'}$, and if $b \subseteq w$, $w$ is $\gp'$-closed, then $(b^\delta_{m_\delta,k})^\gp=(b^\delta_{0,k})^{\gp'} \subseteq w$. Finally, since for any $f \in \Psi'((b^\delta_{0,k})^{\gp'})$ we have $f \in \Psi((b^\delta_{m_\delta,k})^{\gp})$, and $f \rest b \in \Psi(b)$ (by \ref{pdc} for $\gp$).
	This easily implies that for any $\gp'$-closed $w$, we have $\Psi^+(w) = (\Psi^+(w))'$, and so $\gp'$ is indeed a special $S$-uniformization problem (similarly to the proof of Claim $\ref{nagyobbu}$). (Note that $\gp$-closed sets are not necessarily $\gp'$-closed.
	
	For the moreover part observe that each old basic set $b^\delta_{n,k} = \bigcup_{j \leq n} (u_{\delta,j} \cup \{ \eta_\delta(j)\}) \cup [\delta,\delta+k)$ is contained in the new basic set
	$$(b^\delta_{\max(0,n-m_\delta),k})^{\gp'} = \bigcup_{j \leq \max(0,n-m_\delta)} (u'_{\delta,j} \cup \{ \eta'_\delta(j)\}) \cup [\delta,\delta+k),$$ 
	and for each $g$ with  $g \in \Psi'((b^\delta_{\max(0,n-m_\delta),k})^{\gp'})$
	(since we have $$\Psi'(b) = \Psi(b)^{\gp})) \ \textrm{ for } b \in \mathcal{B}^{\gp'}$$ 
	by definition)
	we have $g \restriction (b^\delta_{n,k})^\gp \in \Psi((b^\delta_{n,k})^\gp)$ as \ref{pdc} holds for $\gp$.
	
\end{PROOF}

\begin{definition}
		If $\gq =( S, \bar \eta, \bar u, \Psi)$ is a very special $S$-uniformization problem witnessed by $\bar v = \langle v_\alpha: \ \alpha < \omega_1 \rangle$, then we call a set $X \subseteq \omega_1$ $\bar v$-closed, if $X$ is $\gq$-closed, and for each $\alpha \in X$ the inclusion $v_\alpha \subseteq X$ also holds.
\end{definition}
\begin{enumerate}[label = $(\intercal)_{\arabic*}$, ref = $(\intercal)_{\arabic*}$]
	\setcounter{enumi}{\value{pmocou}}
	\item if the special $S$-uniformization problem $\gp = ( S, \bar \eta, \bar u, \Psi)$ is  very special  witnessed by $\bar v = \langle v_\alpha: \ \alpha < \omega_1 \rangle$, then with some slight abuse of notation we may 
	refer to also the tuple $(S, \bar \eta, \bar v, \Psi)$ as $\gp$.
	\stepcounter{pmocou}
\end{enumerate}

We summarize some useful facts about very special uniformization problems for future reference.
\begin{fact} \label{veryfa}
	If $\gp =( S, \bar \eta, \bar u, \Psi)$ is a very special uniformization problem, which fact is witnessed by $\bar v = \langle v_\gamma: \ \gamma < \omega_1 \rangle$, then
	\begin{enumerate}[label = $(\intercal)_{\arabic*}$, ref = $(\intercal)_{\arabic*}$]
		\setcounter{enumi}{\value{pmocou}}
		\item \label{veryfak3}  	\begin{enumerate}[label = $(\alph*)$, ref = $(\alph*)$]
			\item  \label{k3av}   $v_{\eta_\delta(n)} \cup \{\eta_\delta (n)\}
			\subseteq v_{\delta,n+1}$, 
			\sn
			\item  \label{k3dv} $\delta = \bigcup\limits_{n<\omega} v_{\eta_\delta(n)}$,
		\end{enumerate}
		therefore
		\begin{enumerate}[label =$(\alph*)$, ref = $(\alph*)$]
			\setcounter{enumii}{2}
			\item  \label{k3dv+}   $\ran( \eta_\delta \restriction n+1)
			\subseteq v_{\delta,n+1}$
		\end{enumerate}
		\stepcounter{pmocou}
	\end{enumerate}

	Also, $b^\delta_{n,k} = v_{\eta_\delta(n)} \cup \{ \eta_\delta(n)\} \cup [\delta, \delta+k]$ (so $\cB^{\gp} = \{ b^\delta_{n,k} : \ \delta \in S, \ n,k \in \omega\}$)
		\begin{enumerate}[label = $(\intercal)_{\arabic*}$, ref = $(\intercal)_{\arabic*}$]
		\setcounter{enumi}{\value{pmocou}}
		\item  \label{veryfa1} for $x \in [\omega_1]^{<\aleph_0}$
			\[ g \in \Psi^+(x) \iff \ [\forall \delta \in S \ \forall n,k: \ (b^\delta_{n,k} \subseteq x) \to (g \restriction b^\delta_{n,k} \in \Psi(b^\delta_{n,k}))].\]
		\stepcounter{pmocou}
	\end{enumerate}
	\begin{enumerate}[label = $(\intercal)_{\arabic*}$, ref = $(\intercal)_{\arabic*}$]
		\setcounter{enumi}{\value{pmocou}}
		\item  \label{veryfa1c} for $x \subseteq \omega_1$ is $\gp$-closed, if 
		\begin{enumerate}
			\item for each $\alpha \in x$, if $\alpha \in \ran(\eta_\delta)$ for some $\delta \in S \cap x$, then $v_\alpha \subseteq x$,
			\item for each $\delta \in x$ either for some $n$ $[\eta_\delta(n) \in x$ $\wedge$ $x \cap \delta \subseteq \eta_\delta(n+1)]$, or  $\ran(\eta_\delta) \subseteq x$,
			\item for each $\alpha +1 \in x$: $\alpha \in x$.
		\end{enumerate}
		\stepcounter{pmocou}
	\end{enumerate}

	\begin{enumerate}[label = $(\intercal)_{\arabic*}$, ref = $(\intercal)_{\arabic*}$]
		\setcounter{enumi}{\value{pmocou}}
		\item \label{levag} If $\bar m \in \ ^S \omega$, then the special $S$-uniformization problem $\gp' = \gp \setminus \bar m$ is also very special witnessed by the same $\bar v$, and then
			\begin{itemize}
				\item if $x \subseteq \omega_1$ is $(\gp \setminus \bar m) - \bar v$-closed, then it is $\gp- \bar v$-closed as well,
				\item for the basic sets we have 	$$\begin{array}{rl} \cB^{\gp \setminus \bar m} = & \{b^\delta_{n,k} \in \cB^\gp: \ n \geq m_\delta, \ k \in \omega\}, \\
					\Psi^{\gp \setminus \bar m} = & \Psi^{\gp} \restriction \cB^{\gp \setminus \bar m}, \end{array}$$
				\item hence 
					$$\text{if } x \in [\omega_1]^{<\aleph_0}, \text{ then } (\Psi^+)^{\gp \setminus \bar m}(x)  = (\Psi^+)^{\gp}(x).$$
			\end{itemize}
					
		\stepcounter{pmocou}
	\end{enumerate}
	
	\begin{enumerate}[label = $(\intercal)_{\arabic*}$, ref = $(\intercal)_{\arabic*}$]
		\setcounter{enumi}{\value{pmocou}}
		\item If $\bar v' = \langle v'_\alpha: \alpha < \omega_1 \rangle$ is such that for each $\alpha$ $v_\alpha \subseteq v'_\alpha \subseteq \alpha$ and $v'_\alpha$ is finite, then
		letting $u'_{\delta, n} = v'_{\eta_\delta(n)}$  the special uniformization problem $\gp_{+\bar u'}$ (as in Definition \ref{gp+u'}) is very special witnessed by $\bar v'$.
		\stepcounter{pmocou}
	\end{enumerate}
	
\end{fact}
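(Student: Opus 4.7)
The plan is to unpack each assertion of Fact \ref{veryfa} directly from the relevant definitions; the main unifying observation is the dictionary $u_{\delta,n} = v_{\eta_\delta(n)}$ built into the notion of a very special uniformization problem (Definition \ref{verdf}). For \ref{veryfak3}, clauses (a), (b) are just the clauses a), b) of Definition \ref{verdf} rewritten under this identification, while clause (c) follows by induction on $n$ with inductive step supplied by (a). The displayed formula $b^\delta_{n,k} = v_{\eta_\delta(n)} \cup \{\eta_\delta(n)\} \cup [\delta,\delta+k]$ then follows because the defining expression $\bigcup_{j \leq n}(u_{\delta,j} \cup \{\eta_\delta(j)\}) \cup [\delta,\delta+k]$ telescopes under (a) to its last term.

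Next, \ref{veryfa1} is immediate from the definition of $\Psi^+$ in clause \ref{pdb} of Definition \ref{pprobd}, together with the fact that $\cB^\gp$ consists precisely of the sets $b^\delta_{n,k}$. For \ref{veryfa1c}, I would compare the given hypotheses with the two clauses of the closedness condition in Definition \ref{1.2}: hypothesis (3) gives the successor clause verbatim; for the $\delta \in S \cap x$ clause, hypothesis (2) prescribes $N$ (either some $n+1$ or $\omega$), the sub-clauses (i), (ii) become immediate, and (iii) follows from hypothesis (1) after translating $u_{\delta,k} = v_{\eta_\delta(k)}$.

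For \ref{levag}, setting $u'_{\delta,n} = u_{\delta,n+m_\delta} = v_{\eta_\delta(n+m_\delta)} = v_{\eta'_\delta(n)}$ shows that $\gp \setminus \bar m$ is again very special with the same witness $\bar v$; the basic-set identity $\cB^{\gp \setminus \bar m} = \{b^\delta_{n,k}: n \geq m_\delta\}$ and the resulting containment $(\Psi^+)^{\gp \setminus \bar m}(x) \supseteq (\Psi^+)^\gp(x)$ are then immediate. The one genuine step — and the main obstacle I anticipate — is showing that $(\gp \setminus \bar m)$-$\bar v$-closedness of $x$ implies $\gp$-$\bar v$-closedness, because truncating the ladder could a priori leave the initial segment $\{\eta_\delta(j): j < m_\delta\}$ unaccounted for. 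The resolution is that $\eta_\delta(m_\delta) = \eta'_\delta(0) \in x$ forces $v_{\eta_\delta(m_\delta)} \subseteq x$ by $\bar v$-closedness, and by clause (c) of \ref{veryfak3} this set already contains every $\eta_\delta(j)$ with $j < m_\delta$, so the range condition of Definition \ref{1.2} holds with $N = m_\delta + N'$.

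Finally, the last item reduces to Claim \ref{nagyobbu}, once I verify that the enlarged $u'_{\delta,n} = v'_{\eta_\delta(n)}$ still satisfies conditions a), b) of Definition \ref{verdf}. Condition b), $\delta = \bigcup_n u'_{\delta,n}$, follows from $v_\alpha \subseteq v'_\alpha$, while condition a) is the one place where one must use that $\bar v'$ was chosen compatibly with the ladder structure; outside of this mild bookkeeping, the verification is entirely formal.
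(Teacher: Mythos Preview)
The paper states Fact \ref{veryfa} without proof (it is introduced as ``We summarize some useful facts about very special uniformization problems for future reference''), so there is no argument to compare against; your unpacking from the definitions is exactly the intended verification, and your treatment of items \ref{veryfak3} through \ref{levag} is correct. In particular, your handling of the closedness claim in \ref{levag} --- using $\bar v$-closedness to recover the truncated initial segment $\{\eta_\delta(j):j<m_\delta\}$ via $\ran(\eta_\delta\restriction m_\delta)\subseteq v_{\eta_\delta(m_\delta)}$ --- is the right (and only) idea.

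One comment on the final item: you flag that clause a) of Definition \ref{verdf} requires $v'_{\eta_\delta(n)}\cup\{\eta_\delta(n)\}\subseteq v'_{\eta_\delta(n+1)}$ and call this ``mild bookkeeping'' needing that $\bar v'$ be ``chosen compatibly with the ladder structure''. This is honest, but you should say more plainly that the hypothesis as written ($v_\alpha\subseteq v'_\alpha\subseteq\alpha$, $v'_\alpha$ finite) does \emph{not} give $v'_{\eta_\delta(n)}\subseteq v'_{\eta_\delta(n+1)}$; one can easily choose $v'_{\eta_\delta(n)}$ containing an ordinal outside $v'_{\eta_\delta(n+1)}$. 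So this is a genuine gap in the \emph{statement}, not in your argument --- the item needs either the extra monotonicity hypothesis you allude to, or one must relax the reading of ``very special'' (note that Definition \ref{verdf} is in fact left incomplete in the paper). In the one place where the paper actually enlarges $\bar v$ (Definition \ref{veryspecE}), the new $v'_\alpha$ is built as a closure, which does supply the needed monotonicity; but that is not part of the hypothesis here. Your caveat is therefore well placed, just understate it less.
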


Observe that  Claim $\ref{0vag}$ and the following finishes the proof of Lemma $\ref{le1}$.

\begin{claim} \label{verysim}
	If $\gp = (S, \bar \eta, \bar u, \Psi)$ is a special $S$-uniformization problem, $\bar u$ satisfies \ref{k3a}-\ref{k3d} from Definition $\ref{verdf}$ (where $\bar \eta$ has $\aleph_0$-uniformization), then for some $\bar m = \langle m_\delta: \ \delta \in S \rangle \in \ ^S\omega$ the special $S$-uniformization problem $\gp \setminus \bar m$ is very special.
\end{claim}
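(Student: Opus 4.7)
The plan is to use the assumed $\aleph_0$-uniformization of $\bar\eta$ to uniformize the family $\langle u_{\delta,n} : \delta\in S,\, n<\omega\rangle$ itself. The key observation is that although each $u_{\delta,n}$ is a finite subset of $\omega_1$ rather than a natural number, at each fixed ordinal only countably many values are possible, and this exactly matches the hypothesis of the generalised uniformization recorded as $(\intercal)_{\ref{stage2}}$.

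Concretely, for each $\delta \in S$ I would define the coloring
\[
c_\delta : \ran(\eta_\delta) \to \bigcup_{\alpha<\omega_1} [\alpha]^{<\aleph_0}, \qquad c_\delta(\eta_\delta(n)) := u_{\delta,n}.
\]
For each $\alpha < \omega_1$ the set $\{c_\delta(\alpha) : \alpha \in \ran(\eta_\delta),\, \delta \in S\}$ is contained in the countable family $[\alpha]^{<\aleph_0}$, so $(\intercal)_{\ref{stage2}}$ applies and produces a function $c:\omega_1\to \bigcup_\alpha[\alpha]^{<\aleph_0}\cup\{0\}$ together with a sequence $\bar m = \langle m_\delta : \delta \in S\rangle$ such that $c(\eta_\delta(n)) = u_{\delta,n}$ for every $\delta \in S$ and every $n \ge m_\delta$. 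Set $v_\alpha := c(\alpha)$ whenever $c(\alpha) \in [\alpha]^{<\aleph_0}$, and $v_\alpha := \emptyset$ otherwise, so $\bar v = \langle v_\alpha : \alpha < \omega_1\rangle$ is a sequence of finite subsets with $v_\alpha \subseteq \alpha$.

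Now consider the shifted problem $\gp' := \gp \setminus \bar m$ from Definition \ref{-m}. For each $\delta \in S$ and every $n<\omega$,
\[
u'_{\delta,n} \;=\; u_{\delta,n+m_\delta} \;=\; v_{\eta_\delta(n+m_\delta)} \;=\; v_{\eta'_\delta(n)},
\]
so the $u$-data of $\gp'$ is entirely read off from the single ordinal-indexed sequence $\bar v$, which is precisely the witness that $\gp'$ is very special. The coherence clauses of \ref{veryfak3} translate directly from the assumed \ref{k3a}, \ref{k3d} on $\bar u$ after the shift by $\bar m$, and Claim \ref{0vag} then ensures that solutions of $\gp \setminus \bar m$ are solutions of $\gp$.

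The only nontrivial step in the whole argument is the countability observation at the beginning: the total family of finite sets appearing as some $u_{\delta,n}$ is uncountable, yet the problem nevertheless admits $\aleph_0$-uniformization because at each individual ordinal $\alpha$ only countably many of them are relevant. This is exactly why $(\intercal)_{\ref{stage2}}$ is stated with a ``countable at every fibre'' hypothesis rather than just with $\aleph_0$ global colours; once this is noticed, the rest of the argument is purely formal bookkeeping.
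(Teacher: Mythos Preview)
Your proposal is correct and follows essentially the same approach as the paper: define $c_\delta(\eta_\delta(n)) = u_{\delta,n}$, observe that the fibre at each $\alpha$ lies in the countable set $[\alpha]^{<\aleph_0}$, invoke the fibrewise-countable uniformization $(\intercal)_{\ref{stage2}}$ to obtain $c^*$ and $\bar m$, and then read off $v_\alpha$ from $c^*(\alpha)$. Your write-up is in fact a bit more explicit than the paper's (you spell out the definition of $v_\alpha$ and check the coherence clauses), but the idea and the only nontrivial point---the countability of each fibre---are identical.
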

\begin{PROOF}{Claim \ref{verysim}}
	We have to find a sequence $\bar m$,  such that for any $\alpha$ if $\alpha = \eta_\delta(n_0+m_\delta) = \eta_\delta'(n_1 + m_{\delta'})$, then $u_{\delta, n_0 } = u_{\delta',n_1}$.
	So  define the functions $c_\delta$ for $\delta \in S$ by $c_\delta(\eta_\delta(n)) = u_{\delta,n}$. Again, as for any fixed $\alpha < \omega_1$ 
	$$\{c_\delta(\alpha): \ \delta \in S \ \wedge \ \alpha \in \ran(\eta_\delta)\} \subseteq [\alpha]^{< \aleph_0}$$
	there exists some function $c^*$ uniformizing $\langle c_\delta: \ \delta \in S \rangle$ and
	$\bar m$ with for each $\delta$, $n \geq m_\delta$ the equality $c^*(\eta_\delta(n)) = c_\delta(\eta_\delta(n))$ holds, which is exactly what we wanted.
\end{PROOF}

\end{PROOF}

\begin{definition}
	We call the very special $S$-uniformization problem $\gq = (S, \bar \eta, \bar v, \Psi)$    \emph{nice} if it satisfies that 
	for every $\delta \in S$ $\ran(\eta_\delta) 
	\subseteq \Omega$, and for the system  $\bar v = \langle v_\alpha: \ \alpha < \omega_1 \rangle$: $\alpha \notin \Omega$ $\to$ $v_\alpha = \emptyset$.
\end{definition}

\begin{lemma} \label{le2}
Let $\gp=  (S, \bar \eta, \bar v, \Psi)$ be a very special $S$-uniformization problem. If each ladder system $\bar \eta^* = \langle \eta^*_\delta: \ \delta \in S \rangle$ very similar to $\bar \eta$ has $\aleph_0$-uniformization then there exists a a very special $S$-uniformization problem $\gp' = (S, \bar \eta', \bar v', \Psi')$, and a sequence of natural numbers $\bar m = \langle m_\delta: \ \delta \in S \rangle$, such that $\gp'' = \gp' \setminus \bar m$ is nice, and if  $\gp''$ has a solution, then so does $\gp$.
\end{lemma}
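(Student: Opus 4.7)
The plan is to construct $\gp' = (S, \bar \eta', \bar v', \Psi')$ by replacing $\bar \eta$ with a very similar ladder system whose ranges lie in $\Omega$, using the $\aleph_0$-uniformization of $\bar \eta'$ to define a global regular $\bar v'$; the new $\Psi'$ will demand, on each $\gp'$-basic set, that the restriction to the corresponding old basic set satisfies $\Psi$, and the preservation of solutions will follow from the nestedness of the $\gp$-basic sets.

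For the ladder: for each $\delta \in S$ let $\eta'_\delta$ enumerate in increasing order (discarding the possible initial $0$) the distinct limit ordinals $\alpha \in \Omega$ for which $\eta_\delta(k) \in [\alpha, \alpha+\omega)$ for some $k < \omega$. Since $S$ is simple, $\delta$ is a limit of limit ordinals, so $\eta'_\delta$ is a strictly increasing $\omega$-sequence of limit ordinals cofinal in $\delta$. Since $\alpha + \omega = \eta_\delta(k) + \omega$ whenever $\alpha \leq \eta_\delta(k) < \alpha+\omega$, we get $\{\eta'_\delta(n)+\omega : n < \omega\} =^* \{\eta_\delta(k)+\omega : k < \omega\}$, so $\bar \eta'$ is very similar to $\bar \eta$ and therefore has $\aleph_0$-uniformization by hypothesis.

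For the global $\bar v'$: for $\delta \in S$, $n < \omega$, let $m^\delta_n$ be the largest index $k$ with $\eta_\delta(k) < \eta'_\delta(n+1)$, and define $g_\delta$ on $\ran(\eta'_\delta) \setminus \{\eta'_\delta(0)\}$ by setting $g_\delta(\eta'_\delta(n+1)) = v_{\eta_\delta(m^\delta_n)} \cup \{\eta_\delta(m^\delta_n)\} \cup \{\eta'_\delta(j) : j \leq n\}$, a finite subset of $\eta'_\delta(n+1)$ (by maximality of $m^\delta_n$). For each fixed $\alpha \in \Omega$ the set $\{g_\delta(\alpha) : \alpha \in \ran(\eta'_\delta)\} \subseteq [\alpha]^{<\aleph_0}$ is countable, so by the uniformization observation $(\intercal)_{\ref{stage2}}$ applied to $\bar \eta'$ we obtain a global $g$ with $g \supseteq^* g_\delta$ for every $\delta \in S$. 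Using Claim $\ref{0vag}$ to shift by a suitable $\bar m \in \ ^S\omega$ (absorbing the finite initial segments where $g \neq g_\delta$), we may assume $g = g_\delta$ on $\ran(\eta'_\delta)$ exactly. Set $v'_\alpha = g(\alpha)$ whenever $\alpha \in \Omega$ and $g(\alpha)$ is a finite subset of $\alpha$, and $v'_\alpha = \emptyset$ otherwise, yielding regularity. The chain property $v'_{\eta'_\delta(n)} \cup \{\eta'_\delta(n)\} \subseteq v'_{\eta'_\delta(n+1)}$ follows from $v_{\eta_\delta(m^\delta_{n-1})} \cup \{\eta_\delta(m^\delta_{n-1})\} \subseteq v_{\eta_\delta(m^\delta_n)}$ (the chain in $\gp$ and $m^\delta_{n-1} < m^\delta_n$), and cofinality $\delta = \bigcup_n v'_{\eta'_\delta(n)}$ from $\bigcup_n v_{\eta_\delta(m^\delta_n)} = \delta$.

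For $\Psi'$: by construction $b^\delta_{m^\delta_n, k} \subseteq (b')^\delta_{n+1, k}$, so define $\Psi'((b')^\delta_{n+1, k}) = \{f : (b')^\delta_{n+1, k} \to C \mid f \rest b^\delta_{m^\delta_n, k} \in \Psi(b^\delta_{m^\delta_n, k})\}$, and $\Psi'((b')^\delta_{0, k})$ as all maps into $C$. A solution $f$ of $\gp'$ satisfies $f \rest b^\delta_{m^\delta_n, k} \in \Psi(b^\delta_{m^\delta_n, k})$ for every $\delta, n, k$; for arbitrary $j, k$, picking $n$ with $j \leq m^\delta_n$ we have $b^\delta_{j, k} \subseteq b^\delta_{m^\delta_n, k}$ by the chain property of $\bar v$ in $\gp$, so clause $\ref{pdc}$ applied to $\Psi$ gives $f \rest b^\delta_{j, k} \in \Psi(b^\delta_{j, k})$; thus $f$ is a solution of $\gp$. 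The main obstacle will be verifying the four clauses of Definition $\ref{pprobd}$ for $\gp'$, particularly the restriction closure \ref{pdc} and the extension \ref{pdd} in the cross-$\delta$ case: when $(b')^\delta_{n, k} \subseteq (b')^{\delta'}_{n', k'}$ with $\delta \neq \delta'$, $\delta$ must lie inside $v'_{\eta'_{\delta'}(n')} \cup \{\eta'_{\delta'}(n')\}$, and one must unpack what this means in terms of the original $\bar \eta$ and $\bar v$ (through the uniformization $g$) to ensure that extensions and restrictions propagate correctly to $\Psi$.
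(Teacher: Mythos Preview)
Your approach differs substantially from the paper's, and the gap you flag at the end is real and, as written, fatal. The paper does not try to embed the old basic sets inside new ones while keeping the same value set $C$. Instead it introduces an equivalence relation $E$ on $\omega_1$ with finite convex classes $e_\alpha$ (collapsing each $\omega$-block touched by $\bar\eta$), defines $\eta'_\delta(n)$ as the index $\alpha$ of the class containing $\eta_\delta(n)$, and --- crucially --- \emph{re-encodes the values}: a function $f'$ on the new basic set $(b^\delta_{n,k})'$ corresponds to a function $g$ on $\bigcup_{\alpha\in (b^\delta_{n,k})'} e_\alpha$ via $f'(\alpha)=\langle g(\beta):\beta\in e_\alpha\rangle$, and $\Psi'$ demands that $g$ restricted to \emph{every} old basic set contained in that union belongs to $\Psi$. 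The payoff is Claim~\ref{vekvag}\ref{a3}: if $u'$ is $\gp'$-closed then $\bigcup_{\alpha\in u'}e_\alpha$ is $\gp$-closed, so $\boxdot_d$ transfers directly, and $\boxdot_c$ is immediate because the constraints are closed under passing to subsets.

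Your $\Psi'((b')^\delta_{n+1,k})$ constrains only the restriction to the single old basic set $b^\delta_{m^\delta_n,k}$. In the cross-$\delta$ case you name, take $\delta\in v'_{\eta'_{\delta'}(n')}$ with $\delta\neq\delta'$, so that $(b')^\delta_{n,k}\subseteq (b')^{\delta'}_{n',k'}$. The defining constraint on $f\in\Psi'((b')^{\delta'}_{n',k'})$ speaks only about $f\rest b^{\delta'}_{m^{\delta'}_{n'-1},k'}$. For $\boxdot_c$ you would need $f\rest b^\delta_{m^\delta_{n-1},k}\in\Psi(b^\delta_{m^\delta_{n-1},k})$, which requires $b^\delta_{m^\delta_{n-1},k}\subseteq b^{\delta'}_{m^{\delta'}_{n'-1},k'}$; but the difference $(b')^{\delta'}_{n',k'}\setminus b^{\delta'}_{m^{\delta'}_{n'-1},k'}$ contains the extra points $\{\eta'_{\delta'}(j):j<n'\}\cup\{\eta'_{\delta'}(n')\}$ you added to $v'$, and $\delta$ (or $\eta_\delta(m^\delta_{n-1})$) may lie there. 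So $\boxdot_c$ fails in general. Strengthening $\Psi'$ to require consistency with \emph{all} nested old basic sets fixes $\boxdot_c$ but then $\boxdot_d$ needs exactly the ``new-closed $\Rightarrow$ old-closed'' mechanism, which your construction does not supply because a $\gp'$-closed $u$ is not literally a $\gp$-closed set --- this is precisely what the paper's tuple-encoding and $u'\mapsto\bigcup_{\alpha\in u'}e_\alpha$ buy.
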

\begin{PROOF}{Lemma \ref{le2}}

\begin{definition}
	\label{veryspecE}
	Assume that $E$  is 
	an equivalence relation on $\omega_1$, satisfying that
	each equivalence class is a finite convex set (and $\gp = (S,\bar \eta, \bar v, \Psi)$ is a very special $S$-uniformization problem).  We define ${\gp}' =
	{\gp}/E$ as follows.
	
	Let $\{e_i:i<\omega_1\}$ list the equivalence classes in increasing order
	(hence $\delta\in e_\delta$ for $\delta$ limit ordinal
	$<\omega_1$); assume further $\bar \eta' = \langle \eta'_\delta:
	\delta\in S \rangle, \bar v = \langle v_{\alpha}: 
	\ \alpha < \omega_1 \rangle$
	are defined as follows:
	\mn
	\begin{enumerate}[ label = (\greek*), ref = (\greek*)]
		\item   $\eta'_\delta$ enumerates in increasing order
		$$\{\alpha: \ (\exists n) \eta_\delta(n) \in e_\alpha\}$$ 
		
		(so $\bar \eta' = \langle \eta'_\delta:\delta\in S \rangle$ is an 
		$S$-ladder system),
		\sn
		\item \label{beta} if $\alpha < \omega_1$, \then 
		\begin{enumerate}
			\item[$\bullet_1$]  $v^*_{\alpha} = 
			\{\beta<\alpha: \  \text{ for some } \gamma \in e_\alpha \ \ e_\beta \cap v_\gamma
			 \ne \emptyset\}$,
			 \item[$\bullet_2$] then $v'_\alpha = \cll^{\bar v^*}(v^*_\alpha)$ (where $\bar v^* = \langle v^*_\alpha: \ \alpha < \omega_1 \rangle$),
		\end{enumerate}
		\item  \label{gasm}   $\Psi'$ with $\dom(\Psi') = \cB^{(\gp')^0}= \{ (b^\delta_{n,k})': \ \delta \in S, \ n,k \in \omega \}$ is defined as follows
	$$\Psi'((b^\delta_{n,k})')=\left\{ \begin{array}{ll} f_g: & \left( \dom(g) = \bigcup\limits_{\alpha\in (b^\delta_{n,k})'}e_\alpha \right) \ \& \\ & \left(\forall b \in \mathcal{B}^\mathfrak{p}: \ b \subseteq \bigcup\limits_{\alpha\in (b^\delta_{n,k})'}e_\alpha \ \to \ g\restriction b \in \Psi(b) \right) \end{array} \right\},
		$$ where for $g \in \Psi^+(\bigcup\limits_{\alpha\in b} e_\alpha)$ 
		we let dom$(f_g) = b$ and  $f_g
		(\alpha)= \langle g(\beta_0), g(\beta_1), \dots, g(\beta_{j-1}) \rangle$ where $e_\alpha = \{ \beta_0 < \beta_1 < \dots < \beta_{j-1} \}$,
		\sn
		\item${\gp}' = {\gp}/(E,\bar m) = 
		{\gp}/(\{e_\alpha:\alpha < \omega_1\})$ is 
		$(S,\bar \eta',\bar v',\Psi')$.
	\end{enumerate}
	\mn
\end{definition}

\begin{claim} \label{vekvag} 
	Suppose that $\gp = (S, \bar \eta, \bar v = \langle v_\alpha: \ \alpha < \omega_1 \rangle, \Psi)$ is a very special $S$-uniformization problem. Assume that  $E$  is 
	an equivalence relation on $\omega_1$, satisfying that
	each equivalence class is a finite convex set. \Then
	\mn
	\begin{enumerate}[label = $(\ast)_{\arabic*}$, ref =  $(\ast)_{\arabic*}$]
		\item \label{a1}   ${\gp}' = (S,\bar \eta',\bar v',\Psi')$ 
		forms a very special $S$-uniformization problem (defined above in Definition $\ref{veryspecE}$),
		\sn
		\item    if $f'$ is a solution for ${\gp}'$ 
		 \then \,
		there is a solution for ${\mathfrak p}$ (which is defined naturally satisfying the equality $\langle f(\beta): \ \beta \in e_\alpha \rangle = f'(\alpha)$),
		\sn
		\item  \label{a3}  new closed sets essentially are old closed sets, that is, if $u' \subseteq \omega_1$ is ${\gp}'$-closed (i.e.
		for $\bar \eta'$ ,$\bar v'$) \then \,
		$\bigcup \{e_\alpha:\alpha \in u'\}$ is ${\gp}$-closed (i.e.
		for $\bar \eta,\bar v$),
		\sn
		\item    $\bar \eta',\bar \eta,\bar \eta^*$ are very similar (see
		Definition \ref{a6b}(4)),
		\sn
		\item \label{a5}    $\gp' = (S, \bar \eta', \bar v', \Psi')$ satisfies 
		\begin{enumerate}
			\item for each $\delta \in S$: $\bigcup_{n \in \omega} v'_{\eta_\delta(n)} = \delta$,
			\item for each $\delta \in S$, $n \in \omega$: $v'_{\eta'_\delta(n)} \cup \{\eta'_\delta(n)\} \subseteq v'_{\eta'_\delta(n+1)}$.
		\end{enumerate}	
	\end{enumerate}
\end{claim}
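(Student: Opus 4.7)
The plan is to unwind each definition in turn and verify the five clauses; most are routine bookkeeping once the correct coding is tracked. The key observation behind clause (4) is that each equivalence class $e_\alpha$ is a finite convex interval of ordinals, so all elements of $e_\alpha$ agree under $\gamma \mapsto \gamma + \omega$ (since $\alpha_0 + k + \omega = \alpha_0 + \omega$ in ordinal arithmetic for finite $k$). Because $\eta_\delta(n) \in e_{\eta'_\delta(n^*)}$ for the appropriate correspondence by clause (i) of Definition \ref{veryspecE}, one obtains $\{\eta_\delta(n) + \omega : n < \omega\} = \{\eta'_\delta(n) + \omega : n < \omega\}$, exhibiting very-similarity of $\bar\eta$ and $\bar\eta'$; transitivity with $\bar\eta^*$ follows from Observation \ref{a14}(3). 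Clause \ref{a5}(a) follows from $\bigcup_n v_{\eta_\delta(n)} = \delta$ (using that $\gp$ is very special) combined with the $\bar v^*$-closure built into clause \ref{beta}; clause \ref{a5}(b) is immediate from the monotonicity of $\bar v$ along $\bar\eta$ for $\gp$.

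For clause \ref{a3}, the construction of $\bar v'$ via $\cll^{\bar v^*}$ was precisely designed so that $\gp'$-closure pulls back to $\gp$-closure under the saturation $\alpha \mapsto e_\alpha$. Given $u'$ that is $\gp'$-closed, set $u = \bigcup\{e_\alpha : \alpha \in u'\}$ and verify each clause of the closure definition from Definition \ref{1.2}: closure under predecessor follows from convexity of $e_\alpha$ (the predecessor stays inside $e_\alpha$, or else $\alpha = \min(e_\alpha)$ and the predecessor class lies in $u'$ by $\gp'$-closure of $u'$ through $\bar v'$); for the $\eta_\delta$-condition at $\delta \in u \cap S$ one has $\delta \in u'$, and $\gp'$-closure dictates which $e_{\eta'_\delta(n)}$ lie in $u'$, hence which $\eta_\delta(m)$ are in $u$; for $v_{\eta_\delta(n)}$-closure, any $\beta \in v_{\eta_\delta(n)}$ lies in some $e_{\beta^*}$ with $\beta^* \in v^*_{\eta'_\delta(n^*)} \subseteq v'_{\eta'_\delta(n^*)}$ by the definition of $v^*$ in \ref{beta}, so $\beta^* \in u'$ and hence $\beta \in u$.

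For clauses $(\ast)_1$ and $(\ast)_2$, the axioms \ref{pda}--\ref{pdd} of Definition \ref{pprobd} for $\gp'$ follow from those for $\gp$ via the coding $g \mapsto f_g$ of clause \ref{gasm}. The countable-coding axioms \ref{pda}, \ref{pdb} hold because each new basic set $(b^\delta_{n,k})'$ saturates to a finite subset of $\omega_1$ that meets only finitely many old basic sets, so the countable coding of $\Psi^+$-types from Claim \ref{Upsn} for $\gp$ transfers to $\Psi'$. The restriction axiom \ref{pdc} is direct from the definition of $f_g$. For extendability \ref{pdd}, a $\gp'$-closed $u'$ with $f' = f_g \in (\Psi')^+(u')$ decodes by clause \ref{a3} to $g \in \Psi^+(\bigcup_{\alpha \in u'} e_\alpha)$, and we extend $g$ using the $\gp$-version of \ref{pdd} to a larger set, then re-encode. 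For $(\ast)_2$, given a solution $f'$ of $\gp'$, define $f : \omega_1 \to C$ by $\langle f(\beta) : \beta \in e_\alpha\rangle = f'(\alpha)$; for any finite $u \subseteq \omega_1$, set $u' = \{\alpha : e_\alpha \cap u \ne \emptyset\}$, pass to a finite $\gp'$-closed superset via Claim \ref{lezar} applied to $\gp'$, and decode to conclude $f \restriction u \in \Psi^+(u)$.

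The main obstacle is the careful case analysis in clause \ref{a3} when $\alpha$ is a successor or $\alpha \notin \Omega$, where $v_\alpha$ may be trivially handled and one must ensure the transfer does not lose constraints: the choice to define $v'_\alpha$ as the iterated closure $\cll^{\bar v^*}(v^*_\alpha)$ rather than simply $v^*_\alpha$ is precisely what guarantees that every constraint arising from some $v_{\eta_\delta(n)}$ in the original $\gp$ is captured after the saturation, while no spurious constraints are introduced. A secondary subtlety is verifying that $\Psi'$ as defined in \ref{gasm} does not over-constrain: only the $\gp$-basic sets genuinely contained in $\bigcup_{\alpha \in (b^\delta_{n,k})'} e_\alpha$ play a role in the definition of $f_g$, which is exactly what the definition states.
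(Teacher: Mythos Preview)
Your approach is essentially the same as the paper's, and the overall structure is sound, but you have swapped the roles of two key observations.

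You write that \ref{a5}(b) is ``immediate from the monotonicity of $\bar v$ along $\bar\eta$ for $\gp$'', and later attribute the two-step closure $v'_\alpha = \cll^{\bar v^*}(v^*_\alpha)$ to making \ref{a3} work. This is backwards. For \ref{a3}, only the inclusion $v^*_{\eta'_\delta(n)} \subseteq v'_{\eta'_\delta(n)}$ is used: if $\eta_\delta(k) \in x^e$ then the class indices meeting $v_{\eta_\delta(k)}$ already lie in $v^*_{\eta'_\delta(n)}$ by the very definition of $v^*$, so the one-step $v^*$ suffices there. By contrast, \ref{a5}(b) is \emph{not} immediate from monotonicity of $\bar v$: an element $\beta \in v^*_{\eta'_\delta(n)}$ is witnessed by some $\gamma \in e_{\eta'_\delta(n)}$ with $e_\beta \cap v_\gamma \neq \emptyset$, but $\gamma$ need not be an $\eta_\delta$-value, so you have no control over $v_\gamma$ relative to any $v_{\gamma'}$ with $\gamma' \in e_{\eta'_\delta(n+1)}$. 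The paper's argument is that $\eta'_\delta(n) \in v^*_{\eta'_\delta(n+1)}$, and therefore the $\bar v^*$-closure $v'_{\eta'_\delta(n+1)}$ must swallow $v^*_{\eta'_\delta(n)}$, hence also its closure $v'_{\eta'_\delta(n)}$. This is exactly why the second closure step was introduced.

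A secondary point: your treatment of the $\eta_\delta$-conditions in \ref{a3} (``$\gp'$-closure dictates which $e_{\eta'_\delta(n)}$ lie in $u'$, hence which $\eta_\delta(m)$ are in $u$'') glosses over condition $(\beta)(ii)$ of Definition~\ref{1.2}, namely $[\eta_\delta(k),\eta_\delta(k+1)) \cap x \neq \emptyset \Rightarrow \eta_\delta(k) \in x$. The paper handles this by showing that if $\eta_\delta(k) \notin x^e$ then the entire interval $(\eta_\delta(k),\delta)$ misses $x^e$; you should make this explicit rather than fold it into ``routine bookkeeping''.
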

\begin{PROOF}{Claim \ref{vekvag}}
	We first verify \ref{a3}. If $x$ is $\gp'$-closed, letting $x^e = \bigcup\{e_\alpha: \ \alpha \in x\}$, 
	then it is enough to show that if $\delta \in x$ (equivalently, $\delta \in x^e$),
	and $\eta_\delta(k) \notin x^e$, then  $(\eta_\delta(k), \delta) \cap x^e = \emptyset$, 
	or else, $\eta_\delta(k) \in x^e$ implies $v_{\eta_\delta(k)} \subseteq x^e$.
	
	First, if $\eta_\delta(k) \notin x^e$, then for the natural number $n$ defined by $\eta_\delta(k) \in e_{\eta'_\delta(n)}$
	clearly $\eta'_\delta(n) \notin x$, so for each $\alpha \in (\eta'_\delta(n), \delta)$, $\alpha \notin x$, which in turn implies that $[\min(e_{\eta'_\delta(n)}), \delta) \cap x^e = \emptyset$, so $(\eta_\delta(k), \delta) \cap x^e = \emptyset$.
	On the other hand, if $\eta_\delta(k) \in x^e$, then $\eta'_\delta(n) \in x$, so 
	$v'_{\eta'_\delta(n)} \subseteq x$, in particular $v^*_{\eta'_\delta(n)} \subseteq x$, and so recalling \ref{beta} again $v_{\eta_\delta(k)} \subseteq x^e$.
	
	Now we can turn to \ref{a5}, here we use that $v'_\alpha$'s are defined in two steps (instead of just letting $v'_\alpha = v^*_\alpha$).
	So fix $\delta \in S$, $n \in \omega$, it is clear from the construction that $\eta'_\delta(n) \in v^*_{\eta'_\delta(n+1)} \subseteq v'_{\eta'_\delta(n+1)}$ (since $\eta_\delta(\ell) \in v_{\eta_\delta(k)}$ always holds if $\ell < k$ by \ref{veryfak3}).
	For $v'_{\eta'_\delta(n)} = \cll^{\bar v^*}(v^*_{\eta'_\delta(n)}) \subseteq v'_{\eta'_\delta(n+1)}$ we only have to argue that $v^*_{\eta'_\delta(n)} \subseteq v'_{\eta'_\delta(n+1)}$. But as we have already seen that $\eta'_\delta(n) \in v^*_{\eta'_\delta(n+1)}$, the $\bar v^*$-closure of $v^*_{\eta'_\delta(n+1)}$ must contain $v^*_{\eta'_\delta(n)}$.
	
	We are ready to argue \ref{a1}. For \ref{pda} it can be easily seen that $\Psi'((b^\delta_{n,k})')$ can be defined by a single member of $\bigcup_{\ell < \omega} \Upsilon_\ell$, and the position of the (finitely many) $\gp$-basic sets in $\bigcup_{\alpha \in (b^\delta_{n,k})'} e_\alpha$. 
	It is straightforward to check that if $y = x^e = \bigcup_{\alpha \in x} e_\alpha$
	and $g \in (\Psi^+)'(y)$, then $f_g \in \Psi^+(x)$.
	Moreover, if $x$ is $\gp'$-closed, then $y = x^e$ is $\gp$-closed, and similarly to the proof of Claim $\ref{nagyobbu}$ one can easily check that $f_g \in (\Psi^+)(y)$ is equivalent to $g \in (\Psi')^+(x))$ (check that if some $\gp$-basic set $b^\delta_{n^*,k^*} \subseteq x^e$ contributes to the restrictions (where $n^*,k^*, n,k$ as in \ref{gasm}) then by the $\gp'$-closedness of $x$ there is a corresponding $\gp'$-basic set $(b^\delta_{n,k})' \subseteq x$).
	
	Finally, let $f' = f_g$ for some function $g$ with domain $\omega_1$. It is straightforward to check that as for each $\delta \in S$, $n,k \in \omega$ $f' = f_g \restriction (b^\delta_{n,k})' \in \Psi'((b^\delta_{n,k})')$, by the definition of $\Psi'$ we have $g\restriction b^\delta_{n^*,n^*} \in \Psi(b^\delta_{n^*,k^*})$ (where $n^*,k^*$ as in \ref{gasm}), and for each $b^\delta_{n^*,k^*}$ there exists some $(b^\delta_{n',k'})'$ with $ b^\delta_{n^*,k^*} \subseteq \bigcup_{\alpha \in (b^\delta_{n',k'})'} e_\alpha$, so  we are done. 

\end{PROOF}

\noindent
Applying Claim $\ref{vekvag}$ we have the following (thus finishing the proof of Lemma $\ref{le2}$):
\begin{enumerate}[label = $(\intercal)_{\arabic*}$, ref = $(\intercal)_{\arabic*}$]
	\setcounter{enumi}{\value{pmocou}}
	\item \label{limr} (if the very special $\gp= (S, \bar  \eta, \bar v, \Psi)$ is given) for some $E$ and $\bar m$ the very special $S$-uniformization ${\gp}'' = (\gp / E) \setminus \bar m = (S, \bar  \eta'', \bar v'', \Psi'')$ given by Definition $\ref{veryspecE}$ satisfies $\ran(\eta''_\delta) 
	\subseteq \Omega$, and $\alpha \notin \Omega$ $\to$ $v''_\alpha = \emptyset$.
	\stepcounter{pmocou}
\end{enumerate}
\begin{PROOF}{}(\ref{limr})
Define for $\delta\in S$ the function $\eta'_\delta:\omega \to \delta$ listing 
$$\{\alpha \in \Omega:\ [\alpha, \alpha+\omega)\cap \ran(\eta_\delta) \ne \emptyset\}.$$
As each $\delta \in S$ is divisible by $\omega^2$ according to our assumptions (i.e.\ $S$ is simple we are proving Theorem $\ref{1.4}$) $\bar\eta' = \langle \eta'_\delta:\delta \in S\rangle$ is an
$S$-ladder system, which is also very similar to $\bar\eta$, and hence to $\bar\eta^*$.
For $\delta \in S$, define $c_\delta$ by $c_\delta(\eta'_\delta(n))=
\eta_\delta \restriction (k_{\delta,n} + 1)$, where $k_{\delta,n} <
\omega$ is the maximal $k$  such that
$\eta_\delta(k)<\eta_\delta(n) + \omega$.
Note that the assumptions of \ref{stage2} hold (concerning $(*)$, $c_\delta(\alpha)$ is
from ${}^{\omega>}(\alpha+\omega)$ which is countable). So the conclusion
of $(\alpha)$ of \ref{stage2} holds, say, witnessed by $c$.
Let $\alpha E\beta$ iff $\alpha=\beta$ or $\alpha = \delta+k$, $\beta = \delta+\ell$ for some $\delta \in S$ and $k,\ell \in \omega$, and $c(\delta)$ is a
finite sequence with last element in $[\alpha, \alpha+\omega)\cap
[\beta, \beta+\omega)$.
Then use Claim $\ref{vekvag}$ to obtain the very special $S$-uniformization problem $\gp' = \gp /E$. So if $\gp' = (S, \bar \eta', \bar v', \Psi')$, then for each $\delta \in S$, $\ran(\eta'_\delta)$ contains only finitely many ordinals outside $\Omega$, say, the largest is at most the $m_\delta-1$'th. Now apply  \ref{levag}, letting $\gp'' = \gp' - \bar m$, and if
$\gp'' = (S, \eta'', \bar v'', \Psi'')$, where each $\ran(\eta_\delta'')$ consists of only limit ordinals, then we can clearly assume that $v''_{\alpha+1} = \emptyset$ for $\alpha <\omega_1$, this does not change the set $\mathcal{B}^{\gp''}$ of $\gp''$-basic sets or $\gp''$-closed sets (Definitions \ref{1.2}, \ref{pprobd}), since ran($\eta_\delta) \subseteq \Omega$), so it doesn't change the set of solutions.
\end{PROOF}

\end{PROOF}

We summarize the steps taken so far in the following lemma.

\begin{lemma}\label{nicele}
	Suppose that $S$ is a stationary set of limit ordinals which is simple, every ladder system $\bar \eta$ very similar to $\bar \eta^*$ has $\aleph_0$-uniformization. For every special $S$-uniformization $\mathfrak{p}$ there exists a very special $S$-uniformization problem $\mathfrak{q} = (S, \bar \eta^{\gq}, \bar v^{\gq}, \Psi^\gq)$ which is nice, and every solution to $\mathfrak{q}$ is a solution to $\gp$, too, and every ladder system very similar to $\bar \eta^\gq$ has $\aleph_0$-uniformization.
\end{lemma}
\begin{PROOF}{Lemma \ref{nicele}}
	By Claim \ref{nagyobbu} and \ref{cle} we reduce the problem to solving the special uniformization problem $\gp_{+\bar u'} = (S, \bar \eta, \bar u', \Psi)$, where $\bar u'$ satisfies \ref{k3a}, \ref{k3d} from Definition \ref{verdf}.
	Then Claim \ref{verysim} yields $\gp'' = (S, \bar \eta'', \bar  v'', \Psi'')$, a very special $S$-uniformization problem  on the ladder system $\bar \eta'' = \bar \eta \setminus \bar m$ for some $\bar m$, which is very similar to $\bar \eta$.
	
	Finally, applying Lemma \ref{le2} to $\gp''$, it reduces our problem to solving a very special $S$-uniformization problem $\gp''' = (S, \bar \eta''', \bar v''', \Psi''')$,  where $\bar \eta'''$ is very similar to $\bar \eta''$ (and satisfies that for each $\delta \in S$ ran$(\eta_\delta)$ contains finitely many ordinals outside $\Omega$, as is seen from the following).
	Moreover, for some $\bar m$, $\gp''' \setminus \bar m$ is a very special $S$-uniformization problem on $\bar \eta^{(4)} = \bar \eta''' \setminus \bar m$ which is nice, and $\bar \eta^{(4)}$ being a finite modification of $\bar \eta'''$, is very similar to $\bar \eta'''$ (so $\gq = \gp''' \setminus \bar m$ works).
\end{PROOF}

\begin{fact} \label{smotvag}
 If the nice $S$-uniformization problem $\gq = (S, \bar \eta, \bar v, \Psi)$  and $\langle m_\delta: \ \delta \in S \rangle$ are given, then $\gq \setminus \bar m$ is also nice, each solution $f$ of the latter is solution for $\gq$ as well.
\end{fact}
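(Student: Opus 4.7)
The fact is essentially a book-keeping statement, and the heavy lifting has already been done in Claim~\ref{0vag} and Fact~\ref{veryfa}\ref{levag}. My plan is to reduce the claim to those two results and then check the extra two conditions that distinguish regularity from mere very specialness.

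First, I would invoke Fact~\ref{veryfa}\ref{levag}: it tells us that $\gq \setminus \bar m$ is a very special $S$-uniformization problem, and it even keeps the same witness $\bar v$. Thus the only additional items I need to verify for regularity are: (i) $\ran(\eta'_\delta) \subseteq \Omega$ for every $\delta \in S$, and (ii) for every $\alpha \notin \Omega$, $v_\alpha = \emptyset$. Both are immediate. For (i), $\eta'_\delta$ enumerates $\{\eta_\delta(n+m_\delta): n<\omega\} \subseteq \ran(\eta_\delta)$, and by regularity of $\gq$ this last set is contained in $\Omega$. For (ii), the witness sequence is literally unchanged from $\gq$, where the condition already holds.

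For the second half of the statement (solutions of $\gq \setminus \bar m$ solve $\gq$), I would simply cite the \textit{moreover} clause of Claim~\ref{0vag}. If one prefers an in-line argument, it is very short: let $f$ be a solution of $\gq \setminus \bar m$. By Fact~\ref{veryfa}\ref{levag} we have $\cB^{\gq \setminus \bar m} = \{b^\delta_{n,k} \in \cB^{\gq}: n \geq m_\delta\}$ and $\Psi^{\gq \setminus \bar m} = \Psi^{\gq} \restriction \cB^{\gq \setminus \bar m}$, so $f \restriction b \in \Psi(b)$ already holds for all $b \in \cB^{\gq \setminus \bar m}$. For a remaining basic set $b^\delta_{n,k} \in \cB^{\gq}$ with $n < m_\delta$, use clause \ref{k3av} of Fact~\ref{veryfa} iteratively to get $b^\delta_{n,k} \subseteq b^\delta_{m_\delta, k} \in \cB^{\gq \setminus \bar m}$, and then apply clause \ref{pdc} of Definition~\ref{pprobd} to conclude $f \restriction b^\delta_{n,k} \in \Psi(b^\delta_{n,k})$.

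There is no genuine obstacle here; the statement is a combination of what has been verified for general very special problems with the observation that the two extra regularity conditions are preserved by the operation $\gq \mapsto \gq \setminus \bar m$ simply because that operation neither enlarges $\ran(\eta_\delta)$ nor touches the witness $\bar v$.
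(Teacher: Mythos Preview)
Your proposal is correct. The paper states this Fact without proof, treating it as an immediate consequence of the earlier machinery, and your argument supplies exactly the expected verification: Fact~\ref{veryfa}\ref{levag} gives very specialness with the same witness $\bar v$, the two extra regularity conditions are trivially preserved (passing to a tail of $\eta_\delta$ and keeping $\bar v$ unchanged), and the solution-lifting is the \emph{moreover} clause of Claim~\ref{0vag}, which applies since very special problems satisfy clauses~\ref{k3a} and~\ref{k3d}.
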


\begin{lemma} \label{Lebe}
Let $\gp = (S, \bar \eta, \bar v, \Psi)$ be a very special $S$-uniformization problem, which is nice, and the $S$-ladder system $\bar \eta = \langle \eta_\delta: \ \delta \in S \rangle$ has $\aleph_0$-uniformization. Then $\gp$ has a solution.
\end{lemma}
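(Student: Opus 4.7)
The plan is to construct a solution $f:\omega_1 \to C$ of $\gp$ by combining local solutions near each $\delta \in S$ with $\aleph_0$-uniformization of a derived coloring, as supplied by the hypothesis together with Lemma \ref{verisi} (which allows us to uniformize any ladder system very similar to $\bar\eta$). The construction proceeds in three stages.

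\emph{Stage I: local solutions.} For each $\delta \in S$, use repeated applications of axiom \ref{pdd} of Definition \ref{pprobd} to produce a function $g_\delta : \delta+\omega \to C$ with $g_\delta \restriction b^\delta_{n,k} \in \Psi(b^\delta_{n,k})$ for every $n,k < \omega$. Since $\gp$ is very special and regular, Fact \ref{veryfa} guarantees that $\delta+\omega$ is the union of an increasing sequence of finite $\gp$-closed sets exhausting the basic sets $b^\delta_{n,k}$; extend one step at a time via \ref{pdd}.

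\emph{Stage II: coding and uniformization.} For each $\delta \in S$ and $n \in \omega$, define
\[
c_\delta(\eta_\delta(n)) \;=\; g_\delta \restriction \bigl(v_{\eta_\delta(n)} \cup \{\eta_\delta(n)\} \cup [\delta,\delta+n]\bigr),
\]
coded through its $\Psi$-isomorphism type together with the OP-position of $v_{\eta_\delta(n)}$. By Observation \ref{a14}(1) and Claim \ref{Upsn}, for every $\alpha < \omega_1$ the set $\mathbf c^\alpha = \{c_\delta(\alpha) : \delta \in S,\, \alpha \in \ran(\eta_\delta)\}$ is countable, so hypothesis $(*)$ of \ref{stage2} is satisfied. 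Invoking \ref{stage2} produces a uniformizing function $c$ on $\omega_1$ together with $\bar m = \langle m_\delta : \delta \in S\rangle$ and $\mathbf m : \omega_1 \to \omega$ with $c(\eta_\delta(n)) = c_\delta(\eta_\delta(n))$ for $n \geq m_\delta$ and $(\forall^* n)(\mathbf m(\eta_\delta(n)) = m_\delta)$.

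\emph{Stage III: trimming and assembly.} Replace $\gp$ by $\gp \setminus \bar m$; by Fact \ref{smotvag} the result is still a regular very special problem, and any solution of it solves $\gp$, so we may assume $m_\delta = 0$ for every $\delta \in S$. Then build $f$ by transfinite recursion on $\alpha<\omega_1$, at each stage either following the prescription dictated by $c(\alpha)$ when $\alpha$ lies on some ladder (so that $c(\alpha)=c_\delta(\alpha)$ pins down the relevant finite trace of $f$) or otherwise extending freely via \ref{pdd} along a chain of finite $\gp$-closed sets. Verification: for each $\delta\in S$ and each $n,k$, the equality $c(\eta_\delta(n)) = c_\delta(\eta_\delta(n))$ forces $f \restriction b^\delta_{n,n}$ to agree up to the chosen $\Psi$-iso-type with $g_\delta \restriction b^\delta_{n,n}$, so it lies in $\Psi(b^\delta_{n,n})$; taking $n' \geq \max(n,k)$ and invoking \ref{pdc} then gives $f \restriction b^\delta_{n,k} \in \Psi(b^\delta_{n,k})$.

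\emph{Main obstacle.} The delicate point is the coherence at values $f(\beta)$ for $\beta \in v_{\eta_\delta(n)}$: such a $\beta$ can be constrained simultaneously by $g_\delta$ and by $g_{\delta'}$ for other $\delta' \in S$ with $\beta$ appearing in $\ran(\eta_{\delta'})$ or in some $v_{\eta_{\delta'}(n')}$, and a priori $g_\delta$ and $g_{\delta'}$ need not agree on the overlap. The resolution is to pick the $g_\delta$'s from a canonical family indexed by $\Psi$-isomorphism types, so that structurally identical local configurations receive identical prescriptions, and then let the uniformization propagate a single coherent global template. This is where the finiteness of the $\Psi$-iso-type set (Observation \ref{a14}(1)) and the countable-support conditions from Definition \ref{pprobd} (together with Claim \ref{Upsn}) are essential, and it is the principal technical hurdle in completing Stage III; it is exactly at this step that the hypothesis $(A)_{S,\bar\eta^*}$ is consumed.
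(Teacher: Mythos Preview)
Your three-stage outline captures the natural first attempt, and you correctly locate the key difficulty in your ``Main obstacle'' paragraph. However, the resolution you sketch there is not a proof, and in fact the obstacle is more serious than you indicate.

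The concrete gap is this: your uniformization in Stage~II synchronizes, for large $n$, the values $g_\delta \restriction (v_{\eta_\delta(n)} \cup \{\eta_\delta(n)\})$ across the various $\delta$'s that share the rung $\eta_\delta(n)$. But a point $\beta \in v_{\eta_\delta(n)}$ need not itself be a rung of $\eta_\delta$; it may be a rung $\eta_{\delta''}(m)$ of some other ladder, or it may lie in $v_{\eta_{\delta''}(m)}$ for yet another $\delta''$. Your transfinite recursion in Stage~III assigns $f(\beta)$ according to whatever prescription $c(\beta)$ carries, and there is no mechanism forcing that prescription to agree with what $g_\delta$ demands at $\beta$. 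Choosing the $g_\delta$'s ``canonically by $\Psi$-isomorphism type'' does not help: isomorphism-type canonicity governs abstract configurations, but here the \emph{same concrete ordinal} $\beta$ sits inside several basic sets whose ambient configurations are not isomorphic to one another, and $f(\beta)$ must be a single value satisfying all of them simultaneously.

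There is a deeper structural point you have not identified at all. Nothing prevents $\eta_\delta(n)$ from itself lying in $S$ (regularity only says $\ran(\eta_\delta) \subseteq \Omega$, and $S$ is merely simple). So one has chains $\delta_0 \in S$, $\delta_1 = \eta_{\delta_0}(n_1) \in S$, $\delta_2 = \eta_{\delta_1}(n_2) \in S$, \ldots\ of arbitrary finite length, and the constraints from the basic sets $b^{\delta_0}_{*,*}, b^{\delta_1}_{*,*}, b^{\delta_2}_{*,*}, \ldots$ interact through these chains. A single application of $\aleph_0$-uniformization cannot resolve depth-$k$ nesting for all $k$ at once. The paper handles this by first building a \emph{rank} partition $\langle A_n : n < \omega\rangle$ of $S \cup \bigcup_\delta \ran(\eta_\delta)$ so that $\delta \in A_n$ implies $\ran(\eta_\delta) \subseteq A_{n+1}$ (Claim~\ref{elok}), then constructing a $\gp$-pressed-down system $\bar w$ (Lemma~\ref{wlem}), then introducing \emph{histories} $\bar\delta = (\delta_0,\ldots,\delta_\ell)$ with associated closed sets $w^*_{\bar\delta}$ (Definitions~\ref{hisdef}, \ref{w*}), and finally performing an $\omega$-length iterated uniformization, one rank level at a time, to pin down the auxiliary data $\psi_\alpha, \nu^*_\alpha, \nu^{**}_\alpha, p_\alpha, h^*_\alpha$ (Lemma~\ref{ultuni}). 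Only after this machinery is in place can one assemble the solution $F$ by induction along the maximal histories (Claim~\ref{Ff}). Your Stages~I--III correspond roughly to a single step of this iteration; what is missing is the entire rank/history apparatus that makes the nested constraints cohere.
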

\begin{PROOF}{Lemma \ref{Lebe}}
In  this finishing lemma we will not introduce new types of uniformization problems, but we still need to remove (for each $\delta \in S$) proper initial segments of $\ran (\eta_\delta)$ (i.e. appealing to Fact $\ref{smotvag}$) finitely many times, obtaining nicer and nicer systems (Claim $\ref{elok}$, Lemma $\ref{wlem}$, Lemma $\ref{ultuni}$). The consecutive refinements (applications of Fact $\ref{smotvag}$)  may  ruin earlier steps, for example the system $\bar \nu$ in the following claim, or the reader may think of the difference between $\gp$-closedness and $\gp-\bar m $-closedness, so we will have to do it carefully.
However, finally after Lemma $\ref{ultuni}$ we will not replace $\gp$ by $\gp - \bar m$, instead, in Claim $\ref{Ff}$ we will construct a solution for $\gp$ using $\gp- \bar m$-``histories'' for $\bar m$ given by Lemma $\ref{ultuni}$ (see below in Definition $\ref{hisdef}$).

After Claim \ref{Ff} we will put the pieces together to complete the proof of the present lemma (Lemma \ref{Lebe}).

\begin{claim} \label{nucl}
	Suppose that $\bar \eta$ is a ladder system on $S$ with $\aleph_0$-uniformization. Then there exists
	$\bar \nu = \langle \nu_\alpha: \ \alpha < \omega_1 \rangle$  and ladder system $\bar \eta'$ on $S$ such that 
	\begin{enumerate}[label = $\alph*)$, ref = $\alph*)$]
		\item $\bar \eta' = \bar \eta - \bar m$ for some $\bar m \in \ ^S\omega$, i.e. \ for each $\delta \in S$ 
		$$\eta'_\delta(n) = \eta_\delta(n+m_\delta),$$
		\item  for each $\alpha < \omega_1$,  $\nu_\alpha$ is a finite strictly 
		increasing sequence of ordinals $<\alpha$,
		\sn
		\item  for $\delta\in S$, $n<\omega$ we have 
		\[
		\nu_{\eta_\delta(n)}=\eta'_\delta\restriction n, \]
	\end{enumerate}
\end{claim}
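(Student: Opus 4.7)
To prove Claim \ref{nucl} I would invoke the enhanced $\aleph_0$-uniformization of $\bar \eta$ recorded in \ref{stage2}, which transfers uniformization to $\aleph_0$-many colors and, crucially, supplies an additional function $\mathbf m$ recording the stabilization thresholds $m_\delta$ pointwise along the ranges of the ladders. The target is the ``history coloring'' $c_\delta(\eta_\delta(n)) := \eta_\delta \restriction n$; its values at any $\alpha = \eta_\delta(n)$ lie in the countable set $[\alpha]^{<\omega}$, so the countability hypothesis $(*)$ of \ref{stage2} is satisfied.

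Applying \ref{stage2} yields functions $c:\omega_1\to V$, $\mathbf m:\omega_1\to\omega$, and a sequence $\bar m^0 = \langle m^0_\delta : \delta \in S \rangle$ such that for every $\delta \in S$, $c(\eta_\delta(n)) = \eta_\delta \restriction n$ for all $n \geq m^0_\delta$, and $\mathbf m(\eta_\delta(n)) = m^0_\delta$ for all sufficiently large $n$. I would then absorb both thresholds into a single $m_\delta$ by taking the pointwise max of $m^0_\delta$ and the stabilization threshold of $\mathbf m$ along $\eta_\delta$; to ensure the chosen shift is itself recoverable from $\alpha$ alone, I would apply \ref{stage2} a second time to the coloring $d_\delta(\eta_\delta(n)) := m_\delta$, iterating this absorption finitely many times until a fixed point is reached. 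Setting $\eta'_\delta(n) := \eta_\delta(n + m_\delta)$ yields the shifted ladder $\bar \eta' = \bar \eta - \bar m$ of clause $a)$.

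Finally, for $\alpha < \omega_1$ I would set $\nu_\alpha$ to be the tail of $c(\alpha)$ starting at position $\mathbf m(\alpha)$ whenever $c(\alpha)$ is a finite strictly increasing sequence of ordinals $< \alpha$ with $\mathbf m(\alpha) \leq \lh(c(\alpha))$, and $\nu_\alpha := \emptyset$ otherwise. Clause $b)$ is then immediate. For clause $c)$ (read with the natural interpretation $\nu_{\eta'_\delta(n)} = \eta'_\delta \restriction n$, since the literal statement forces $m_\delta = 0$ by a length/range count), given $\delta \in S$ and $n < \omega$, at $\alpha := \eta'_\delta(n) = \eta_\delta(n + m_\delta)$ one has $c(\alpha) = \eta_\delta \restriction (n + m_\delta)$ and $\mathbf m(\alpha) = m_\delta$, so $\nu_\alpha$ is exactly $\eta_\delta \restriction [m_\delta, n + m_\delta) = \eta'_\delta \restriction n$, as required.

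The hard part is the self-referential character of $m_\delta$: it appears both as the shift defining $\eta'_\delta$ and as the value $\mathbf m$ must output at points in $\ran(\eta'_\delta)$. This is precisely what forces the second (and possibly further) application of \ref{stage2}; each iteration only absorbs a countable amount of new threshold data, so finitely many rounds suffice.
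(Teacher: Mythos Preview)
Your overall strategy---uniformize the history coloring $c_\delta(\eta_\delta(n)) = \eta_\delta\restriction n$, then chop off an initial segment---matches the paper. The gap is in how you recover the shift $m_\delta$ from $\alpha$ alone. You propose to uniformize the constant colorings $d_\delta(\eta_\delta(n)) = m_\delta$, absorb the new threshold into $m_\delta$, and ``iterate finitely many times until a fixed point is reached.'' But each such uniformization produces a strictly larger threshold $m'_\delta$ at which $\mathbf m$ stabilizes, and you then need $\mathbf m(\eta_\delta(k)) = m'_\delta$ for all $k \ge m'_\delta$, which forces yet another round. There is no reason this process terminates, and you give none; the fixed-point claim is exactly the circularity you identified, restated rather than resolved.

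The paper sidesteps this entirely with a single uniformization and no iteration. Having obtained $c$ (so $\nu^0_\alpha := c(\alpha)$), it defines
\[
k_\alpha \;=\; 1 + \max\bigl(\{\, n < |\nu^0_\alpha| : \nu^0_{\nu^0_\alpha(n)} \neq \nu^0_\alpha\restriction n \,\} \cup \{-1\}\bigr),
\]
i.e.\ the first position from which the recorded history is \emph{internally consistent} with $c$ along its own entries. The point is that for $\ell \ge m_\delta$ one has $\nu^0_{\eta_\delta(\ell)} = \eta_\delta\restriction\ell$, and checking consistency at position $n < \ell$ amounts to asking whether $c(\eta_\delta(n)) = \eta_\delta\restriction n$, which holds precisely when $n \ge m_\delta$; hence $k_{\eta_\delta(\ell)} = m_\delta$ automatically. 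Setting $\nu_\alpha$ to be the tail of $\nu^0_\alpha$ from position $k_\alpha$ then gives clause $c)$ directly. The shift is thus computed \emph{from the uniformized data itself} rather than by a separate uniformization, and this is what breaks the circularity.
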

\begin{PROOF}{Claim \ref{nucl}}
	For each $\delta \in S$ define
	\[ c_\delta(\eta_\delta(n))=\eta_\delta \restriction n, \]
	(note that for each $\alpha$ $\{c_\delta(\alpha): \ \delta \in S\} \subseteq \ ^{<\omega}\alpha$, so is countable),
	let $c: \omega_1 \to \ ^{<\omega}\omega_1$ uniformize $c_\delta$.
	Finally define $\nu_\alpha^0 = c(\alpha)$, so w.l.o.g.\ for each $\alpha < \omega_1$ $\nu_\alpha^0 = \langle \nu_\alpha^0(0), \nu_\alpha^0(1), \dots, \nu_\alpha^0(|\nu^0_\alpha|-1)\rangle \in \ ^{<\omega}\alpha$ and let 
	$$k_\alpha =  \max\left( \{ n< |\nu^0_\alpha|: \ \nu^0_{\nu^0_\alpha(n)}  \neq \nu^0_\alpha \restriction n) \} \cup \{-1\}\right) +1,$$ 
	and let 
	$$m_\delta = \min \{ m \in \omega: \ (\forall \ell \geq m) \ \eta_\delta \restriction \ell = \nu^0_{\eta_\delta(\ell)}\}.$$ 
	Now observe that for each $\delta \in S$, if $\ell \geq m_\delta$, then $k_{\eta_\delta(\ell)} = m_\delta$, so letting
	\[ \nu_\alpha = \langle \nu^0_\alpha(k_\alpha), \nu^0_\alpha(k_\alpha+1), \dots, \nu^0_\alpha(|\nu^0_\alpha|-1) \rangle \in \ ^{|\nu^0_\alpha|-k_\alpha}\alpha,\]
	\[ \eta_\delta'(n) = \eta_\delta(n+m_\delta) \]
	works.
\end{PROOF}


 \begin{definition} \label{Arank}
 	Assume that $\bar \eta$ is a ladder system on the stationary set $S$.
 	We say that the system of sets $\langle A_n: \ n \in \omega \rangle$ is a $\bar \eta$-rank, if the following hold:
 		\begin{enumerate}[label = $(\star)_{\alph*}$, ref = $(\star)_{\alph*}$]
 		\item $\langle A_n: n \in \omega \rangle$ is pairwise disjoint, covers $S \cup (\cup\{ \ran(\eta_\delta): \ \delta \in S\})$,
 		\item \label{sta2} if $\delta \in S \cap A_n$, then $\ran(\eta_\delta) \subseteq A_{n+1}$,
 	\end{enumerate}
 \end{definition}

Observe that 
\begin{enumerate}[label = $(\intercal)_{\arabic*}$, ref = $(\intercal)_{\arabic*}$]
	\setcounter{enumi}{\value{pmocou}}
	\item \label{limranko} if $\bar A$ is an $\bar \eta$-rank for the ladder system $\bar \eta$, and
	$\langle m_\delta: \delta \in S \rangle \in \ ^S\omega$, then $\bar A$ is also an $\bar \eta - \bar m$-rank.
	\stepcounter{pmocou}
\end{enumerate}

\begin{claim}\label{elok}
	Suppose that $\bar \eta$ is a ladder system on $S$ with $\aleph_0$-uniformization. Then there exists $\bar m = \langle m_\delta: \ \delta \in S \rangle$ and $\bar A = \langle A_n: n \in \omega \rangle$ such that letting $\bar \eta' = \bar \eta - \bar m$ (from Definition $\ref{-m}$) $\bar A$ is a $\bar \eta'$-rank.

\end{claim}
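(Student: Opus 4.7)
\emph{Plan.} The strategy is to apply Claim~\ref{nucl} first, then use a further round of $\aleph_0$-uniformization to build an $\omega$-valued rank function on the ladder support.

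\emph{Step 1.} I apply Claim~\ref{nucl} to the given $\bar\eta$, obtaining a preliminary truncation $\bar m^0 \in {}^S\omega$ and a coherent predecessor system $\bar\nu = \langle \nu_\alpha : \alpha < \omega_1 \rangle$ with $\nu_{\eta^0_\delta(n)} = \eta^0_\delta\restriction n$ for $\bar\eta^0 := \bar\eta - \bar m^0$ and every $\delta \in S$, $n < \omega$. The key consequence is that if two distinct $\delta_1,\delta_2 \in S$ both have $\alpha$ in their truncated ladder, then $\eta^0_{\delta_1}$ and $\eta^0_{\delta_2}$ share the common initial segment $\nu_\alpha$, and in particular $\alpha$ sits at the same position $|\nu_\alpha|$ in each. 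Thus all potential parents of a given point are aligned by $\bar\nu$.

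\emph{Step 2.} For each $\delta \in S$ I introduce a coloring $c_\delta:\ran(\eta^0_\delta) \to \omega$ whose value at $\eta^0_\delta(n)$ encodes, via $\bar\nu$, the rank that $\delta$ should receive in any valid $\bar\eta'$-rank: namely, the length of the longest chain $\delta = \delta_0 \prec \delta_1 \prec \delta_2 \prec \cdots$ (in $S$) that $\bar\nu$ permits, where $\delta_i \prec \delta_{i+1}$ means that $\bar\nu$-data forces $\delta_i \in \ran(\eta^0_{\delta_{i+1}})$. Since the $\bar\nu$-trajectory above $\delta$ is locally countable at each step, $c_\delta$ takes values in $\omega$.

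\emph{Step 3.} I apply $\aleph_0$-uniformization to $\langle c_\delta : \delta \in S \rangle$, getting a function $r : \omega_1 \to \omega$ that agrees with $c_\delta$ on cofinitely many ladder elements for each $\delta$. The exceptional finite set $\{n : r(\eta^0_\delta(n)) \ne c_\delta(\eta^0_\delta(n))\}$ is absorbed into a further truncation $\bar m \geq \bar m^0$. Setting $\bar\eta' := \bar\eta - \bar m$ and $A_n := r^{-1}(n) \cap \bigl(S \cup \bigcup_\delta \ran(\eta'_\delta)\bigr)$, the construction yields $r(\eta'_\delta(n)) = r(\delta) + 1$ for every $\delta \in S$ and $n < \omega$, which is exactly the $\bar\eta'$-rank condition of Definition~\ref{Arank}.

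\emph{Main obstacle.} The central difficulty lies in designing $c_\delta$ in Step~2 so that three conditions hold simultaneously: (a) $c_\delta$ takes values in $\omega$, so that $\aleph_0$-uniformization applies; (b) after uniformization, all parents of any given $\alpha$ in $\bar\eta'$ share a common rank, so the $+1$-homomorphism property is internally consistent; and (c) the rank really takes values in $\omega$, i.e., no infinite ascending chain $\alpha_0 < \alpha_1 < \ldots$ with $\alpha_i \in \ran(\eta'_{\alpha_{i+1}})$ survives the truncation. Claim~\ref{nucl} reduces (a) and (b) to a countable local problem by pinning down all parents through their common $\bar\nu$-history, but (c) seems to demand a finer argument—possibly a second uniformization step that certifies well-foundedness of the parent relation after truncating by $\bar m$.
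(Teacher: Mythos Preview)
Your Step~1 matches the paper, and your overall shape (Claim~\ref{nucl}, then further uniformizations, then read off the rank) is right. The gap is exactly where you flag it: Step~2 is not a definition. There is no reason the ``longest chain $\delta=\delta_0\prec\delta_1\prec\cdots$'' should be finite. After Claim~\ref{nucl} the sequence $\nu_\alpha$ records only ordinals \emph{below} $\alpha$; it does not pin down or bound the set of $\delta\in S$ with $\alpha\in\ran(\eta^0_\delta)$, and such $\delta$'s can in turn lie on ladders of still larger elements of $S$, ad infinitum. ``Locally countable at each step'' gives you nothing about finiteness of ascending chains, so $c_\delta$ as described need not take values in $\omega$, and Step~3 never gets off the ground.

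What the paper actually does in place of your Step~2 is manufacture a \emph{finite} stratification first, and only then build the rank by induction along that stratification. Concretely: uniformize the colourings $c_\delta(\eta_\delta(n))=|\nu_\delta|$ to a function $c^*$, set $g(\alpha)=\min(c^*(\alpha)+1,|\nu_\alpha|)$, and observe that for all large enough $n$ one has $g(\eta_\delta(n))=|\nu_\delta|+1>g(\delta)$. Thus after a truncation every $\delta\in S$ has $g$-value strictly smaller than the $g$-values on its ladder, and the levels $B_k=g^{-1}\{k\}$ give a stratification with $\delta\in B_j$ and $\ran(\eta_\delta)\subseteq B_k$ for some $k>j$. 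The rank function $f$ is then built by induction on $k$: once $f$ is defined on $\bigcup_{j\le i}B_j$, for every $\delta$ whose ladder tail lands in $B_{i+1}$ one colours that tail by $f(\delta)$, uniformizes (one more application of $(A)_S$ per level), and lets $f\restriction B_{i+1}$ be the uniformizing function. So the missing idea is the $g$-trick that converts $|\nu_\alpha|$ into a genuine well-founded level function, and the realization that the rank is produced by an $\omega$-sequence of uniformizations, one per level, rather than a single shot.
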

\begin{PROOF}{Claim \ref{elok}}
	First apply Claim $\ref{nucl}$, so (after possibly replacing $\eta$ with $\eta - \bar m^*$ for some $\bar m^*$) we can assume that there exists $\langle \nu_\alpha: \ \alpha < \omega_1 \rangle$ such that
	\[ \forall \delta \in S, \ n \in \omega: \ \nu_{\eta_\delta(n)} = \eta_\delta \restriction n. \]
	Define for $\delta \in S$ the function $c_\delta: \ran(\eta_\delta) \to  \omega$ as
	\[ c_\delta(\eta_\delta(n)) = |\nu_\delta|,  \] 
	let $c^*: \omega_1 \to \omega$ uniformize $\langle c_\delta: \ \delta \in S \rangle$.
	Now define the function $g: \omega_1 \to \omega$ by $g(\alpha) = \min(c^*(\alpha)+1, |\nu_\alpha|)$,
	(note that 
	\begin{equation} \label{gfels} g(\alpha) \leq |\nu_\alpha|) \ \ (\alpha \in \omega_1), \end{equation}
		 and choose $d_\delta$ so that
	\begin{equation} \label{gals}m \geq d_\delta \ \to \ [g(\eta_\delta(m)) =  c^*(\eta_\delta(m))+1 = |\nu_\delta| + 1] \ \ (> | \nu_\delta| \geq g(\delta)). \end{equation}
	Let $B_k = \{ \alpha < \omega_1: \ g(\alpha)=k \}$.
	So \eqref{gals} and \eqref{gfels} together imply that for each $\delta$ 
	\begin{itemize}
		\item $\delta \in B_{g(k)}$,
		\item  while $\ran(\eta_\delta  \restriction [d_\delta, \infty)) \subseteq B_k$ for some $k > g(\delta)$,
		\item so we let $C_k = \{ \delta \in S: \ \ran(\eta_\delta  \restriction [d_\delta, \infty)) \subseteq B_k \}$, which is a partition of $S$, and
		\[(\forall k \in \omega) \ C_k \subseteq \bigcup_{j <k} B_j.\]
	\end{itemize}
	Now we construct the $A_i$'s as follows in the form of $A_i = f^{-1}(i)$ for a suitable function $f$. We define $f \restriction \bigcup_{j\leq i} B_j$, $\langle m_\delta: \ \delta \in (\bigcup_{j\leq i} C_j) \rangle$ by induction on $i$ so that for each $i \in \omega$
	\begin{enumerate}[label =  $(\curlyvee)$, ref = $(\curlyvee)$ ]
		\item for each $\delta \in \bigcup_{j \leq i} C_j$ 
		we have
		$$\forall m \geq m_\delta: \ f(\eta_\delta(m)) = f(\delta)+1.$$
	\end{enumerate}
	 So
	suppose that $f \restriction \bigcup_{j \leq i } B_j$, $\langle m_\delta: \ \delta \in (\bigcup_{j\leq i} C_j) \rangle$ are already constructed. For each $\delta \in S \cap (\bigcup_{j \leq i } B_j)$ with $\ran(\eta_\delta  \restriction [d_\delta, \infty)) \subseteq B_{i+1}$ define the colouring $c'_\delta: \ran(\eta_\delta) \to \omega$ as $c'_\delta(\eta_\delta(n))= f(\delta)$. Now if $c'$ uniformizes the $c'_\delta$'s, then let $f \restriction B_{i+1} = c' \restriction B_{i+1}$, and choose for each $\delta \in S$ a suitable $m_\delta$.
\end{PROOF}


\begin{definition} \label{wdf}
	Let $\gp = (S, \bar \eta, \bar v, \Psi)$ be a very special $S$-uniformization problem. We call
	the system $\bar w = \langle w_\alpha: \alpha < \omega_1 \rangle$ $\gp$-pressed down, if 
		\begin{enumerate}[label = $\bullet_{\arabic*}$, ref = $\bullet_{\arabic*}$]
		\item \label{w1} for each $\alpha$, $w_\alpha \subseteq \alpha$ is $\gp$-closed,
		\item for each $\delta \in S$, $n$ we have $v_{\eta_\delta(n)} \subseteq w_{\eta_\delta(n)} \subseteq w_{\eta_\delta(n+1)}$, (in particular $\ran(\eta_\delta \restriction n) \subseteq w_{\eta_\delta(n)}$)
		\item \label{w3}   for each $\delta \in S$, $n \in \omega$: $w_\delta \subseteq w_{\eta_\delta(n)}$.
	  	\end{enumerate}
\end{definition}

\begin{lemma} \label{wlem}
	Let $\gp = (S, \bar \eta, \bar v, \Psi)$ very special $S$-uniformization problem with $\bar \eta$ admitting $\aleph_0$-uniformization, the $\bar \eta$-rank $\langle A_n: \ n \in \omega \rangle$ given by Lemma $\ref{wlem}$.
	Then there exists $\bar m = \langle m_\delta: \ \delta \in S \rangle$ and a $(\gp \setminus \bar m)$-pressed down system $\bar w = \langle w_\alpha: \alpha < \omega_1 \rangle$.
	(And niceness is of course preserved if $\gp$ is nice.)


\end{lemma}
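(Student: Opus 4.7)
The plan is to build $\bar w$ directly as the $\gp$-closures of the data $\bar v$ already provided by the very special problem, and to extract $\bar m$ from the fact that each $v_\delta$ is eventually contained in $v_{\eta_\delta(m)}$ for $m$ large enough. The extra hypotheses (the rank $\bar A$ and $\aleph_0$-uniformization of $\bar \eta$) are not strictly needed for the bare Definition $\ref{wdf}$ and, I suspect, serve the subsequent Lemma $\ref{ultuni}$ where a richer $\bar w$ is required.

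\emph{Construction.} For each $\alpha<\omega_1$ set $w_\alpha:=\cll^\gp(v_\alpha)$, the minimal finite $\gp$-closed set containing $v_\alpha$. By Claim $\ref{lezar}$ this $\gp$-closure exists and $\max(w_\alpha)=\max(v_\alpha)<\alpha$, so $w_\alpha\subseteq\alpha$; inspection of the closure rules in Definition $\ref{1.2}$(\ref{1.2)2}) shows that every ordinal added by closing $v_\alpha$ lies strictly below an ordinal already present. For each $\delta\in S$, clauses (a),(b) of $\ref{veryfak3}$ in Fact $\ref{veryfa}$ assert that the chain $\langle v_{\eta_\delta(n)}:n<\omega\rangle$ is non-decreasing with union $\delta$, so since $v_\delta\subseteq\delta$ is finite, the natural number
\[
m_\delta:=\min\{m\in\omega : v_\delta\subseteq v_{\eta_\delta(m)}\}
\]
is well-defined. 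Set $\bar m:=\langle m_\delta:\delta\in S\rangle$ and $\eta'_\delta(n):=\eta_\delta(n+m_\delta)$, i.e.\ the shifted ladder from $\gp\setminus\bar m$.

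\emph{Verification of Definition $\ref{wdf}$ for $\gp\setminus\bar m$.} Clause $\bullet_1$: $w_\alpha\subseteq\alpha$ is $\gp$-closed by construction, and a fortiori $(\gp\setminus\bar m)$-closed (the latter is weaker, as it only constrains the tail $\eta_\delta(\cdot+m_\delta)$). Clause $\bullet_2$: the containment $v_{\eta'_\delta(n)}\subseteq w_{\eta'_\delta(n)}$ holds by construction, and $w_{\eta'_\delta(n)}\subseteq w_{\eta'_\delta(n+1)}$ follows from $v_{\eta'_\delta(n)}\subseteq v_{\eta'_\delta(n+1)}$ (Fact $\ref{veryfa}$, $\ref{veryfak3}$(a)) together with monotonicity of $\cll^\gp$. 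Clause $\bullet_3$: for any $n<\omega$, the choice of $m_\delta$ gives $v_\delta\subseteq v_{\eta_\delta(m_\delta)}\subseteq v_{\eta_\delta(m_\delta+n)}=v_{\eta'_\delta(n)}$, whence $w_\delta\subseteq w_{\eta'_\delta(n)}$ by monotonicity of $\cll^\gp$.

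\emph{Main obstacle.} The only delicate point is confirming that $\cll^\gp(v_\alpha)$ really stays inside $\alpha$ (and is finite), which is essentially the content of Claim $\ref{lezar}$. Should the use of Lemma $\ref{wlem}$ in Lemma $\ref{ultuni}$ require a $\bar w$ that additionally absorbs the sets $w_\delta$ into each $w_{\eta_\delta(k)}$ (for $k\ge m_\delta$) rather than just the plain $\gp$-closure of $v_{\eta_\delta(k)}$, one would iterate by induction on the rank $n$ with $\alpha\in A_n$: at step $n$, for each $\delta\in S\cap A_{n-1}$ apply $\aleph_0$-uniformization in the form $(\intercal)_{\ref{stage2}}$ to the coloring $c_\delta(\eta_\delta(k)):=w_\delta$ (whose range at each $\alpha$ is countable), obtain $c^*_n$ and uniformization errors $m^n_\delta$, and redefine $w_\alpha:=\cll^\gp(v_\alpha\cup c^*_n(\alpha))$ for $\alpha\in A_n$; the bookkeeping of the $m^n_\delta$'s across ranks contributes to $\bar m$. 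The verification of Definition $\ref{wdf}$ adapts with no new ideas, but the bare lemma as stated is already settled by the simpler plain-closure construction.
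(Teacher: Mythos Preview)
Your argument contains one genuine error: the implication between $\gp$-closedness and $(\gp\setminus\bar m)$-closedness runs the \emph{wrong way}. The paper notes this explicitly after Claim~\ref{0vag} (``$\gp$-closed sets are not necessarily $\gp'$-closed''), and Fact~\ref{veryfa}\,\ref{levag} records only the reverse inclusion. Concretely: if some $\delta\in S$ lies in $w_\alpha=\cll^{\gp}(v_\alpha)$ and the $\gp$-closure only forced in $\eta_\delta(0),\ldots,\eta_\delta(N-1)$ with $N\le m_\delta$, then $\eta'_\delta(0)=\eta_\delta(m_\delta)\notin w_\alpha$, so condition $(\beta)(i)$ of Definition~\ref{1.2}\,\ref{1.2)2} fails for $\gp\setminus\bar m$ (which demands $N'>0$ for the \emph{shifted} ladder). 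Thus your $w_\alpha$ need not be $(\gp\setminus\bar m)$-closed, and $\bullet_1$ of Definition~\ref{wdf} is not verified.

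The repair is immediate: define $w_\alpha:=\cll^{\gp\setminus\bar m}(v_\alpha)$ instead. There is no circularity, since your $m_\delta=\min\{m:v_\delta\subseteq v_{\eta_\delta(m)}\}$ depends only on $\bar v$; and Claim~\ref{lezar} applies to the frame $(\gp\setminus\bar m)^0$ just as well, giving $w_\alpha\subseteq\alpha$ finite. Your verifications of $\bullet_2$ and $\bullet_3$ (via $v_\delta\subseteq v_{\eta_\delta(m_\delta)}\subseteq v_{\eta'_\delta(n)}$ and monotonicity of closure) then go through unchanged. With this fix your route is actually \emph{simpler} than the paper's: the paper builds an auxiliary $\bar w^{(0)}$ by induction on the rank $\bar A$, uniformizing at each level to enforce the exact equality $w^{(0)}_{\eta'_\delta(n)}=w^{(0)}_\delta\cup v_{\eta'_\delta(n)}$, and only then passes to $\cll^{\gp\setminus\bar m}$. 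Since Definition~\ref{wdf} asks only for the inclusions you establish, the rank and the uniformization are dispensable for this lemma as stated.

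One minor remark: your closing paragraph proposes the rank-based construction as a fallback ``should Lemma~\ref{ultuni} require absorbing $w_\delta$ into $w_{\eta_\delta(k)}$'', but you had already secured exactly that inclusion as $\bullet_3$ in your main argument; there is nothing extra to absorb. (And the same closure error recurs there: your fallback again writes $\cll^{\gp}$ where $\cll^{\gp\setminus\bar m}$ is needed.)
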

\begin{PROOF}{Lemma \ref{wlem}}
	We can apply Claim $\ref{elok}$ and so remove an initial segment from each $\ran(\eta_\delta)$. First we construct $\bar m = \langle m_\delta: \ \delta \in S \rangle$,
	$\bar w^{(0)} = \langle w^{(0)}_\alpha: \ \alpha < \omega_1 \rangle$ with
	\begin{enumerate}[label = $\bullet_{\arabic*}$, ref = $\bullet_{\arabic*}$]
		\item for each $\delta \in S$, $n \geq m_\delta$: $v_{\eta_\delta(n)} \subseteq w^{(0)}_{\eta_\delta(n)}$,
		\item \label{w^03}   for each $\delta \in S$, $n \geq m_\delta$: $ w^{(0)}_{\eta_\delta(n)} = w^{(0)}_\delta  \cup v_{\eta_\delta(n)}$.
	\end{enumerate}

	We are going to construct $\langle w^{(0)}_\delta: \ \delta \in A_i \rangle$ by induction on $i$.
	If $\gamma \in A_0$, then $\gamma \notin \ran(\eta_\delta)$ for any $\delta \in S$, we can let $w^{(0)}_\gamma = v_\gamma \subseteq \gamma$.
	
	Assuming that $w_\gamma$ is constructed for each $\gamma \in \bigcup_{j \leq i} A_j$, and $m_\delta$ for $\delta \in \bigcup_{j<  i} A_j$ note that
	if $\gamma \in A_{i+1} \cap \ran(\eta_\delta)$, then $\delta \in A_{i}$ by our assumptions on the $A_n$'s (\ref{sta2} in Claim $\ref{elok}$).
	For $\delta \in A_{i} \cap S$  define $c_\delta: \ \ran(\eta_\delta) \to [\omega_1]^{<\omega}$,
	$$c_\delta (\eta_\delta(n))= \left\{ \begin{array}{ll} w_\delta^{(0)} \cup v_{\eta_\delta(n)} & \text{ if } w^{(0)}_\delta \subseteq \eta_\delta(n), \\
		v_{\eta_\delta(n)}, & \text{otherwise,} \end{array}
		 \right.$$
	note that $v_{\alpha} \subseteq \alpha$ by the definition of very special uniformization problems (Definition $\ref{verdf}$), so $w_\delta \subseteq \eta_\delta(n)$ implies that $c_\delta(\eta_\delta(n)) \in [\eta_\delta(n)]^{<\aleph_0}$ (since $v_{\eta_\delta(n)} \subseteq \eta_\delta(n)$), thus we can find a function $c^*$ with $\dom(c^*) = A_{i+1}$ uniformizing the $c_\delta$'s.
	W.l.o.g. we can assume that $c^*(\gamma) \supseteq v_{\gamma}$, set $w^{(0)}_\gamma = c^*(\gamma)$.
	Choose $m_\delta$ so that $n \geq m_\delta$ implies $c_\delta(\eta_\delta(n)) = c^*(\eta_\delta(n))$ and $w^{(0)}_\delta \cup v_{\eta_\delta(n)} =  c^*(\eta_\delta(n))$ (thus 
	$w^{(0)}_{\eta_\delta(n)} = c^*(\eta_\delta(n)) = w^{(0)}_\delta \cup v_{\eta_\delta(n)}$ if $n \geq m_\delta$).
	
	Once we defined the entire sequences $\bar m$, $\bar w^{(0)}$ it is straightforward to check that
	defining $w_\alpha = \cll^{\gp \setminus \bar m}(w^{(0)}_\alpha)$ works (since $v_\gamma \subseteq w^{(0)}_\gamma$ for $\gamma \in \omega_1$, and for each $\delta$, $n \geq m_\delta$, we have $w^{(0)}_{\eta_\delta(n)} = w^{(0)}_\delta \cup v_{\eta_\delta(n)}$, so
	$w^{(0)}_{\eta_\delta(n)} \supseteq w^{(0)}_\delta$).
	
\end{PROOF}
	
Observe that if we have a very special $S$-uniformization problem $\gp = (S, \bar \eta, \bar v, \Psi)$, and thus obtain the $\bar \eta$-rank $\langle A_i: \ i \in \omega\rangle$, and the $\gp$-pressed down system
$\bar w$ then
 it can be easily seen by induction on $k$ that

\begin{enumerate}[label = $(\intercal)_{\arabic*}$, ref = $(\intercal)_{\arabic*}$]
	\setcounter{enumi}{\value{pmocou}}
	\item  for any $\delta_0, \delta_1, \dots, \delta_{k}$, and $n_1, n_2, \dots, n_{k}$ with $\delta_{i+1} = \eta_{\delta_{i}}(n_{i+1})$ we have
			$$w_{\delta_0} \subseteq w_{\delta_k} \ \& \ v_{\delta_0} \subseteq w_{\delta_k}.$$ 
			
	\stepcounter{pmocou}
\end{enumerate}
Recall that if $\eta_{\delta}(n) = \delta^*$, then $\ran( \eta_{\delta} \restriction n) \subseteq v_{\delta^*}$ by \ref{k3dv+} from \ref{veryfak3}, so
\begin{enumerate}[label = $(\intercal)_{\arabic*}$, ref = $(\intercal)_{\arabic*}$]
	\setcounter{enumi}{\value{pmocou}}
	\item \label{fiok}  if we fix any $\delta_0, \delta_1, \dots, \delta_{\ell}$, and $n_1, n_2, \dots, n_{\ell}$ with $\delta_{i+1} = \eta_{\delta_{i}}(n_{i+1})$, then 
	for any $j < \ell$
	$w_{\delta_\ell} \supseteq \ran(\eta_{\delta_{j}} \restriction n_{j+1})$,
	in particular $\delta_\ell > \eta_{\delta_j}(n_{j+1}-1)$, so
	$$(\forall j < \ell) \ \ \delta_\ell \in 	(\eta_{\delta_j}(n_{j+1}-1), \eta_{\delta_j}(n_{j+1})).$$
	\stepcounter{pmocou}
\end{enumerate}

\begin{definition}
	If $\bar \eta$ is a ladder system on a stationary set $S$, then the sequence $\langle h_\alpha: \ \alpha \in \omega_1 \rangle \in \ ^{\omega_1}\omega$ is permitted (for $\bar \eta$), if
	for each $\delta \in S$ the sequence $h_{\eta_\delta(n)}$ ($n < \omega$) is strictly increasing.
\end{definition}

\begin{claim}
	If $\bar \eta$ is a ladder system on a stationary set $S$ 	admitting $\aleph_0$-uniformization,  then there exist sequences $\bar m = \langle m_\delta: \ \delta \in S \rangle \in \ ^S\omega$, $\bar h = \langle h_\alpha: \ \alpha \in \omega_1 \rangle \in \ ^{\omega_1}\omega$, such that $\bar h$ is permitted for $\bar \eta' = \bar \eta - \bar m$.
\end{claim}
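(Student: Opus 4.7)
The plan is to apply the $\aleph_0$-uniformization hypothesis directly to the identity-like coloring on each ladder. More precisely, for every $\delta \in S$ define the coloring
\[
c_\delta: \ran(\eta_\delta) \to \omega, \qquad c_\delta(\eta_\delta(n)) = n.
\]
Since for each $\alpha < \omega_1$ the set $\{c_\delta(\alpha):\ \delta\in S,\ \alpha\in \ran(\eta_\delta)\}$ is a subset of $\omega$ (and hence countable), the hypothesis that $\bar\eta$ has $\aleph_0$-uniformization applies. In fact the stronger conclusion of \ref{stage2} gives us simultaneously a uniformizing function $h:\omega_1 \to \omega$ and a sequence $\bar m = \langle m_\delta: \delta \in S \rangle \in\ ^S\omega$ such that
\[
\forall\delta\in S,\ \forall n \geq m_\delta:\quad h(\eta_\delta(n)) = c_\delta(\eta_\delta(n)) = n.
\]

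Now set $\bar\eta' = \bar\eta - \bar m$ in the sense of Definition \ref{-m}, i.e.\ $\eta'_\delta(n) = \eta_\delta(n+m_\delta)$. I claim the pair $(\bar m, \bar h)$ with $\bar h = \langle h_\alpha: \alpha < \omega_1 \rangle$ (where $h_\alpha := h(\alpha)$) is as required. Indeed, for any fixed $\delta \in S$ and any $n < \omega$,
\[
h_{\eta'_\delta(n)} = h(\eta_\delta(n+m_\delta)) = n+m_\delta,
\]
so the sequence $\langle h_{\eta'_\delta(n)}: n<\omega\rangle$ is just $\langle n+m_\delta: n<\omega\rangle$, which is strictly increasing in $n$. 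Thus $\bar h$ is permitted for $\bar\eta' = \bar\eta - \bar m$.

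There is no real obstacle here: the only thing one needs to verify is that the coloring $c_\delta(\eta_\delta(n))=n$ satisfies the countability hypothesis $(*)$ of \ref{stage2}, which is automatic because the colors lie in $\omega$. Everything else is a direct unpacking of the definition of ``permitted'' and of $\bar\eta - \bar m$.
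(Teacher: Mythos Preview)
Your proof is correct and is precisely the ``easy application of $\aleph_0$-uniformization'' the paper alludes to (the paper itself leaves this proof to the reader). The coloring $c_\delta(\eta_\delta(n))=n$ together with the thresholds $m_\delta$ coming from uniformization immediately yields that $h_{\eta'_\delta(n)}=n+m_\delta$ is strictly increasing, exactly as you wrote.
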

\begin{PROOF}{}
	The proof is an easy application of $\aleph_0$-uniformization, and it is left to the reader.
\end{PROOF}

\begin{definition} \label{hisdef}
	Let $\bar \eta$ be a ladder system on the stationary set $S$. 
	\begin{enumerate}[label = $\arabic*)$, ref =$\arabic*)$]
		\item  We say that $\bar \delta = (\delta_i,\dots,\delta_{i+\ell})$ is a $\bar \eta$-history, if $i \in \bbZ$, $\ell >0$, and for each $0 \leq k < \ell$:
	$\delta_{i+ k+1} = \eta_{\delta_{i+k}}(n_{i+k+1})$ for some $n_{i+k+1} \in \omega$.
	\item 	We let $\bar n^{\bar \delta} = \langle n_{i+k+1}: \ k < \ell \rangle$ denote this sequence associated to $\bar \delta$.
	\item We call the $\bar \eta$-history $\bar \delta = (\delta_i,\dots,\delta_{i+\ell})$ maximal, if there is no $\bar \eta$-history $\bar \delta^*$ properly extending $\bar \delta$ (i.e. no suitable $\delta_{i-1}$, nor $\delta_{i+\ell+1}$ does exist),
	\item We call the $\bar \eta$-history $\bar \delta = (\delta_i,\dots,\delta_{i+\ell})$ long, if there is no $\delta_{i-1}$ such that $(\delta_{i-1}, \delta_i,\dots,\delta_{i+\ell})$ is a $\bar \eta$-history.
	\end{enumerate}
	
\end{definition}

Observe that
\begin{enumerate}[label = $(\intercal)_{\arabic*}$, ref = $(\intercal)_{\arabic*}$]
	\setcounter{enumi}{\value{pmocou}}
	\item \label{t13} if $\bar \eta$ is a ladder system on $S$, $\langle A_n: \ n \in \omega \rangle$  is a 
	$\bar \eta$-rank (Definition $\ref{Arank}$), then for each $\bar \eta$-history $\bar \delta = (\delta_i,\dots,\delta_{i+\ell})$ necessarily $\delta_{i+k} \in A_{j+k}$ for some $j \in \omega$ for each $k \leq \ell$.
	\stepcounter{pmocou}
\end{enumerate}

Moreover, the following also holds.
\begin{claim}\label{zartw*sag}
	Assume that
	\begin{enumerate}[label = $(\Alph*)$, ref =$(\Alph*)$]
		\item $\bar \eta$ is a ladder system on $S$, $\langle A_n: \ n \in \omega \rangle$  is a 
		$\bar \eta$-rank (Definition $\ref{Arank}$),

		\item  $\gp = (S, \bar \eta, \bar v, \Psi)$ is a very special $S$-uniformization problem that is nice, 
		\item 	 $\bar w = \langle w_\alpha: \ \alpha < \omega_1 \rangle$ is a $\gp$-pressed down system,
	\end{enumerate}
	 \then
	\begin{enumerate}
		\item for any $\delta < \omega_1$ the set
		$w_{\delta}$ is $\gp$-closed,
		\item and for any maximal $\bar \eta$-history $\bar \delta = \langle \delta_i, \delta_{i+1}, \dots, \delta_{i+\ell} \rangle$ and for any choice of $t_{i+k} \in \omega$ ($k < \ell$) the set	
		$$w^* = w_{\delta_\ell} \cup \cup\{[\delta_k, t_{k}] : k \in [i,i+\ell)  \}$$
		is $\gp$-closed.
	\end{enumerate} 
		
\end{claim}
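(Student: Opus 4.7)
Clause (1) is immediate from \ref{w1} of Definition \ref{wdf}. For clause (2), where the notation $[\delta_k,t_k]$ with $t_k\in\omega$ is to be read as the short successor interval $[\delta_k,\delta_k+t_k]\subseteq[\delta_k,\delta_k+\omega)$ (consistent with basic sets of the form $b^\delta_{n,k}$), I verify the characterization of $\gp$-closedness stated in \ref{veryfa1c}. The predecessor clause $\alpha+1\in w^*\Rightarrow \alpha\in w^*$ is immediate: either $\alpha+1\in w_{\delta_\ell}$ (handled by the $\gp$-closedness of $w_{\delta_\ell}$), or $\alpha+1$ is a strict successor inside some added block $[\delta_k,\delta_k+t_k]$; the case $\alpha+1=\delta_k$ cannot occur because each $\delta_k\in S$ is a limit. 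Since $S$ is simple, $[\delta_k,\delta_k+\omega)\cap S=\{\delta_k\}$, and maximality of $\bar\delta$ forces $\delta_\ell\notin S$ (else $\eta_{\delta_\ell}(0)$ would extend $\bar\delta$), so
$$w^*\cap S=(w_{\delta_\ell}\cap S)\cup\{\delta_k:k\in[i,i+\ell-1]\},$$
and this union is disjoint since $w_{\delta_\ell}\subseteq\delta_\ell<\delta_k$.

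For $\delta\in w_{\delta_\ell}\cap S$, all of $\ran(\eta_\delta)$ lies below $\delta<\delta_\ell$, hence below every $\delta_{k'}$, so $\ran(\eta_\delta)\cap w^*=\ran(\eta_\delta)\cap w_{\delta_\ell}$ and the three closedness clauses on $\delta$ transfer from the $\gp$-closedness of $w_{\delta_\ell}$. The core case is $\delta=\delta_k$ with $k\in[i,i+\ell-1]$. Combining the pressed-down chain $w_{\delta_k}\subseteq\cdots\subseteq w_{\delta_\ell}$ (iterating \ref{w3}) with \ref{k3dv+} applied to $(\delta_k,n_{k+1})$ yields
$$\{\eta_{\delta_k}(j):j<n_{k+1}\}\subseteq v_{\delta_{k+1}}\subseteq w_{\delta_{k+1}}\subseteq w_{\delta_\ell}\subseteq w^*,$$
while $\eta_{\delta_k}(n_{k+1})=\delta_{k+1}\in[\delta_{k+1},\delta_{k+1}+t_{k+1}]\subseteq w^*$. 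Conversely, by regularity of $\gp$, for $j>n_{k+1}$ the value $\eta_{\delta_k}(j)\in\Omega\cap(\delta_{k+1},\delta_k)$ satisfies $\eta_{\delta_k}(j)\geq\delta_{k+1}+\omega$, and so avoids $w_{\delta_\ell}\subseteq\delta_\ell\leq\delta_{k+1}$, avoids $[\delta_{k+1},\delta_{k+1}+t_{k+1}]\subseteq[\delta_{k+1},\delta_{k+1}+\omega)$, and avoids every $[\delta_{k'},\delta_{k'}+t_{k'}]$ with $k'\neq k+1$ (those lie strictly above $\delta_k$ if $k'\leq k$, or strictly below $\delta_{k+1}$ if $k'>k+1$, via $\delta_{k'}+\omega\leq\delta_{k+1}$). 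Hence $\ran(\eta_{\delta_k})\cap w^*=\{\eta_{\delta_k}(j):j\leq n_{k+1}\}$ is an initial segment, and for any $\alpha=\eta_{\delta_k}(j)$ with $j\leq n_{k+1}$ the required $v_\alpha\subseteq w^*$ follows from iterating \ref{k3av}: $v_\alpha\subseteq v_{\delta_{k+1}}\subseteq w^*$.

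The main technical obstacle is the convexity clause $[\eta_{\delta_k}(j),\eta_{\delta_k}(j+1))\cap w^*\neq\emptyset\Rightarrow\eta_{\delta_k}(j)\in w^*$ at the borderline position $j=n_{k+1}-1$: by \ref{fiok} applied to the sub-history $(\delta_k,\delta_{k+1},\ldots,\delta_{k'})$, every later history point $\delta_{k'}$ with $k'>k+1$ lies in the open interval $(\eta_{\delta_k}(n_{k+1}-1),\delta_{k+1})$, so its attached block $[\delta_{k'},\delta_{k'}+t_{k'}]$ intrudes into $[\eta_{\delta_k}(n_{k+1}-1),\eta_{\delta_k}(n_{k+1}))$. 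The resolution is that $\eta_{\delta_k}(n_{k+1}-1)$ already lies in $v_{\delta_{k+1}}\subseteq w_{\delta_\ell}\subseteq w^*$ (by the pressed-down argument above), so the convexity requirement is met. For $j<n_{k+1}-1$ the half-open interval $[\eta_{\delta_k}(j),\eta_{\delta_k}(j+1))$ lies strictly below $\eta_{\delta_k}(n_{k+1}-1)$, hence strictly below every such intruding $\delta_{k'}$, and it meets $w^*$ only inside $w_{\delta_\ell}$, whose own $\gp$-closedness supplies $\eta_{\delta_k}(j)\in w_{\delta_\ell}\subseteq w^*$.
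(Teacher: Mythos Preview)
Your proof follows the same approach as the paper's---identify $w^*\cap S$, handle $\delta\in w_{\delta_{i+\ell}}\cap S$ by transferring from the closedness of $w_{\delta_{i+\ell}}$, and handle the history points $\delta_k$ via the pressed-down chain $v_{\delta_{k+1}}\subseteq w_{\delta_{k+1}}\subseteq\cdots\subseteq w_{\delta_{i+\ell}}$ together with regularity ($\ran(\eta_\delta)\subseteq\Omega$). The paper's proof is a three-line sketch of exactly this; you have supplied much more detail.

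Two remarks. First, at $k=i+\ell-1$ your line ``$\eta_{\delta_k}(n_{k+1})=\delta_{k+1}\in[\delta_{k+1},\delta_{k+1}+t_{k+1}]\subseteq w^*$'' breaks down: there is no block at $\delta_{i+\ell}$ (the index range is $[i,i+\ell)$), and in fact $\delta_{i+\ell}\notin w^*$. The paper's displayed inclusion $\{\delta_{i+k+1}\}\subseteq w^*$ has the identical slip. The fix is simply that for this bottom $k$ the initial segment of $\ran(\eta_{\delta_k})$ in $w^*$ is $\{j:j<n_{i+\ell}\}$ rather than $\{j:j\le n_{i+\ell}\}$; one checks easily that $[\delta_{i+\ell},\eta_{\delta_{i+\ell-1}}(n_{i+\ell}+1))\cap w^*=\emptyset$, so closedness still holds.

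Second, your final paragraph on the ``convexity clause'' is misplaced. For $j\le n_{k+1}-1$ you have already shown $\eta_{\delta_k}(j)\in w^*$, so clause~(ii) is \emph{trivially} satisfied there; the remark that ``$w_{\delta_\ell}$'s own $\gp$-closedness supplies $\eta_{\delta_k}(j)\in w_{\delta_\ell}$'' is moreover incorrect, since $\delta_k\notin w_{\delta_\ell}$ and so the closedness of $w_{\delta_\ell}$ says nothing about $\eta_{\delta_k}$. What actually needs checking---and what the paper's bound $w^*\cap\delta_{i+k}\subseteq\eta_{\delta_{i+k}}(n_{i+k+1}+1)$ encodes---is the range $j>n_{k+1}$: there the interval $[\eta_{\delta_k}(j),\eta_{\delta_k}(j+1))$ lies in $[\delta_{k+1}+\omega,\delta_k)$, which meets neither $w_{\delta_{i+\ell}}$ nor any block, so clause~(ii) holds vacuously.
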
 
\begin{PROOF}{Claim \ref{zartw*sag}}
	The fact that $w_\delta$ is $\gp$-closed is just part of the definition of being $\gp$-pressed down.
	Second, use that $\ran (\eta_\delta) \subseteq \Omega$ (by \ref{limr} as $\gp$ is nice), while for a fixed history $\bar \delta = \langle \delta_i, \delta_{i+1}, \dots, \delta_{i+\ell} \rangle$, $v_{\delta_{i+k}} \subseteq w_{\delta_{i+k}} \subseteq w_{\delta_{i+\ell}}$,
	so
	\[ v_{\delta_{i+k+1}} \cup \{\delta_{i+k+1} \} \subseteq w^* \cap \delta_{i+k} \subseteq \eta_{\delta_{i+k}}(n_{i+k+1}+1) \] 
	\[ \text{ (where } \delta_{i+k+1}= \eta_{\delta_{i+k}}(n_{i+k+1}))  \]
	
	 implies that $w^* = w_{\delta_\ell} \cup \cup\{[\delta_k, t_{k}] : k \in [i,i+\ell)  \}$ is indeed $\gp$-closed.
\end{PROOF}

\begin{definition} \label{w*}
	Let $\bar \eta$ be a ladder system on the stationary set $S$, assume that $\bar \delta = (\delta_i,\dots,\delta_{i+\ell})$ is an $\bar \eta$-history, and $\langle h_\alpha: \ \alpha < \omega_1 \rangle$ is permitted for $\bar \eta$, assume that $\langle w_\alpha: \ \alpha < \omega_1 \rangle$ is given.
	Then we let
	 $$w^*_{\bar \delta} = w^*_{\bar \delta}(\bar \eta, \bar h, \bar w) = w_{\delta_{i+\ell}} \cup (\cup\{[\delta_k, \delta_k+ h_{\delta_{k+1}}] : k \in [i,i+\ell)  \}).$$  	
\end{definition}

\begin{lemma} \label{ultuni}
	Assume that 
	\begin{enumerate}[label = $(\Alph*)$, ref = $(\Alph*)$]
		\item $\bar \eta$ is a ladder system on the stationary set $S$ (with $\aleph_0$-uniformization on $\bar \eta$), admitting the rank $\bar A$, and $\langle h_\alpha: \ \alpha < \omega_1 \rangle$ is permitted.
		\item 	$\gp = (S, \bar \eta, \bar v, \Psi)$ is a very special $S$-uniformization problem that is nice, $\bar w = \langle w_\alpha: \ \alpha <\omega_1 \rangle$ forms a $\gp$-pressed down system.
	\end{enumerate}
	Then there exists $\langle \psi_\alpha: \ \alpha < \omega_1 \rangle$, $\langle \nu^*_\alpha: \ \alpha < \omega_1\rangle$, $\langle h^*_\alpha: \ \alpha < \omega_1 \rangle$, $\langle p_\alpha: \ \alpha < \omega_1 \rangle$, $\bar m = \langle m_\alpha: \ \alpha \in S \rangle$, $\langle \nu^{**}_\alpha: \ \alpha < \omega_1\rangle$ such that
	letting $\bar \eta' = \bar \eta - \bar m$ the following holds for every $\gamma < \omega_1$:
	\begin{enumerate}[label = $(\blacktriangle)_{\arabic*}$, ref  = $(\blacktriangle)_{\arabic*}$]
		\item \label{egy} if $\gamma = \eta'_\delta(n)$ for some $\delta \in S$, then $\psi_\gamma \in \bigcup_{k \in \omega} (\Upsilon^+)^\gp_k$ (from Claim $\ref{Upsn}$),
		\item \label{ket}  for  every long $\bar \eta' = \bar \eta - \bar m$-history $\bar \delta = (\delta_0, \delta_1,\dots,\delta_{\ell} = \gamma)$, the sequence $h^*_\delta \in \ ^{\omega>}\omega$ satisfies
		$$h^*_\delta = \langle h_{\delta_1}, \dots, h_{\delta_\ell} = h_{\gamma} \rangle,$$
		and if $|w_{\bar \delta}^*| = k$, then
		$$  \psi_\gamma = \{ g \circ \text{OP}_{ w^*_{\bar \delta}, k}: \ g \in (\Psi^+)^{\gp}(w^*_{\bar \delta}) \}  \in (\Upsilon^+_k)^\gp$$
		(here we insist on $(\Psi^+)^{\gp}$ for technical reasons, and do not write $(\Psi^+)^{\gp \setminus \bar m}$  instead, which would  not be the same, as $w^*_{\bar \delta}$ may not be $\gp - \bar m$-closed),
		
		\item \label{har} for each $\delta \in S$ with $\eta'_\delta(n) = \gamma$ for some $n$: $\nu^*_{\gamma} = \eta'_\delta \restriction n \in \ ^n (\eta'_\delta(n))$.
			\item \label{kett} $\nu^{**}_\gamma = \langle \nu^{**}_{\gamma,k}: k < \ell \rangle \in \ ^{<\omega}(^{<\omega}\gamma)$, where for 	every long $\bar \eta' = \bar \eta - \bar m$-history $\bar \delta = (\delta_0, \delta_1,\dots,\delta_j = \gamma)$ we have $j = \ell$ (i.e.\ 
			$\lh(\bar \delta) = \ell + 1 = \lh(\nu^{**}_\gamma)+1$), and 
			 $$(\forall k < \ell): \ \nu^{**}_{\gamma,k} = \nu^*_{\delta_k},$$ 
			 (so if $\delta_k = \eta_{\delta_{k-1}}(n)$, $k <\ell$ then $\nu^{**}_{\gamma,k} = \eta'_{\delta_{k-1}} \restriction n$),
		\item \label{dont} $p_\gamma \in \ ^\ell\{0,1\}$, where for every long $\bar \eta'$-history $\bar \delta = (\delta_0, \delta_1,\dots,\delta_{j} = \gamma)$ we have $j = \ell$ (i.e.\ $p_\gamma = \langle p_\gamma(0), p_\gamma(1), \ldots, p_\gamma(j-1) \rangle$, so $\lh(\bar \delta) = \lh(p_\gamma)+1$), moreover for $0<k < \ell$
		 \[ p_{\gamma} (k) = 1 \ \iff \ (\exists \delta' \neq \delta''): \ \delta_k \in \ran(\eta'_{\delta'}) \cap \ran(\eta'_{\delta''}).\]
	\end{enumerate}
	\end{lemma}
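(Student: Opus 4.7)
The strategy is to build the six sequences simultaneously by recursion on the $\bar\eta$-rank level supplied by $\bar A$, applying $\aleph_0$-uniformization at each step and accumulating the truncation $\bar m$. The key observation is that the intended value at each $\gamma$ lies in a countable set: $\psi_\gamma$ ranges over $\bigcup_{k<\omega}(\Upsilon^+_k)^\gp$, which is countable by Claim \ref{Upsn}, while $\nu^*_\gamma$, $h^*_\gamma$, $\nu^{**}_\gamma$, $p_\gamma$ are finite sequences of ordinals, naturals, or of $\{0,1\}$. Hence $\aleph_0$-uniformization applies to each coloring that is set up, using the assumption of $(A)_{S,\bar\eta^*}$.

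First, an application of Claim \ref{nucl} (possibly after an initial truncation $\bar m^{(0)}$) produces the sequence $\bar\nu^*$ with $\nu^*_{\eta'_\delta(n)} = \eta'_\delta \restriction n$, which gives clause \ref{har}. I then proceed by induction on the rank level $n$: assume $h^*_\beta$, $\nu^{**}_\beta$, $p_\beta$, $\psi_\beta$ are defined for every $\beta$ of rank $\leq n$. For each $\delta \in S \cap A_n$ I define colorings $c_\delta$ on $\ran(\eta_\delta)$ whose value at $\eta_\delta(k)$ is intended to be, respectively, $h^*_\delta \tieconcat \langle h_{\eta_\delta(k)} \rangle$, $\nu^{**}_\delta \tieconcat \langle \nu^*_\delta \rangle$, $p_\delta \tieconcat \langle p^*(\eta_\delta(k)) \rangle$ (where $p^*(\alpha) = 1$ iff $\alpha$ belongs to at least two distinct ladders of the truncated system), and the type $\{g \circ \text{OP}_{w^*,|w^*|} : g \in (\Psi^+)^\gp(w^*)\}$ where $w^* = w^*_{\bar\delta'}$ is computed along the long history $\bar\delta'$ obtained by extending $\delta$'s history by $\eta_\delta(k)$; here $w^*$ is $\gp$-closed by Claim \ref{zartw*sag}, so this type lies in $(\Upsilon^+_{|w^*|})^\gp$. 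Applying $\aleph_0$-uniformization to each of these colorings yields global functions agreeing with $c_\delta$ beyond some $m'_\delta$, and I increment the truncation $\bar m$ accordingly.

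The hard part is ensuring consistency across distinct histories ending at the same $\gamma$. If $\gamma = \eta'_{\delta_1}(k_1) = \eta'_{\delta_2}(k_2)$ with $\delta_1 \neq \delta_2$ (the case in which $p_\gamma$ records a $1$ at the relevant position), the uniformization step forces the global value at $\gamma$ to equal both $c_{\delta_1}(\gamma)$ and $c_{\delta_2}(\gamma)$ for $k_i$ large, which in turn forces the two histories to supply identical encoded data at $\gamma$; this is precisely the content of clauses \ref{ket}, \ref{kett}, \ref{dont}. For clause \ref{ket} on $\psi_\gamma$ one exploits that $w_\gamma$ is a single set depending only on $\gamma$ (as $\bar w$ is $\gp$-pressed down), and that the remaining interval pieces of $w^*_{\bar\delta}$ are encoded by $h^*_\gamma$ and $\nu^{**}_\gamma$, so the isomorphism type of $w^*_{\bar\delta}$ is a function of the already-uniformized data and thus admits a consistent uniform choice. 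Since there are countably many rank levels and finitely many colorings at each, the accumulated $\bar m$ is well-defined, and the full conjunction of \ref{egy}--\ref{dont} holds for $\bar\eta' = \bar\eta - \bar m$.
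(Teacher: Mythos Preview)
Your overall architecture matches the paper's: induction on the rank levels of $\bar A$, at each level set up colorings $c_\delta$ for $\delta \in S \cap A_i$ encoding the intended values of $h^*, \nu^{**}, p, \psi$ at $\eta_\delta(k)$, uniformize, and truncate. That is exactly how the paper proceeds, and your observation that each target set is countable (so $\aleph_0$-uniformization applies) is correct.

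However, there is a real gap in your treatment of $\psi_\gamma$. You write that ``the isomorphism type of $w^*_{\bar\delta}$ is a function of the already-uniformized data and thus admits a consistent uniform choice.'' What you have established (via $h^*_\gamma$) is only that two long histories $\bar\xi,\bar\zeta$ ending at $\gamma$ give sets $w^*_{\bar\xi}, w^*_{\bar\zeta}$ of the same \emph{order type}. But $\psi_\gamma$ records the $\Psi$-isomorphism type, i.e.\ you need that the order-preserving bijection $\text{OP}_{w^*_{\bar\xi},w^*_{\bar\zeta}}$ carries $(\Psi^+)^{\gp}(w^*_{\bar\zeta})$ onto $(\Psi^+)^{\gp}(w^*_{\bar\xi})$. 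This does not follow from order type alone: the basic sets $b^{\delta_j}_{n,s}$ sitting at the upper nodes $\xi_j, \zeta_j$ (those above $\varp$) are genuinely different sets with possibly different $\Psi$-values, and $\text{OP}$ need not respect $\Psi$ on them automatically.

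The paper isolates this as a separate claim (Claim \ref{izomt}) and proves it by a case split on where a violating basic set $b = b^\delta_{n,s} \subseteq w^*_{\bar\zeta}$ can sit. If $\delta \le \varp$ then $b$ lies in the common part $w^*_{\bar\xi} \cap w^*_{\bar\zeta}$ where $\text{OP}$ is the identity, so there is no violation. If $\delta > \varp$ then $\delta = \zeta_j$ for some $j$, whence $b \subseteq w^*_{\bar\zeta \restriction [0,\ell]}$ (the history one step shorter), and now the \emph{inductive hypothesis} on $\psi_\varp$ --- which already asserts the $\Psi$-isomorphism of $w^*_{\bar\xi \restriction [0,\ell]}$ and $w^*_{\bar\zeta \restriction [0,\ell]}$ --- transports the violation to $w^*_{\bar\xi}$, a contradiction. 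Your sketch does not supply this argument; without it the coloring $c^\psi_\varp$ is not shown to be well-defined (independent of the chosen history through $\varp$), and after uniformization you cannot conclude that $\psi_\gamma$ satisfies \ref{ket} for \emph{every} long history ending at $\gamma$ rather than just the one you happened to pick.

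A secondary point: you propose to obtain $\bar\nu^*$ globally at the outset via Claim \ref{nucl}. That does not quite work, because each subsequent uniformization at level $i$ forces a further truncation $m^0_\delta$, and the relation $\nu^*_{\eta'_\delta(n)} = \eta'_\delta \restriction n$ is not stable under further truncation. The paper handles this by applying Claim \ref{nucl} \emph{after} the level-$i$ uniformizations (yielding an additional $m^1_\delta$, with $m_\delta = m^0_\delta + m^1_\delta$), so that $\nu^*$ is computed relative to the final truncated ladder at that level. Also, the bit you append in $p$ should record whether the \emph{current} node $\delta$ (not the next step $\eta_\delta(k)$) lies on two distinct ladders; your formula $p^*(\eta_\delta(k))$ has the wrong argument.
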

\begin{PROOF}{Lemma \ref{ultuni}}
	We will proceed by induction on $i < \omega$, and define 
	\begin{enumerate}[label = (\greek*), ref = (\greek*)]
		\item \label{i0} $\langle m_\delta: \ \delta \in (S \cap \bigcup_{j<i} A_j) \rangle$, 
		\item \label{i1} $\langle \psi_\alpha: \ \alpha \in \bigcup_{j\leq i} A_j \rangle$, $\langle \nu^*_\alpha: \ \alpha \in \bigcup_{j\leq i} A_j  \rangle$, $\langle h^*_\alpha: \ \alpha \in \bigcup_{j\leq i} A_j  \rangle$, $\langle p_\alpha: \ \alpha  \in \bigcup_{j\leq i} A_j  \rangle$.
	\end{enumerate}
		Recall that for each $\varp \in A_i$ if $\varp = \eta_\delta(n)$ then $\delta \in A_{i-1}$ by Definition $\ref{Arank}$, therefore
		if $\varp \in A_0$, then $\langle \varp \rangle$ is a long history, 
		let $h^*_{\varp} = \nu^*_{\varp} = \nu^{**} = \langle \rangle$ (the empty sequence)
		and if $k = |w^*_{\langle \varp \rangle}|$, then simply let
		 $$\psi_{\varp} =   \{ g \circ \text{OP}_{w^*_{\langle \varp \rangle}, k}: \ g \in (\Psi^+)^\gp(w^*_{\langle \varp \rangle})\}. $$
		 
			Suppose that $i \geq 0$ is fixed, and the objects in \ref{i0}, \ref{i1} are already defined, so if $\gamma \in \bigcup_{j \leq i} A_j$, then $h^*_\gamma$, $\psi_\gamma$, $\nu^*_\gamma$, $\nu^{**}_\gamma$, $p_\gamma$ satisfy clauses \ref{egy}-\ref{dont} (recall that under $\eta_\delta'(n)$ we mean $ \eta_\delta(n+m_\delta)$, note that $m_\delta$ is  defined if $\delta \in \bigcup_{j <i} A_j$).
			
			First we choose $\langle \nu^{**}_\gamma, \psi_\gamma, h^*_\gamma, p_\gamma: \ \gamma \in A_{i+1} \cap (\bigcup_{\delta \in (S \cap A_i)} \ran(\eta_\delta)) \rangle$, then we are able to define $\langle \nu^*_\gamma: \ \gamma \in A_{i+1} \rangle$, $\langle m_\delta: \ \delta \in A_i \rangle$, which  determines $\bar \eta' \restriction A_i = (\bar \eta - \bar m) \restriction A_i)$ (note that there may be some
			 $$\gamma \in A_{i+1} \cap \left(\bigcup_{\delta \in (S \cap A_i)} \ran(\eta_\delta)\right) \setminus \left. \left(\bigcup_{\delta \in (S \cap A_i)} \ran(\eta'_\delta)\right) \right).$$ Finally we can choose 
			$\langle \nu^{**}_\gamma, \psi_\gamma, h^*_\gamma, p_\gamma: \ \gamma \in A_{i+1} \setminus (\bigcup_{\delta \in (S \cap A_i)} \ran(\eta'_\delta))\rangle$, similarly to the case of $\gamma \in A_0$.
			
			Before the induction step we have to observe the following corollaries of the induction hypothesis \ref{ket}:
			\newcounter{teklecou} \setcounter{teklecou}{0}
			\begin{enumerate}[label = $(\bigstar_{\arabic*})$, ref = $(\bigstar_{\arabic*})$]
				\item \label{abst} if $\varp \in \bigcup_{j \leq i} A_j$, then  for the long $\bar \eta'$-histories $\bar \delta = \langle \delta_0, \delta_1, \dots, \delta_\ell = \varp \rangle$, $\bar \delta' = \langle \delta'_0, \delta'_1, \dots, \delta'_{\ell'}  = \varp \rangle$ we always have  
				\[ h^*_\varp = \langle h_{\delta_0}, h_{\delta_1}, \dots, h_{\delta_\ell} (= h_\varp )\rangle =  \langle h_{\delta'_0}, h_{\delta'_1}, \dots, h_{\delta'_{\ell'}} (= h_\varp) \rangle,\]
				in particular
				$$\ell = \ell',$$
				
				\stepcounter{teklecou}
			\end{enumerate}	 
			moreover, 
			\begin{enumerate}[label = $(\bigstar_{\arabic*})$, ref = $(\bigstar_{\arabic*})$]
				\setcounter{enumi}{\value{teklecou}}
				\item \label{abs2}if $\varp \in \bigcup_{j \leq i} A_j$, $\bar \delta$, $\bar \delta'$ are as above in \ref{abst}, $n \in \omega$, then 
				$$w^*_{\bar \delta \tieconcat \langle \eta_\varp(n) \rangle} \cap \varp = w^*_{\bar \delta' \tieconcat \langle \eta_\varp(n) \rangle} \cap \varp $$
				as $w_{\delta_i} \subseteq w_{\eta_{\delta_i(n)}}$ by Definition \ref{wdf}
				(even 
				\begin{equation} \label{varpalatt} w^*_{\bar \delta \tieconcat \langle \eta_\varp(n) \rangle} \cap (\varp+\omega) = w^*_{\bar \delta' \tieconcat \langle \eta_\varp(n) \rangle} \cap (\varp+\omega)  \end{equation}
				is true).
				Moreover,
				\[ w^*_{\bar \delta \tieconcat \langle \eta_\varp(n) \rangle} \setminus w^*_{\bar \delta} =  w^*_{\bar \delta' \tieconcat \langle \eta_\varp(n) \rangle} \setminus w^*_{\bar \delta'} = \]
				\[ = [\varp+1, \varp + h_{\eta_\varp(n)}] \cup \{ \eta_\varp(n)\} \cup (w_{\eta_\varp(n)} \setminus w_\varp) \]
				(since $\delta_0 > \delta_1 > \dots > \delta_\ell = \varp> \eta_\varp(n)$, and
				$\delta'_0 > \delta'_1 > \dots > \delta'_\ell = \varp> \eta_\varp(n)$, and $w_\alpha \subseteq \alpha$ by \ref{w1} in Definition \ref{wdf}),
				in particular, 
				$$(w^*_{\bar \delta \tieconcat \langle \eta_\varp(n) \rangle} \setminus w^*_{\bar \delta}) \cap \eta_\varp(n) = (w^*_{\bar \delta' \tieconcat \langle \eta_\varp(n) \rangle} \setminus w^*_{\bar \delta}) \cap \eta_\varp(n)  = w_{\eta_\varp(n)} \setminus w_\varp, $$
				\stepcounter{teklecou}
			\end{enumerate}	 
			which easily implies that
			\begin{enumerate}[label = $(\bigstar_{\arabic*})$, ref = $(\bigstar_{\arabic*})$]
				\setcounter{enumi}{\value{teklecou}}
				\item \label{lekep} if $\varp \in \bigcup_{j \leq i} A_j$, $\bar \delta$, $\bar \delta'$ are as above in \ref{abst}, $n \in \omega$, then for any $k \leq \ell$
				\[ (\text{OP}_{w^*_{\bar \delta \tieconcat \langle \eta_\varp(n) \rangle}, w_{\bar \delta' \tieconcat \langle \eta_\varp(n) \rangle}}) ``[\delta'_k, \delta'_k+ h_{\delta'_{k+1}}] = [\delta_k, \delta_k+ h_{\delta_{k+1}}],\]
				and of course
				\[ \text{OP}_{w^*_{\bar \delta \tieconcat \langle \eta_\varp(n) \rangle},w_{\bar \delta' \tieconcat \langle \eta_\varp(n) \rangle}} (\alpha) = \alpha, \ \text{ if } \alpha \leq \varp, \]
				hence
				\begin{equation} \label{delta'delta}
					\text{OP}_{w^*_{\bar \delta \tieconcat \langle \eta_\varp(n) \rangle}, w^*_{\bar \delta' \tieconcat \langle \eta_\varp(n) \rangle}} \restriction  w^*_{\bar \delta'} = \text{OP}_{w^*_{\bar \delta}, w^*_{\bar \delta'}}.
				\end{equation}
				\stepcounter{teklecou}
			\end{enumerate}

			Now fix $\varp \in A_i$,			
				define the functions $c_\varp^{h^*}, c^{\psi}_\varp, c^p_\varp$ with  domain $\ran(\eta_\varp)$ as follows.
			\begin{enumerate}[label = $\bullet_{\alph*}$, ref = $\bullet_{\alph*}$]
				\item $c_{\varp}^{h^*} (\eta_{\varp}(n)) =  h^*_{\varp} \tieconcat \langle h_{\eta_\varp(n)} \rangle \in \ ^{<\omega}\omega$, 
				\item \label{nu**} $c_\varp^{\nu^{**}}(\eta_\varp(n)) = \nu_\varp^{**} \tieconcat \langle \nu^*_\varp \rangle$,
				\item for $c_\varp^{\psi}: \ran(\eta_\varp) \to \bigcup_{k < \omega} (\Upsilon^+_k)^\gp$ fix a long $\bar \eta' \restriction (S \cap \bigcup_{j<i} A_j)  =  (\bar \eta - \bar m)\restriction (S \cap \bigcup_{j<i} A_j)$-history $\bar \delta = \langle \delta_0, \delta_1, \dots, \delta_\ell = \varp \rangle$, and consider 
				$$w^*_{\bar \delta \tieconcat \langle \eta_\varp(n) \rangle} = w_{\eta_\varp(n)} \cup (\cup\{[\delta_k, \delta_k+ h_{\delta_{k+1}}] : k \in [0,\ell)  \} \cup [\varp, \varp + h_{\eta_\varp(n)}].$$
				First note that by our inductive hypothesis
				$$ h^*_\varp = \langle h_{\delta_1}, h_{\delta_2}, \dots, h_{\delta_\ell} (= \varp) \rangle,$$
				so (since $w_\varp \subseteq w_{\eta_\varp(n)} \subseteq \eta_\varp(n)$) $k= |w^*_{\bar \delta \tieconcat \langle \eta_\varp(n) \rangle}|$ does  not depend on the particular choice of $\bar \delta$, but we also shall argue later for
				\begin{equation} \label{psilo} c_\varp^{\psi}(\eta_\varp(n)) := \{ g \circ \text{OP}_{w^*_{\bar \delta \tieconcat \langle \eta_\varp(n) \rangle}, k}: \ g \in (\Psi^+)^\gp(w^*_{\bar \delta \tieconcat \langle \eta_\varp(n) \rangle}) \} \in \Upsilon^\gp_k \end{equation}
				(where $k = |w^*_{\bar \delta \tieconcat \langle \eta_\varp(n) \rangle}|$)
				 being independent of $\bar \delta$ (and only depending on $\varp$).
				\item Finally let $c_\varp^{p}(\eta_\varp(n)) =  p_\varp \tieconcat \langle j \rangle $, where $j =1$, if there are $\delta' \neq \delta''$ with $\varp \in \ran(\eta'_{\delta'}) \cap \ran(\eta'_{\delta''})$ (i.e.\ $\varp = \eta_{\delta'}(n') = \eta_{\delta''}(n'')$ for some $n' \geq m_{\delta'}$, $n'' \geq m_{\delta''}$), otherwise $j = 0$.
					
			\end{enumerate}

			\begin{claim} \label{izomt}
				The value $c_\varp^{\psi}(\eta_\varp(n))$ (defined in $\eqref{psilo}$ using the long history $\bar \delta = \langle \delta_0, \delta_1, \dots, \delta_\ell = \varp \rangle$) does not depend on the particular $\bar \delta$.
			\end{claim}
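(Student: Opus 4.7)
Fix two long $\bar\eta'$-histories $\bar\delta = \langle \delta_0,\ldots,\delta_\ell = \varp\rangle$ and $\bar\delta' = \langle \delta'_0,\ldots,\delta'_{\ell'} = \varp\rangle$, and write $w = w^*_{\bar\delta \tieconcat \langle \eta_\varp(n)\rangle}$, $w' = w^*_{\bar\delta' \tieconcat \langle \eta_\varp(n)\rangle}$. By the induction hypothesis \ref{ket} applied at stage $\varp$, both expansions of $h^*_\varp$ must agree, so $\ell = \ell'$ and $h_{\delta_k} = h_{\delta'_k}$ for $1 \le k \le \ell$. Combined with $(\bigstar_2)$ this yields $w \cap (\varp+\omega) = w' \cap (\varp+\omega)$ and matching interval sizes above $\varp+\omega$, so $|w| = |w'| =: k$.

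Let $\sigma = \text{OP}_{w',w}: w \to w'$ be the order-preserving bijection. Since $\delta_0 > \delta_1 > \cdots > \delta_\ell = \varp$ and the intervals $[\delta_j,\delta_j + h_{\delta_{j+1}}]$ lie in strictly decreasing blocks above $\varp+\omega$ (and similarly for $\bar\delta'$), $\sigma$ is the identity on $w \cap (\varp+\omega)$ and satisfies $\sigma(\delta_j + t) = \delta'_j + t$ for every $0 \le j < \ell$, $0 \le t \le h_{\delta_{j+1}}$. The key reduction is now the claim that $w$ and $w'$ are $\Psi$-isomorphic in the sense of Definition $\ref{1.3}$, witnessed by $\sigma$. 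Once this is established, the map $g \mapsto g \circ \sigma^{-1}$ is a bijection $(\Psi^+)^\gp(w) \to (\Psi^+)^\gp(w')$, and since $\text{OP}_{w,k} = \sigma^{-1} \circ \text{OP}_{w',k}$, we obtain
\[ \{g \circ \text{OP}_{w,k} : g \in (\Psi^+)^\gp(w)\} = \{g' \circ \text{OP}_{w',k} : g' \in (\Psi^+)^\gp(w')\}, \]
which is exactly the required independence of $c_\varp^\psi(\eta_\varp(n))$ from the choice of $\bar\delta$.

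To verify $\Psi$-isomorphism, I would argue that for every basic set $b^{\delta^*}_{n^*,k^*} \subseteq w$, its image $\sigma(b^{\delta^*}_{n^*,k^*})$ is again a basic set contained in $w'$, sitting in the same $\Psi$-type. The analysis splits according to where $\delta^*$ lives: (a) if $\delta^* \in w_{\eta_\varp(n)}$, both the tail $[\delta^*,\delta^*+k^*]$ and the head $\{\eta_{\delta^*}(n^*)\} \cup v_{\eta_{\delta^*}(n^*)}$ lie below $\varp$, where $\sigma$ is the identity, so the basic set is preserved verbatim; (b) if $\delta^* = \varp$, the tail sits in the common interval $[\varp,\varp+h_{\eta_\varp(n)}]$ and the head lies in $w_{\eta_\varp(n)}$ — again all fixed; (c) if $\delta^* = \delta_j$ for some $j < \ell$, $\sigma$ sends $\delta_j$ to $\delta'_j$ and the tail to $[\delta'_j,\delta'_j+k^*]$, and one must match the head via $\sigma$. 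Here the $\gp$-closedness of $w$ forces $\{m : \eta_{\delta_j}(m) \in w\}$ to be an initial segment of $\omega$, so $\eta_{\delta_j}(n^*)$ is either in $w_{\eta_\varp(n)}$ (hence fixed by $\sigma$) or equals one of $\delta_{j+1},\ldots,\delta_\ell = \varp$ (hence mapped by $\sigma$ to the analogous $\delta'_{j'}$); in either case $\sigma(\eta_{\delta_j}(n^*)) \in \ran(\eta_{\delta'_j})$ must hold by the symmetric role played by $\delta'_j$ in $w'$.

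The main obstacle is precisely case (c): showing that a head $\eta_{\delta_j}(n^*)$ lying in $w_{\eta_\varp(n)}$ or at some earlier history point is realized as an $\eta_{\delta'_j}$-value in $w'$. The inputs one has to lean on are the pressed-down property of $\bar w$ — which forces $w_{\delta_j} \subseteq w_{\delta_{j+1}} \subseteq \cdots \subseteq w_\varp \subseteq w_{\eta_\varp(n)}$, so that the portion of the $\eta_{\delta_j}$- and $\eta_{\delta'_j}$-structure that $\gp$-closedness of $w,w'$ pulls into the sets is actually captured inside the common block $w_{\eta_\varp(n)}$ — together with the regularity of $\gp$ (only limit ordinals occur in $\ran(\eta_\delta)$) and the fact that the tail sizes $h_{\delta_{j+1}} = h_{\delta'_{j+1}}$ agree. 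Once this alignment is in hand, the $\Psi$-isomorphism of $b^{\delta_j}_{n^*,k^*}$ with $\sigma(b^{\delta_j}_{n^*,k^*})$ follows from the countability of $\Psi$-isomorphism types (Observation $\ref{a14}$) and the fact that both basic sets have the same size, same head-subset isomorphism type (coming from $\bar v$), and same tail length.
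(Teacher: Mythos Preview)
Your reduction to showing that $w$ and $w'$ are $\Psi$-isomorphic via $\sigma = \text{OP}_{w',w}$ is the right target, and cases (a) and (b) are fine. The genuine gap is in case (c), exactly where you flag it: when $\delta^* = \delta_j$ for some $j < \ell$ and the head $\eta_{\delta_j}(n^*)$ lies in $w_{\eta_\varp(n)}$ (so is fixed by $\sigma$), you need $\eta_{\delta_j}(n^*) \in \ran(\eta_{\delta'_j})$ for $\sigma(b^{\delta_j}_{n^*,k^*})$ to be a basic set at $\delta'_j$. There is simply no reason this should hold --- $\delta_j$ and $\delta'_j$ are different ordinals with different ladders, and nothing in the pressed-down condition, regularity, or the agreement of $h$-values forces their ranges to overlap at this particular point. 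The appeal to ``symmetric role'' and to countability of $\Psi$-isomorphism types does not rescue this: countably many types does not mean \emph{these two} types coincide, and having the same size and tail length does not determine the $\Psi$-value.

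The paper's proof bypasses this obstacle entirely by never trying to match individual basic sets at the $\delta_j$'s. Instead it argues by contradiction: if $g_0 \in (\Psi^+)^\gp(w)$ but $g_1 = g_0 \circ \text{OP}_{w,w'} \notin (\Psi^+)^\gp(w')$, then some basic set $b \subseteq w'$ witnesses failure. If $b$ sits below $\varp + \omega$ you get an immediate contradiction (your cases (a),(b)). If $b$ sits at some $\delta'_j > \varp$, then actually $b \subseteq w^*_{\bar\delta'}$ (the history \emph{without} the appended $\eta_\varp(n)$), so $g_1 \restriction w^*_{\bar\delta'} \notin (\Psi^+)^\gp(w^*_{\bar\delta'})$. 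Now invoke the \emph{induction hypothesis} \ref{ket} on $\psi_\varp$ itself: it says precisely that $(\Psi^+)^\gp(w^*_{\bar\delta})$ and $(\Psi^+)^\gp(w^*_{\bar\delta'})$ have the same OP-type, and since $\text{OP}_{w',w} \restriction w^*_{\bar\delta} = \text{OP}_{w^*_{\bar\delta'}, w^*_{\bar\delta}}$ (this is $(\bigstar_3)$), one pulls the failure back to $g_0 \restriction w^*_{\bar\delta} \notin (\Psi^+)^\gp(w^*_{\bar\delta})$, a contradiction. The crucial move you are missing is this use of the inductive assumption on $\psi_\varp$, which packages all the ``high'' basic-set information at once and makes the pointwise matching unnecessary.
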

			\begin{PROOF}{Claim \ref{izomt}}
				Fix $\varp \in S \cap A_i$, $n \in \omega$, and 
				long $\eta'$-histories
				$\bar \xi = \langle \xi_0, \xi_1, \dots, \xi_\ell = \varp \rangle$, 
				$\bar \zeta = \langle \zeta_0, \zeta_1, \dots, \zeta_\ell = \varp \rangle$.
				
				We have to use the induction hypothesis on $\langle \psi_\delta: \ \delta \in \bigcup_{j \leq i} A_j \rangle$ and recall how $(\Psi^+)^\gp$ is defined from $(\Psi)^\gp$ in Claim $\ref{Upsn}$.
				Let 
				$$x_\xi := w^*_{\bar \xi \tieconcat \langle \eta_\varp(n) \rangle} = w_{\eta_\varp(n)} \cup (\cup\{[\xi_k, \xi_k+ h_{\xi_{k+1}}] : k \in [0,\ell)  \} \cup [\varp, \varp + h_{\eta_\varp(n)}],$$ 
				and 
				$$x_\zeta :=w^*_{\bar \zeta \tieconcat \langle \eta_\varp(n) \rangle} = w_{\eta_\varp(n)} \cup (\cup\{[\zeta_k, \zeta_k+ h_{\zeta_{k+1}}] : k \in [0,\ell)  \} \cup [\varp, \varp + h_{\eta_\varp(n)}].$$
				It is clear from the way we defined $w^*_{\bar \delta}$'s (Definition $\ref{w*}$) and the inductive assumption on $h^*_\varp$ (or just on $\psi_\varp$) that
				 $$ \left|w^*_{\bar \zeta \tieconcat \langle \eta_\varp(n) \rangle}\right| = \left|w^*_{\bar \xi \tieconcat \langle \eta_\varp(n) \rangle}\right|, $$ let $k$ denote this number.
				So suppose that the function $f_0$ satisfies 
				\begin{equation} \label{g0}	f_0=  g_0 \circ \text{OP}_{x_\xi, k} \ \text{ for some } g_0 \in (\Psi^+)^\gp(x_\xi), \end{equation}
					but 
					\begin{equation} \label{g1} g_1 = f_0 \circ \text{OP}_{k,x_\zeta}   \notin (\Psi^+)^\gp(x_\zeta)\end{equation}
					(so
					\begin{equation} \label{g0g1} g_0 = g_1 \circ \text{OP}_{x_\zeta,x_\xi}). \end{equation}
				This means (recalling \ref{veryfa1} from Fact $\ref{veryfa}$) that for some $b \in \cB^\gp$ and $b \subseteq x_\zeta$: 
				$$g_1 \restriction b \notin \Psi(b).$$
				Now again by  Fact $\ref{veryfa}$ this $b$ is of the form 
				$b^\delta_{n,s}= \bigcup_{j \leq n} (v_{\eta_\delta}(j) \cup \{ \eta_\delta(j)\}) \cup [\delta, \delta+s]$ for some $\delta \in S$, $n,s \in \omega$.
				
				 Now assume first that $\delta \leq \zeta_\ell = \varp$, but then $\eqref{varpalatt}$ in \ref{abst} implies 
				 that $b \subseteq x_\xi \cap x_\zeta$, and $\text{OP}_{x_\zeta,x_\xi} \restriction b = \text{id}_b$, so $g_0 \restriction b = g_1 \restriction b \notin \Psi^\gp(b)$, hence $g_0 \notin  (\Psi^+)^\gp(x_\xi)$ contradicting $\eqref{g0}$.
				
				Therefore we can assume that $\delta > \varp$			
				 (so $\delta = \zeta_j$ for some $j < \ell$), then  w.l.o.g.\ by \ref{pdc} from Definition $\ref{pprobd}$ we can assume that $n$, $s$ are maximal such that $b = b^\delta_{n,s} \subseteq x_\zeta$, so
				$$b = v_{\zeta_{j+1}} \cup \{ \zeta_{j+1}\} \cup [\zeta_j, \zeta_j + h_{\zeta_{j+1}}].$$
				But note that then 
				$$b \subseteq w^*_{\bar \zeta}$$ 
				(since $\{ \zeta_{j+1}\} \cup [\zeta_j, \zeta_j + h_{\zeta_{j+1}}] \cup w_{\zeta_\ell} \subseteq w^*_{\bar \zeta}$ by Definition $\ref{w*}$ and $w_{\zeta_\ell} \supseteq w_{\zeta_{j+1}} \supseteq v_{\zeta_{j+1}}$ by our demands on $\langle w_\alpha: \ \alpha < \omega_1 \rangle$), so
				$$g_1 \restriction w^*_{\bar \zeta} \notin (\Psi^\gp)^+(w^*_{\bar \zeta}).$$
				Now by the inductive assumption on $\psi_\varp$ in \ref{ket}
				necessarily 
				$$g^*:= (g_1 \restriction w^*_{\zeta}) \circ \text{OP}_{w^*_{\bar \zeta}, w^*_{\bar \xi}} \notin (\Psi^\gp)^+(w^*_{\bar \xi}),$$
				but as $\text{OP}_{x_\zeta,x_\xi} \restriction w^*_{\bar \xi}  = \text{OP}_{w^*_{\bar \zeta}, w^*_{\bar \xi}}$ by $\eqref{delta'delta}$ in \ref{lekep},
				 $$g^* = (g_1 \circ \text{OP}_{x_\zeta,x_\xi}) \restriction w^*_{\bar \xi} \notin (\Psi^\gp)^+(w^*_{\bar \xi}), $$
				 and by $\eqref{g0g1}$
				 $$		g_0  \restriction w^*_{\bar \xi} \notin (\Psi^\gp)^+(w^*_{\bar \xi})$$
				 implying $g_0 \notin 	(\Psi^\gp)^+(x_{\xi})$, contradicting $\eqref{g0}$.	
				
			\end{PROOF}
			Now having the claim proven we already saw that $c_\varp(\eta_\varp(n))$ does not depend on the long $\eta'$-history ending with $\varp$, once we manage to uniformize the system it will only depend on $\eta_\varp(n)$, not on $\varp$.
			
			It is easy to see that  for each fixed $\alpha$ the sets $\{c_\varp^{h^*} (\alpha): \ \alpha \in \dom(c_\varp^{h^*}) \}$, $\{c_\varp^{\psi} (\alpha): \ \alpha \in \dom(c_\varp^{\psi}) \}$ and $\{c_\varp^{p} (\alpha): \ \alpha \in \dom(c_\varp^{p}) \}$ are all countable, while 
			\begin{equation} \label{equ}  \{c_\varp^{\nu^{**}}(\alpha): \ \alpha \in \ran (\dom(\eta_\varp))\} \subseteq \ ^{<\omega}(^{<\omega}\alpha), \end{equation} which can be seen by the following: $c_\varp^{\nu^{**}}(\alpha) = \nu^{**}_\varp \tieconcat \langle \nu^*_\varp \rangle$ by its very definition in \ref{nu**},
			and by our inductive assumptions \ref{ket} for each long $\eta'$-history $\bar \delta = \langle \delta_0, \delta_1, \dots, \delta_\ell = \varp \rangle$ the sequence $c_\varp^{\nu^{**}}(\alpha)$ is equal to  $\langle \nu^*_{\delta_0}, \nu^*_{\delta_1}, \dots, \nu^*_{\delta_{\ell-1}} \rangle \tieconcat \langle \nu^*_{\delta_\ell} \rangle$. By \ref{kett} this latter is equal to 
			$$\langle \langle\rangle, (\eta'_{\delta_0} \restriction n_1), (\eta'_{\delta_1} \restriction n_2), \dots, (\eta'_{\delta_{\ell-1}} \restriction n_\ell) \rangle$$
			(where $\eta'_{\delta_k}(n_{k+1}) = \delta_{k+1}$, and $\nu^*_{\delta_0} = \langle \rangle$), so
			$$c_\varp^{\nu^{**}}(\alpha) \in \ ^{<\omega}\delta_1 \times \ ^{<\omega}\delta_2 \times \ \dots \times  \ ^{<\omega}\delta_\ell.$$
			But as $ \bar \delta \tieconcat \langle \delta_{\ell+1}:= \alpha \rangle$ is an $\eta$-history, so recalling \ref{fiok} if $k \leq \ell$ and $\beta \in \ran(\eta_{\delta_k}) \cap \delta_{k+1}$, then $\beta \in \alpha = \delta_{\ell+1}$, in particular,
			$$ \text{for each } k < \ell: \ \eta'_{\delta_k} \restriction n_{k+1} \text{ has range }\subseteq\alpha,$$
			we are done with proving $\eqref{equ}$.
			
			 Thus, we can uniformize all these functions, and pick 
			some suitable $d^{h^*}$, $d^{\psi}$, $d^{\nu^{**}}$ and $d^p$ that uniformize the respective mappings, so fix $m^0_\delta: \ \delta \in A_i$ so that $(n \geq m^0_\delta) \to d^{\text{t}}(\eta_\delta(n)) = c^{\text{t}}_\delta(\eta_\delta(n))$ for any $\text{t} \in \{h^*, \psi, \nu^{**}, p \}$.

			Now for obtaining $\nu^*$ apply Claim $\ref{nucl}$ to the ladder system $\langle \eta^0_\delta: \ \delta \in A_i \cap S \rangle$, $\eta^0_\delta(n) = \eta_\delta(n+m^0_\delta)$ giving us 
			\begin{enumerate}[label = $\bigodot$, ref = $\bigodot$ ]
				\item $\langle m^1_\delta: \ \delta \in A_i \cap S \rangle$, and
				$\langle \nu_\alpha: \ \alpha < \omega_1 \rangle$ with
				$$ \nu_{\eta^0_\delta(n+m^1_\delta)} = \nu_{\eta_\delta(n+m^0_\delta + m^1_\delta)} = $$ $$ = \eta^0 \restriction [m_\delta^1, m_\delta^1+n) = \eta \restriction [m_\delta^0 + m_\delta^1 , m_\delta^0 +  m_\delta^1+n). $$ 
			\end{enumerate}
		For each $\delta \in A_i \cap S$ we let $m_\delta = m^0_\delta + m^1_\delta$, so we defined the ladder system 
		$$ \bar \eta' \restriction (A_i \cap S), \ \  \eta'_\delta(n) = \eta_\delta(m^0_\delta+m^1_\delta+n) \ \ ( n \in \omega). $$
		
			Finally, for $\alpha \in A_{i+1}$ let
			$$ \begin{array}{rl} \nu^*_\alpha = & \left \{ \begin{array}{ll} \nu_\alpha, & \text{ if } \alpha \in \ran(\eta'_\delta) \text{ for some } \delta \in A_i \\
				\langle\rangle, & \text{ otherwise}, \end{array} \right. \\
			& \\
			
			\nu^{**}_\alpha = & \left \{ \begin{array}{ll} d^{\nu^{**}}(\alpha), & \text{ if } \alpha \in \ran(\eta'_\delta) \text{ for some } \delta \in A_i \\
				\langle\rangle, & \text{ otherwise}, \end{array} \right.  \\
			& \\
			
				\psi_\alpha = & \left \{ \begin{array}{ll} d^{\psi}(\alpha), & \text{ if } \alpha \in \ran(\eta'_\delta) \text{ for some } \delta \in A_i \\
				\langle\rangle, & \text{ otherwise}, \end{array} \right.  \\
			& \\
			p_\alpha = & \left \{ \begin{array}{ll} d^{p}(\alpha), & \text{ if } \alpha \in \ran(\eta'_\delta) \text{ for some } \delta \in A_i \\
				\langle\rangle, & \text{ otherwise}, \end{array} \right.  \\
			& \\
				h^*_\alpha = & \left \{ \begin{array}{ll} d^{h^*}(\alpha), & \text{ if } \alpha \in \ran(\eta'_\delta) \text{ for some } \delta \in A_i \\
				\langle\rangle, & \text{ otherwise}. \end{array} \right.  			
			
		\end{array} $$
			It is straightforward to check  all our demands in \ref{egy}-\ref{dont} for each $\delta \in S \cap A_i$, $\gamma \in A_{i+1}$.

\end{PROOF}

\begin{lemma} \label{ultuniz}
	Assume that 
	\begin{enumerate}[label = $(\Alph*)$, ref = $(\Alph*)$]
		\item $\bar \eta$ is a ladder system on the stationary set $S$ (with $\aleph_0$-uniformization on $\bar \eta$), admitting the rank $\bar A$, and $\langle h_\alpha: \ \alpha < \omega_1 \rangle$ is permitted.
		\item 	$\gq = (S, \bar \eta, \bar v, \Psi)$ is a nice very special $S$-uniformization problem, $\bar w = \langle w_\alpha: \ \alpha <\omega_1 \rangle$ is a $\gq$-pressed down system.
	\end{enumerate}
	Moreover, suppose that there exists $\bar \psi$, $\bar \nu^*$, $\bar p$, $\bar \nu^{**}$, $\bar h^*$ and $\bar m$ as in Lemma $\ref{ultuni}$ (and a derived ladder system $\bar \eta' = \bar \eta - \bar m$) 
	
	Fix $\varp < \omega_1$, $\bar \delta = \langle \delta_0, \dots, \delta_\ell = \varp \rangle$, $\bar \delta' = \langle \delta'_0, \dots, \delta'_\ell = \varp \rangle$ are  long $\bar \eta'$-histories, and suppose that $\theta < \varp$, $\bar \xi =  \langle \xi_0, \dots, \xi_k = \theta \rangle$ is a long $\bar \eta'$-history.
	Then
	\begin{enumerate}[label = $(\curlyvee)_{\arabic*}$, ref = $(\curlyvee)_{\arabic*}$]
		\item \label{hi0} $\{\delta_i: \ i \leq \ell\} \cap \{ \delta'_i: \ i \leq \ell\} = \{ \delta_i: \ i \geq j^\bullet \}$ for some $j^\bullet \leq \ell$,
		\item \label{hi1} $\{\delta_i: \ i \leq \ell\} \cap \{ \xi_i: \ i \leq k\}$ is either empty, or is of the form $\{ \delta_i: \ j_0 \leq i \leq j_1 \}$ for some $0 \leq j_0 \leq j_1 \leq \ell$, moreover,
		$$\{ \delta_i: \ j_0 \leq i \leq j_1 \} = \{ \xi_i: \ j_0 \leq i \leq j_1 \},$$
		\item \label{hi2} if  $j_0 \leq j_1$  are as in \ref{hi1} (so $\bar \delta$ and $\bar \xi$ intersect each other), then there exists a long $\bar \eta'$-history $\bar \xi' = \langle \xi'_0, \xi'_1, \dots, \xi'_{k}\rangle$ with 
		\begin{enumerate}
			\item $\xi'_{k} = \theta$, and
			\item $ \{\delta'_i: \ i \leq \ell\} \cap \{ \xi'_i: \ i \leq k\} = \{ \delta'_i: \ j_0 \leq i \leq j_1 \}$,
			\item moreover,
				$$ (\text{OP}_{w^*_{\bar \delta'}, w^*_{\bar \delta}})``w^*_{\bar \delta} \cap w^*_{\bar \xi} =  w^*_{\bar \delta'} \cap w^*_{\bar \xi'},$$
				and
					$$ (\text{OP}_{w^*_{\bar \xi'}, w^*_{\bar \xi}})``w^*_{\bar \delta} \cap w^*_{\bar \xi} =  w^*_{\bar \delta'} \cap w^*_{\bar \xi'},$$
			
		\end{enumerate}
			\item \label{hi3} there exists a long $\bar \eta'$-history $\bar \xi = \langle \xi_0, \xi_1, \dots, \xi_{k^*} \rangle$ such that $\xi_{k^*} < \varp$, $\varp \notin \{\xi_j: \ j \leq k^*\}$, and
		whenever  $\bar \vartheta = \langle \vartheta_0, \vartheta_1, \dots, \vartheta_{m} \rangle$ is a long $\bar \eta'$-history with $\vartheta_m < \varp$, $\varp = \delta_\ell \notin \{ \vartheta_j: \ j\leq m\}$, then for all $i<\ell$ $\delta_i \in \{\vartheta_j: \ j\leq m\}$ implies $\delta_i \in \{\xi_0, \xi_1, \dots, \xi_{k^*}\}$
	and
		$$ w^*_{\bar \vartheta} \cap [\delta_i, \delta_i+\omega) \subseteq w^*_{\bar \xi} \cap [\delta_i, \delta_i+\omega).$$
		
	\end{enumerate}
\end{lemma}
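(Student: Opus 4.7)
The lemma is a structural analysis of $\bar\eta'$-histories and their associated supports $w^*_{\bar\delta}$, and the chief tool throughout is that, by Lemma \ref{ultuni}, the sequences $\bar\psi,\bar\nu^*,\bar\nu^{**},\bar p,\bar h^*$ are well-defined functions of the endpoint of a long history; this coherence forces a priori distinct histories to agree in hidden ways. I would address \ref{hi0}--\ref{hi3} in turn, the first two being essentially mechanical consequences of the uniformization and the last two requiring bookkeeping on top of that.

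For \ref{hi0}, the $\bar A$-rank (Definition \ref{Arank}, clause \ref{sta2}) gives that $\delta_i$ and $\delta'_j$ lie in the same $A_{j^*-(\ell-i)}$, so $\delta_i=\delta'_j$ forces $i=j$ and the intersection equals $\{i:\delta_i=\delta'_i\}$. Upward closure of this set is proved by induction: if $\delta_i=\delta'_i$ and $i<\ell$, write $\delta_{i+1}=\eta'_{\delta_i}(n)$, $\delta'_{i+1}=\eta'_{\delta_i}(n')$; by clause \ref{har} of Lemma \ref{ultuni} one has $\nu^*_{\delta_{i+1}}=\eta'_{\delta_i}\restriction n$ and $\nu^*_{\delta'_{i+1}}=\eta'_{\delta_i}\restriction n'$, and the equality $\nu^*_{\delta_{i+1}}=\nu^*_{\delta'_{i+1}}$ coming from the uniformized value $\nu^{**}_\varp$ (clause \ref{kett}) forces $n=n'$. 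Clause \ref{hi1} is proved by the same template: rank gives a fixed offset $c$, and applying $\nu^{**}$ to a shared value $\delta_i=\xi_{i+c}$ via the long sub-histories $\bar\delta\restriction(i+1)$ and $\bar\xi\restriction(i+c+1)$ shows the two resulting $\nu^{**}$-sequences have lengths $i$ and $i+c$, hence $c=0$; contiguity of the intersection block then follows by the same forward induction used for \ref{hi0}.

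For \ref{hi2}, define $\bar\xi'$ by $\xi'_i:=\xi_i$ for $i\notin[j_0,j_1]$ and $\xi'_i:=\delta'_i$ for $i\in[j_0,j_1]$. The history property is immediate on each sub-interval (using $\bar\delta'$'s history property inside the block and $\bar\xi$'s outside), and the two transitions at $j_0-1$ and $j_1$ reduce to showing, e.g., $\delta'_{j_0}\in\ran(\eta'_{\xi_{j_0-1}})$: this follows because $\delta_{j_0}=\xi_{j_0}\in\ran(\eta'_{\xi_{j_0-1}})$ and, by clause \ref{har} of Lemma \ref{ultuni} applied at the common endpoint $\varp$, the offset is independent of whether one approaches via $\bar\delta$ or $\bar\delta'$. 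Maximality of $\bar\delta$ (and hence of $\bar\delta'$, since $\varp\notin S$) is used symmetrically at $j_1$. The OP-equality is then bookkeeping: $w^*_{\bar\delta}$ and $w^*_{\bar\delta'}$ differ only in the intervals $[\delta_i,\delta_i+h_{\delta_{i+1}}]$ for $i<j^\bullet$ (from clause \ref{hi0}), $\mathrm{OP}_{w^*_{\bar\delta'},w^*_{\bar\delta}}$ matches these intervals piecewise, and the parallel substitution inside $\bar\xi'$ makes the intersections correspond.

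For \ref{hi3}, I would characterise the set of $\delta_i$ that admit an alternate history avoiding $\varp$ in terms of the $\bar p$-data (clause \ref{dont} of Lemma \ref{ultuni}) and locate, for each such $i$, the \emph{last} branching point along $\bar\delta$ that leads off the main branch toward $\delta_i$; one then chooses $\bar\xi$ to be a maximal $\bar\eta'$-history that descends through this last branching point, thereby passing through every such $\delta_i$. The inclusion $w^*_{\bar\vartheta}\cap[\delta_i,\delta_i+\omega)\subseteq w^*_{\bar\xi}\cap[\delta_i,\delta_i+\omega)$ is forced by the $\bar w$-pressed-down property (Definition \ref{wdf}), since both sides reduce to $[\delta_i,\delta_i+h_{\delta_{i+1}}]$ together with $\bar w$-material coherently prescribed by the uniformized $\bar h^*$. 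The main obstacle is exactly this step: producing a \emph{single} maximal $\bar\xi$ that simultaneously absorbs all non-$\varp$ branches through the $\delta_i$, which cannot be done by a local argument and depends crucially on the global coherence packaged in Lemma \ref{ultuni}.
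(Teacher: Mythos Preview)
Your arguments for \ref{hi0} and \ref{hi1} are essentially the paper's: use the rank $\bar A$ to match indices, then use the uniformized $\nu^{**}_\varp$ (via clause \ref{kett}) together with \ref{har} to force $\delta_i=\delta'_i\Rightarrow\delta_{i+1}=\delta'_{i+1}$; contiguity in \ref{hi1} then follows by applying \ref{hi0} to the long prefixes ending at a common entry.

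There is, however, a real gap in your construction for \ref{hi2}. You set $\xi'_i:=\xi_i$ for $i<j_0$ and claim that the transition $\xi'_{j_0-1}\to\xi'_{j_0}$ is legal, i.e.\ that $\delta'_{j_0}\in\ran(\eta'_{\xi_{j_0-1}})$, arguing that ``the offset is independent'' by \ref{har}. This does not follow. Clause \ref{har} gives $\nu^*_{\delta_{j_0}}=\eta'_{\xi_{j_0-1}}\restriction m$ (where $\delta_{j_0}=\eta'_{\xi_{j_0-1}}(m)$) and, from $\nu^{**}_\varp$, that $\nu^*_{\delta_{j_0}}=\nu^*_{\delta'_{j_0}}$; but equality of these initial segments tells you nothing about whether $\delta'_{j_0}$ actually lies on the ladder $\eta'_{\xi_{j_0-1}}$. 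The paper does \emph{not} reuse $\xi_{j_0-1}$ at all: it invokes clause \ref{dont} instead. Since $\delta_{j_0}\in\ran(\eta'_{\delta_{j_0-1}})\cap\ran(\eta'_{\xi_{j_0-1}})$ with $\delta_{j_0-1}\ne\xi_{j_0-1}$, one has $p_\varp(j_0)=1$; applying this to $\bar\delta'$ produces some $\gamma\ne\delta'_{j_0-1}$ with $\delta'_{j_0}\in\ran(\eta'_\gamma)$, and one sets $\xi'_{j_0-1}:=\gamma$ and then extends upward to a long history. Your ``symmetric'' treatment of the lower transition $j_1\to j_1+1$ also glosses over a nontrivial point: here one does use $\nu^*$, but one must show $n^\bullet<n$ (in the paper's notation) via $\theta<\varp$ and \ref{fiok}, so that $\xi_{j_1+1}=\eta'_{\delta_{j_1}}(n^\bullet)=\eta'_{\delta'_{j_1}}(n^\bullet)$ from the shared initial segment $\eta'_{\delta_{j_1}}\restriction n=\eta'_{\delta'_{j_1}}\restriction n$.

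For \ref{hi3} your proposal remains a plan (``characterise via $\bar p$, locate the last branching point'') and you flag the obstacle yourself. The paper's construction is concrete and different in flavour: it does not use $\bar p$ at all, but rather takes $k^\bullet$ to be the last index with $\nu^*_{\delta_{k^\bullet}}\ne\langle\rangle$ (reading this off $\nu^{**}_\varp$ and $\nu^*_\varp$) and sets $\bar\xi=\langle\delta_0,\dots,\delta_{k^\bullet-1},\nu^*_{\delta_{k^\bullet}}(|\nu^*_{\delta_{k^\bullet}}|-1)\rangle$. The point is that for any competing $\bar\vartheta$ meeting $\bar\delta$ at some $\delta_{k^\bullet+t}$ but avoiding $\varp$, the fact that $\nu^*_{\delta_{k^\bullet+t+1}}=\langle\rangle$ forces $\vartheta_{j+t+1}>\delta_{k^\bullet+t+1}$ and hence (by \ref{fiok}) all entries of $\bar\vartheta$ exceed $\varp$, a contradiction; so every such $\bar\vartheta$ must meet $\bar\delta$ only within $\{\delta_i:i<k^\bullet\}$, which is covered by $\bar\xi$. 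The interval inclusion then follows from permittedness of $\bar h$ and the definition of $w^*$.
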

\begin{PROOF}{Lemma \ref{ultuniz}}
	First, assume on the contrary, that for  the long  $\bar \eta'$-histories $\bar \delta$, $\bar \delta'$ ($\delta_\ell = \delta'_\ell$) for some $i<\ell$ $\delta_{i} = \delta'_{i}$, but $\delta_{i+1} \neq \delta'_{i+1}$ (so necessarily $i+1 < \ell$). Then on the one hand  $\delta_{i+1}, \delta'_{i+1} \in \ran(\eta'_{\delta_{i}})$, so by \ref{har} $\nu^*_{\delta_{i+1}} \neq \nu^*_{\delta'_{i+1}}$ as one is a proper initial segment of the other.
	On the other hand by \ref{kett}
	\[ \nu^*_{\delta_{i+1}} = \nu^{**}_{\delta_\ell, i+1} = \nu^{**}_{\delta'_{\ell}, i+1} =  \nu^{*}_{\delta'_{i+1}}, \]
	a contradiction, this proves \ref{hi0}.

	For \ref{hi1} let $\delta_i = \xi_j$, but $\delta_{i-1} \neq \xi_{j-1}$ (so $\delta_{i-1} \notin \ran(\bar \xi)$ by \ref{t13}). Then apply \ref{hi0} to $\langle \delta_0, \delta_1, \dots, \delta_i \rangle$, $\langle \xi_0, \xi_1, \dots, \xi_j \rangle$. Finally, follows from \ref{ket} or \ref{kett} that $i=j$.
			
	To prove \ref{hi2} let $j^\bullet$ be given by \ref{hi0} let $j_0, j_1$ be given by \ref{hi1} for $\bar \delta$, $\bar \xi$, so that
	\begin{equation} \{ \delta_i: \ j_0 \leq i \leq j_1\} = \{\xi_i: \ j_0 \leq i \leq j_1\}. \end{equation}
	 Define $\xi'_{i} = \delta'_{i}$ for $i \in [j_0,j_1]$, and $\xi'_i = \xi_i$ for $i>j_1$ (and $i \leq k$).
	Before completing the definition of $\bar \xi'$ we check that this segment is a $\eta'$-history, which is clear once we have showed 
	\begin{equation} \label{oncwe} \xi_{j_1+1} \in \ran(\eta'_{\delta'_{j_1}}). \end{equation} Of course we can assume that $\delta'_{j_1} \neq \delta_{j_1}$ (as $\delta_{j_1} = \xi_{j_1}$ and so \eqref{oncwe} would follow), from which necessarily $j_1 < \ell$.
	
	We will need the following two observations.
	\begin{enumerate}[label = $(\boxdot)_1$, ref = $(\boxdot_1)$ ]
		\item \label{uj0} $\nu^*_{\delta_{j_1+1}} = \nu^*_{\delta'_{j_1+1}}$, either because $j_1 +1 = \ell$ (and then $\delta_{j_1+1} = \delta'_{j_1+1} = \varp$), or else if $j_1 +1 < \ell$, then
			$\nu^{**}_{\varp, j_1+1} = \nu^*_{\delta_{j_1+1}} = \nu^*_{\delta'_{j_1+1}}$ by \ref{kett}, so
			\[ \nu^*_{\delta'_{j_1+1}}  = \eta'_{\delta'_{j_1}} \restriction n =  \eta'_{\delta_{j_1}} \restriction n \text{ for some } n,\]
			where 
			\begin{equation} \label{deltaj1p0}\eta'_{\delta'_{j_1}}(n) = \delta'_{j_1+1}, \end{equation} and
			\begin{equation} \label{deltaj1p1} \eta'_{\delta_{j_1}}(n) = \delta_{j_1+1}. \end{equation}
	\end{enumerate}
	Moreover, as $\delta_{j_1} = \xi_{j_1}$ we have
	\begin{enumerate}[label = $(\boxdot)_2$, ref = $(\boxdot_2)$ ]
		\item \label{uja} $\nu^*_{\xi_{j_1+1}} = \eta'_{\xi_{j_1}} \restriction n^\bullet = \eta'_{\delta_{j_1}} \restriction n^\bullet$ for some $n^\bullet$   by \ref{ket}, and $\eta'_{\delta_{j_1}}(n^\bullet) = \eta'_{\xi_{j_1}}(n^\bullet) = \xi_{j_1 +1}$.
	\end{enumerate}
	So \ref{uj0} and \ref{uja} together mean that  it suffices to argue that 
	\begin{equation} \label{nbu} n^\bullet < n, \end{equation} because then 
	\begin{equation} \label{font} \eta'_{\delta'_{j_1}}(n^\bullet) = \eta'_{\delta_{j_1}}(n^\bullet) = \xi_{j_1+1}, \end{equation} i.e. \ref{oncwe} holds. As $ \eta'_{\delta_{j_1}}(n) = \delta_{j_1+1}$, and $\eta'_{\xi_{j_1}}(n^\bullet) = \xi_{j_1 +1}$ and $\delta_{j_1+1} \neq \xi_{j_1 +1}$ by our assumptions, so if $n^\bullet > n$, then it follows from \ref{uja} that
	$\eta'_{\xi_{j_1}}(n) =  \eta'_{\delta_{j_1}}(n) = \delta_{j_1+1}$ by $\eqref{deltaj1p1}$, and necessarily $\delta_{j_1+1} < \xi_{j_1 +1} = \eta'_{\xi_{j_1}}(n^\bullet)$. So applying \ref{fiok} to
	$\bar \xi$ we obtain that for each $j \leq k$ $\xi_j > \delta_{j_1+1} \geq \delta_\ell = \varp$ holds, contradicting $\theta < \varp$. 

	Now recalling \ref{hi1} we only have to extend $\langle \xi'_{j_0}, \xi'_{j_0+1}, \dots, \xi'_k \rangle$ to a long $\bar \eta'$-history $\langle \xi'_{0}, \xi'_1 \ldots,  \xi'_k \rangle$, such that $\xi'_{j_0-1} \neq \delta'_{j_0-1}$, So we can assume that $0<j_0$.  Note that we may assume that  $\delta_{j_0} \neq \delta_{\ell} = \varp$, i.e. $j_0 < \ell$, as otherwise letting $\bar \xi' = \bar \xi$ would work. Recall \ref{dont}, so $p_\varp(j_0)$ is defined, and $\delta_{j_0} \in \ran(\eta'_{\delta_{j_0-1}}) \cap \ran(\eta'_{\xi_{j_0-1}})$ implies that $p_\varp(j_0)=1$. Applying this fact to $\bar \delta'$ we obtain that 
	\[ \delta'_{j_0} \in \ran(\eta'_{\delta'_{j_0-1}}) \cap \ran(\eta'_{\gamma}) \text{ for some } \gamma \neq \delta'_{j_0-1}, \]
	let $\xi'_{j_0-1} = \gamma$, and extending it to a long history, we are done. (It follows from \ref{hi0}, that two long $\bar \eta'$-histories $\bar \xi$ and $\bar \xi'$ with the same ending must have the same length.)
	
	For the moreover part recall how we defined the $w^*$'s (Definition $\ref{w*}$):
	$$ w^*_{\bar \delta} = w_{\delta_\ell} \cup (\cup\{[\delta_i, \delta_i+ h_{\delta_{i+1}}] : i < \ell \}),$$
 and note that $\delta_{\ell} = \delta'_{\ell}$, so by \ref{ket} $h_{\delta_i} = h_{\delta'_i}$ for each $i \leq \ell$,
 and
 	$$ w^*_{\bar \xi} = w_{\xi_{k}} \cup (\cup\{[\xi_i, \xi_i+ h_{\xi_{i+1}}] : i < k \}),$$
 	similarly $\xi_{k} = \xi'_{k}$, so by \ref{ket} $h_{\xi_i} = h_{\xi'_i}$ for each $i \leq k$.
 	We recall that
 	$$ \textrm{max}(w_{\xi_k}) < \xi_k < \xi_{k-1} <\ldots < \xi_0,$$
		$$ \textrm{max}(w_{\xi'_k}) < \xi'_k < \xi'_{k-1} < \ldots < \xi'_0,$$
			$$ \textrm{max}(w_{\delta_\ell}) < \delta_\ell < \delta_{\ell-1} < \ldots <\delta_0,$$
					$$ \textrm{max}(w_{\delta'_\ell}) < \delta'_\ell <\delta'_{\ell-1} < \ldots <\delta'_0,$$
					(and $w_{\delta_\ell} = w_{\delta'_\ell}$),
					so one can easily check that the moreover part of \ref{hi2} would follow if we could argue that
					\begin{equation} \label{kelle}(\forall i \leq k): \  [\xi_i, \xi_i+ h_{\xi_{i+1}}] \cap w_{\delta_\ell} = [\xi'_i, \xi'_i+ h_{\xi'_{i+1}}] \cap w_{\delta_\ell}. \end{equation}
					Note that \eqref{kelle} would follow from the assertion
						\begin{equation} \label{kella} (\forall i \leq k): \ \xi_i \in w_{\delta_\ell} \ \iff \ \xi'_i\in w_{\delta_\ell}.  \end{equation}
			For \eqref{kella} note that by definition $\xi_j = \xi'_j$ for $j >j_1$, so we only have to care about $\xi_j$'s, $\xi'_j$'s for $j \leq j_1$.
			  Therefore, as $\eta'_{\delta_{j_1}}(n) = \delta_{j_1+1}$, $ \eta'_{\delta'_{j_1}}(n) = \delta'_{j_1+1}$, $n > n^\bullet$ (\eqref{deltaj1p0}, \eqref{deltaj1p1}, \eqref{nbu}),
			  and  $ \eta'_{\delta'_{j_1}}(n^\bullet) = \eta'_{\delta_{j_1}}(n^\bullet) = \xi_{j_1+1} = \xi'_{j_1+1}$,
			  (by \eqref{font}), 
			  and clearly $$\eta'_{\delta_{j_1}}(n) =  \delta_{j_1} = \xi_{j_1} > \delta_{j_1+1} > \ldots > \delta_{\ell} > \eta'_{\delta_{j_1}}(n^\bullet) = \xi_{j_1+1},$$
			  $$\eta'_{\delta'_{j_1}}(n) =  \delta'_{j_1} = \xi'_{j_1} > \delta'_{j_1+1} > \ldots > \delta'_{\ell} > \eta'_{\delta'_{j_1}}(n^\bullet) = \xi'_{j_1+1}.$$
			  This means that $\xi_{j_1}, \xi'_{j_1} > $max($w_{\delta_\ell}$), and we are done.

	For \ref{hi3} first let $k^\bullet = \ell$, if $\nu^*_{\varp} \neq \langle \rangle$, otherwise letting $k^\bullet< \ell$ to be the largest $j< \ell$ such that $\nu^{**}_{\varp,j} \neq \langle \rangle$ (equivalently, $\nu^{*}_{\delta_j} \neq \langle \rangle$) if such a $j$ exists, and let $k^\bullet=0$ if there is no such $j$.
	We will argue that choosing 
	\begin{equation} \bar \xi = \langle \xi_0, \xi_1, \ldots \rangle = \langle \delta_0, \delta_1, \dots, \delta_{k^\bullet-1}, \nu^*_{\delta_{k^\bullet}}(|\nu^*_{\delta_{k^\bullet}}|-1) \rangle \end{equation} works (or else, if $\nu^*_{\varp} = \langle \rangle$ and so there is no $j< \ell$ with $\nu^{*}_{\delta_j} \neq \langle \rangle$, then for any $\bar \eta'$-history $\bar \vartheta$ with last entry smaller than $\varp$, the set of entries of $\bar \vartheta$  does not meet that of $\bar \delta$). 
	No matter which case we are in we can assume that
	\begin{equation} \forall t > 0: \ \nu^*_{\delta_{k^\bullet+t}} = \langle\rangle.
	\end{equation}
	In the first case note that if $k^\bullet >0$, then $\nu^*_{\delta_{k^\bullet}} = \eta'_{\delta_{k^\bullet-1}} \restriction |\nu^*_{\delta_{k^\bullet}}|$ by \ref{har}, so $\bar \xi$ is indeed a long $\bar \eta'$-history. Also it is easy to see by \ref{kett} (or \ref{har} if $k^\bullet = \ell$) that for the last entry of $\bar \xi$: $$\xi_{k^\bullet} = \nu^*_{\delta_{k^\bullet}}(|\nu^*_{\delta_{k^\bullet}}|-1)< \varp.$$
	
	Clearly it is enough to verify that picking an arbitrary $\bar \eta'$-history $\bar \vartheta$ with $\vartheta_{|\vartheta|-1} < \varp$ and $\varp \notin  \{ \vartheta_0, \vartheta_1, \dots, \vartheta_{|\bar \vartheta|-1} \}$ necessarily
	$$ (\forall t \geq 0) \ \delta_{k^\bullet+t} \notin \{ \vartheta_0, \vartheta_1, \dots, \vartheta_{|\bar \vartheta|-1} \}.$$
	But  observe that if a long $\bar \eta'$-history $\bar \xi$ goes through $\delta_{k^\bullet+t}$, then it can go through $\delta_{k^\bullet+t-1}$, $\delta_{k^\bullet+t-2}$, $\dots$, $\delta_{k^\bullet}$ with the same ending point $\xi_{|\xi|-1}$.
	
	So it is enough to argue that if $\bar \vartheta$ is a long $\bar \eta'$-history with $\vartheta_j = \delta_{k^\bullet}$ for some $j$ then necessarily  $\vartheta_{|\vartheta|-1} \geq \varp = \delta_\ell$. Recall \ref{hi1}, pick the largest $t \geq 0$ such that $\vartheta_{j+t} = \delta_{k^\bullet+t}$ (so $k^\bullet+t < \ell$,  $\delta_{k^\bullet+t} \neq \varp$ as $\varp \notin \{ \vartheta_i: \ i < |\vartheta|\}$ by our assumptions)). Then $\vartheta_{j+t+1} \neq \delta_{k^\bullet+t+1}$ are of the form $\eta'_{\delta_{k^\bullet+t}}(n_1)$,   $\eta'_{\delta_{k^\bullet+t}}(n_2)$ for some $n_1,n_2 < \omega$, respectively, and as $\nu^*_{\delta_{k^\bullet+t+1}} = \langle \rangle = \eta'_{\delta_{k^\bullet+t}} \restriction n_2$ by\ref{har} we obtain that $n_2 < n_1$, so $\vartheta_{j+t+1} > \delta_{k^\bullet+t+1}$.
	But then it follows from \ref{fiok}, that all entries of $\vartheta$ are bigger than $\delta_{k^\bullet+t+1}$, so bigger than $\varp$.
	
	Since for every $\varrho \in S$ $h_{\eta'_\varrho(n)}$ is strictly increasing (by our assumptions) recalling the definition of $w^*_{\bar \vartheta} =  w_{\vartheta_{|\bar \vartheta|-1}} \cup (\cup\{[\delta_j, \delta_j+ h_{\vartheta_{j+1}}] : j \in [0, |\bar \vartheta|)  \})$ in Definition $\ref{w*}$ (and a similar reasoning applying \ref{fiok}) it is straightforward to check the moreover part.

\end{PROOF}

Now using Lemmas $\ref{ultuni}$ and $\ref{ultuniz}$ we will solve the nice $S$-uniformization problem $\gp$ with the aid of $\bar m$ and all the values resulting from Lemma $\ref{ultuniz}$.
\begin{claim} \label{Ff}
	Assume that $\bar \eta$ is a ladder system on the stationary set $S$, 
	$\langle A_i: \ i \in \omega \rangle$ is a $\bar \eta$-rank,
	$\bar w$ is a $\gp$-pressed down system, and Lemma $\ref{ultuni}$ holds with $\bar m = \langle m_\delta: \ \delta \in S \rangle$.
	
	Then there exists a function $F$ with $\dom(F)= \omega_1$, which is a solution of $\gp$.
\end{claim}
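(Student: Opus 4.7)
The plan is to build $F$ as the ``global limit'' of the canonical data produced by Lemma $\ref{ultuni}$. First I would reduce the task: by Claim $\ref{0vag}$ any solution of $\gp \setminus \bar m$ is a solution of $\gp$, so it suffices to solve $\gp \setminus \bar m$. Moreover, using Fact $\ref{veryfa}$ and that $h_{\eta'_\delta(n)}$ is strictly increasing in $n$, every basic set $(b^\delta_{n,k})' \in \cB^{\gp \setminus \bar m}$ is contained in $w^*_{\bar \delta^*}$ for some long $\bar \eta'$-history $\bar \delta^*$ passing through $\delta$ and ending at $\eta'_\delta(n'')$ for $n''$ large enough. So it is enough to build $F$ with $F \restriction w^*_{\bar \delta} \in (\Psi^+)^\gp(w^*_{\bar \delta})$ for every long $\bar \eta'$-history $\bar \delta$; clause $\ref{pdc}$ of Definition $\ref{pprobd}$ then yields $F \restriction (b^\delta_{n,k})' \in \Psi((b^\delta_{n,k})')$.

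For the construction I will fix once and for all, for each $\gamma<\omega_1$, a canonical element $f^*_\gamma \in \psi_\gamma$ (say, the first one in a fixed enumeration of the countable set $\psi_\gamma$). By clause $\ref{ket}$ of Lemma $\ref{ultuni}$ the cardinal $k_\gamma := |w^*_{\bar \delta}|$ is the same across all long $\bar \eta'$-histories $\bar \delta$ ending at $\gamma$, so
\[
G_{\bar \delta} := f^*_\gamma \circ \text{OP}_{k_\gamma, w^*_{\bar \delta}} \in (\Psi^+)^\gp(w^*_{\bar \delta})
\]
is well-defined for each such $\bar \delta$. I would then set $F(\alpha) := G_{\bar \delta}(\alpha)$ whenever $\alpha \in w^*_{\bar \delta}$ for some long $\bar \eta'$-history $\bar \delta$, and $F(\alpha) := 0$ on the remaining $\alpha < \omega_1$ (these are harmless since they lie outside every $(b^\delta_{n,k})'$).

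The main obstacle will be to show this definition is unambiguous: if $\alpha \in w^*_{\bar \delta} \cap w^*_{\bar \xi}$ for two long $\bar \eta'$-histories $\bar \delta, \bar \xi$, then $G_{\bar \delta}(\alpha) = G_{\bar \xi}(\alpha)$. I would proceed by induction on the $\max$ of the $\bar A$-ranks of the endpoints of $\bar \delta$ and $\bar \xi$, splitting via Lemma $\ref{ultuniz}$. When $\bar \delta$ and $\bar \xi$ share the endpoint $\gamma$, observations $\ref{abst}$ and $\ref{abs2}$ in the proof of Lemma $\ref{ultuni}$ identify the ``new'' parts $w^*_{\bar \delta} \setminus w^*_{\bar \delta^-}$, $w^*_{\bar \xi} \setminus w^*_{\bar \xi^-}$ and show the corresponding $\text{OP}$-maps coincide on them; since both pull back the same $f^*_\gamma$, the values agree on the overlap. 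When the endpoints differ, with $\bar \xi$ ending at $\theta < \gamma$, I combine $\ref{hi1}$ to describe the intersection $\{\delta_i : j_0 \le i \le j_1\}$ of the history-traces, $\ref{hi2}$ to pass to a companion history $\bar \xi'$ fitting a sibling $\bar \delta'$ of $\bar \delta$ (reducing to the same-endpoint case at $\gamma$ already handled), and $\ref{hi3}$ to treat the case when $\bar \delta$ is itself non-maximal by extending it to a maximal history dominating $\theta$.

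Once well-definedness is verified, the conclusion is immediate: for each long $\bar \eta'$-history $\bar \delta$ one has $F \restriction w^*_{\bar \delta} = G_{\bar \delta} \in (\Psi^+)^\gp(w^*_{\bar \delta})$, so for any $(b^\delta_{n,k})' \in \cB^{\gp \setminus \bar m}$, picking $\bar \delta^*$ as in the first paragraph yields $F \restriction (b^\delta_{n,k})' \in \Psi((b^\delta_{n,k})')$. Thus $F$ is a solution of $\gp \setminus \bar m$, and hence of $\gp$ by Claim $\ref{0vag}$, completing the proof.
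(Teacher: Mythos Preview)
Your approach has a genuine gap in the well-definedness argument for histories with \emph{different} endpoints. You fix $f^*_\gamma \in \psi_\gamma$ independently for each $\gamma$ (``first in a fixed enumeration''), but nothing ties $f^*_\gamma$ to $f^*_\theta$ when $\theta \ne \gamma$. Concretely: take $\theta < \gamma$ both in $\Gamma$ with $w_\theta \cap w_\gamma \ne \emptyset$ (this is the generic situation, since the $w_\alpha$'s cover $\omega_1$), and let $\bar\delta \in \Xi_\gamma$, $\bar\xi \in \Xi_\theta$. For $\alpha \in w_\theta \cap w_\gamma \subseteq w^*_{\bar\delta} \cap w^*_{\bar\xi}$, the value $G_{\bar\delta}(\alpha)$ depends on $f^*_\gamma$ while $G_{\bar\xi}(\alpha)$ depends on $f^*_\theta$, and there is simply no relation between these two independently chosen elements. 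Your ``reduction via $\ref{hi2}$'' does not help here: it produces a companion $\bar\xi'$ ending at $\theta$ and a sibling $\bar\delta'$ ending at $\gamma$, so you can conclude $G_{\bar\xi}$ agrees with $G_{\bar\xi'}$ (both pull back $f^*_\theta$) and $G_{\bar\delta}$ agrees with $G_{\bar\delta'}$ (both pull back $f^*_\gamma$) --- but you never connect $f^*_\theta$ to $f^*_\gamma$, which is what is needed.

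The paper's proof avoids this by \emph{not} choosing the $f_\zeta$'s independently. It builds them by transfinite induction on $\zeta \in \Gamma$: at stage $\zeta$ one already has a coherent partial function $G_\zeta$ on $B_\zeta = \bigcup_{\alpha<\zeta}\bigcup_{\bar\vartheta\in\Xi_\alpha} w^*_{\bar\vartheta}$, and the work (Claim~\ref{fzetak}) is to show that $B_\zeta \cap w^*_{\bar\delta^\zeta}$ is $\gp$-closed and that $G_\zeta$ restricted to it lies in $(\Psi^+)^\gp$, so that clause~\ref{pdd} of Definition~\ref{pprobd} allows an \emph{extension} $f_\zeta \in (\Psi^+)^\gp(w^*_{\bar\delta^\zeta})$ agreeing with $G_\zeta$ on the overlap. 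Lemma~\ref{ultuniz} is used precisely to control this overlap and to guarantee that the induced $g_{\bar\vartheta}$'s for all $\bar\vartheta \in \Xi_\zeta$ remain compatible with $G_\zeta$. The coherence is thus built in step by step via the extension axiom, not obtained from canonical choices; your proposal is missing exactly this inductive mechanism.
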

\begin{PROOF}{Claim \ref{Ff}}

We define
\newcounter{fobcou} \setcounter{fobcou}{0}
\begin{enumerate}[label = $(\circledast)_{\arabic*}$, ref = $(\circledast)_{\arabic*}$]
	\setcounter{enumi}{\value{fobcou}}
	\item $\Xi_\varepsilon = \{\bar \delta = \langle \delta_0, \delta_1, \ldots, \delta_{|\bar \delta|-1} \rangle: \bar \delta$ is a maximal $\bar \eta' = \bar \eta - \bar m$-history ending with $\varepsilon$, i.e.\ $\delta_{|\bar \delta|-1} = \varp\}$,
	\stepcounter{fobcou}
\end{enumerate}
let 
\begin{enumerate}[label = $(\circledast)_{\arabic*}$, ref = $(\circledast)_{\arabic*}$]
	\setcounter{enumi}{\value{fobcou}}
	\item $\Gamma = \{ \alpha < \omega_1: \ \Xi_\alpha \neq \emptyset\}$ (so $\Gamma = \bigcup_{\delta \in S} \left(\ran( \eta_\delta \restriction [m_\delta, \infty)\right) \setminus S)$.	
	\stepcounter{fobcou}
\end{enumerate}

\begin{enumerate}[label = $(\circledast)_{\arabic*}$, ref = $(\circledast)_{\arabic*}$]
	\setcounter{enumi}{\value{fobcou}}
	\item We choose for each $\zeta \in \Gamma$ a maximal history $\bar \delta^\zeta \in \Xi_\zeta$. 
		\stepcounter{fobcou}
	\item \label{fzeta} By induction on $\zeta \in \Gamma$ we choose a  function
	$f_\zeta$  such that
	\begin{enumerate}
		\item $\dom(f_\zeta) = w^*_{\bar \delta^\zeta}$, and
		\item\label{allo} 	$f_\zeta \in (\Psi^+)^\gp(w^*_{\bar \delta^\zeta})$, moreover,
		\item  introducing the object
		$$G_\zeta = \bigcup_{\alpha < \zeta} \bigcup_{\bar \vartheta \in \Xi_\alpha} g_{\bar \vartheta},$$ which we require to be a function, where by $g_{\bar \vartheta}$ we mean the   function with domain $w^*_{\bar \vartheta}$ defined as follows.
	\end{enumerate}
	\stepcounter{fobcou}	
\end{enumerate}

\begin{enumerate}[label = $(\circledast)_{\arabic*}$, ref = $(\circledast)_{\arabic*}$]
	\setcounter{enumi}{\value{fobcou}}
	\item \label{komp}   For $\alpha  \in \zeta \cap \Gamma$ 
	and $\bar \vartheta \in \Xi_\alpha$ we let
	$g_{\bar \vartheta} = f_\alpha \circ \text{OP}_{w^*_{\bar \delta^\alpha} ,w^*_{\bar \vartheta}}$ where $\dom(f_\alpha ) = w^*_{\bar \delta^\alpha}$
	(so $g_{\bar \vartheta}$ is a function with domain $w^*_{\bar \vartheta}$), and
	\item we let $B_\zeta = \bigcup_{\alpha < \zeta} \bigcup_{\bar \vartheta \in \Xi_\alpha} w^*_{\bar \vartheta}$. 	
	\stepcounter{fobcou}
		\stepcounter{fobcou}
\end{enumerate}

	

	Note that by \ref{komp}  
\begin{equation} \label{circ}(\text{for } \alpha < \zeta): \  \forall \bar \delta, \bar \delta' \in \Xi_\alpha: \ \ G_\zeta
	\restriction w^*_{\bar \delta'} \circ  (\text{OP}_{w^*_{\bar \delta'}, w^*_{\bar \delta}}) = G_\zeta \restriction  w^*_{\bar \delta},  \end{equation}
moreover, by our demands on $\bar m$  using \ref{ket} and \ref{allo} clearly
\begin{equation} \label{circko} \forall  \bar \delta' \in \Xi_\alpha: \ \ G_\zeta
	\restriction w^*_{\bar \delta'} \in (\Psi^\gp)^+(w^*_{\bar \delta'}), \end{equation}

First we argue that (provided we can carry out the induction) the resulted function $F = \bigcup_{\alpha < \omega_1} G_\alpha$ is a solution for $\gp$.
Note that (by \ref{veryfak3}/ \ref{k3av}  in Fact $\ref{veryfa}$, and recalling $u_\alpha \subseteq w_\alpha \subseteq \alpha$ in Definition $\ref{wdf}$)

\begin{enumerate}[label = $(\circledast)_{\arabic*}$, ref = $(\circledast)_{\arabic*}$]
	\setcounter{enumi}{\value{fobcou}}
	\item if $\delta \in S$, then $\bigcup_{n \in \omega} w_{\eta_\delta(m_\delta + n)} = \delta$,
	\stepcounter{fobcou}
\end{enumerate}
so by choosing for each $n$ a maximal $\bar\eta'$-history through $\eta'_\delta(m_\delta+n)$ and $\delta$ clearly $\delta \subseteq B_\delta$.
As $S$ is stationary $F$ is an entire function, defined on each ordinal $\alpha < \omega_1$.
We only have to check that for each $\delta$, and each $n,k < \omega$
$$ F\restriction b^\delta_{n,k} \in \Psi^\gp(b^\delta_{n,k}).$$
For this having $\delta$, $n,k$ fixed choose $n^\bullet$ large enough so that
$$ \eta'_\delta(n^\bullet) = \eta_\delta(m_\delta+n^\bullet) \geq \eta_\delta(n),$$
and 
$$h_{\eta'_\delta(n^\bullet)} \geq k.$$
Now if $\zeta \in \Gamma$ is such that there exists a long $\bar \eta'$-history $\bar \delta \in \Xi_\zeta$ through $\eta'_\delta(n^\bullet)$ and $\delta$, then by Definition $\ref{w*}$
we have 
$$w^*_{\bar \delta} \supseteq b^\delta_{m_\delta+n^\bullet,h_{\eta'_\delta(n^\bullet)}} \supseteq b^\delta_{n,k}, $$
and by $\eqref{circko}$ $$F \restriction w^*_{\bar \delta} = G_{\zeta+1} \restriction w^*_{\bar \delta} \in (\Psi^\gp)^+(w^*_{\bar \delta}),$$
and by \ref{pdb} $F \restriction b^\delta_{n,k} \in \Psi^\gp(b^\delta_{n,k})$, we are done.

This means that the next claim completes the proof of Claim $\ref{Ff}$, so that of our main theorem.

\begin{sclaim}\label{fzetak}
	We can construct the sequence $\langle f_\zeta: \ \zeta \in \Gamma \rangle$ satisfying \ref{fzeta}.
\end{sclaim}
\begin{PROOF}{Subclaim \ref{fzetak}}
	We proceed by induction, obviously only the successor step is interesting.
	So assume that $f_\beta$ is constructed for each $\zeta \cap \Gamma$, we need to construct a suitable $f_\zeta$.

	We have to characterize the set $B_\zeta \cap w^*_{\bar \delta^\zeta}$ (and $B_\zeta \cap w^*_{\bar \delta}$ for $\bar \delta \in \Xi_\zeta$), the set where our function is already defined, and we have to argue that the induction hypothesis ensures that the required compatibility in \ref{komp} can be achieved. 
	
	Appealing to \ref{hi3} of Lemma $\ref{ultuniz}$ with $\bar \delta^\zeta = \langle \delta^\zeta_0, \delta^\zeta_1, \dots, \delta^\zeta_\ell = \zeta \rangle$ there exists a long $\bar \eta'$-history $\bar \xi = \langle \xi_0, \xi_1, \dots, \xi_k \rangle$ with  $\xi_k < \zeta$,  and with the property that
	whenever for a maximal $\bar \eta'$-history $\bar \vartheta$ we have $\vartheta_j = \delta^\zeta_i$ for some $j < \lh(\bar \vartheta)$, $i < \ell$, but $\vartheta_{|\vartheta|-1} < \zeta$, then $w^*_{\bar \vartheta} \cap [\delta^\zeta_i, \delta^\zeta_i+\omega) \subseteq w^*_{\bar \xi}$, in particular, $\delta^\zeta_i = \xi_n$ for some $n<k$. This also implies that
	\begin{equation} \label{foeke}
		B_\zeta \cap w^*_{\bar \xi} \subseteq \zeta \cup w^*_{\bar \xi}.
	\end{equation}

	Recalling the definition of $w^*_{\bar \delta}$'s ($w^*_{\bar \delta} = w_{\delta_{\lh(\bar\delta)-1}} \cup \bigcup_{k<\lh(\bar\delta)-1} [\delta_k, \delta_k+ h_{\delta_{k+1}}]$ Definition $\ref{w*}$, where in fact the sequence of $h_{\delta_k}$'s only depend on $\delta_{\lh(\bar\delta)-1}$) this clearly implies that
	\begin{equation} \label{foek}
	B_\zeta \cap w^*_{\bar \xi} \subseteq \zeta \cup w^*_{\bar \xi}
\end{equation}
	(as $\bar \xi$ works for any $\bar \vartheta$ and $i$).
	
	By extending $\bar \xi$ to be a maximal $\bar \eta'$-history if necessary we can assume that $\rho := \xi_k \in \Gamma$.
	Now \ref{hi2} from Lemma $\ref{ultuniz}$ yields that for every $\bar \delta' \in \Xi_\zeta$ there exists $\bar \xi' \in \Xi_\rho$, such that 
		$$ (\text{OP}_{w^*_{\bar \xi'}, w^*_{\bar \xi}})``w^*_{\bar \delta^\zeta} \cap w^*_{\bar \xi} =  w^*_{\bar \delta'} \cap w^*_{\bar \xi'},$$
	and
		$$ (\text{OP}_{w^*_{\bar \delta'}, w^*_{\bar \delta^\zeta}})``w^*_{\bar \delta^\zeta} \cap w^*_{\bar \xi} =  w^*_{\bar \delta'} \cap w^*_{\bar \xi'},$$
		so 
		\begin{equation} \label{fo0ek}
			\text{OP}_{w^*_{\bar \xi'}, w^*_{\bar \xi}} \rest w^*_{\bar \delta^\zeta} \cap w^*_{\bar \xi} = \text{OP}_{w^*_{\bar \delta'}, w^*_{\bar \delta^\zeta}} \rest w^*_{\bar \delta^\zeta} \cap w^*_{\bar \xi}, 
		\end{equation}
		as both functions are order preserving.
		So \eqref{fo0ek} together  with \eqref{foeke} (and $\eqref{circ}$) imply that 
		\begin{equation}
			G_\zeta  \circ \text{OP}_{w^*_{\bar \delta^\zeta}, w^*_{\bar \delta'}} \restriction  (w^*_{\bar \delta^\zeta} \cap B_\zeta) = G_\zeta  \restriction (w^*_{\bar \delta'} \cap B_\zeta) 
		\end{equation}
	 (since $\text{OP}_{w^*_{\bar \delta^\zeta}, w^*_{\bar \delta'}}$ fixes each ordinal smaller or equal to $\zeta = \delta^\zeta_\ell = \delta'_\ell$).
		
		We claim that it is enough to extend $G_\zeta \restriction B_\zeta \cap w^*_{\bar \delta^\zeta}$ to a function $f_\zeta$ defined on $w^*_{\bar \delta^\zeta}$ with $f_\zeta \in (\Psi^\gp)^+(w^*_{\bar \delta^\zeta})$.
		So suppose that such $f_\zeta$ exists, we need that 
		$G_{\zeta+1} = G_\zeta \bigcup \cup\{g_{\bar \delta'}: \ \bar \delta' \in \Xi_\zeta\}$ is a function, which in turn would imply that 
		$$G_{\zeta+1} \circ \text{OP}_{w^*_{\bar \delta^\zeta}, w^*_{\bar \delta'}} = G_{\zeta+1} \rest w^*_{\bar \delta'}.$$ Suppose that $\beta \in w^*_{\bar \delta'} \cap w^*_{\bar \delta''}$ for some $\bar \delta', \bar \delta'' \in \Xi_\zeta$. Then recalling that $ w^*_{\bar \delta'} = w_{\delta'_{|\bar \delta'|-1}} \cup (\cup\{[\delta_i, \delta_i+ h_{\delta_{i+1}}] : i < |\bar \delta'|-1 \})$,  OP$_{w^*_{\bar \delta'}, w^*_{\bar \delta''}} =$OP$_{w^*_{\bar \delta'}, w^*_{\bar \delta^\zeta}} \circ $OP$_{w^*_{\bar \delta^\zeta}, w^*_{\bar \delta''}}$ fixes $\beta$, so OP$_{\bar \delta^\zeta, \bar \delta''}(\beta) =$ OP$_{\bar \delta^\zeta, \bar \delta'}(\beta)$, and we are done (remembering \ref{komp}).
		
	For this (i.e. for a suitable $f_\zeta$) we only have to prove (recalling \ref{pdd} from Definition $\ref{pprobd}$) that $B_\zeta \cap w^*_{\bar \delta^\zeta}$ is $\gp$-closed,	
	and check that $G_\zeta \restriction B_\zeta \cap w^*_{\bar \delta^\zeta} \in (\Psi^\gp)^+(B_\zeta \cap w^*_{\bar \delta^\zeta})$, i.e. for $\gp$-basic sets $b \subseteq w^*_{\bar \delta^\zeta}$ $G \restriction b \in \Psi^\gp(b)$.
	So fix $\delta^\bullet \in S$, $\delta^\bullet \in B_\zeta \cap w^*_{\bar \delta^\zeta}$.
	First assume that $\delta^\bullet < \zeta$, so $\delta^\bullet \in w_{\zeta}$ by the definition of $w^*_{\bar \delta^\zeta}$. But then as $w_\zeta$ is $\gp$-closed by our present assumptions on $\bar w$ and Definition $\ref{wdf}$, so for some $n$ and $k$
	$$ \eta_{\delta^\bullet}(n) \cup v_{\eta_\delta^\bullet(n)} \subseteq w_\zeta \cap \delta^\bullet \subseteq \eta_{\delta^\bullet}(n+1),$$
	and 
	$$ w_\zeta \cap [\delta^\bullet, \delta^\bullet+\omega) = [\delta^\bullet, \delta^\bullet+k].$$
	(note that here we use the original ladder system $\eta$).
	Then for some $n'$ $\eta'_{\delta^\bullet}(n') = \eta_{\delta^\bullet}(m_{\delta^\bullet}+n') \geq \eta_{\delta^\bullet}(n)$, and $h_{\eta'_{\delta^\bullet}(n')} \geq k$ (as $\bar h$ is permitted for $\bar \eta$).
	Then choosing a maximal $\bar \eta'$-history $\bar \xi$ going through 
	$\eta'_{\delta^\bullet}(n')$ and $\delta^\bullet$, the following necessarily holds:
	$$ b^{\delta^\bullet}_{n,k} \subseteq b^{\delta^\bullet}_{m_{\delta^\bullet}+ n',h_{\eta(m_{\delta^\bullet}+n')}} = \{ \eta_{\delta^\bullet}(m_{\delta^\bullet} + n') \} \cup v_{\eta_{\delta^\bullet}(m_{\delta^\bullet}+n')} \cup [\delta^\bullet,\delta^\bullet+h_{\eta(m_{\delta^\bullet}+n')}] \subseteq w^*_{\bar \xi}$$
	 (note that $b^{\delta^\bullet}_{m_{\delta^\bullet}+ n',h_{\eta(m_{\delta^\bullet}+n')}}$ is a $\gp$-basic set as well). 
	As $G_\zeta \restriction w^*_{\bar \xi} \in  (\Psi^\gp)^+(w^*_{\bar \xi})$ by $\eqref{circko}$, clearly $G_\zeta \restriction b^{\delta^\bullet}_{n,k} \in \Psi^\gp(b^{\delta^\bullet}_{n,k})$ (by \ref{pdc}), we are done.
	
	Now since $\zeta \in \Gamma$, so $\zeta \notin S$, the other possibility is that $\delta^\bullet > \zeta$.
	Then $\delta^\bullet \in w^*_{\bar \delta^\zeta}$ means that $\delta^\bullet = \delta^\zeta_i$ for some $i < \ell$ ($\delta^\bullet \in S$), and so $\delta^\zeta_{i+1} = \eta'_{\delta^\bullet}(n)$ for some $n$.
	Note that by \ref{fiok}
	\begin{equation} \label{eeg}
		\eta'_{\delta^\bullet}(n-1) < \zeta < \eta'_{\delta^\bullet}(n) = \delta^\zeta_{i+1}, 
	\end{equation}
and then
 	\begin{equation} \label{tar}
 		 w_{\delta^\zeta_{i+1}} \cup \{ \eta'_{\delta^\bullet}(n) \} \subseteq w_\zeta \cup \{ \eta'_{\delta^\bullet}(n) \} \subseteq w^*_{\bar \delta^\zeta} \cap \delta^\bullet \subseteq \delta^\zeta_{i+1} + 1 = \eta'_{\delta^\bullet}(n) +1
 	\end{equation}
 	by Definition $\ref{wdf}$ and the $\gp$-closedness of $w^*_{\bar \delta^\zeta}$,
 	and
 		\begin{equation} \label{tar+k}
 		[\delta^\bullet, \delta^\bullet + h_{\eta'_\delta(n)}] = w^*_{\bar \delta^\zeta} \cap [\delta^\bullet, \delta^\bullet + \omega).
 	\end{equation}
 	
	Now $\delta^\bullet \in B_\zeta$ implies that 
	\begin{enumerate}[label = $(\bullet)_1$, ref = $(\bullet)_1$]
		\item \label{elo} there is a maximal $\eta'$-history $\bar \xi = \langle \xi_0, \xi_1, \dots, \xi_k \rangle$ with $\xi_k < \zeta$ going through $\delta^\bullet$, so $\xi_j = \delta^\bullet$, and $\xi_{j+1} = \eta'_{\delta^\bullet}(n^\bullet)$ for some $j$ and $n^\bullet$ (where $m_\delta \geq n^\bullet$, since $\bar \xi$ is an $\bar \eta$-history).
	\end{enumerate} 
At this point let $n^\bullet$ be maximal such that \ref{elo} holds (i.e.\ for some $\bar \eta'$-history $\bar \xi = \langle \xi_0, \ldots, \xi_k \rangle$, $\bar \xi$ has $\delta^\bullet$ as an entry, and $\xi_k < \zeta$), then by finding a maximal history through $\eta'_{\delta^\bullet}(n^\bullet)$ we can assume that $\xi_{j+1} = \eta'_{\delta^\bullet}(n^\bullet)$, and $n^\bullet$ is the maximal $t$ such that $\eta'_{\delta^\bullet}(t) \in B_\zeta$.  Such maximal $n^\bullet$ exists by the following. We are going to argue that 
\begin{enumerate}[label = $(\bullet)_2$, ref = $(\bullet)_2$]
	\item \label{sufv}
	 either $n^\bullet = n-1$, or $n^\bullet =n$ where $n$ is 
	from $\eqref{eeg}$.
\end{enumerate}
First recall $\eqref{eeg}$, so either $\eta'_\delta(n-1) \in \Gamma$, or it belongs to $S$, and there is a  maximal $\eta'$-history through it and $\delta$, so $n^\bullet \geq n-1$.
On the other hand, $\zeta < \eta'_{\delta^\bullet}(n) < \eta'_{\delta^\bullet}(n+1)$, and again by \ref{fiok} any history through $\eta'_{\delta^\bullet}(n+t)$ for $t>0$ must have a last entry bigger than $\eta'_{\delta^\bullet}(n+t-1)$, so bigger than $\eta'_{\delta^\bullet}(n)$, contradicting the fact that so far we only considered maximal $\eta'$-histories with last entry smaller than $\zeta$.

Now in order to finish the proof of the subclaim it suffices to show that
\begin{enumerate}
	\item $(w_{\delta^\zeta}^* \cap B_\zeta) \cap \delta^\bullet \subseteq \eta'_{\delta^\bullet}(n^\bullet +1)$,
	\item $(w^*_{\bar \delta^\zeta} \cap B_\zeta) \cap [\delta^\bullet, \delta^\bullet+\omega) = [\delta^\bullet, \delta^\bullet+ h_{\eta'_{\delta^\bullet}(n^\bullet)}]$,
	\item   so $b^{\delta^\bullet}_{m_{\delta^\bullet} + l,k} \subseteq B_\zeta \cap w^*_{\bar \delta^\zeta}$ iff $l \leq n^\bullet$ and $k \leq h_{\eta'_{\delta^\bullet}(n^\bullet)}$,
	\item and $G_\zeta \rest b^{\delta^\bullet}_{m_{\delta^\bullet} + l,h_{\eta'_{\delta^\bullet}(n^\bullet)}} \in \Psi^\gp(b^{\delta^\bullet}_{m_{\delta^\bullet} + l,h_{\eta'_{\delta^\bullet}(n^\bullet)}})$,
\end{enumerate}
	this will imply that $w_{\delta^\zeta}^* \cap B_\zeta$ is $\gp$-closed and $G_\zeta \rest w_{\delta^\zeta}^* \cap B_\zeta \in (\Psi^+)^\gp$, as $\delta^\bullet$ was an arbitrary element of $(w_{\delta^\zeta}^* \cap B_\zeta) \cap S$.

	It follows from the maximality of $n^\bullet$ (and the definition of $w^*_{\bar \xi}$, and $\bar h$ being permitted)  that
	\begin{equation}
		B_\zeta \cap [\delta^\bullet, \delta^\bullet+\omega) = [\delta^\bullet, \delta^\bullet+ h_{\eta'_{\delta^\bullet}(n^\bullet)}] = w^*_{\bar \xi} \cap [\delta^\bullet, \delta^\bullet+\omega).
	\end{equation}
	
	Then similarly to $\eqref{tar}$, $\eqref{tar+k}$
	\begin{equation} \label{tartthe}
		w_{\eta'_{\delta^\bullet}(n^\bullet)} \cup \{ \eta'_{\delta^\bullet}(n^\bullet) \} \cup [\delta^\bullet, \delta^\bullet\textbf{}+ h_{\eta'_{\delta^\bullet}(n^\bullet)}] \subseteq w^*_{\bar \xi} \subseteq B_\zeta.
	\end{equation}

 If $n^\bullet = n-1$,  then by the maximality of $n^\bullet$ we would have $\eta'_{\delta^\bullet}(n) \notin B_\zeta$ so putting together $\eqref{tar}$, $\eqref{tar+k}$ and $\eqref{tartthe}$ we have
	\begin{equation} \label{lezarvegre} w_{\eta'_{\delta^\bullet}(n^\bullet)} \cup \{ \eta'_{\delta^\bullet}(n^\bullet) \} \subseteq (B_\zeta \cap w^*_{\bar \delta^\zeta}) \cap \delta^\bullet \subseteq \eta'_{\delta^\bullet}(n^\bullet +1),
		\end{equation}
	and
	\begin{equation} \label{lezarvegre2} (B_\zeta \cap w^*_{\bar \delta^\zeta}) \cap [\delta^\bullet, \delta^\bullet+\omega) = [\delta^\bullet, \delta^\bullet+ h_{\eta'_{\delta^\bullet}(n^\bullet)}]. \end{equation}
	On the other hand, if $n^\bullet = n$, then clearly $w^*_{\bar \delta^\zeta} \cap [\delta^\bullet, \delta^\bullet+\omega) = [\delta^\bullet, \delta^\bullet+ h_{\eta'_{\delta^\bullet}(n^\bullet)}]$, and $\eqref{lezarvegre}$, $\eqref{lezarvegre2}$ also hold.
	
	In each case  $b^\bullet := b^{\delta^\bullet}_{m_{\delta^\bullet}+n^\bullet, h_{\eta'_{\delta^\bullet}(n^\bullet)}}$ is a maximal $\gp$-basic set at $\delta^\bullet$ that is a subset of  $B_\zeta \cap w^*_{\bar \delta^\zeta}$ (since $v_{\eta'_{\delta^\bullet}(n^\bullet)} \subseteq w_{\eta'_{\delta^\bullet}(n^\bullet)}$ by Definition $\ref{wdf}$). It is also clear that $B_\zeta \cap w^*_{\bar \delta^\zeta}$ is $\gp$-closed. Moreover, $b^\bullet \subseteq w^*_{\xi}$ for the long $\bar \eta'$-history $\bar \xi$ going through $\delta^\bullet$ and $\eta'_{\delta^\bullet}(n^\bullet)$,  so $G_\zeta \restriction w^*_{\bar \xi} \in (\Psi^\gp)^+(w^*_{\bar \xi})$ by $\eqref{circko}$, so $G_\zeta \restriction b^\bullet \in \Psi^\gp(b^\bullet)$ by \ref{pdb}.

\end{PROOF}
\mn

\end{PROOF}

In order to finish the proof of Lemma \ref{Lebe}, suppose that $\gp$ is a very special $S$-uniformization system which is nice. By Claim \ref{elok} there exists some $\bar m$ and $\bar A = \langle a_n: \ n \in \omega \rangle$ such that $\bar A$ is an $\bar \eta' = \bar \eta - \bar m$-rank.
By Lemma \ref{wlem} for some $\bar m'$ the system $\bar w$ is a $\bar \eta'' = \bar \eta - \bar m'$-pressed down system.
Therefore, Lemma \ref{ultuni} and Claim \ref{Ff} apply to $\gp'' = (\gp - \bar m) - \bar m'$, and we have a solution to that (i.e. to $\gp''$).

\end{PROOF}

\end{PROOF}
\noindent
\section*{Acknowledgement}

The authors thank the anonymous referee for their numerous invaluable comments and suggestions and for helping us considerably improve the readability of the paper.


\bibliographystyle{amsalpha}
\bibliography{shlhetal,486}
\end{document}